\documentclass[a4paper,10pt]{amsart}

\usepackage[utf8]{inputenc}
\usepackage[T1]{fontenc}
\usepackage{lmodern}
\usepackage{amsmath}
\usepackage{amsfonts}
\usepackage{amssymb}
\usepackage{amsthm}
\usepackage{thmtools}
\usepackage{mathtools}
\usepackage{enumitem}
\usepackage{tikz-cd}
\usepackage{multirow}
\usepackage{comment}
\usepackage{placeins}
\usepackage{array}
\usepackage{diagbox}
\newcolumntype{P}[1]{>{\centering\arraybackslash}p{#1}}
\newcolumntype{Q}[1]{>{\raggedleft\arraybackslash}p{#1}}
\newcolumntype{R}[1]{>{\raggedright\arraybackslash}p{#1}}
\usepackage[pdftex,
            pdfauthor={Sven Möller and Nils R. Scheithauer},
            pdftitle={Dimension Formulae and Generalised Deep Holes of the Leech Lattice Vertex Operator Algebra},
            pdfsubject={},
            pdfkeywords={},
            pdfproducer={},
            pdfcreator={}]{hyperref}

\theoremstyle{plain}
\newtheorem{thm}{Theorem}[section]
\newtheorem{prop}[thm]{Proposition}
\newtheorem{cor}[thm]{Corollary}
\newtheorem{lem}[thm]{Lemma}

\newtheorem*{thm*}{Theorem}
\newtheorem*{conj*}{Conjecture}
\newtheorem*{prop*}{Proposition}

\newtheoremstyle{narrow}
  {.5em} 
  {.5em} 
  {\itshape} 
  {} 
  {\bfseries} 
  {.} 
  {.5em} 
  {} 

\theoremstyle{narrow}

\newtheorem*{thn*}{Theorem}

\newtheorem*{conjn*}{Conjecture}

\theoremstyle{definition}
\newtheorem{defi}[thm]{Definition}

\newtheorem*{nota*}{Notation}


\newcommand{\Q}{\mathbb{Q}}
\newcommand{\Z}{\mathbb{Z}}
\newcommand{\Ns}{\mathbb{Z}_{>0}}
\newcommand{\N}{\mathbb{Z}_{\geq0}}
\newcommand{\C}{\mathbb{C}}
\newcommand{\R}{\mathbb{R}}
\newcommand{\HH}{H}

\newcommand{\tr}{\operatorname{tr}}
\renewcommand{\i}{\mathrm{i}}
\newcommand{\e}{\mathrm{e}}

\newcommand{\Aut}{\operatorname{Aut}}
\newcommand{\Hom}{\operatorname{Hom}}

\newcommand{\rk}{\operatorname{rk}}

\newcommand{\sign}{\operatorname{sign}}
\newcommand{\voa}{vertex operator algebra}

\newcommand{\VOA}{Vertex Operator Algebra}
\newcommand{\vosa}{vertex operator subalgebra}

\newcommand{\fpvosa}{fixed-point vertex operator subalgebra}

\newcommand{\gdh}{generalised deep hole}

\newcommand{\GDH}{Generalised Deep Hole}

\newcommand{\ch}{\operatorname{ch}}
\newcommand{\Ch}{\operatorname{Ch}}

\newcommand{\id}{\operatorname{id}}
\newcommand{\amgis}{\zeta}
\newcommand{\eps}{\varepsilon}
\newcommand{\lcm}{\operatorname{lcm}}
\newcommand{\SLZ}{\operatorname{SL}_2(\mathbb{Z})}

\newcommand{\ee}{e}
\newcommand{\g}{\mathfrak{g}}
\newcommand{\hh}{\mathfrak{h}}
\newcommand{\h}{{L_\C}}
\newcommand{\ad}{\operatorname{ad}}

\newcommand{\CC}{\operatorname{C}}
\newcommand{\orb}{\operatorname{orb}}

\newcommand{\strathol}{strongly rational, holomorphic}
\newcommand{\strat}{strongly rational}
\newcommand{\II}{I\!I}
\newcommand{\spn}{\operatorname{span}}

\renewcommand{\O}{\operatorname{O}}
\newcommand{\Co}{\operatorname{Co}}

\renewcommand{\Im}{\operatorname{Im}}
\newcommand{\fqs}{discriminant form}

\newcommand{\oddity}{\operatorname{oddity}}
\newcommand{\mmod}{\!\mod}

\newlength{\myl}
\settowidth{\myl}{1}

\hyphenation{semi-sim-ple}
\hyphenation{eigen-space}
\hyphenation{eigen-spaces}
\hyphenation{sur-jec-tive}

\displaywidowpenalty=10000 



\begin{document}

\title[Generalised Deep Holes of the Leech Lattice VOA]{Dimension Formulae and Generalised Deep Holes of the Leech Lattice Vertex Operator Algebra}
\author[Sven Möller and Nils~R. Scheithauer]{Sven Möller\textsuperscript{\lowercase{a},\lowercase{b}}and Nils~R. Scheithauer\textsuperscript{\lowercase{c}}}
\thanks{\textsuperscript{a}{Rutgers University, Piscataway, NJ, United States of America}}
\thanks{\textsuperscript{b}{Research Institute for Mathematical Sciences, Kyoto University, Kyoto, Japan}}
\thanks{\textsuperscript{c}{Technische Universität Darmstadt, Darmstadt, Germany}}
\dedicatory{Dedicated to the memory of John Horton Conway.}

\begin{abstract}
We prove a dimension formula for the weight-$1$ subspace of a \voa{} $V^{\orb(g)}$ obtained by orbifolding a \strathol{} \voa{} $V$ of central charge $24$ with a finite-order automorphism $g$. Based on an upper bound derived from this formula we introduce the notion of a \gdh{} in $\Aut(V)$.

Then we show that the orbifold construction defines a bijection between the \gdh{}s of the Leech lattice \voa{} $V_\Lambda$ with non-trivial fixed-point Lie subalgebra and the \strathol{} \voa{}s of central charge $24$ with non-vanishing weight-$1$ space. This provides a uniform construction of these \voa{}s and naturally generalises the correspondence between the deep holes of the Leech lattice $\Lambda$ and the $23$ Niemeier lattices with non-vanishing root system found by Conway, Parker, Sloane and Borcherds.
\end{abstract}

\vspace*{-20pt}
\maketitle

\vspace*{-20pt}
\setcounter{tocdepth}{1}
\tableofcontents


\vspace*{-20pt}
\section{Introduction}
In 1968 Niemeier classified the positive-definite, even, unimodular lattices of rank $24$ \cite{Nie68,Nie73}. He showed that up to isomorphism there are exactly $24$ such lattices and that the isomorphism class is uniquely determined by the root system. The Leech lattice $\Lambda$ is the unique lattice in this genus without roots. There are several proofs of this result. Niemeier applied Kneser's neighbourhood method. Venkov found a proof based on harmonic theta series \cite{Ven80}. It can also be derived from Conway, Parker and Sloane's classification of deep holes of the Leech lattice \cite{CPS82,Bor85b} and from the Smith-Minkowski-Siegel mass formula \cite{CS82c,CS99}.
Finally, it also follows from the classification of certain automorphic representations of $\O_{24}$ \cite{CL19}.

Conway, Parker and Sloane found a nice construction of the Niemeier lattices starting from the Leech lattice \cite{CPS82,CS82}. They showed that up to equivalence there are exactly $23$ deep holes in the Leech lattice $\Lambda$, i.e.\ points in $\Lambda\otimes_\Z\R$ which have maximal distance to the Leech lattice, and that they are in bijection with the Niemeier lattices different from the Leech lattice. The construction is as follows. Let $d$ be a deep hole corresponding to the Niemeier lattice $N$. Then the $\Z$-module in $\Lambda\otimes_\Z\R$ generated by $\{x\in\Lambda\,|\,(x,d)\in\Z\}$ and $d$ is isomorphic to $N$.

The classification of \strathol{} \voa{}s of central charge $24$ bears similarities to the classification of positive-definite, even, unimodular lattices of rank $24$. The weight-$1$ subspace $V_1$ of a \strathol{} \voa{} $V$ of central charge $24$ is a reductive Lie algebra.
In 1993 Schellekens \cite{Sch93} (see also \cite{EMS20a}) showed that there are at most $71$ possibilities for this Lie algebra using the theory of Jacobi forms. He conjectured that all potential Lie algebras are realised and that the $V_1$-structure fixes the \voa{} up to isomorphism. By the work of many authors over the past three decades the following result is now proved.
\begin{thn*}
Up to isomorphism there are exactly $70$ \strathol{} \voa{}s $V$ of central charge $24$ with $V_1\neq\{0\}$. Such a \voa{} is uniquely determined by its $V_1$-structure.
\end{thn*}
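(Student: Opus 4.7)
The plan combines the classical upper bound on candidate $V_1$-structures with the uniform orbifold construction of this paper. Invoking Schellekens' analysis via Jacobi forms \cite{Sch93,EMS20a}, there are at most $71$ candidate reductive Lie algebras that can occur as $V_1$ for a \strathol{} \voa{} $V$ of central charge $24$, of which exactly $70$ are non-trivial. It therefore suffices to realise each of these $70$ candidates as the weight-$1$ space of some such \voa{} and to verify that $V_1$ determines $V$ up to isomorphism.

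For existence, I would classify the finite-order $g\in\Aut(V_\Lambda)$ that are \gdh{}s with $(V_\Lambda^g)_1\neq\{0\}$, i.e.\ those automorphisms saturating the upper bound on $\dim(V^{\orb(g)}_1)$ provided by the dimension formula of the paper. For each such $g$, the cyclic orbifold $V_\Lambda^{\orb(g)}$ yields a \strathol{} \voa{} of central charge $24$ with non-vanishing weight-$1$ space, and the Lie algebra $(V_\Lambda^{\orb(g)})_1$ can be read off from the action of $g$ on $V_\Lambda$ and matched against Schellekens' list; the bijection between \gdh{}s and the $70$ target \voa{}s then guarantees that every candidate is realised exactly once. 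For uniqueness, I would run an inverse-orbifold argument: given any \strathol{} holomorphic $V$ of central charge $24$ with $V_1\neq\{0\}$, exhibit an automorphism $\sigma\in\Aut(V)$ of finite order whose cyclic orbifold returns $V_\Lambda$, using that $V_\Lambda$ is characterised as the unique \strathol{} \voa{} of central charge $24$ whose weight-$1$ space is a $24$-dimensional abelian Lie algebra. Isomorphism of two such \voa{}s with the same $V_1$ then reduces to $\Aut(V_\Lambda)$-conjugacy of the corresponding \gdh{}s, which is controlled by the $V_1$-structure via the bijection.

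The main obstacle is establishing the bijection between \gdh{}s in $\Aut(V_\Lambda)$ and the $70$ target \voa{}s. This requires an explicit classification of finite-order automorphisms of $V_\Lambda$ up to conjugacy, together with enough control over cyclic orbifolds of arbitrary order through the dimension formula to compute $(V_\Lambda^{\orb(g)})_1$ Lie-algebraically in each case. The subtlest step is the inverse-orbifold direction, which must rule out any target \voa{} failing to arise from some \gdh{} of $V_\Lambda$; historically this is precisely what required the case-by-case efforts of many authors, and the uniform statement here is the content of the abstract's main bijection, naturally generalising the Conway--Parker--Sloane correspondence between the $23$ deep holes of $\Lambda$ and the non-Leech Niemeier lattices.
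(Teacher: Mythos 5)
Your outline of the existence half matches the paper's: Schellekens' Jacobi-form argument caps the candidates at $71$ (with $70$ non-zero), and \autoref{thm:main3} realises each of the $70$ non-zero Lie algebras as $(V_\Lambda^{\orb(g)})_1$ for an explicitly exhibited \gdh{}. One difference in emphasis: the paper does not first classify all \gdh{}s (it remarks that determining the orbits of $C_{\O(\Lambda)}(\nu)$ on $\pi_\nu(\Lambda_\C)/\pi_\nu(\Lambda)$ directly is computationally prohibitive); it writes down $70$ automorphisms, verifies extremality and the rank condition, and identifies each Lie algebra $(V_\Lambda^{\orb(g)})_1$ by a combination of dimension, rank, eigenspace and Cartan-subalgebra constraints. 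The classification of \gdh{}s (\autoref{thm:class}) is a \emph{consequence} of the bijection, not an input to it, so "the bijection guarantees that every candidate is realised" has the logic backwards for existence as well: the surjectivity of the bijection is proved \emph{by} the explicit realisation.

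The genuine gap is in your uniqueness argument, which is circular. The paper does not prove that $V$ is determined by $V_1$; it quotes this from the case-by-case literature (DM04, LS19, LS15a, LL20, EMS20b, LS20, KLL18, with a uniform proof in HM20), and this uniqueness is precisely what is fed into the surjectivity half of \autoref{thm:main2}. Concretely: given $V$ and $W$ with $V_1\cong W_1\cong\g$, the inverse-orbifold construction produces \gdh{}s $g_V$ and $g_W$ of $V_\Lambda$ with $V_\Lambda^{\orb(g_V)}\cong V$ and $V_\Lambda^{\orb(g_W)}\cong W$. The injectivity of $g\mapsto V_\Lambda^{\orb(g)}$ (the averaged very strange formula argument) only says that two \gdh{}s with \emph{isomorphic} orbifolds are algebraically conjugate; it gives no information about $g_V$ versus $g_W$ before you know $V\cong W$. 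To close the loop along your lines you would need to show that the algebraic conjugacy class of the \gdh{} is determined by the Lie algebra $\g$ alone, i.e.\ an independent classification of the \gdh{}s of $V_\Lambda$ together with a check that each $\g$ occurs for exactly one class --- exactly the computation the paper avoids. As written, "controlled by the $V_1$-structure via the bijection" assumes the conclusion.
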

The proof is based on a case-by-case analysis and uses a variety of methods. One of the main results of this paper is a uniform proof of the existence part which generalises Conway and Sloane's construction of the Niemeier lattices from the Leech lattice.

One method to construct \voa{}s is the orbifold construction \cite{EMS20a}. Let $V$ be a \strathol{} \voa{} and $g$ an automorphism of $V$ of finite order~$n$ and type~$0$. Then the fixed-point subalgebra $V^g$ is a \strat{} \voa{}. It has exactly $n^2$ non-isomorphic irreducible modules, which can be realised as the eigenspaces of $g$ acting on the twisted modules $V(g^i)$ of $V$. If the twisted modules $V(g^i)$ have positive conformal weight for $i\neq0\mmod{n}$, then the sum $V^{\orb(g)}:=\oplus_{i\in\Z/n\Z}V(g^i)^g$ is a \strathol{} \voa{}. There is also an inverse orbifold construction, i.e.\ an automorphism $h$ of $V^{\orb(g)}$ such that $(V^{\orb(g)})^{\orb(h)}\cong V$.

We can use the deep holes of the Leech lattice to construct the \voa{}s corresponding to the Niemeier lattices.
\begin{thn*}[\autoref{prop:deepholecons}]
Let $d\in\Lambda\otimes_\Z\R$ be a deep hole in the Leech lattice $\Lambda$ corresponding to the Niemeier lattice $N$. Then $g=\e^{2\pi\i d_0}$ is an automorphism of the \voa{} $V_{\Lambda}$ associated with $\Lambda$ of order equal to the Coxeter number of $N$ and type~$0$. The corresponding orbifold construction $V_{\Lambda}^{\orb(g)}$ is isomorphic to the \voa{} $V_N$ associated with $N$.
\end{thn*}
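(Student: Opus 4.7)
The plan is to verify the three assertions---the order of $g$, its type, and the orbifold identification---by translating each into the arithmetic of the Conway-Parker-Sloane construction $N=\Lambda_d+\Z d$, where $\Lambda_d:=\{\lambda\in\Lambda\,|\,(\lambda,d)\in\Z\}$.

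First, $g=\e^{-2\pi\i d_0}$ is an inner automorphism of $V_\Lambda$ acting on the component $\C\e^\lambda$ by the scalar $\e^{-2\pi\i(\lambda,d)}$, so its order equals the smallest $n\in\Ns$ with $nd\in\Lambda^*=\Lambda$. The deep-hole theory of Conway, Parker and Sloane recalled in the introduction identifies this with the order of $d$ in $N/\Lambda_d$, which equals the Coxeter number $h$ of the root system of $N$.

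Next, using the standard description of twisted modules of lattice \voa{}s for inner automorphisms, $V_\Lambda(g^i)$ is isomorphic, as a module over the fixed-point subalgebra $V_\Lambda^g=V_{\Lambda_d}$, to $V_{\Lambda+id}$, and has conformal weight $\tfrac{1}{2}\min_{\lambda\in\Lambda}(id-\lambda,id-\lambda)$. Since $d$ has squared distance $2$ from $\Lambda$, this equals $1$ at $i=1$ and is strictly positive for $1\le i<h$, because $id\notin\Lambda$ there. The type-$0$ condition reduces to a quadratic-form statement on $(d,d)$ modulo $2/h$ which follows directly from $hd\in\Lambda$ and the evenness of $\Lambda$.

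Finally, decomposing each $V_\Lambda(g^i)$ into $V_{\Lambda_d}$-irreducibles indexed by cosets of $\Lambda_d$ in $\Lambda+\Z d$, the $g$-fixed summand is exactly $V_{\Lambda_d+id}$, and summing yields
\[V_\Lambda^{\orb(g)}=\bigoplus_{i=0}^{h-1}V_{\Lambda_d+id}=V_{\Lambda_d+\Z d}=V_N.\]
The hardest step should be the second: extracting from the purely geometric deep-hole property the precise arithmetic statements on $(d,d)$ and $(id,\Lambda)$ needed both to certify positivity of twisted conformal weights for $1\le i<h$ and to verify type~$0$, ideally uniformly across all $23$ deep-hole classes rather than by a case-by-case inspection.
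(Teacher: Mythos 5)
Your proposal follows essentially the same route as the paper: the order of $g$ is the smallest $n$ with $nd\in\Lambda'=\Lambda$, which equals the Coxeter number $h$ by Conway--Parker--Sloane; the conformal weight $\rho(V_\Lambda(g))=\min_{\alpha\in\Lambda+d}(\alpha,\alpha)/2$ equals $1$ exactly because the covering radius of $\Lambda$ is $\sqrt{2}$ and $d$ is a deep hole; and the orbifold is assembled from the irreducible $V_{\Lambda^d}$-modules $V_{jd+\Lambda^d}$, yielding $V_N$. (The paper phrases this last step via the two trivially intersecting maximal isotropic subgroups $\Lambda/\Lambda^d$ and $N/\Lambda^d$ of $(\Lambda^d)'/\Lambda^d$, which cleanly sidesteps the choice of the representations $\phi_i$ implicit in your "$g$-fixed summand".)

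One justification in your second step is wrong, although the conclusion survives. You claim the type-$0$ condition reduces to $(d,d)\in(2/h)\Z$ and that this "follows directly from $hd\in\Lambda$ and the evenness of $\Lambda$". Those two facts only give $h^2(d,d)=(hd,hd)\in2\Z$, i.e.\ $(d,d)\in(2/h^2)\Z$, which is strictly weaker; for a general $d$ of order $h$ the type $h^2(d,d)/2\bmod h$ need not vanish, which is exactly why the notion of type is non-trivial for inner automorphisms. The correct (and simpler) argument is already contained in what you established: since the squared distance from $d$ to $\Lambda$ is exactly $2$, one has $\rho(V_\Lambda(g))=1\in(1/h)\Z$, hence $t=h^2\rho(V_\Lambda(g))\equiv0\bmod h$. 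This also disposes of the worry in your final paragraph: no case-by-case inspection over the $23$ deep-hole classes is needed, because the deep-hole property uniformly supplies the exact value $\rho(V_\Lambda(g))=1$ and, since $id\notin\Lambda$ for $0<i<h$, the strict positivity of $\rho(V_\Lambda(g^i))$ required for the positivity condition.
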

We shall see that the other elements on Schellekens' list can be constructed in a similar way.

Modular forms for the Weil representation of $\SLZ$ play an important role in many areas of mathematics. The simplest examples are theta series. Let $L$ be a positive-definite, even lattice of even rank $2k$ and $D=L'/L$ its \fqs{}. Then $\theta(\tau)=\sum_{\gamma\in D}\theta_{\gamma}(\tau)\ee^{\gamma}$ with $\theta_{\gamma}(\tau)=\sum_{\alpha\in\gamma+L}q^{(\alpha,\alpha)/2}$ is a modular form of weight~$k$ for the Weil representation of $D$.

Another example comes from orbifold theory. Under the same conditions as above the $n^2$ characters of the irreducible modules of $V^g$ combine to a modular form of weight~$0$ for the Weil representation
of the hyperbolic lattice $\II_{1,1}(n)$. Pairing this form with a certain Eisenstein series of weight~$2$ for the dual Weil representation we obtain:
\begin{thn*}[\autoref{thm:dimform2} and \autoref{cor:dimbound2}]
Let $V$ be a \strathol{} \voa{} of central charge $24$ and $g$ an automorphism of $V$ of finite order $n>1$ and type~$0$. We assume that the twisted modules $V(g^i)$ have positive conformal weight for $i\neq0\mmod{n}$. Then
\begin{equation*}
\dim(V_1^{\orb(g)})=24+\sum_{m\mid n}c_n(m)\dim(V_1^{g^m})-R(g)
\end{equation*}
where the $c_n(m)$ are defined by $\sum_{m\mid n}c_n(m)(t,m)=n/t$ for all $t\mid n$ and the rest term $R(g)$ is non-negative. In particular
\begin{equation*}
\dim(V_1^{\orb(g)})\leq24+\sum_{m\mid n}c_n(m)\dim(V_1^{g^m}).
\end{equation*}
\end{thn*}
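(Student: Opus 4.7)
The plan is to package the $n^2$ twisted trace functions of $V$ with respect to $g$ into a single vector-valued modular form and extract $\dim V_1^{\orb(g)}$ from the constant term of its product with a weight-$2$ vector-valued Eisenstein series. For $(i,j)\in(\Z/n\Z)^2$ set
\begin{equation*}
T_{(i,j)}(\tau)\;:=\;\tr_{V(g^i)}\!\bigl(\phi(g)^j\,q^{L_0-c/24}\bigr),
\end{equation*}
where $\phi(g)$ denotes a chosen lift of $g$ to each twisted module. By the paragraph preceding the statement, these assemble into a vector-valued modular form $T$ of weight $0$ for the Weil representation $\rho_D$ of the discriminant form $D=\II_{1,1}(n)'/\II_{1,1}(n)\cong(\Z/n\Z)^2$ (using type $0$ to ensure the representation is well-defined). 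Since $V^{\orb(g)}=\bigoplus_{i}V(g^i)^g$, the averaging projection onto $g$-invariants yields
\begin{equation*}
\dim V_1^{\orb(g)}\;=\;\frac{1}{n}\sum_{i,j\in\Z/n\Z}\tr_{V(g^i)_1}(\phi(g)^j),
\end{equation*}
so the target quantity is a $\Q$-linear combination of Fourier coefficients of $T$, and a parallel expression gives the partial sums $\dim V_1^{g^m}$, for $m\mid n$, solely in terms of the untwisted components $T_{(0,j)}$.

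Next I would pair $T$ with the vector-valued Eisenstein series $E_2$ of weight $2$ for the dual Weil representation $\rho_D^*$, whose Fourier coefficients are computed explicitly by the Bruinier-Kuss formulae for $\II_{1,1}(n)$. The product $\langle T,E_2\rangle$ is a scalar-valued modular form of weight $2$ for $\SLZ$; as $M_2(\SLZ)=\{0\}$, its constant Fourier coefficient vanishes identically. Unfolding this identity, the $q^1$-coefficients of $T$ paired against the polar $q^{-1}$-contribution of $E_2$ pull down precisely $\dim V_1^{\orb(g)}$. The universal constant $24$ is recovered from $\dim V_0=1$ combined with the $c/24$-normalisation, while the divisor-sum structure of the Eisenstein coefficients rearranges the $T_{(0,j)}$-contributions into the combination $\sum_{m\mid n}c_n(m)\dim V_1^{g^m}$, with $c_n(m)$ forced by Möbius inversion to satisfy the characterising system $\sum_{m\mid n}c_n(m)(t,m)=n/t$ for every $t\mid n$. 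Everything else collects into the rest term $R(g)$, whose summands are products of module multiplicities (automatically non-negative) with Eisenstein coefficients that are non-negative on twisted Fourier exponents; the hypothesis that $V(g^i)$ has positive conformal weight for $i\not\equiv0\pmod n$ excludes cancellation from polar twisted contributions and forces $R(g)\geq 0$, from which the upper bound is immediate.

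The main obstacle will be the arithmetic matching step: showing that after regrouping the untwisted Eisenstein contributions by divisor $m\mid n$, the resulting coefficients coincide with the system-defined $c_n(m)$. Concretely, one must compute the Bruinier-Kuss coefficients of $E_2$ at each element of $D$, reinterpret the sums as averages of $\tr_{V_1}(g^{jm})$, invoke $\dim V_1^{g^m}=\tfrac{1}{\ord(g^m)}\sum_{j}\tr_{V_1}(g^{mj})$, and then verify the identity $\sum_{m\mid n}c_n(m)(t,m)=n/t$ by a direct Möbius-inversion argument indexed by divisors of $n$. A secondary but delicate point is bookkeeping the twisted-sector contributions to the constant term of $\langle T,E_2\rangle$ to confirm that each surviving coefficient in $R(g)$ is non-negative; this is precisely the place where the positive-conformal-weight assumption on $V(g^i)$ is indispensable.
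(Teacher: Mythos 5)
Your overall strategy coincides with the paper's: assemble the twisted traces into a vector-valued modular form of weight $0$ for the Weil representation of $D=\II_{1,1}(n)'/\II_{1,1}(n)$, pair it with a weight-$2$ form for the dual Weil representation, and read off the vanishing constant term (the paper's \autoref{prop:pairing}, which is the residue theorem; note your justification via $\mathcal{M}_2(\SLZ)=\{0\}$ does not literally apply since the product has a pole at $\infty$, and your bookkeeping is reversed: the Eisenstein series is holomorphic, the pole sits in $T_{(0,0)}$ and pairs with $[F_0](1)$ to produce the constant $24$, while $\dim(V_1^{\orb(g)})$ sits in the $q^0$-coefficients of $T$ and pairs with the constant terms $[F_\gamma](0)$). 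However, there are two genuine gaps. First, you never identify \emph{which} Eisenstein series to use, and the default choice (the Bruinier--Kuss series attached to the cusp $\infty$, whose constant term is supported at $\gamma=0$) picks out $\dim(V_1^{g})$ rather than $\dim(V_1^{\orb(g)})$: to isolate the orbifold you need $[F_\gamma](0)$ supported essentially on the isotropic subgroup $\{(i,0)\}$. The paper achieves this in two steps: it constructs the scalar Eisenstein series $f\in\mathcal{E}_2(\Gamma_0(n))$ with constant term $1$ at every cusp other than $\infty$ (\autoref{prop:specialeis}), lifts it, and obtains a first formula involving \emph{all} the $\dim(V_1^{\orb(g^c)})$ for $c\mid n$ (\autoref{thm:dimform1}); only a further M\"obius-type linear combination over the powers $g^{n/m}$ with coefficients $\xi_n(m)$ isolates $\dim(V_1^{\orb(g)})$ and produces the $c_n(m)$ (\autoref{thm:dimform2}). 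Your one-shot pairing omits this selection entirely, and the ``arithmetic matching'' you defer is precisely where the construction lives.

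Second, the non-negativity of $R(g)$ does not follow from a general positivity of Eisenstein coefficients ``on twisted Fourier exponents''. The coefficients $d_n(\gamma)$ are alternating sums built from M\"obius functions and Ramanujan sums (after the $\xi_n$-recombination they are not even all positive: some vanish, characterised by the condition $(i,j,n)\nmid\lceil i_nj_n/n\rceil$), and in weight $2$ for a signature-$0$ discriminant form there is no off-the-shelf positivity statement. Establishing $d_n(\gamma)\in\frac{24}{\phi(n)}\N$ is one of the main technical points of the paper (\autoref{prop:pos1} and \autoref{prop:pos2}, via multiplicativity in the divisor variable), and your proposal treats it as automatic. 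The positivity condition on the twisted modules is indeed needed so that only non-polar coefficients of $T$ enter, but that alone does not give $R(g)\geq0$.
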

The rest term $R(g)$ is given explicitly. It depends on the dimensions of the weight spaces of the irreducible $V^g$-modules of weight less than 1.

The dimension formula was first proved by Montague for $n=2,3$ \cite{Mon94}, then generalised to $n=5,7,13$ \cite{Moe16} and finally to all $n>1$ such that $\Gamma_0(n)$ has genus~$0$ in \cite{EMS20b}. The previous proofs all used explicit expansions of the corresponding Hauptmoduln. We show here that the dimension formula is an obstruction coming from the Eisenstein space. Pairing the character of $V^g$ with other modular forms for the dual Weil representation we can obtain further restrictions.

The above upper bound is our motivation for the following definition. Let $V$ be a \strathol{} \voa{} of central charge $24$ and $g$ an automorphism of $V$ of finite order $n>1$. Suppose $g$ has type~$0$ and $V^g$ satisfies the positivity condition. Then $g$ is called a \emph{\gdh} of $V$ if
\begin{enumerate}
\item the upper bound in the dimension formula is attained, i.e.\ $\dim(V_1^{\orb(g)})=24+\sum_{m\mid n}c_n(m)\dim(V_1^{g^m})$,
\item $\rk(V_1^{\orb(g)})$ is minimal, i.e.\ $\rk(V_1^{\orb(g)})=\rk(V_1^g)$.
\end{enumerate}
By convention, we also call the identity a \gdh{}.

We show that if $d\in\Lambda\otimes_\Z\R$ is a deep hole of the Leech lattice, then $g=\e^{2\pi\i d_0}$ is a \gdh{} of the \voa{} $V_{\Lambda}$.

The existence of \gdh{}s is restricted by Deligne's bound on the Fourier coefficients of cusp forms. Pairing the character of $V^g$ with a certain cusp form we obtain:
\begin{thn*}[\autoref{thm:delignebound}]
In the above situation suppose that the order of $g$ is a prime $p$ such that $\Gamma_0(p)$ has positive genus. Then $R(g)\geq24$. In particular, $g$ is not a \gdh{}.
\end{thn*}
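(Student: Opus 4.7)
The plan is to extend the pairing argument underlying the dimension formula from the Eisenstein subspace of $M_2(\rho^*)$ to the cuspidal subspace, where $\rho$ denotes the Weil representation of $\II_{1,1}(p)$. Since $\Gamma_0(p)$ has positive genus by assumption, $S_2(\Gamma_0(p))\neq 0$, and via the standard correspondence between scalar modular forms on $\Gamma_0(p)$ and vector-valued modular forms for $\rho^*$ we obtain non-zero cusp forms $f\in S_2(\rho^*)$. Just as in the derivation of the dimension formula, the vector-valued character $\chi$ of the irreducible $V^g$-modules is a weight-$0$ modular form for $\rho$, so $\langle \chi,f\rangle$ is a weight-$2$ scalar modular form for $\SLZ$. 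Since $M_2(\SLZ)=0$, this pairing vanishes, and extracting its constant Fourier coefficient yields a non-trivial linear relation among the dimensions of the weight spaces of the irreducible $V^g$-modules, complementary to the one producing the dimension formula.

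This relation splits into (a) contributions from weight-space dimensions of weight ${<}1$, which, up to replacing Eisenstein Fourier coefficients by those of $f$, are precisely the ingredients of $R(g)$, and (b) contributions from weight-space dimensions of weight ${\geq}1$, which are controlled by $\dim V_1^{\orb(g)}$ (via the weight-$1$ subspaces of the $g$-fixed untwisted sector and of the $g$-eigenspaces in the twisted sectors $V(g^i)$) together with higher Fourier coefficients of $\chi$. Taking $f$ to be a normalized Hecke eigenform, Deligne's bound $|a_n(f)|\leq\sigma_0(n)\sqrt{n}$ gives effective upper estimates on the terms in (b), using that $V$ is \strathol{} of central charge $24$ so its graded character is $J(\tau)+\dim(V_1)$ and the growth of the twisted characters is controlled by the positive-conformal-weight hypothesis.

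Combining the resulting inequality with the dimension formula, i.e.\ substituting $\dim V_1^{\orb(g)}=24+\sum_{m\mid p}c_p(m)\dim(V_1^{g^m})-R(g)$, produces the bound $R(g)\geq 24$, whence a \gdh{} (for which $R(g)=0$) is excluded. The main obstacle is the precise numerical bookkeeping needed to extract the sharp constant $24$ uniformly in $p$: one must identify which eigenspaces in which twisted modules pair against which Fourier coefficients of $f$, and use the behaviour of $f$ at the two cusps $\i\infty$ and $0$ of $X_0(p)$ to isolate the central-charge contribution of $24$ independently of the particular prime $p$ with $\Gamma_0(p)$ of positive genus.
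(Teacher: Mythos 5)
Your high-level strategy---pair $\Ch_{V^g}$ with the lift of a weight-$2$ cusp form for $\Gamma_0(p)$ and invoke Deligne's bound---is exactly the route the paper takes, and your observation that the pairing lands in $\mathcal{M}_2(\Gamma)=\{0\}$ is sound (for cuspidal $F$ the product is indeed holomorphic at $\infty$, so it vanishes identically). The gap is in your plan for the decisive step. Because $F$ is the lift of a cusp form, \emph{every} constant term $[F_\gamma](0)$ vanishes, so in the relation of \autoref{prop:pairing2} the whole weight-$1$ block drops out: there are no contributions from the $\dim(W_1^\gamma)$, hence none from $\dim(V_1^{\orb(g)})$ or the fixed-point Lie algebras, and no ``higher Fourier coefficients of $\chi$'' enter (the constant term of the product only sees coefficients of the character up to $q^0$ against coefficients of $F$ up to $q^1$). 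In particular the dimension formula is never substituted into anything, and your item (b) would have you estimating quantities that simply do not occur. What survives is only the singular term of the vacuum character against $[F_0](1)$, plus the rest-term-type coefficients. Taking $f\in\mathcal{S}_2(\Gamma_0(p))^-$ (nonzero by \autoref{prop:cuspminus}, so that $a(p)=1$) one computes $[F_0](1)=1-a(p)/p=1-1/p$ and $[F_\gamma](r_\gamma)=-a(j_\gamma)/p$, which gives the identity
\begin{equation*}
p-1=\sum_{k=1}^{p-1}a(p-k)\,d_k,\qquad d_k=\!\!\sum_{\substack{\gamma\in D\\ q(\gamma)=k/p\bmod 1}}\!\!\dim\big(W^\gamma_{k/p}\big).
\end{equation*}
Without this identity you have nothing to feed Deligne's inequality into.

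The constant $24$ is then not obtained by ``isolating the central-charge contribution at the two cusps'': it is already present in the explicit prime-order rest term $R(g)=\frac{24}{p-1}\sum_{k=1}^{p-1}\sigma(p-k)d_k$, the $24$ being inherited from $E_2=1-24\sum_m\sigma(m)q^m$ in the Eisenstein lift that defines $R(g)$. Deligne's bound is applied to the \emph{rest term}, not to any weight-$\geq1$ contributions: $|a(m)|\leq\sigma_0(m)\sqrt{m}\leq\sigma(m)$ gives $\sum_k\sigma(p-k)d_k\geq\sum_k a(p-k)d_k=p-1$, whence $R(g)\geq24$ immediately. So the ``precise numerical bookkeeping'' you defer is in fact trivial once the cuspidality of $F$ is exploited; as written, your plan misplaces both the role of Deligne's bound and the source of the constant $24$. (A minor point: at prime level every normalised Hecke eigenform is automatically a Fricke eigenform, and with absolute values either eigenspace would do, but you must control the sign of $a(p)$ and of the $a(j_\gamma)$ one way or the other; the paper does this by working in the minus eigenspace.)
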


Using inverse orbifolding and an averaged version of Kac's very strange formula \cite{Kac90} we show (recall that algebraic conjugacy means conjugacy of cyclic subgroups):
\begin{thn*}[\autoref{thm:main2}]
The cyclic orbifold construction $g\mapsto V_\Lambda^{\orb(g)}$ defines a bijection between the algebraic conjugacy classes of \gdh{}s $g$ in $\Aut(V_\Lambda)$ with $\rk((V_\Lambda^g)_1)>0$ and the isomorphism classes of \strathol{} \voa{}s $V$ of central charge $24$ with $V_1\neq\{0\}$.
\end{thn*}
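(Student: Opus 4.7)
The plan is to prove the bijection in three stages: well-definedness on algebraic conjugacy classes, surjectivity, and injectivity. The statement splits naturally along these lines, and the map $g \mapsto V_\Lambda^{\orb(g)}$ is already known (on the level of isomorphism classes) to send inputs of the stated form to outputs of the stated form, since a generalised deep hole with $\rk((V_\Lambda^g)_1) > 0$ has $\dim(V_\Lambda^{\orb(g)})_1 \geq 24 + \dim(V_\Lambda^g)_1 > 0$ by the dimension formula.

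For well-definedness, I would verify that replacing $g$ by a conjugate $hgh^{-1}$ obviously gives an isomorphic orbifold VOA via the induced map, and that replacing $g$ by a coprime power $g^k$ (with $(k, \ord(g)) = 1$) also yields an isomorphic $V_\Lambda^{\orb(g^k)}$; the latter is a general feature of the cyclic orbifold construction, essentially because the decomposition $V_\Lambda^{\orb(g)} = \bigoplus_{i\in \Z/n\Z} V_\Lambda(g^i)^g$ only depends on the cyclic subgroup generated by $g$ together with a choice of character, and the relabelling of characters corresponds precisely to the action $k \in (\Z/n\Z)^\times$.

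For surjectivity, I would invoke \autoref{thm:main3} directly: given a \strathol{} \voa{} $V$ of central charge $24$ with $V_1 = \g \neq \{0\}$, that theorem produces a \gdh{} $g \in \Aut(V_\Lambda)$ with $(V_\Lambda^{\orb(g)})_1 \cong \g$, and Schellekens' uniqueness statement (the boxed theorem in the introduction, which we are allowed to cite) then upgrades this to $V_\Lambda^{\orb(g)} \cong V$. One has to check that the \gdh{} produced by \autoref{thm:main3} satisfies $\rk((V_\Lambda^g)_1)>0$, which should follow from the explicit form of the construction (each case exhibits a non-trivial fixed-point Lie algebra) together with the definition of \gdh{}.

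Injectivity is the main obstacle. Given two \gdh{}s $g_1, g_2$ with $V_\Lambda^{\orb(g_1)} \cong V_\Lambda^{\orb(g_2)}$, I would use the reverse orbifold construction: on $V_\Lambda^{\orb(g_i)}$ there is a canonical dual automorphism $\tilde{g}_i$ of the same order $n_i$ such that the orbifold of $V_\Lambda^{\orb(g_i)}$ by $\tilde{g}_i$ reproduces $V_\Lambda$, and transporting $\tilde{g}_i$ back through the chosen isomorphism $V_\Lambda^{\orb(g_1)} \cong V_\Lambda^{\orb(g_2)}$ gives two automorphisms of the same \voa{} whose orbifolds both recover $V_\Lambda$. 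The key point is then to show that the resulting automorphism of $V_\Lambda$ obtained by the reverse orbifold is algebraically conjugate to $g_i^{-1}$, so that the two deep holes $g_1, g_2$ must themselves be algebraically conjugate. This requires an analysis of the fixed-point Lie subalgebra $(V_\Lambda^{g_i})_1$ and the induced action of $\tilde g_i$ on it, together with the rigidity of automorphisms of $V_\Lambda$ coming from the known structure $\Aut(V_\Lambda)_1 = \Co_0 \ltimes \Lambda$ at the Lie algebra level. In particular, one must show that an order-$n$ automorphism of $V_\Lambda$ whose orbifold and whose fixed-point Lie algebra data match those of $g_i$ is forced, by the uniqueness of conjugacy classes of finite-order elements of $\Co_0$ with prescribed cycle shape acting on $\Lambda$, to lie in the algebraic conjugacy class of $g_i$. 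The hard part is thus reducing a \voa{}-theoretic uniqueness question to a purely lattice-theoretic uniqueness question about conjugacy classes in $\Co_0$, and carrying out the latter check case by case against Schellekens' list; this is where I expect the bulk of the work to lie, and where the explicit list of \gdh{}s constructed in the proof of \autoref{thm:main3} becomes indispensable.
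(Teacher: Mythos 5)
Your well-definedness and surjectivity arguments match the paper: the orbifold construction depends only on the algebraic conjugacy class (stated in \autoref{sec:orbifold}), and surjectivity is exactly \autoref{thm:main3} combined with the uniqueness of a \strathol{} \voa{} of central charge $24$ with prescribed non-zero $V_1$. The gap is in injectivity. You correctly identify the inverse orbifold as the tool, but your mechanism for pinning down the two dual automorphisms $\tilde g_1,\tilde g_2$ on the common \voa{} $V$ --- ``rigidity of automorphisms of $V_\Lambda$'' plus ``uniqueness of conjugacy classes of finite-order elements of $\Co_0$ with prescribed cycle shape'' --- does not work. The algebraic conjugacy class of a \gdh{} $g=\hat\nu\sigma_h$ in $\Aut(V_\Lambda)$ is \emph{not} determined by the cycle shape of its projection $\nu\in\O(\Lambda)$: by \autoref{prop:conjclassbij} it also depends on the orbit of $h$ under $C_{\O(\Lambda)}(\nu)$, and \autoref{table:11} shows, e.g., $24$ pairwise non-conjugate \gdh{}s all projecting to the cycle shape $1^{24}$. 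So a lattice-theoretic uniqueness statement about $\Co_0$ cannot separate them, and the paper explicitly notes (end of \autoref{sec:class}) that computing these orbits directly is infeasible. Moreover, no grip is available on the $V_\Lambda$ side, since $(V_\Lambda)_1$ is abelian.

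The missing idea is that the conjugacy class of the inverse-orbifold automorphism $\sigma$ must be pinned down on the $V$ side, where $V_1$ is semisimple, using the \emph{averaged very strange formula}. Because $g$ is a \gdh{}, $\sigma$ is inner (this is where the rank condition $\rk(V_1^{\orb(g)})=\rk(V_1^g)$ enters), of type~$0$ and extremal (\autoref{prop:inverse}); feeding this into \autoref{prop:dimstrange} gives $24=24+\frac{12n}{\phi(n)}\sum_{i,j}h_i^\vee\bigl|\lambda_{s_i^{[j]}}-\rho_i/h_i^\vee\bigr|^2$, forcing $\lambda_{s_i^{[j]}}=\rho_i/h_i^\vee$ for all $i,j$, i.e.\ $\sigma|_{V_1}$ is conjugate (under an inner automorphism, which extends to $\Aut(V)$) to the canonical Weyl-vector automorphism $\sigma_u$ with $u=\sum_i\rho_i/h_i^\vee$. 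Hence every \gdh{} mapping to $V$ is algebraically conjugate to the inverse orbifold of the single element $\sigma_u$, which gives injectivity uniformly, with no case-by-case analysis. Without this equality case of \autoref{thm:averageverystrange} (via \autoref{cor:B}), your outline cannot be completed.
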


Then, we give a uniform construction of the $70$ non-trivial Lie structures on Schellekens' list:
\begin{thn*}[\autoref{thm:main3}]
Let $\g$ be one of the $70$ non-zero Lie algebras on Schelle\-kens' list (Table~1 in \cite{Sch93}). Then there exists a \gdh{} $g\in\Aut(V_{\Lambda})$ such that $(V_\Lambda^{\orb(g)})_1\cong\g$.
\end{thn*}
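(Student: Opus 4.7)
The plan is to exhibit, for each of the 70 Lie algebras $\g$ on Schellekens' list, an explicit automorphism $g\in\Aut(V_\Lambda)$ and verify that it is a \gdh{} with $(V_\Lambda^{\orb(g)})_1\cong\g$. Recall that $\Aut(V_\Lambda)$ is generated by lifts of $\Co_0=\Aut(\Lambda)$ together with the inner torus $\exp(2\pi\i(V_\Lambda)_{1,0})$, so every candidate $g$ is specified by a pair $(\nu,h)$ with $\nu\in\Co_0$ and $h\in\Lambda\otimes_\Z\R$. By \autoref{prop:deepholecons} the $23$ deep holes of $\Lambda$ realise the $23$ Niemeier Lie algebras (and the identity realises the case $\g=\{0\}$, which is omitted), so the task is to handle the remaining $47$ non-lattice entries on the list.

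First I would use the upper bound of \autoref{thm:dimform2} together with the Deligne obstruction (\autoref{thm:delignebound}) to cut down the search drastically. For an automorphism of $V_\Lambda$ coming from $(\nu,h)$, each $\dim(V_1^{g^m})$ is read off from the Frame shape of $\nu^m$ and the $\nu$-invariant part of $h$, so the bound
\begin{equation*}
\dim(V_1^{\orb(g)})\leq 24+\sum_{m\mid n}c_n(m)\dim(V_1^{g^m})
\end{equation*}
becomes an explicit inequality in the Frame shape and in $h$. Deligne rules out prime orders $p$ with $\Gamma_0(p)$ of positive genus, so the relevant orders $n$ are confined to those for which $\Gamma_0(n)$ has genus zero. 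This leaves a finite, manageable list of conjugacy classes of $\nu$ in $\Co_0$ (essentially the classes appearing in Conway--Norton's monstrous moonshine list), together with a rational shift $h$ in the $\nu$-invariant Cartan.

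For each surviving candidate I would then check the two defining conditions of a \gdh{}: that the rest term $R(g)$ in the dimension formula vanishes (so the upper bound is attained) and that $\rk((V_\Lambda^{\orb(g)})_1)=\rk(V_\Lambda^g)$. Both are reducible to computations in the twisted modules $V_\Lambda(g^i)$: the vanishing of $R(g)$ says that no weight spaces of $V_\Lambda(g^i)^g$ below weight~$1$ contribute correction terms, and the rank equality says that no twisted weight-$1$ state lies in the Cartan. Both properties can be read off from the character $\tr|_{V_\Lambda(g^i)}q^{L_0-1}$, which for a lattice orbifold is an explicit eta-quotient determined by the Frame shape of $\nu^i$ and the shift $h$.

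The main obstacle is not the dimension count but the identification of the semisimple structure and affine levels of $(V_\Lambda^{\orb(g)})_1$ with the correct entry of Schellekens' list. Once the \gdh{} property is established, the weight-$1$ space decomposes as $V_{\Lambda,1}^{\nu,h}\oplus\bigoplus_{i=1}^{n-1}V_\Lambda(g^i)^g_1$, where the first summand is controlled by the fixed Cartan of $\nu$ and the twisted summands contribute root vectors of length depending on the conformal weights of the twisted modules. Matching this decomposition to a specific $\g$ from Schellekens' list requires a case-by-case computation of the twisted conformal weights and of the action of $g$ on the lowest twisted states. After this is done, Schellekens' uniqueness result forces $V_\Lambda^{\orb(g)}$ to be the unique VOA with $V_1\cong\g$, yielding all $70$ Lie algebras and completing the proof.
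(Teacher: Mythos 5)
Your overall strategy---exhibiting an explicit automorphism $g=\hat{\nu}\sigma_h$ for each entry and verifying the \gdh{} conditions case by case---is exactly the route the paper takes, and your description of the verification (extremality via the twisted characters, which are eta-quotients determined by the Frame shape and the shift, followed by an identification of the Lie algebra structure) matches the paper's proof in outline. However, there is one step in your plan that is simply wrong and would cause the construction to fail for a substantial fraction of the $70$ cases: you claim that the Deligne obstruction (\autoref{thm:delignebound}) confines the relevant orders $n$ to those for which $\Gamma_0(n)$ has genus zero. That theorem only applies to \emph{prime} orders $p$ with $g(X_0(p))>0$; it does not extend to composite orders, and the paper explicitly remarks that extremal automorphisms of order $14$ exist. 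Indeed, the \gdh{}s realising $D_{24,1}$ (order $46$), $D_{16,1}E_{8,1}$ and $E_{8,1}^3$ (order $30$), $D_{12,1}^2$ (order $22$), $D_{8,1}^3$ (order $14$), $B_{12,2}$ (order $46$), and several further entries all have orders $n$ with $g(X_0(n))>0$. Pruning the search to genus-zero levels would therefore exclude the automorphisms needed for roughly a dozen of the $70$ Lie algebras, and no argument is offered for why these algebras could be obtained at other orders (they cannot: the order is forced by the dimension formula once the cycle shape and the target dimension are fixed).

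A second, softer point: the identification of $(V_\Lambda^{\orb(g)})_1$ with the correct entry of Schellekens' list is where most of the work lies, and your sketch of it is thin. Knowing the dimension and the decomposition $V_1^{\orb(g)}=(V_\Lambda^g)_1\oplus\bigoplus_{i\neq 0}W^{(i,0)}_1$ does not by itself pin down the semisimple type and the affine levels; the paper supplements this with the rank equality $\rk(V_1^{\orb(g)})=\rk((V_\Lambda^g)_1)$, Kac's classification of regular finite-order automorphisms applied to the inverse-orbifold automorphism, compatibility of the eigenspace dimensions $\dim(W^{(i,0)}_1)$ with that classification, and necessary/sufficient criteria for $(V_\Lambda^g)_1$ to be a Cartan subalgebra. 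You would need to make these elimination criteria explicit for the argument to close, but this is a matter of detail rather than a wrong idea. The genus-zero pruning, by contrast, must be removed from the argument entirely.
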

We explicitly give a \gdh{} for each case. In order to determine the $V_1$-structure of the corresponding orbifold construction we use Schellekens' classification of the possible $V_1$-structures and Kac's theory of finite-order automorphisms of simple Lie algebras \cite{Kac90}.

Our results generalise the holy construction of the Niemeier lattices with non-trivial root system by Conway, Parker, Sloane and Borcherds.

We remark that another uniform construction of the \voa{}s on Schellekens' list with non-trivial $V_1$ was given by Höhn (see \cite{Hoe17} and \cite{Lam20}) using a different approach based on commuting pairs.

The classification of the \strathol{} \voa{}s of central charge $24$ with non-trivial $V_1$ described above and \autoref{thm:main2} yield the classification of the \gdh{}s of $V_\Lambda$ with $\rk((V_\Lambda^g)_1)>0$. We observe:
\begin{thn*}[\autoref{thm:main4}]
Under the natural projection $\Aut(V_\Lambda)\to\O(\Lambda)$ the $70$ algebraic conjugacy classes of \gdh{}s $g$ with $\rk((V_\Lambda^g)_1)>0$ map to the $11$ algebraic conjugacy classes in $\O(\Lambda)$ with cycle shapes $1^{24}$, $1^82^8$, $1^63^6$, $2^{12}$, $1^42^24^4$, $1^45^4$, $1^22^23^26^2$, $1^37^3$, $1^22^14^18^2$, $2^36^3$ and $2^210^2$.
\end{thn*}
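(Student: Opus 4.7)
The plan is to reduce this directly to the explicit classification of \gdh{}s of $V_\Lambda$ provided by \autoref{thm:class} and then to read off cycle shapes.

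First, I would recall the structure of the natural projection $\pi\colon\Aut(V_\Lambda)\to\O(\Lambda)$. Every automorphism of $V_\Lambda$ is, up to conjugation, of the form $g=\hat\nu\,\sigma_h$, where $\nu\in\O(\Lambda)$, $\hat\nu$ denotes a standard lift of $\nu$, and $\sigma_h=\e^{-2\pi\i h_0}$ is the inner automorphism associated with a suitable vector $h\in(\Lambda\otimes_\Z\R)^\nu$; the projection acts as $\hat\nu\,\sigma_h\mapsto\nu$. Since $\pi$ is a surjective group homomorphism, it sends cyclic subgroups to cyclic subgroups and conjugate cyclic subgroups to conjugate cyclic subgroups, so it descends to a well-defined map from algebraic conjugacy classes in $\Aut(V_\Lambda)$ to algebraic conjugacy classes in $\O(\Lambda)$.

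Next, I would invoke \autoref{thm:class}, which provides an explicit representative $g$ for each of the $70$ algebraic conjugacy classes of \gdh{}s with $\rk((V_\Lambda^g)_1)>0$. By construction the orthogonal part $\nu=\pi(g)\in\O(\Lambda)=\Co_0$ of each representative is known: it is precisely the element of $\Co_0$ used to build $g$ from data associated with a (generalised) deep hole of a sublattice of $\Lambda$. Its cycle shape on $\Lambda$ can therefore be read off directly from this description.

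The final step is tabulation. One checks that among the $70$ representatives exactly the $11$ cycle shapes $1^{24}$, $1^82^8$, $1^63^6$, $2^{12}$, $1^42^24^4$, $1^45^4$, $1^22^23^26^2$, $1^37^3$, $1^22^14^18^2$, $2^36^3$ and $2^210^2$ occur, and that each of these arises at least once. The main obstacle is not conceptual but organisational: the work reduces to systematically listing the orthogonal part of every \gdh{} in the classification and collecting the resulting Frame shapes. The genuine theoretical content lies in the earlier structural results; the statement here is essentially the resulting observation drawn from inspection of the table.
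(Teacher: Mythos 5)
Your proposal is correct and follows essentially the same route as the paper: the result is recorded there as an observation that drops out of the explicit classification (Theorems \ref{thm:main3} and \ref{thm:class}), since each of the $70$ representatives is given in the form $\hat{\nu}\sigma_h$ with $\nu$ of known cycle shape, the projection $\hat{\nu}\sigma_h\mapsto\nu$ is a group homomorphism and hence well defined on algebraic conjugacy classes, and algebraic conjugacy classes in $\O(\Lambda)\cong\Co_0$ are determined by their cycle shape. The only substantive content is the tabulation you describe, which matches \autoref{table:11} and \autoref{table:11expl}.
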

More precisely, we exactly recover the decomposition of the genus of the Moonshine module described by Höhn in \cite{Hoe17} (see \autoref{table:11}).

Finally, we give a full classification of the \gdh{}s of the Leech lattice \voa{} $V_\Lambda$, including those with $\rk((V_\Lambda^g)_1)=0$, based on Carnahan's constructions of the Moonshine module \cite{Car18} (see \autoref{thm:class}).


\subsection*{Outline}
In \autoref{sec:la} we review finite-order automorphisms of simple Lie algebras and prove an averaged version of Kac's very strange formula.

In \autoref{sec:voa} we recall some results on lattice \voa{}s, automorphisms of the \voa{} $V_\Lambda$ associated with the Leech lattice and the cyclic orbifold construction.

In \autoref{sec:mf} we review modular forms of weight $2$ for $\Gamma_0(n)$ and vector-valued modular forms for the Weil representation. Then we construct a certain Eisenstein series of weight $2$ for the dual Weil representation of $\II_{1,1}(n)$.

In \autoref{sec:dimform} we derive a formula for the dimension of the weight-$1$ space of the cyclic orbifold construction by pairing the character of $V^g$ with the Eisenstein series of the preceding section and show that the dimension is bounded from above. We introduce the notion of extremal automorphisms, for which the upper bound is attained.

In \autoref{sec:gdh} we define \gdh{}s, certain extremal automorphisms, and prove a bijection between the \strathol{} \voa{}s of central charge $24$ with non-vanishing weight-$1$ space and the \gdh{}s of $V_\Lambda$ with non-vanishing fixed-point Lie subalgebra. Then we describe $70$ \gdh{}s realising all non-zero Lie algebras on Schellekens' list.


\subsection*{Notation}
Throughout this text we use the notation $e(x):=\e^{2\pi\i x}$ and $\Z_n:=\Z/n\Z$ for $n\in\Ns$. The variable $\tau$ is always from the complex upper half-plane $\HH=\{z\in\C\,|\,\Im(z)>0\}$ and $q=e(\tau)=\e^{2\pi\i\tau}$. The Dedekind eta function is defined as $\eta(\tau)=q^{1/24}\prod_{n=1}^{\infty}(1-q^n)$. We set $\Gamma:=\SLZ$, and we denote by $\sigma=\sigma_1$ the usual sum-of-divisors function extended to the positive rational numbers by $\sigma(r)=0$ if $r\notin\Ns$.


\subsection*{Acknowledgements}
The authors thank Moritz Dittmann, Jethro van Ekeren, Gerald Höhn, Yi-Zhi Huang, Ching Hung Lam, Jim Lepowsky, Masahiko Miyamoto, Hiroki Shimakura, Jakob Stix and Don Zagier for helpful discussions, and the anonymous referees for their suggestions.

Sven Möller was supported by a JSPS \emph{Postdoctoral Fellowship for Research in Japan} and by JSPS Grant-in-Aid KAKENHI 20F40018. Nils Scheithauer was supported by the DFG through the CRC \emph{Geometry and Arithmetic of Uniformized Structures}, project number 444845124. Both authors also thank the LOEWE research unit \emph{Uniformized Structures in Arithmetic and Geometry} for its support.


\section{Lie Algebras}\label{sec:la}
In this section we review the theory of finite-order automorphisms of finite-dimensional, simple Lie algebras and the very strange formula \cite{Kac90}. Then we prove an averaged version of the very strange formula.

The Lie algebras in this paper are algebras over $\C$.


\subsection{Finite-Order Automorphisms of Simple Lie Algebras}\label{sec:lieaut}
The finite-order automorphisms of simple Lie algebras were classified in \cite{Kac90}, Chapter~8. Since we shall only need inner automorphisms, we restrict to this case.

Let $\g$ be a finite-dimensional, simple Lie algebra of type $X_l$. Fix a Cartan subalgebra $\hh$ with corresponding root system $\Phi\subseteq\hh^*$ and a set of simple roots $\{\alpha_1,\ldots,\alpha_l\}\subseteq\Phi$. The height of a root $\alpha=\sum_{i=1}^lm_i\alpha_i\in\Phi$ is defined as $\operatorname{ht}(\alpha)=\sum_{i=1}^lm_i$. There is a unique root $\theta\in\Phi$ of maximal height. We normalise the non-degenerate, symmetric, invariant bilinear form $(\cdot,\cdot)$ on $\g$ such that $(\theta,\theta)=2$. The Weyl vector $\rho$ of $\g$ is the half sum of the positive roots $\rho=(1/2)\sum_{\alpha\in\Phi^+}\alpha$ (or the sum of the fundamental weights). The relation $\theta=\sum_{i=1}^la_i\alpha_i$ defines the Kac labels (or marks) $a_1,\ldots,a_l$, complemented with $a_0=1$. The untwisted affine Dynkin diagram $X_l^{(1)}$ associated with $X_l$ has $l+1$ nodes labelled by the indices $0,\ldots,l$ (see Table Aff~1 in \cite{Kac90}).

By Theorem~8.6 in \cite{Kac90} the conjugacy classes of inner automorphisms of $\g$ of order $n\in\Ns$ are in bijection with the sequences $s=(s_0,\ldots,s_l)$ of non-negative, coprime integers satisfying $\sum_{i=0}^la_is_i=n$, modulo automorphisms of the affine Dynkin diagram $X_l^{(1)}$. The conjugacy class associated with a sequence $s$ is represented by the inner automorphism $\sigma_s$ uniquely specified by the relations
\begin{equation*}
\sigma_s(x_i)=e(s_i/n)x_i
\end{equation*}
for $i=1,\ldots,l$ where $x_i$ spans the $1$-dimensional root space $\g_{\alpha_i}$ corresponding to the simple root $\alpha_i$.

The automorphism $\sigma_s$ is characterised by the vector $\lambda_s\in\hh^*$ defined by $(\alpha_i,\lambda_s)=s_i/n$ for all simple roots $\alpha_i$. Identifying the Cartan subalgebra $\hh$ with its dual $\hh^*$ via $(\cdot,\cdot)$ we can write $\sigma_s=\e^{2\pi\i\ad(\lambda_s)}$.


\subsection{Very Strange Formula}\label{sec:verystrange}
The very strange formula, which generalises the strange formula of Freudenthal--de~Vries, relates the dimensions of the eigenspaces of a finite-order automorphism of a simple Lie algebra to the distance of the corresponding vector to the Weyl vector. Here, we only state it for inner automorphisms.

Let $\g$ be a finite-dimensional, simple Lie algebra with dual Coxeter number $h^\vee$. Given an automorphism $\sigma$ of $\g$ of order $n$, we define
\begin{equation*}
B(\sigma):=\frac{\dim(\g)}{24}-\frac{1}{4n^2}\sum_{j=1}^{n-1}j(n-j)\dim(\g_{(j)})
\end{equation*}
where $\g_{(j)}$ is the eigenspace $\g_{(j)}=\{x\in\g\,|\,\sigma x=e(j/n)x\}$. We note that $B(\sigma)$ can be written as
\begin{equation*}
B(\sigma)=\frac{1}{4}\sum_{j=0}^{n-1}B_2(j/n)\dim(\g_{(j)})
\end{equation*}
where $B_2(x)=x^2-x+1/6$ is the second Bernoulli polynomial. In both formulae we may replace the (exact) order $n$ of $\sigma$ by any multiple of it.

\begin{prop}[Very Strange Formula, formula~(12.3.6) in \cite{Kac90}]\label{prop:verystrange}
Let $\g$ be a finite-dimensional, simple Lie algebra and $\sigma$ an inner automorphism of $\g$ of order~$n$ corresponding to the sequence $s=(s_0,\ldots,s_l)$. Then
\begin{equation*}
\frac{h^\vee}{2}\left|\lambda_s-\frac{\rho}{h^\vee}\right|^2=\frac{\dim(\g)}{24}-\frac{1}{4n^2}\sum_{j=1}^{n-1}j(n-j)\dim(\g_{(j)}).
\end{equation*}
\end{prop}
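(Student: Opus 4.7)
The plan is to reduce the very strange formula to the classical strange formula of Freudenthal--de~Vries, $|\rho|^2/(2h^\vee) = \dim(\g)/24$, by evaluating the right-hand side explicitly on the root-space decomposition of $\g$.

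First I would rewrite the right-hand side of the claimed identity in terms of the second Bernoulli polynomial, as the excerpt already suggests. The elementary identity $-j(n-j)/n^2 = B_2(j/n) - 1/6$ together with $\sum_{j=0}^{n-1}\dim(\g_{(j)}) = \dim(\g)$ yields
\begin{equation*}
\frac{\dim(\g)}{24}-\frac{1}{4n^2}\sum_{j=1}^{n-1}j(n-j)\dim(\g_{(j)})=\frac{1}{4}\sum_{j=0}^{n-1}B_2(j/n)\dim(\g_{(j)}).
\end{equation*}

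Next I would decompose $\g=\hh\oplus\bigoplus_{\alpha\in\Phi}\g_\alpha$. Since $\sigma_s=\e^{2\pi\i\ad(\lambda_s)}$, the Cartan $\hh$ sits in $\g_{(0)}$ while $\g_\alpha\subseteq\g_{(j)}$ for $j/n=\{(\alpha,\lambda_s)\}$. This turns the previous expression into $\frac{1}{4}\bigl(l\,B_2(0)+\sum_{\alpha\in\Phi}B_2(\{(\alpha,\lambda_s)\})\bigr)$. Because $s_0,\dots,s_l\geq0$ and $\sum_ia_is_i=n$, the vector $\lambda_s$ lies in the closed fundamental alcove; in particular $(\alpha,\lambda_s)\in[0,1]$ for every positive root, and by the symmetry $B_2(1-x)=B_2(x)$ the contribution of a negative root equals that of its positive counterpart. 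Substituting $B_2(x)=x^2-x+1/6$ and invoking the standard Lie-theoretic identities $\sum_{\alpha\in\Phi}(\alpha,\lambda_s)^2=2h^\vee|\lambda_s|^2$ (the Casimir relation in the normalisation $(\theta,\theta)=2$), $\sum_{\alpha>0}\alpha=2\rho$, and $l+|\Phi|=\dim(\g)$ collapses the sum to
\begin{equation*}
\frac{h^\vee}{2}|\lambda_s|^2-(\rho,\lambda_s)+\frac{\dim(\g)}{24}.
\end{equation*}

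Finally I would expand the left-hand side of the proposition as $\frac{h^\vee}{2}|\lambda_s|^2-(\rho,\lambda_s)+|\rho|^2/(2h^\vee)$ and apply the strange formula of Freudenthal--de~Vries to replace $|\rho|^2/(2h^\vee)$ by $\dim(\g)/24$; the two expressions then agree. The only genuinely deep ingredient is the strange formula itself, which I would quote as a black box. The main technical care required is in handling the fractional parts $\{(\alpha,\lambda_s)\}$ at the boundary values $0$ and $1$, together with keeping track of the normalisation of the invariant bilinear form throughout the computation.
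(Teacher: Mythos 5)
Your argument is correct. Note, however, that the paper does not prove this proposition at all: it is quoted verbatim as formula~(12.3.6) of Kac's book, so there is no internal proof to compare against. What you give is the standard direct derivation for \emph{inner} automorphisms, and every step checks out: the identity $-j(n-j)/n^2=B_2(j/n)-1/6$ converts the right-hand side into $\frac14\sum_{j=0}^{n-1}B_2(j/n)\dim(\g_{(j)})$ (the paper itself records this rewriting just before the proposition); the root-space decomposition places $\hh$ in $\g_{(0)}$ and $\g_\alpha$ in the eigenspace indexed by $\{(\alpha,\lambda_s)\}$; the alcove containment $(\alpha,\lambda_s)\in[0,1]$ for $\alpha>0$ follows from $s_i\geq0$ together with $(\theta,\lambda_s)=(n-s_0)/n\leq1$ and the fact that $\theta-\alpha$ is a non-negative combination of simple roots; the boundary values are harmless since $B_2(0)=B_2(1)=1/6$, so $B_2(\{x\})=B_2(x)$ on $[0,1]$ and $B_2(1-x)=B_2(x)$ pairs each negative root with its positive counterpart; and the identities $\sum_{\alpha\in\Phi}(\alpha,\lambda_s)^2=2h^\vee|\lambda_s|^2$, $\sum_{\alpha>0}\alpha=2\rho$, $l+|\Phi|=\dim(\g)$ collapse the sum to $\frac{h^\vee}{2}|\lambda_s|^2-(\rho,\lambda_s)+\frac{\dim(\g)}{24}$, which matches the expansion of the left-hand side once $|\rho|^2/(2h^\vee)=\dim(\g)/24$ is invoked. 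Since both sides are invariant under conjugation, proving the identity for the representative $\sigma_s$ suffices. The only caveat worth recording is that this route is genuinely restricted to inner automorphisms (the eigenvalue of $\sigma_s$ on $\g_\alpha$ being $e((\alpha,\lambda_s))$ is what makes the computation go through), whereas Kac's (12.3.6) also covers the outer case; the paper explicitly restricts to inner automorphisms, so this is no loss here.
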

The squared norm on the left-hand side is formed with respect to the bilinear form on $\hh^*$ induced by $(\cdot,\cdot)$.

We define the automorphism $\sigma_{\rho/h^\vee}=\e^{2\pi\i\ad(\rho/h^\vee)}$, whose conjugacy class is characterised by $\lambda_s=\rho/h^\vee$. The following corollary is immediate:
\begin{cor}\label{cor:B}
Let $\g$ be a finite-dimensional, simple Lie algebra. Then $B(\sigma)\geq0$ for all finite-order, inner automorphisms $\sigma$ of $\g$, and equality is attained if and only if
$\sigma$ is conjugate to $\sigma_{\rho/h^\vee}$.
\end{cor}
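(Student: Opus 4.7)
The plan is to deduce Corollary \ref{cor:B} directly from the Very Strange Formula. Proposition \ref{prop:verystrange} rewrites $B(\sigma)$, which a priori is just a combinatorial expression in the eigenspace dimensions, as a purely geometric quantity:
\begin{equation*}
B(\sigma)=\frac{h^\vee}{2}\left|\lambda_s-\frac{\rho}{h^\vee}\right|^2.
\end{equation*}
Since the dual Coxeter number $h^\vee$ is a positive integer and the bilinear form $(\cdot,\cdot)$ induced on $\hh^*$ is positive-definite on the real span of the roots (which contains both $\lambda_s$, a non-negative rational combination of fundamental weights, and $\rho/h^\vee$), the right-hand side is manifestly non-negative. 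By Theorem~8.6 in \cite{Kac90} every finite-order inner automorphism is conjugate to some $\sigma_s$, and $B(\sigma)$ depends only on the dimensions of the $\sigma$-eigenspaces, hence only on the conjugacy class. This yields $B(\sigma)\geq0$ for all finite-order inner automorphisms.

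For the equality statement, positive-definiteness turns $B(\sigma)=0$ into the sharp equation $\lambda_s=\rho/h^\vee$, and then the definition $\sigma_s=\e^{2\pi\i\ad(\lambda_s)}$ in \autoref{sec:lieaut} immediately identifies the representative $\sigma_s$ with $\sigma_{\rho/h^\vee}$. Conversely, if $\sigma$ is conjugate to $\sigma_{\rho/h^\vee}$, then $B(\sigma)=B(\sigma_{\rho/h^\vee})$, and since the characterising vector of $\sigma_{\rho/h^\vee}$ is $\rho/h^\vee$ itself, the formula gives $B(\sigma_{\rho/h^\vee})=0$.

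Essentially there is no obstacle: once the Very Strange Formula is granted, the corollary is a one-line consequence of positive-definiteness. The only subtle point worth stating cleanly in the write-up is that $\rho/h^\vee$ does lie in the closed fundamental alcove and therefore genuinely represents a conjugacy class of finite-order inner automorphisms, so that Proposition \ref{prop:verystrange} applies to $\sigma_{\rho/h^\vee}$ itself; this is standard and follows from $(\alpha_i,\rho)>0$ for all simple roots together with $(\theta,\rho)/h^\vee\leq 1$.
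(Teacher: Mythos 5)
Your argument is exactly the paper's: the corollary is stated there as an immediate consequence of the Very Strange Formula, namely that $B(\sigma)=\frac{h^\vee}{2}\bigl|\lambda_s-\frac{\rho}{h^\vee}\bigr|^2$ is a positive-definite quadratic expression, vanishing precisely when $\lambda_s=\rho/h^\vee$, i.e.\ when $\sigma$ is conjugate to $\sigma_{\rho/h^\vee}$. Your write-up just makes explicit the (correct and standard) points the paper leaves implicit, so there is nothing to add.
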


The automorphism $\sigma_{\rho/h^\vee}$ has order $n=lh^\vee$ where $l\in\{1,2,3\}$ denotes the lacing number of $\g$, is regular, i.e.\ the fixed-point Lie subalgebra is abelian, and rational, i.e.\ $\sigma_{\rho/h^\vee}$ is conjugate to $\sigma_{\rho/h^\vee}^i$ for all $i\in\Z_n^*$. In particular, $\sigma_{\rho/h^\vee}$ is quasirational, i.e.\ the dimensions of the eigenspaces $\g_{(j)}$ only depend on $(j,n)$.

If $\g$ is simply-laced, then $\sigma_{\rho/h^\vee}$ is the up to conjugacy unique minimal-order regular automorphism of $\g$ and $s=(1,\ldots,1)$ (see Exercise~8.11 in \cite{Kac90}).

Finally, we note that if an automorphism $\sigma$ of $\g$ is conjugate to $\sigma_{\rho/h^\vee}$, then it must already be conjugate to $\sigma_{\rho/h^\vee}$ under an inner automorphism of $\g$ because the sequence $s$ characterising $\sigma_{\rho/h^\vee}$ is invariant under the diagram automorphisms of the untwisted affine Dynkin diagram of $\g$.


\subsection{Averaged Very Strange Formula}\label{sec:averageverystrange}
In the following we prove an averaged version of Kac's very strange formula. Remarkably, the coefficients of the eigenspaces that will appear in this formula are identical to those in the dimension formula in \autoref{thm:dimform2}.

Given an automorphism $\sigma$ of $\g$ of order $n$ (or dividing $n$), we define
\begin{equation*}
A(\sigma):=\frac{1}{\phi(n)}\sum_{i\in\Z_n^*}B(\sigma^i).
\end{equation*}
Clearly, $A(\sigma)=B(\sigma)$ if $\sigma$ is quasirational. This is the case that was studied in \cite{ELMS21}. Here, we do not make this assumption.

The definition of $A(\sigma)$ and \autoref{cor:B} imply:
\begin{prop}
Let $\g$ be a finite-dimensional, simple Lie algebra. Then $A(\sigma)\geq0$ for all finite-order, inner automorphisms $\sigma$ of $\g$, and equality is attained if and only if
$\sigma$ is conjugate to $\sigma_{\rho/h^\vee}$.
\end{prop}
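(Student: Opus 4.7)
The plan is that this proposition is essentially a direct corollary of \autoref{cor:B} combined with the rationality property of $\sigma_{\rho/h^\vee}$ noted just above. The non-negativity is immediate: by \autoref{cor:B} every summand $B(\sigma^i)$ in the defining average is non-negative (each $\sigma^i$ is again a finite-order inner automorphism of $\g$), so $A(\sigma)\geq 0$ as an average of non-negative reals.

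For the equality case, I would observe that $A(\sigma)=0$ forces $B(\sigma^i)=0$ for every $i\in\Z_n^*$, since each term is non-negative. Applying \autoref{cor:B} term-by-term, this is equivalent to $\sigma^i$ being conjugate to $\sigma_{\rho/h^\vee}$ for each $i\in\Z_n^*$. Taking $i=1$ in particular gives that $\sigma$ itself is conjugate to $\sigma_{\rho/h^\vee}$, proving one implication.

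For the converse, suppose $\sigma$ is conjugate to $\sigma_{\rho/h^\vee}$; then in particular $\sigma$ has order $n=lh^\vee$. Here I would invoke the rationality of $\sigma_{\rho/h^\vee}$ recalled in the paragraph following \autoref{cor:B}: for every $i\in\Z_n^*$, $\sigma_{\rho/h^\vee}^i$ is conjugate to $\sigma_{\rho/h^\vee}$. Conjugating through the element realising $\sigma\sim\sigma_{\rho/h^\vee}$, we conclude $\sigma^i\sim\sigma_{\rho/h^\vee}^i\sim\sigma_{\rho/h^\vee}$, whence $B(\sigma^i)=0$ by \autoref{cor:B} and therefore $A(\sigma)=0$.

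There is no real obstacle here; the only point that requires a moment's care is making sure the indexing set $\Z_n^*$ in the average is interpreted with $n$ being the exact order of $\sigma$ (or a multiple thereof, as permitted in the paragraph preceding \autoref{prop:verystrange}), so that the rationality statement for $\sigma_{\rho/h^\vee}$ can be applied directly to the exponents appearing in the sum. Once that convention is fixed the argument reduces, as sketched, to a one-line non-negativity estimate plus the recorded rationality property.
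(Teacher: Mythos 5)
Your proposal is correct and follows essentially the same route as the paper's own proof: non-negativity from termwise application of \autoref{cor:B}, the forward implication by extracting the $i=1$ term, and the converse via the rationality of $\sigma_{\rho/h^\vee}$. No gaps.
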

\begin{proof}
Let $\sigma$ be an inner automorphism of $\g$ of order $n$. Then $B(\sigma^i)\geq0$ for all $i\in\Z_n^*$ implies $A(\sigma)\geq0$. If $A(\sigma)=0$, then $B(\sigma)=0$ so that $\sigma$ is conjugate to $\sigma_{\rho/h^\vee}$. Conversely, if $\sigma$ is conjugate to $\sigma_{\rho/h^\vee}$, then so is $\sigma^i$ for all $i\in\Z_n^*$ by the rationality of $\sigma_{\rho/h^\vee}$. Hence, $B(\sigma^i)=0$ for all $i\in\Z_n^*$, and therefore $A(\sigma)=0$.
\end{proof}

We now compute the coefficients of the eigenspace dimensions $\dim(\g_{(j)})$ in $A(\sigma)$. Because of the averaging these coefficients only depend on $(j,n)$. Equivalently, we may rewrite $A(\sigma)$ as a linear combination of the fixed-point dimensions $\dim(\g^{\sigma^m})$ for $m\mid n$.

\enlargethispage{\baselineskip}

Let $n\in\Ns$. We define the constants $c_n(m)\in\Q$ for $m\mid n$ by the linear system of equations
\begin{equation*}
\sum_{m\mid n}c_n(m)(t,m)=n/t
\end{equation*}
for all $t\mid n$. An explicit formula will be given in \autoref{sec:dimform2}.

\begin{thm}[Averaged Very Strange Formula]\label{thm:averageverystrange}
Let $\g$ be a finite-dimensional, simple Lie algebra and $\sigma$ an inner automorphism of $\g$ of order~$n$. Then
\begin{equation*}
\frac{1}{\phi(n)}\sum_{i\in\Z_n^*}\frac{h^\vee}{2}\left|\lambda_{s^{[i]}}-\frac{\rho}{h^\vee}\right|^2=\frac{1}{24n}\sum_{m\mid n}c_n(m)\dim(\g^{\sigma^m})
\end{equation*}
where $s^{[i]}$ is the sequence characterising $\sigma^i$.
\end{thm}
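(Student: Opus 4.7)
The plan is to apply the very strange formula (\autoref{prop:verystrange}) to each $\sigma^i$ with $i\in\Z_n^*$ and average the results. Each $\sigma^i$ is inner of order dividing $n$, so the formula identifies $(h^\vee/2)|\lambda_{s^{[i]}}-\rho/h^\vee|^2$ with $B(\sigma^i)$. Averaging then shows that the left-hand side of the theorem equals $A(\sigma)$, and it remains to rewrite $A(\sigma)$ as the claimed linear combination of the fixed-point dimensions $\dim(\g^{\sigma^m})$.

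First I would expand $B(\sigma^i)$ in terms of the eigenspaces of $\sigma$. Since $\sigma^i$ acts on $\g_{(j)}$ by $e(ij/n)$,
\begin{equation*}
B(\sigma^i)=\frac{1}{4}\sum_{j=0}^{n-1}B_2(\{ij/n\})\dim(\g_{(j)}).
\end{equation*}
The coefficient of $\dim(\g_{(j)})$ in $A(\sigma)$ depends only on $d=\gcd(j,n)$: the reduction $\Z_n^*\to\Z_{n/d}^*$ is surjective with fibres of size $\phi(n)/\phi(n/d)$, and multiplication by the unit $j/d$ permutes $\Z_{n/d}^*$, so this coefficient equals $\frac{1}{4}f(n/d)$ with $f(N):=\frac{1}{\phi(N)}\sum_{k\in\Z_N^*}B_2(k/N)$. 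Grouping the eigenspaces by $\gcd(j,n)$ and Möbius-inverting the identity $\dim(\g^{\sigma^m})=\sum_{t\mid m}\sum_{k\in\Z_t^*}\dim(\g_{(kn/t)})$ (valid for $m\mid n$) produces
\begin{equation*}
A(\sigma)=\frac{1}{4}\sum_{m\mid n}\dim(\g^{\sigma^m})\sum_{s\mid n/m}\mu(s)f(ms).
\end{equation*}

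It remains to verify $6n\sum_{s\mid n/m}\mu(s)f(ms)=c_n(m)$ for all $m\mid n$. Substituting this candidate into $\sum_{m\mid n}c_n(m)(t,m)$, applying $(t,m)=\sum_{e\mid(t,m)}\phi(e)$ and the Möbius identity $\sum_{s\mid u/e}\mu(s)=[u=e]$ reduces the defining system $\sum_{m\mid n}c_n(m)(t,m)=n/t$ to
\begin{equation*}
\sum_{u\mid t}f(u)\phi(u)=\frac{1}{6t}\qquad\text{for all }t\mid n.
\end{equation*}
Since $f(u)\phi(u)=\sum_{k\in\Z_u^*}B_2(k/u)$ and every fraction $j/t$ with $0\leq j<t$ has a unique reduced representation $k/u$ with $u\mid t$, the left-hand side collapses to $\sum_{j=0}^{t-1}B_2(j/t)=B_2(0)/t=1/(6t)$ by the classical distribution relation for Bernoulli polynomials. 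The main technical obstacle is keeping the nested Möbius inversions straight; once this Bernoulli identity is invoked, the matching of coefficients is immediate.
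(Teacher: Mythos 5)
Your proof is correct, and its first half (averaging the very strange formula, observing that the coefficient of $\dim(\g_{(j)})$ depends only on $(j,n)$, and Möbius-inverting to pass from eigenspace dimensions to the fixed-point dimensions $\dim(\g^{\sigma^m})$) coincides in substance with the paper's argument. Where you genuinely diverge is in identifying the resulting coefficients with $c_n(m)$. The paper evaluates the averaged Bernoulli sum in closed form, $\frac{1}{\phi(N)}\sum_{k\in\Z_N^*}\frac{1}{4}B_2(k/N)=\frac{1}{24}\frac{\lambda(N)}{N^2}$, using the explicit formula for $\sum_{(i,k)=1}i^2$, and then checks that the resulting expression satisfies the defining system $\sum_{m\mid n}c_n(m)(t,m)=n/t$ by multiplicativity, i.e.\ by a computation at prime powers. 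You instead keep the averaged value symbolic as $f(N)$ and verify the defining system directly: writing $(t,m)=\sum_{e\mid(t,m)}\phi(e)$, collapsing the Möbius sum, and reducing everything to the distribution relation $\sum_{j=0}^{t-1}B_2(j/t)=1/(6t)$. This buys you a proof that never needs the closed form involving $\lambda$ nor the prime-power verification, at the cost of not producing the explicit formula $c_n(m)=\frac{\lambda(m)}{m}\frac{\phi((m,n/m))}{(m,n/m)}\psi(n/m)$ that the paper uses elsewhere (e.g.\ in \autoref{sec:dimform2}). Note that your argument implicitly uses the uniqueness of the solution of the system $\sum_{m\mid n}c_n(m)(t,m)=n/t$, which is guaranteed since the GCD matrix $((t,m))_{t,m\mid n}$ is invertible; this is harmless but worth stating.
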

Again, we may replace the order $n$ in the above formula by any multiple of it.
\begin{proof}
We assume that $n>1$. By the very strange formula (see \autoref{prop:verystrange}) and the definition of $A(\sigma)$, the left-hand side equals
\begin{align*}
A(\sigma)&=\frac{1}{\phi(n)}\sum_{i\in\Z_n^*}\frac{1}{4}\sum_{j=0}^{n-1}B_2(j/n)\dim(\g_{(ij)})\\
&=\sum_{j=0}^{n-1}\Bigg(\frac{1}{\phi(n/(j,n))}\!\!\sum_{\substack{i=0\\(i,n)=(j,n)}}^{n-1}\!\!\frac{1}{4}B_2(i/n)\Bigg)\dim(\g_{(j)}).
\end{align*}
Let $k>1$. Then
\begin{equation*}
\sum_{\substack{i=1\\(i,k)=1}}^{k}\!\!1=\phi(k),\quad
\sum_{\substack{i=1\\(i,k)=1}}^{k}\!\!i=\frac{1}{2}k\phi(k),\quad
\sum_{\substack{i=1\\(i,k)=1}}^{k}\!\!i^2=\frac{1}{3}\phi(k)(k^2+\frac{1}{2}\lambda(k))
\end{equation*}
with
\begin{equation*}
\lambda(k):=\prod_{\substack{p\mid k\\p\text{ prime}}}(-p).
\end{equation*}
(For the third sum see, e.g., \cite{PS71}, Abschnitt~VIII, Aufgabe~27.) Hence, summing $B_2(i/n)=(i/n)^2-i/n+1/6$ over $i$ yields
\begin{equation*}
\frac{1}{\phi(n/(j,n))}\!\!\sum_{\substack{i=0\\(i,n)=(j,n)}}^{n-1}\!\frac{1}{4}B_2(i/n)=\frac{1}{24}\frac{\lambda(n/(j,n))}{(n/(j,n))^2}.
\end{equation*}
Noting that
\begin{equation*}
\sum_{\substack{j=0\\(j,n)=d}}^{n-1}\!\!\dim(\g_{(j)})=\sum_{m\mid(n/d)}\mu(n/dm)\dim(\g^{\sigma^m})
\end{equation*}
for all $d\mid n$ with the Möbius function $\mu$ we obtain
\begin{align*}
A(\sigma)
&=\sum_{m\mid n}\Big(\sum_{d\mid(n/m)}\frac{\lambda(n/d)\mu(n/dm)}{24(n/d)^2}\Big)\dim(\g^{\sigma^m}).
\end{align*}
Finally we show that
\begin{equation*}
\sum_{m\mid n}\Big(\sum_{d\mid(n/m)}\frac{\lambda(n/d)\mu(n/dm)}{(n/d)^2}\Big)(t,m)=\frac{1}{t}
\end{equation*}
for all $t\mid n$ using the multiplicativity of the left-hand side in $n$, which implies that
\begin{equation*}
\sum_{d\mid(n/m)}\frac{\lambda(n/d)\mu(n/dm)}{(n/d)^2}=\frac{1}{n}c_n(m).
\end{equation*}
Now the assertion follows.
\end{proof}


\section{\VOA{}s}\label{sec:voa}
In this section we recall some results about lattice \voa{}s and their automorphism groups and describe the cyclic orbifold construction developed in \cite{EMS20a,Moe16}.

The \voa{}s in this paper are defined over $\C$.


\subsection{Lattice \VOA{}s}\label{sec:lat}
To keep the exposition self-contained we recall a few well-known facts about lattice \voa{}s \cite{Bor86,FLM88}.

For a positive-definite, even lattice $L$ with bilinear form $(\cdot,\cdot)\colon L\times L\to\Z$ the associated \voa{} of central charge $c=\rk(L)$ is given by
\begin{equation*}
V_L=M(1)\otimes\C[L]_\eps
\end{equation*}
where $M(1)$ is the Heisenberg \voa{} associated with $\h=L\otimes_\Z\C$ and $\C[L]_\eps$ is the twisted group algebra, i.e.\ the algebra with basis $\{\ee^\alpha\,|\,\alpha\in L\}$ and products $\ee^\alpha\ee^\beta=\eps(\alpha,\beta)\ee^{\alpha+\beta}$ for all $\alpha,\beta\in L$ where $\eps\colon L\times L\to\{\pm1\}$ is a $2$-cocycle satisfying $\eps(\alpha,\beta)/\eps(\beta,\alpha)=(-1)^{(\alpha,\beta)}$. Note that $\eps$ defines a central extension $\hat{L}$ of $L$ by $1\to\{\pm1\}\to\{\pm1\}\times_\eps L\to L\to0$ and $\C[L]_\eps=\C[\hat{L}]/J$ where $J$ is the kernel of the linear homomorphism $\C[\hat{L}]\to\C[L]$ defined by $\ee^{(\pm1,\alpha)}\mapsto\pm\ee^{\alpha}$.

\medskip

Let $\nu\in\O(L)$, the orthogonal group of the lattice $L$, and let $\eta\colon L\to\{\pm1\}$ be a function. Then the map $\phi_\eta(\nu)$ acting on $\C[L]_\eps$ as $\phi_\eta(\nu)(\ee^\alpha)=\eta(\alpha)\ee^{\nu\alpha}$ for $\alpha\in L$ defines an automorphism of $\C[L]_\eps$ if and only if
\begin{equation*}
\frac{\eta(\alpha)\eta(\beta)}{\eta(\alpha+\beta)}=\frac{\eps(\alpha,\beta)}{\eps(\nu\alpha,\nu\beta)}
\end{equation*}
for all $\alpha,\beta\in L$. In this case $\phi_\eta(\nu)$ is called a \emph{lift} of $\nu$. Since the lifts can be identified with the automorphisms of $\hat{L}$ which preserve the norm in $L$, we denote the group of lifts by $\O(\hat{L})$. In contrast to $\O(L)$ the group $\O(\hat{L})$ acts naturally on the \voa{} $V_L=M(1)\otimes\C[L]_\eps$.

There is a short exact sequence
\begin{equation*}
1\to\Hom(L,\{\pm1\})\to\O(\hat{L})\to\O(L)\to1
\end{equation*}
with the surjection $\O(\hat{L})\to\O(L)$ given by $\phi_\eta(\nu)\mapsto\nu$. The image of $\Hom(L,\{\pm1\})$ in $\O(\hat{L})$ are exactly the lifts of the identity in $\O(L)$.

If the restriction of $\eta$ to the fixed-point sublattice $L^\nu$ is trivial, we call $\phi_\eta(\nu)$ a \emph{standard lift} of $\nu$. It is always possible to choose $\eta$ in this way (see \cite{Lep85}, Section~5).

Let $\phi_\eta(\nu)$ be a standard lift of $\nu$, and suppose that $\nu$ has order~$m$. If $m$ is odd or if $m$ is even and $(\alpha,\nu^{m/2}\alpha)$ is even for all $\alpha\in L$, then the order of $\phi_\eta(\nu)$ is also $m$. Otherwise the order of $\phi_\eta(\nu)$ is $2m$, in which case we say $\nu$ exhibits \emph{order doubling}.

For any \voa{} $V$ of CFT-type $K:=\langle\e^{v_0}\,|\,v\in V_1\rangle$ is a subgroup of $\Aut(V)$, called the \emph{inner automorphism group} of $V$. By \cite{DN99}, Theorem~2.1, the automorphism group of $V_L$ can be written as
\begin{equation*}
\Aut(V_L)=\O(\hat{L})\cdot K.
\end{equation*}
Here, $K$ is normal in $\Aut(V_L)$, $\Hom(L,\{\pm1\})$ is a subgroup of $K\cap\O(\hat{L})$ and $\Aut(V_L)/K$ is isomorphic to a quotient of $\O(L)$.

All standard lifts of a given lattice automorphism $\nu\in\O(L)$ are conjugate in $\Aut(V_L)$, in fact under elements of $K$ (see \cite{EMS20a}, Proposition~7.1).

Choosing a section $\O(L)\to\O(\hat{L})$, $\nu\mapsto\phi_\eta(\nu)$, we can write every automorphism of $V_L$ as $\phi_\eta(\nu)\sigma$ for some $\nu\in\O(L)$, $\sigma\in K$ because any two lifts of $\nu$ only differ by a homomorphism $L\to\{\pm 1\}$, which can be absorbed into $\sigma$.

\medskip

We also recall some results about the twisted modules of lattice \voa{}s. The irreducible $\phi_\eta(\nu)$-twisted modules of a lattice \voa{} $V_L$ for standard lifts $\phi_\eta(\nu)$ are described in \cite{DL96,BK04}. Together with the results in Section~5 of \cite{Li96} this allows us to describe the irreducible $g$-twisted $V_L$-modules for all finite-order automorphisms $g\in\Aut(V_L)$ (cf.\ \cite{HM20}).

By construction, $(V_L)_1$ is a reductive Lie algebra,
and the subspace $\hh:=\{h(-1)\otimes\ee^0\,|\,h\in\h\}\cong\h$ is a Cartan subalgebra of $(V_L)_1$. Now, let $g=\phi_\eta(\nu)\sigma_h$ where $\phi_\eta(\nu)$ is some lift of $\nu\in\O(L)$ and $\sigma_h=\e^{2\pi\i h_0}$ for some $h\in\h$. Note that $g$ has finite order if and only if $h\in L\otimes_\Z\Q$. Suppose $\nu$ has order $m$. In Lemma~8.3 of \cite{EMS20b} (see also Lemma~3.4 in \cite{LS19}) it is shown that $\phi_\eta(\nu)\sigma_h$ is conjugate to $\phi_\eta(\nu)\sigma_{\pi_\nu(h)}$ where $\pi_\nu=\frac{1}{m}\sum_{i=0}^{m-1}\nu^i$ is the projection of $\h$ onto the elements of $\h$ fixed by $\nu$. Hence we may assume that $h\in\pi_\nu(\h)$. Then $\sigma_h$ and $\phi_\eta(\nu)$ commute, and we can apply the results in \cite{Li96} to $g=\phi_\eta(\nu)\sigma_h$.

We shall be interested in the case that $L$ is unimodular. Then $V_L$ is holomorphic and there is a unique $g$-twisted $V_L$-module $V_L(g)$ for each $g\in\Aut(V_L)$ of finite order. Let $g=\phi_\eta(\nu)\sigma_h$ where $\nu\in\O(L)$ has order~$m$, $\phi_\eta(\nu)$ is a standard lift of~$\nu$, $\sigma_h=\e^{2\pi\i h_0}$ and $h\in\pi_\nu(\Lambda\otimes_\Z\Q)$. Suppose that $\nu$ has cycle shape $\prod_{t\mid m}t^{b_t}$, i.e.\ $\nu$ has characteristic polynomial $\prod_{t\mid m}(x^t-1)^{b_t}$ as automorphism of $\h$. Note that $\sum_{t\mid m}tb_t=\rk(L)=c$. Then the conformal weight of $V_L(g)$ is given by
\begin{equation*}
\rho(V_L(g))=\frac{1}{24}\sum_{t\mid m}b_t\left(t-\frac{1}{t}\right)+\min_{\alpha\in -h+\pi_{\nu}(L)}\frac{(\alpha,\alpha)}{2}\geq0.
\end{equation*}
The number
\begin{equation*}
\rho_\nu:=\frac{1}{24}\sum_{t\mid m}b_t\left(t-\frac{1}{t}\right)
\end{equation*}
is called the \emph{vacuum anomaly} of $V_L(g)$. It is positive for $\nu\neq\id$.


\subsection{Automorphisms of the Leech Lattice \VOA{}}
We now specialise the exposition to the Leech lattice.

The Leech lattice $\Lambda$ is the up to isomorphism unique positive-definite, even, unimodular lattice of rank $24$ without roots, i.e.\ vectors of squared norm $2$. The automorphism group $\O(\Lambda)$ of the Leech lattice is Conway's group $\Co_0$. Because $(V_\Lambda)_1=\hh=\{h(-1)\otimes\ee^0\,|\,h\in\Lambda_\C\}$, the inner automorphism group is given by
\begin{equation*}
K=\{\sigma_h\,|\,h\in\Lambda_\C\}
\end{equation*}
with $\sigma_h=\e^{2\pi\i h_0}$ and is abelian. Moreover, $\Aut(V_\Lambda)/K\cong\O(\Lambda)$, i.e.\ there is a short exact sequence
\begin{equation*}
1\to K\to\Aut(V_\Lambda)\to\O(\Lambda)\to1,
\end{equation*}
because $K\cap\O(\hat{\Lambda})=\Hom(\Lambda,\{\pm1\})$ in the special case of the Leech lattice. Hence, every automorphism of $V_\Lambda$ is of the form
\begin{equation*}
\phi_\eta(\nu)\sigma_h
\end{equation*}
for a lift $\phi_\eta(\nu)$ of some $\nu\in\O(\Lambda)$ and some $h\in\Lambda_\C$. The surjective homomorphism $\Aut(V_\Lambda)\to\O(\Lambda)$ in the short exact sequence is given by $\phi_\eta(\nu)\sigma_h\mapsto\nu$.

Recall that it suffices to take the lift $\phi_\eta(\nu)$ from a fixed section. Moreover, since $\sigma_h=\id$ if and only if $h\in\Lambda'=\Lambda$, we can take $h\in\Lambda_\C/\Lambda$.

\medskip

In the following we describe the conjugacy classes of $\Aut(V_\Lambda)$. As mentioned above, the automorphism $\phi_\eta(\nu)\sigma_h$ is conjugate to $\phi_\eta(\nu)\sigma_{\pi_\nu(h)}$ for any $h\in\Lambda_\C$, and $\phi_\eta(\nu)$ and $\sigma_{\pi_\nu(h)}$ commute.

Fix a section $\varphi\colon\O(L)\to\O(\hat{L})$, and for $\nu\in\O(\Lambda)$ denote by $\eta_\nu\colon\Lambda\to\{\pm1\}$ the corresponding function characterising the action of $\varphi(\nu)$ on $\C[\Lambda]_\eps$.
\begin{lem}\label{lem:conjsurj}
Any automorphism in $\Aut(V_\Lambda)$ is conjugate to an automorphism of the form $\varphi(\nu)\sigma_h$ where
\begin{enumerate}
\item\label{item:rep1} $\nu$ is in a fixed set $N$ of representatives for the conjugacy classes in $\O(\Lambda)$,
\item\label{item:rep2} $h$ is in a fixed set $H_\nu$ of representatives for the orbits of the action of $C_{\O(\Lambda)}(\nu)$ on $\pi_\nu(\Lambda_\C)/\pi_\nu(\Lambda)$.
\end{enumerate}
\end{lem}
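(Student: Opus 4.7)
My plan is to chain two kinds of conjugation: outer conjugation by lifts $\hat\mu$ of elements of $\O(\Lambda)$ to normalise $\nu$, and inner conjugation by $\sigma_{h'}\in K$ to normalise $h$. The starting observation, established in the text just before the lemma, is that every $g\in\Aut(V_\Lambda)$ already has the form $g=\hat\nu\sigma_h$ with $\nu\in\O(\Lambda)$ and $h\in\Lambda_\C/\Lambda$.

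For part~\ref{item:rep1}, for any $\mu\in\O(\Lambda)$ I would compute
\begin{equation*}
\hat\mu\,\hat\nu\sigma_h\,\hat\mu^{-1}=(\hat\mu\hat\nu\hat\mu^{-1})\,\sigma_{\mu h}=\hat{\nu'}\,\sigma_{k(\mu,\nu)+\mu h},
\end{equation*}
where $\nu':=\mu\nu\mu^{-1}$ and $\sigma_{k(\mu,\nu)}:=(\hat{\nu'})^{-1}\hat\mu\hat\nu\hat\mu^{-1}$ lies in $\Hom(\Lambda,\{\pm1\})\subseteq K$ because both $\hat\mu\hat\nu\hat\mu^{-1}$ and $\hat{\nu'}$ are lifts of $\nu'$. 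Here I use $\hat\mu\sigma_h\hat\mu^{-1}=\sigma_{\mu h}$ (since $\hat\mu$ acts on $\hh$ via $\mu$) and the abelianness of $K$. Choosing $\mu$ so that $\mu\nu\mu^{-1}\in N$ puts $g$ into the required form.

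For part~\ref{item:rep2}, fix $\nu\in N$. A direct calculation on the spanning set $\ee^\alpha$ gives
\begin{equation*}
\sigma_{h'}\hat\nu\sigma_{h'}^{-1}=\hat\nu\,\sigma_{(\nu^{-1}-1)h'}
\end{equation*}
for $h'\in\Lambda_\C$; since $(\nu^{-1}-1)\Lambda_\C=\ker\pi_\nu$, inner conjugation by such $\sigma_{h'}$ allows us to replace $h$ by $\pi_\nu(h)\in\pi_\nu(\Lambda_\C)$. Applying the formula of the previous paragraph with $\mu\in C_{\O(\Lambda)}(\nu)$ (so $\nu'=\nu$) and projecting via $\pi_\nu$ (which commutes with $\mu$ because $\mu$ centralises $\nu$) shows that the class of $\pi_\nu(h)$ in $\pi_\nu(\Lambda_\C)/\pi_\nu(\Lambda)$ is transformed to $\mu\pi_\nu(h)+\pi_\nu(k(\mu,\nu))$. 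Taking $h$ from a fixed set $H_\nu$ of orbit representatives for the resulting action of $C_{\O(\Lambda)}(\nu)$ finishes the reduction.

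The main technical subtlety is the offset $\pi_\nu(k(\mu,\nu))$, which a priori makes the centraliser action on $\pi_\nu(\Lambda_\C)/\pi_\nu(\Lambda)$ affine rather than linear. One can either take $H_\nu$ to be representatives of the affine orbits, or verify, using the standard-lift normalisation $\eta_\nu|_{\Lambda^\nu}=1$ and the explicit $2$-cocycle $\eps$ on $\Lambda$, that $\pi_\nu(k(\mu,\nu))\in\pi_\nu(\Lambda)$, so that the action descends to the genuine linear $C_{\O(\Lambda)}(\nu)$-action stated in the lemma. Either way, orbit representatives exist and the lemma follows.
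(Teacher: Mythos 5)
Your overall strategy is the same as the paper's: write $g=\hat\nu\sigma_h$, conjugate by a lift of some $\mu\in\O(\Lambda)$ to move $\nu$ into $N$ (absorbing the resulting element of $\Hom(\Lambda,\{\pm1\})$ into $K$), replace $h$ by $\pi_\nu(h)$ by inner conjugation, and then act by lifts of the centraliser. The computations you do carry out ($\hat\mu\sigma_h\hat\mu^{-1}=\sigma_{\mu h}$, and $\sigma_{h'}\hat\nu\sigma_{h'}^{-1}=\hat\nu\sigma_{(\nu^{-1}-1)h'}$ with $(\nu^{-1}-1)\Lambda_\C=\ker\pi_\nu$) are correct and reproduce what the paper imports from Lemma~8.3 of \cite{EMS20b}.

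The gap is exactly the point you flag at the end and then leave open. The lemma asserts that representatives of the orbits of the \emph{linear} action of $C_{\O(\Lambda)}(\nu)$ on $\pi_\nu(\Lambda_\C)/\pi_\nu(\Lambda)$ suffice, so the first of your two escape routes (passing to affine orbits) proves a different statement; you must actually show $\pi_\nu(k(\mu,\nu))\in\pi_\nu(\Lambda)$, and this verification is essentially the entire content of the paper's proof of item~(2), not a detail one may wave at. It is short once set up: for $\tau\in C_{\O(\Lambda)}(\nu)$ the element $(\hat\nu\sigma_{h'})^{-1}\hat\tau\hat\nu\sigma_h\hat\tau^{-1}$ acts on $\ee^\alpha$ by the character
\[
\lambda(\alpha)=\e^{-2\pi\i(\tau h-h',\alpha)}\,\frac{\eta_\tau(\nu\tau^{-1}\alpha)\,\eta_\nu(\tau^{-1}\alpha)}{\eta_\nu(\alpha)\,\eta_\tau(\tau^{-1}\alpha)},
\]
and for $\alpha\in\Lambda^\nu$ one has $\nu\tau^{-1}\alpha=\tau^{-1}\alpha$ and $\tau^{-1}\alpha\in\Lambda^\nu$ because $\tau$ centralises $\nu$, so the $\eta$-ratio is $1$ by the standard-lift condition $\eta_\nu|_{\Lambda^\nu}=1$ (no information about the cocycle $\eps$ itself is needed, and $\eta_\tau$ need not even be standard). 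Hence $\lambda$ is a homomorphism trivial on $\Lambda^\nu$, the element equals $\sigma_f$ with $\pi_\nu(f)\in(\Lambda^\nu)'=\pi_\nu(\Lambda)$ by unimodularity of $\Lambda$, and the offset vanishes modulo $\pi_\nu(\Lambda)$, so the action really is the linear one. Inserting this computation turns your sketch into a complete proof.
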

\begin{proof}
Let $g\in\Aut(V_\Lambda)$. We have already argued that $g$ is conjugate to $\varphi(\nu)\sigma_h$ for some $\nu\in\O(\Lambda)$ and $h\in\pi_\nu(\Lambda_\C)/\pi_\nu(\Lambda)$. Now suppose that $\mu=\tau^{-1}\nu\tau$ for some $\tau,\mu\in\O(\Lambda)$. Let $\phi_\eta(\tau)$ be any lift of $\tau$. Then $\varphi(\mu)^{-1}\phi_\eta(\tau)^{-1}\varphi(\nu)\sigma_h\phi_\eta(\tau)$ acts as $\mu^{-1}\tau^{-1}\nu\tau=\id$ on $(V_\Lambda)_1\cong\Lambda_\C$. By Lemma~2.5 in \cite{DN99} this implies that $\varphi(\mu)^{-1}\phi_\eta(\tau)^{-1}\varphi(\nu)\sigma_h\phi_\eta(\tau)=\sigma_{h'}$ for some $h'\in\Lambda_\C$, i.e.\ $\varphi(\nu)\sigma_h$ is conjugate to $\varphi(\mu)\sigma_{h'}$. As explained above we may even assume that $h'\in\pi_\mu(\Lambda_\C)/\pi_\mu(\Lambda)$. This proves item~\eqref{item:rep1}.

To see item~\eqref{item:rep2} we consider $\varphi(\nu)\sigma_h$ with $h\in\pi_\nu(\Lambda_\C)$ and let $h'\in\pi_\nu(\Lambda_\C)$ with $h'+\pi_\nu(\Lambda)=\tau h+\pi_\nu(\Lambda)$ for some $\tau\in C_{\O(\Lambda)}(\nu)$ and prove that $\varphi(\nu)\sigma_h$ and $\varphi(\nu)\sigma_{h'}$ are conjugate. Indeed, $(\varphi(\nu)\sigma_{h'})^{-1}\varphi(\tau)\varphi(\nu)\sigma_h\varphi(\tau)^{-1}$ acts as identity on $(V_\Lambda)_1\cong\Lambda_\C$ and on $\ee^\alpha\in\C[\Lambda]_\eps$ as
\begin{align*}
&\sigma_{-h'}\varphi(\nu)^{-1}\varphi(\tau)\varphi(\nu)\sigma_h\varphi(\tau)^{-1}\ee^\alpha\\
&=\frac{\e^{-2\pi\i(h',\nu^{-1}\tau\nu\tau^{-1}\alpha)}\eta_\tau(\nu\tau^{-1}\alpha)\eta_\nu(\tau^{-1}\alpha)\e^{2\pi\i(h,\tau^{-1}\alpha)}}{\eta_\nu(\nu^{-1}\tau\nu\tau^{-1}\alpha)\eta_\tau(\tau^{-1}\alpha)}\ee^{\nu^{-1}\tau\nu\tau^{-1}\alpha}\\
&=\e^{2\pi\i(\tau h-h',\alpha)}\frac{\eta_\tau(\nu\tau^{-1}\alpha)\eta_\nu(\tau^{-1}\alpha)}{\eta_\nu(\alpha)\eta_\tau(\tau^{-1}\alpha)}\ee^\alpha\\
&=:\lambda(\alpha)\ee^\alpha.
\end{align*}
The map $\lambda\colon\Lambda\to\C^*$ is a homomorphism that is trivial on $\Lambda^\nu$. This means that $(\varphi(\nu)\sigma_{h'})^{-1}\varphi(\tau)\varphi(\nu)\sigma_h\varphi(\tau)^{-1}=\sigma_f$ for some $f\in\Lambda_\C$ with $(f,\alpha)\in\Z$ for all $\alpha\in\Lambda^\nu$. We just showed that $\varphi(\nu)\sigma_h$ and $\varphi(\nu)\sigma_{h'+f}$ are conjugate. The latter is conjugate to $\varphi(\nu)\sigma_{h'+\pi_\nu(f)}$, but $\pi_\nu(f)\in (\Lambda^\nu)'$ and $(\Lambda^\nu)'=\pi_\nu(\Lambda)$ because $\Lambda$ is unimodular. This proves the claim.
\end{proof}

The above lemma provides the surjective map $(\nu,h)\mapsto\varphi(\nu)\sigma_h$ from the set $Q:=\{(\nu,h)\,|\,\nu\in N,h\in H_\nu\}$ onto the conjugacy classes of $\Aut(V_\Lambda)$. We shall now prove that this map is in fact a bijection.
\begin{prop}\label{prop:conjclassbij}
The map $(\nu,h)\mapsto\varphi(\nu)\sigma_h$ is a bijection from the set $Q$ to the conjugacy classes of $\Aut(V_\Lambda)$.
\end{prop}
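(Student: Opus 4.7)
The statement is a bijection claim, and \autoref{lem:conjsurj} already gives surjectivity of the map, so the plan is to prove injectivity: if $(\nu,h),(\mu,h')\in Q$ and $\hat{\nu}\sigma_h$ is conjugate to $\hat{\mu}\sigma_{h'}$ in $\Aut(V_\Lambda)$, then $(\nu,h)=(\mu,h')$.

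First I would apply the quotient homomorphism $\Aut(V_\Lambda)\to\O(\Lambda)$ from the short exact sequence recalled in \autoref{sec:lat}. It sends the conjugate elements $\hat{\nu}\sigma_h$ and $\hat{\mu}\sigma_{h'}$ to conjugate elements $\nu$ and $\mu$ of $\O(\Lambda)$; since both lie in the fixed set of representatives $N$, this forces $\nu=\mu$.

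Next I would analyse the conjugating element. Any element of $\Aut(V_\Lambda)$ can be written as $g=\hat{\tau}\sigma_k$ for some $\tau\in\O(\Lambda)$ and $k\in\Lambda_\C$ (any other lift of $\tau$ differs from $\hat{\tau}$ by an element of $\Hom(\Lambda,\{\pm1\})\subseteq K$, which can be absorbed into $\sigma_k$), and the quotient map again gives $\tau\nu\tau^{-1}=\nu$, i.e., $\tau\in C_{\O(\Lambda)}(\nu)$. The contribution of $\sigma_k$ is easy to control: using $\hat{\nu}\sigma_c\hat{\nu}^{-1}=\sigma_{\nu c}$ together with the abelianness of $K$, a short computation gives $\sigma_k\hat{\nu}\sigma_h\sigma_{-k}=\hat{\nu}\sigma_{h_1}$ with $h_1-h=(\nu^{-1}-1)k\in\ker\pi_\nu$, so in particular $\pi_\nu(h_1)=h$.

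It then remains to compare $\hat{\tau}\hat{\nu}\sigma_{h_1}\hat{\tau}^{-1}$ with $\hat{\nu}\sigma_{h'}$. The key is to reuse the calculation at the end of the proof of \autoref{lem:conjsurj} in reverse: that computation expresses $(\hat{\nu}\sigma_{h'})^{-1}\hat{\tau}\hat{\nu}\sigma_{h_1}\hat{\tau}^{-1}$ as multiplication on $\ee^\alpha$ by $\e^{-2\pi\i(\tau h_1-h',\alpha)}\lambda(\alpha)$, where $\lambda\colon\Lambda\to\C^*$ is a character trivial on $\Lambda^\nu$ (this triviality is exactly where the standard-lift property is used). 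Requiring this operator to equal the identity forces $(\tau h_1-h',\alpha)\in\Z$ for every $\alpha\in\Lambda^\nu$; by the unimodularity of $\Lambda$ (so $(\Lambda^\nu)'=\pi_\nu(\Lambda)$) and the fact that $\tau$ commutes with $\pi_\nu$ (since $\tau\nu=\nu\tau$), this yields $h'\equiv\tau h\pmod{\pi_\nu(\Lambda)}$. Hence $h$ and $h'$ lie in the same $C_{\O(\Lambda)}(\nu)$-orbit on $\pi_\nu(\Lambda_\C)/\pi_\nu(\Lambda)$, and since $H_\nu$ is a set of representatives for these orbits, $h=h'$. The main obstacle is running the computation of \autoref{lem:conjsurj} in the reverse direction cleanly, extracting the orbit condition as a \emph{necessary} consequence of the conjugacy rather than only a sufficient one; everything else is bookkeeping with the short exact sequence and the projector $\pi_\nu$.
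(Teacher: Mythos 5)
Your proposal is correct and follows essentially the same route as the paper's proof: reduce to $\nu=\mu$ via the projection to $\O(\Lambda)$, then run the $\ee^\alpha$-computation from \autoref{lem:conjsurj} for the conjugating element, use that the $\eta$-quotient is trivial on $\Lambda^\nu$ (commutativity of $\tau$ and $\nu$ plus the standard-lift property of $\hat\nu$) to extract $(\tau h-h',\alpha)\in\Z$ on $\Lambda^\nu$, and conclude via $(\Lambda^\nu)'=\pi_\nu(\Lambda)$ that $h$ and $h'$ lie in the same $C_{\O(\Lambda)}(\nu)$-orbit. The only cosmetic difference is that you first conjugate by $\sigma_k$ (noting it perturbs $h$ only within $\ker\pi_\nu$) and then by $\hat\tau$, whereas the paper writes the conjugator as $\hat\tau\sigma_s$ and absorbs the resulting $s-\nu^{-1}s$ term, which likewise pairs integrally with $\Lambda^\nu$.
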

\begin{proof}
We saw in \autoref{lem:conjsurj} that this map is surjective. To see the injectivity let $(\nu,h)$ and $(\mu,f)$ be in the set $Q$ and assume that the corresponding automorphisms $\varphi(\nu)\sigma_h$ and $\varphi(\mu)\sigma_f$ are conjugate in $\Aut(V_\Lambda)$. We need to show that $(\nu,h)=(\mu,f)$.

Applying the surjection $\Aut(V_\Lambda)\to\O(\Lambda)$ immediately yields that $\nu$ and $\mu$ are conjugate and hence $\nu=\mu$.

Now assume $\varphi(\nu)\sigma_h$ and $\varphi(\nu)\sigma_f$ are conjugate under some automorphism $\varphi(\tau)\sigma_s$. Then $\tau\in C_{\O(\Lambda)}(\nu)$ and $(\varphi(\nu)\sigma_f)^{-1}(\varphi(\tau)\sigma_s)^{-1}\varphi(\nu)\sigma_h\varphi(\tau)\sigma_s=\id$. The latter acts on $\ee^\alpha\in\C[\Lambda]_\eps$ by multiplication with
\begin{equation*}
\frac{\eta_\tau(\alpha)\eta_\nu(\tau\alpha)}{\eta_\tau(\nu\alpha)\eta_\nu(\alpha)}\e^{2\pi\i(s-\nu^{-1}s+\tau^{-1}h-f,\alpha)}=1
\end{equation*}
for all $\alpha\in\Lambda$. Since $\nu$ and $\tau$ commute and $\eta_\nu$ is trivial on $\Lambda^\nu$, this implies that $\e^{2\pi\i(\tau^{-1}h-f,\alpha)}=1$ for all $\alpha\in\Lambda^\nu$. Hence $\tau^{-1}h-f\in(\Lambda^\nu)'=\pi_\nu(\Lambda)$. This means that $h+\pi_\nu(\Lambda)$ and $f+\pi_\nu(\Lambda)$ are in the same orbit under the action of $C_{\O(\Lambda)}(\nu)$ and hence identical.
\end{proof}
Recall that the automorphism $\varphi(\nu)\sigma_h$ in $\Aut(V_\Lambda)$ has finite order if and only if $h$ is in $\pi_\nu(\Lambda\otimes_\Z\Q)$.


\subsection{Orbifold Construction}\label{sec:orbifold}
In \cite{EMS20a} (see also \cite{Moe16}) we develop a cyclic orbifold theory for holomorphic \voa{}s. Below we give a short summary.

A \voa{} $V$ is called \emph{\strat{}} if it is rational (as defined, e.g., in \cite{DLM97}), $C_2$-cofinite (or lisse), self-contragredient (or self-dual) and of CFT-type. Then $V$ is also simple. Moreover, a rational \voa{} $V$ is said to be \emph{holomorphic} if $V$ itself is the only irreducible $V$-module. Note that the central charge $c$ of a \strathol{} \voa{} $V$ is necessarily a non-negative multiple of $8$, a simple consequence of Zhu's modular invariance result \cite{Zhu96}. Examples of \strat{} \voa{}s are the lattice \voa{}s discussed above.

\medskip

Let $V$ be a \strathol{} \voa{} and $G=\langle g\rangle$ a finite, cyclic group of automorphisms of $V$ of order~$n$.

By \cite{DLM00}, for each $i\in\Z_n$, there is an irreducible $g^i$-twisted $V$-module $V(g^i)$, which is unique up to isomorphism. The uniqueness of $V(g^i)$ implies that there is a representation $\phi_i\colon G\to\Aut_\C(V(g^i))$ of $G$ on the vector space $V(g^i)$ such that
\begin{equation*}
\phi_i(g)Y_{V(g^i)}(v,x)\phi_i(g)^{-1}=Y_{V(g^i)}(g v,x)
\end{equation*}
for all $v\in V$. This representation is unique up to an $n$-th root of unity. Let $W^{(i,j)}$ denote the eigenspace of $\phi_i(g)$ in $V(g^i)$ corresponding to the eigenvalue $e(j/n)$. On $V(g^0)=V$ we choose $\phi_0(g)=g$.

The \fpvosa{} $V^g=W^{(0,0)}$ is \strat{} by \cite{DM97,Miy15,CM16,McR21} and has exactly $n^2$ irreducible modules, namely the $W^{(i,j)}$, $i,j\in\Z_n$ \cite{MT04,DRX17}. We further show that the conformal weight $\rho(V(g))$ of $V(g)$ is in $(1/n^2)\Z$, and we define the type $t\in\Z_n$ of $g$ by $t=n^2\rho(V(g))\mmod{n}$.

Assume for simplicity that $g$ has type $t=0$, i.e.\ that $\rho(V(g))\in(1/n)\Z$. We may choose the representations $\phi_i$ such that the conformal weights satisfy
\begin{equation*}
\rho(W^{(i,j)})=\frac{ij}{n}\mmod{1}
\end{equation*}
and $V^g$ has fusion rules
\begin{equation*}
W^{(i,j)}\boxtimes W^{(l,k)}\cong W^{(i+l,j+k)}
\end{equation*}
for all $i,j,k,l\in\Z_n$ \cite{EMS20a}, i.e.\ the fusion algebra of $V^g$ is the group ring $\C[D]$ where $D$ is the \fqs{} with group structure $\Z_n\times\Z_n$ and quadratic form defined by the conformal weights modulo $1$. This is the \fqs{} of the hyperbolic lattice $\II_{1,1}(n)$, i.e.\ the lattice with Gram matrix $\left(\begin{smallmatrix}0&n\\n&0\end{smallmatrix}\right)$. In particular, all $V^g$-modules are simple currents.

In essence, the results in \cite{EMS20a} show that for a \strathol{} \voa{} $V$ and cyclic $G\cong\Z_n$ the module category of $V^G$ is the twisted group double $\mathcal{D}_\omega(G)$ where the $3$-cocycle $[\omega]\in H^3(G,\C^*)\cong\Z_n$ is determined by the type $t\in\Z_n$.
This proves a conjecture by Dijkgraaf, Vafa, Verlinde and Verlinde \cite{DVVV89} who stated it for arbitrary finite $G$.

If $24\mid c$, then the characters of the irreducible $V^g$-modules
\begin{equation*}
\ch_{W^{(i,j)}}(\tau)=\tr|_{W^{(i,j)}}q^{L_0-c/24},
\end{equation*}
$i,j\in\Z_n$, form a vector-valued modular form of weight~$0$ for the Weil representation $\rho_D$ of $\Gamma=\SLZ$ on $\C[D]$. Since $D$ has level $n$, the individual characters $\ch_{W^{(i,j)}}(\tau)$ are modular forms for the congruence subgroup $\Gamma(n)$. Moreover, $\ch_{W^{(0,0)}}(\tau)=\ch_{V^g}(\tau)$ is modular for $\Gamma_0(n)$. Similar results hold if $24\nmid c$ or if $t\neq0$. We shall explain these notions in more detail in \autoref{sec:mf}.

In general, we say that a simple \voa{} $V$ satisfies the \emph{positivity condition} if $\rho(W)>0$ for any irreducible $V$-module $W\not\cong V$ and $\rho(V)=0$.

Now, if $V^g$ satisfies the positivity condition, then the direct sum of $V^g$-modules
\begin{equation*}
V^{\orb(g)}:=\bigoplus_{i\in\Z_n}W^{(i,0)}
\end{equation*}
carries the structure of a \strathol{} \voa{} of the same central charge as $V$ and is called the \emph{orbifold construction} of $V$ associated with $g$ \cite{EMS20a}. The \voa{} structure of $V^{\orb(g)}$ as extension of $V^g$ is unique up to isomorphism. It depends only up to algebraic conjugacy on $g$, i.e.\ if $\langle g\rangle$ and $\langle h\rangle$ for some $h\in\Aut(V)$ are conjugate in $\Aut(V)$, then $V^{\orb(g)}\cong V^{\orb(h)}$. Note that the sum $\bigoplus_{j\in\Z_n}W^{(0,j)}$ is just the old \voa{} $V$.

\medskip

There is also an \emph{inverse} (or \emph{reverse}) orbifold construction. Given a
\voa{} $V^{\orb(g)}$ obtained by the orbifold construction, we define an automorphism $\amgis$ of $V^{\orb(g)}$ of order~$n$ via $\amgis v:=e(i/n)v$ for $v\in W^{(i,0)}$, and the unique irreducible $\amgis^j$-twisted $V^{\orb(g)}$-module is given by $V^{\orb(g)}(\amgis^j)=\bigoplus_{i\in\Z_n}W^{(i,j)}$, $j\in\Z_n$ (see \cite{Moe16}, Theorem~4.9.6). Then
\begin{equation*}
(V^{\orb(g)})^{\orb(\amgis)}=\bigoplus_{j\in\Z_n}W^{(0,j)}=V,
\end{equation*}
i.e.\ orbifolding with $\amgis$ is inverse to orbifolding with $g$.

If two elements $g,h\in\Aut(V)$ of type $0$ and satisfying the positivity condition are (algebraically) conjugate, the inverse-orbifold automorphisms corresponding to $g$ and $h$ are (algebraically) conjugate under some isomorphism $\varphi\colon V^{\orb(g)}\to V^{\orb(h)}$.


\subsection{Orbifold Characters}\label{sec:charlator}
We recall a formula for the character of the orbifold construction from \cite{GK19b}, which we shall use in the proof of Theorem \ref{thm:main3}.

We continue in the setting of the previous section, i.e.\ we let $V$ be a \strathol{} \voa{} and $g$ an automorphism of $V$ of order~$n$ such that $V^g$ satisfies the positivity condition. Suppose that $g$ has type~$0$ and $24\mid c$. Then the character
\begin{equation*}
\ch_{V^{\orb(g)}}(\tau)=\tr|_{V^{\orb(g)}}q^{L_0-c/24} 
\end{equation*}
is a modular form for $\Gamma=\SLZ$ of weight $0$, holomorphic on $H$ with a possible pole at the cusp $\infty$. For $k\mid n$ define
\begin{equation*}
D_k(\tau)=\sum_{\substack{j\in\Z_n\\(j,n)=k}}T(0,j)(\tau)
\end{equation*}
with $T(0,j)(\tau)=\tr|_Vg^jq^{L_0-c/24}$. Then $D_k$ is a modular form for $\Gamma_0(n/k)$ and
\begin{equation*}
\ch_{V^{\orb(g)}}(\tau)=\frac{1}{n}\sum_{k\mid n}\sum_{M\in\Gamma_0(n/k)\backslash\Gamma}\!\!D_k|_M(\tau)
\end{equation*}
where $D_k|_M(\tau)=D_k(M\tau)$ (this notation is explained in \autoref{sec:mf}).

\medskip

Now, we specialise to the case where $V=V_L$ is the lattice \voa{} associated with a positive-definite, even, unimodular lattice $L$ such that $24\mid\rk(L)$. Let $g=\phi_\eta(\nu)\sigma_h$ where $\phi_\eta(\nu)$ is a standard lift of $\nu\in\O(L)$ and $h\in L_\C$. The eta product associated with $\nu$ is defined as
\begin{equation*}
\eta_{\nu}(\tau)=\prod_{t\mid m}\eta(t\tau)^{b_t} 
\end{equation*}
if $\nu$ has order $m$ and cycle shape $\prod_{t\mid m}t^{b_t}$. Define 
\begin{equation*}
w_k\colon L^{\nu^k}\to\C^*,\quad\alpha\mapsto\e^{2\pi\i k(\pi_\nu(h),\pi_\nu(\alpha))}\begin{cases}(-1)^{\langle\alpha,\nu^{k/2}\alpha\rangle}&\text{if $m,k\in2\Z$,}\\
1&\text{else.}\end{cases}
\end{equation*}
Then $w_k$ is a homomorphism and the kernel $M_{k,d}$ of $w_k^d$ is a finite-index sublattice of $L^{\nu^k}$. A slight generalisation of the argument given in \cite{GK19b} shows that
\begin{equation*}
D_k(\tau)=\frac{1}{\eta_{\nu^k}(\tau)}\sum_{d\mid n/k}\frac{n}{kd}\mu(d)\theta_{M_{k,d}}(\tau)
\end{equation*}
where $\theta_{M_{k,d}}$ is the theta series of $M_{k,d}$. (Note that there is a typo in Theorem~3.9 of \cite{GK19b} and its proof.)


\section{Modular Forms}\label{sec:mf}
In this section we describe some results on modular forms of weight~$2$. In particular, we construct an Eisenstein series of weight $2$ for the dual Weil representation of $\II_{1,1}(n)$.


\subsection{Modular Forms of Weight 2 for \texorpdfstring{$\Gamma_0(n)$}{Gamma0(n)}}\label{sec:gamma}
We construct a special Eisenstein series of weight $2$ for $\Gamma_0(n)$ and recall the Deligne bound. For background on modular forms we refer to \cite{DS05,Miy06}.

The group $\Gamma=\SLZ$ acts on the upper half-plane $\HH$ by Möbius transformations. Given a positive integer $n$ the subgroup $\Gamma_0(n)=\left\{\left(\begin{smallmatrix}a&b\\c&d\end{smallmatrix}\right)\in\Gamma\,\middle|\,c=0\mmod{n}\right\}$ has index $\psi(n)$ in $\Gamma$ where $\psi(n)=n\prod_{p|n}(1+1/p)$ denotes the Dedekind $\psi$-function.

The quotient $X_0(n)=\Gamma_0(n)\backslash\HH^*$ where $\HH^*=\HH\cup P$ is the upper half-plane complemented by the cusps $P=\Q\cup\{\infty\}$ is a compact Riemann surface. The class of $a/c\in\Q$ with $(a,c)=1$ in $\Gamma_0(n)\backslash P$ is characterised by the invariants $(c,n)$, a divisor of $n$, and $ac/(c,n)\in\Z_{(c,n/(c,n))}^*$.

It follows that $\Gamma_0(n)$ has $\eps=\sum_{c\mid n}\phi((c,n/c))$ classes of cusps. The cusp $a/c\in\Q$ with $(a,c)=1$ has width $t_{a/c}:=n/(c^2,n)$. Note that $\sum_{s\in\Gamma_0(n) \backslash P}t_s=\psi(n)$. A complete system of representatives for the cusps $\Gamma_0(n)\backslash P$ is given by the rational numbers $a/c$ where $c$ ranges over the positive divisors of $n$ and $a$ over a complete set of representatives of $\Z_{(c,n/c)}^*$ such that $(a,c)=1$.

We denote by $\mathcal{M}_k(\Gamma_0(n))$ the space of holomorphic modular forms for $\Gamma_0(n)$ of weight $k\in\Ns$ and trivial character. These are holomorphic functions on $\HH$ which are invariant under modular transformations, i.e.\ $f|_M=f$ for all $M\in\Gamma_0(n)$ where $f|_M(\tau):=(c\tau+d)^{-k}f(M\tau)$ for $M=\left(\begin{smallmatrix}a&b\\c&d\end{smallmatrix}\right)\in\operatorname{SL}_2(\R)$, and which are holomorphic at the cusps.

Let $f\in\mathcal{M}_k(\Gamma_0(n))$ and $s\in\Gamma_0(n)\backslash P$ a cusp represented by $a/c$ with $(a,c)=1$. Choose $M_s=\left(\begin{smallmatrix}a&b\\c&d\end{smallmatrix}\right)\in\Gamma$. Then $M_s\infty=s$. The Fourier expansion of $f|_{M_s}$ is called an expansion of $f$ at $s$. It is an expansion in integral powers of $q^{1/t_s}=e(\tau/t_s)$. In general, it depends on the choice of $M_s$. However, its coefficients at integral powers of $q$ do not.

\medskip

We now specialise to weight~$2$. The Eisenstein series
\begin{equation*}
E_2(\tau)=1-24\sum_{m=1}^\infty\sigma(m)q^m
\end{equation*}
transforms under $M=\left(\begin{smallmatrix}a&b\\c&d\end{smallmatrix}\right)\in\Gamma$ as
\begin{equation*}
E_2|_M(\tau)=E_2(\tau)+\frac{12}{2\pi\i}\frac{c}{c\tau+d}.
\end{equation*}
For $k\in\Ns$ we define $F_k:=\frac{1}{\sqrt{k}}\left(\begin{smallmatrix}k&0\\0&1\end{smallmatrix}\right)$ and
\begin{equation*}
E_2^{(k)}(\tau):=E_2(k\tau)=\frac{1}{k} E_2|_{F_k}(\tau).
\end{equation*}
We describe the transformation behaviour of $E_2^{(k)}$ under $\Gamma$:
\begin{prop}\label{prop:rescaledtrans}
Let $M=\left(\begin{smallmatrix}a&b\\c&d\end{smallmatrix}\right)\in\Gamma$ and $k\in\Ns$. Define
\begin{equation*}
u=(k,c)>0,\quad w=\frac{k}{(k,c)}>0
\end{equation*}
and choose $v\in\Z$ such that
\begin{equation*}
v=\left(\frac{c}{(k,c)}\right)^{-1}\!d\mmod{\frac{k}{(k,c)}}
\end{equation*}
where $(c/(k,c))^{-1}$ denotes the inverse of $c/(k,c)$ modulo $k/(k,c)$. Then
\begin{equation*}
E_2^{(k)}|_M(\tau)=\frac{1}{w^2}E_2\left(\frac{u\tau+v}{w}\right)+\frac{12}{2\pi\i}\frac{c}{k(c\tau+d)}.
\end{equation*}
\end{prop}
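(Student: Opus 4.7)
The starting point is the identity $E_2^{(k)}=\frac{1}{k}E_2|_{F_k}$ recorded just before the proposition, which gives
\begin{equation*}
E_2^{(k)}|_M(\tau)=\frac{1}{k}E_2|_{F_k M}(\tau).
\end{equation*}
The task therefore reduces to analysing the weight-$2$ slash action of the matrix $F_k M\in\operatorname{SL}_2(\R)$. The plan is to factor $F_k M = N\cdot G$, with $N\in\Gamma$ (so that the quasi-modular transformation law of $E_2$ applies) and $G$ an upper-triangular element of $\operatorname{SL}_2(\R)$ (which only contributes a weight-$2$ multiplier and a rescaling of the argument). Since we want $G\tau=(u\tau+v)/w$ and $uw=k$, the natural candidate is $G=\tfrac{1}{\sqrt{k}}\left(\begin{smallmatrix}u&v\\0&w\end{smallmatrix}\right)$.

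Writing $c=uc'$ with $(c',w)=1$ and solving $NG=F_k M$ for $N$ forces
\begin{equation*}
N=\begin{pmatrix}wa & ub-av\\ c' & (d-c'v)/w\end{pmatrix}.
\end{equation*}
All entries except possibly the bottom-right one are manifestly integers. Integrality of $(d-c'v)/w$ is equivalent to $c'v\equiv d\pmod{w}$, and since $(c',w)=1$ this is precisely the condition $v\equiv (c')^{-1}d\pmod{w}$ defining $v$ in the proposition. A short expansion yields $\det N=ad-bc=1$, so indeed $N\in\Gamma$.

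I would then apply the transformation law $E_2|_N(\tau)=E_2(\tau)+\frac{12}{2\pi\i}\frac{c'}{c'\tau+(d-c'v)/w}$ and slash on the right by $G$. The weight-$2$ factor contributed by $G$ is $(w/\sqrt{k})^{-2}=k/w^2$, so the Eisenstein summand becomes $(k/w^2)E_2((u\tau+v)/w)$. The one-line simplification
\begin{equation*}
c'\cdot\frac{u\tau+v}{w}+\frac{d-c'v}{w}=\frac{c'u\tau+d}{w}=\frac{c\tau+d}{w}
\end{equation*}
turns the correction term into $(k/w^2)\cdot\frac{12}{2\pi\i}\cdot\frac{c'w}{c\tau+d}$. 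Multiplying by the overall $1/k$ and using $kc'=uwc'=wc$ produces $\frac{12}{2\pi\i}\frac{c}{k(c\tau+d)}$, as claimed.

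The only real obstacle is combinatorial: one has to guess the correct upper-triangular $G$, in particular the parameter $v$, so that the complementary factor $N$ lands in $\Gamma$. Once this factorisation is in place, the rest is a mechanical computation with the weight-$2$ slash operator and the quasi-modularity of $E_2$.
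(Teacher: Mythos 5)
Your proof is correct and essentially identical to the paper's: both factor $F_kM$ as an element of $\Gamma$ times the upper-triangular matrix $\tfrac{1}{\sqrt{k}}\left(\begin{smallmatrix}u&v\\0&w\end{smallmatrix}\right)$ (the paper writes this as $F_kM=(F_kMN^{-1})N$, you as $NG$), with the congruence on $v$ serving exactly to make the $\Gamma$-factor integral. The subsequent application of the quasi-modular transformation law of $E_2$ and the bookkeeping of the weight-$2$ factors match the paper's computation.
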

\begin{proof}
Let $N:=\frac{1}{\sqrt{uw}}\left(\begin{smallmatrix}u&v\\0&w\end{smallmatrix}\right)$. Then
\begin{equation*}
F_kMN^{-1}=\begin{pmatrix}aw&bu-av\\c/u&(du-cv)/k\end{pmatrix}\in\Gamma
\end{equation*}
and
\begin{align*}
E_2^{(k)}|_M(\tau)
&=\frac{1}{k}E_2|_{F_kM}(\tau)=\frac{1}{k}E_2|_{F_kMN^{-1}N}(\tau)\\
&=\frac{1}{k}\frac{u}{w}\left( E_2(N\tau)+\frac{12}{2\pi\i}\frac{c}{u(c\tau+d)/w}\right)\\
&=\frac{1}{w^2}E_2(N\tau)+\frac{12}{2\pi\i}\frac{c}{k(c\tau+d)}.\qedhere
\end{align*}
\end{proof}

The Fourier expansion of the function $E_2((u\tau+v)/w)$ in the proposition is
\begin{equation*}
E_2\left(\frac{u\tau+v}{w}\right)=1-24\sum_{m=1}^\infty\sigma(m)e(mv/w)q^{mu/w},
\end{equation*}
which is an expansion in powers of $q^{k/(c^2,k)}$ because $u/w=\frac{u/(u,w)}{w/(u,w)}=\frac{u/(u,w)}{k/(c^2,k)}$.

The space of modular forms of weight~$2$ for $\Gamma_0(n)$ decomposes as
\begin{equation*}
\mathcal{M}_2(\Gamma_0(n))=\mathcal{S}_2(\Gamma_0(n))\oplus\mathcal{E}_2(\Gamma_0(n))
\end{equation*}
where $\mathcal{S}_2(\Gamma_0(n))$ denotes the space of cusp forms and $\mathcal{E}_2(\Gamma_0(n))$ the Eisenstein space, the orthogonal complement of $\mathcal{S}_2(\Gamma_0(n))$ with respect to the Petersson inner product. The dimensions of the spaces are given by
\begin{align*}
\dim(\mathcal{S}_2(\Gamma_0(n)))&=g(X_0(n)),\\
\dim(\mathcal{E}_2(\Gamma_0(n)))&=\eps-1
\end{align*}
where $g(X_0(n))$ is the genus of the modular curve $X_0(n)=\Gamma_0(n)\backslash\HH^*$ and $\eps$ the number of cusps of $\Gamma_0(n)$. An explicit basis of $\mathcal{E}_2(\Gamma_0(n))$ is given, for example, in \cite{DS05}, Theorem~4.6.2.

Let $n>1$ and $k\mid n$, $k>1$. Then
\begin{equation*}
f_k(\tau):=E_2(\tau)-kE_2^{(k)}(\tau)=E_2(\tau)-kE_2(k\tau)
\end{equation*}
is in $\mathcal{E}_2(\Gamma_0(n))$. If $n$ is square-free, the functions $f_k$ already form a basis of $\mathcal{E}_2(\Gamma_0(n))$.
The expansion of $E_2^{(k)}(\tau)$ at a cusp $s=a/c$ is given by
\begin{equation*}
E_2^{(k)}|_{M_s}(\tau)
=\frac{(k,c)^2}{k^2}\left(1-24\sum_{m=1}^\infty\sigma(m)e(mv/w)q^{m(k,c)^2/k}\right)+\ldots
\end{equation*}
where we omitted the non-modular term, which cancels out in $f_k$. This is an expansion in $q^{1/t_s}$ where $t_s=n/(c^2,n)$ is the width of the cusp $s$ since $k/(c^2,k)\mid t_s$. It follows that the constant term of $f_k$ at the cusp $s=a/c$ is
\begin{equation*}
[f_k|_{M_s}](0)=1-\frac{(k,c)^2}{k}.
\end{equation*}

We construct an Eisenstein series with certain constant terms at the cusps:
\begin{prop}\label{prop:specialeis}
Suppose $n>1$. Then there is a unique $f\in\mathcal{E}_2(\Gamma_0(n))$ satisfying
\begin{equation*}
[f|_{M_s}](0)=1
\end{equation*}
for all cusps $s\neq\infty$. This function can be written as
\begin{equation*}
f(\tau)=\frac{1}{\phi(n)}\sum_{\substack{k\mid n\\k>1}}k\mu(n/k)f_k(\tau)=\frac{1}{\phi(n)}\sum_{\substack{k\mid n\\k>1}}k\mu(n/k)\left(E_2(\tau)-kE_2(k\tau)\right)
\end{equation*}
and the constant coefficient of the Fourier expansion of $f$ at $\infty$ is given by
\begin{equation*}
[f](0)=1-\psi(n).
\end{equation*}
\end{prop}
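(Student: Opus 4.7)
The strategy is to prove existence by direct computation of constant terms and uniqueness by a residue-theorem argument on the compact modular curve $X_0(n)$.

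For existence, I would start from the constant-term formula $[f_k|_{M_s}](0) = 1 - (k,c)^2/k$, which follows from \autoref{prop:rescaledtrans} as in the paragraph preceding the statement; here $s = a/c$ is a cusp with $c \mid n$. Substituting into the candidate expression for $f$ gives
\begin{equation*}
[f|_{M_s}](0) = \frac{1}{\phi(n)}\sum_{\substack{k \mid n \\ k > 1}} k\mu(n/k) - \frac{1}{\phi(n)}\sum_{\substack{k \mid n \\ k > 1}} \mu(n/k)(k,c)^2.
\end{equation*}
After restoring the $k = 1$ terms, the first sum becomes $\phi(n) - \mu(n)$ (using the Möbius-inverted identity $\phi(n) = \sum_{k \mid n} k\mu(n/k)$), while the second is governed by the auxiliary identity
\begin{equation*}
\sum_{k \mid n} \mu(n/k)(k,c)^2 = J_2(n)\,[n \mid c],
\end{equation*}
where $J_2(n) = n^2 \prod_{p \mid n}(1 - 1/p^2)$ is the Jordan totient. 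I would prove this by expanding $(k,c)^2 = \sum_{d \mid (k,c)} J_2(d)$ (via $m^2 = \sum_{d \mid m} J_2(d)$), exchanging the order of summation, and using $\sum_{k \mid n,\, d \mid k}\mu(n/k) = [n = d]$. For $c \mid n$ with $c < n$ the indicator vanishes, yielding $[f|_{M_s}](0) = 1$; for $c = n$ (the cusp $\infty$) I obtain $[f](0) = 1 - J_2(n)/\phi(n) = 1 - \psi(n)$, invoking the factorization $J_2(n) = \phi(n)\psi(n)$.

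For uniqueness, suppose $f' \in \mathcal{E}_2(\Gamma_0(n))$ also satisfies $[f'|_{M_s}](0) = 1$ for all cusps $s \neq \infty$. Set $g := f - f' \in \mathcal{E}_2(\Gamma_0(n))$, which has vanishing constant term at every cusp $s \neq \infty$. Interpreting $g\,\d\tau$ as a meromorphic differential on $X_0(n)$ with at most simple poles at the cusps, the residue at cusp $s$ is proportional to $t_s [g|_{M_s}](0)$, so the residue theorem forces the residue at $\infty$ to vanish as well. Consequently $g$ is cuspidal at every cusp, so $g \in \mathcal{S}_2(\Gamma_0(n)) \cap \mathcal{E}_2(\Gamma_0(n)) = \{0\}$, and hence $f = f'$.

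The main (though elementary) obstacle is the Möbius identity above: it demands careful bookkeeping of the excluded $k = 1$ contributions, and the final simplification hinges on the factorization $J_2 = \phi \cdot \psi$, which is precisely what produces the clean formula $1 - \psi(n)$ at $\infty$.
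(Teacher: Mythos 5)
Your proposal is correct, and it proves all three assertions; the existence/formula part follows the same route as the paper (computing $[f|_{M_s}](0)$ from $[f_k|_{M_s}](0)=1-(k,c)^2/k$), but the other two parts diverge in mechanism. The paper handles uniqueness by citing the standard fact that $\mathcal{E}_2(\Gamma_0(n))$ has a basis of Eisenstein series dual to the cusps other than $\infty$, and then obtains $[f](0)=1-\psi(n)$ from the residue theorem (using $\sum_s t_s=\psi(n)$ and $t_\infty=1$); in the key computation it simply asserts $\sum_{k\mid n}\mu(n/k)(k,c)^2=0$ for $c\mid n$, $c<n$, without proof. You instead prove that M\"obius identity in full generality, $\sum_{k\mid n}\mu(n/k)(k,c)^2=\phi(n)\psi(n)\,[n\mid c]$, via $m^2=\sum_{d\mid m}J_2(d)$ and an interchange of summation, which simultaneously yields the value $1$ at the cusps $s\neq\infty$ and the value $1-\psi(n)$ at $\infty$ (the $k=1$ bookkeeping cancels cleanly, as you note); and you prove uniqueness by applying the residue theorem to the difference of two solutions, forcing it into $\mathcal{S}_2(\Gamma_0(n))\cap\mathcal{E}_2(\Gamma_0(n))=\{0\}$. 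Your version is slightly more self-contained: it does not invoke the dual basis of the Eisenstein space, and it supplies the arithmetic lemma the paper leaves implicit; the paper's version buys a shorter computation at the cost of outsourcing both the basis fact and the identity. Both arguments are sound, and the residue theorem appears in both, just attached to different claims.
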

\begin{proof}
Let $S$ be the set of cusps of $\Gamma_0(n)$ without $\infty$. Then $\mathcal{E}_2(\Gamma_0(n))$ has a basis consisting of Eisenstein series $g_s$, $s\in S$, which are $1$ at $s$ and vanish at the other cusps in $S$. This implies the first statement. Now let
\begin{equation*}
f(\tau)=\frac{1}{\phi(n)}\sum_{\substack{k\mid n\\k>1}}k\mu(n/k)f_k(\tau)
\end{equation*}
and $s=a/c$ with $(c,n)<n$. Then
\begin{align*}
[f|_{M_s}](0)
&=\frac{1}{\phi(n)}\sum_{\substack{k\mid n\\k>1}}k\mu(n/k)[f_k|{M_s}](0)=\frac{1}{\phi(n)}\sum_{\substack{k\mid n\\k>1}}k\mu(n/k)\left(1-\frac{(k,c)^2}{k}\right)\\
&=\frac{1}{\phi(n)}\sum_{k\mid n}k\mu(n/k)-\frac{1}{\phi(n)}\sum_{k\mid n}\mu(n/k)(k,c)^2\\
&=1-0=1.
\end{align*}
This proves the second statement. The third follows from the residue theorem.
\end{proof}

The Eisenstein series $f$ in the previous proposition can also be written as
\begin{equation*}
f(\tau)=E_2(\tau)-\frac{1}{\phi(n)}\sum_{k\mid n}k^2\mu(n/k)E_2(k\tau)
\end{equation*}
and its Fourier expansion at a cusp $s=a/c$ is
\begin{align*}
f|_{M_s}(\tau)&=1-24\sum_{m=1}^\infty\sigma(m)q^m\\
&\quad-\frac{1}{\phi(n)}\sum_{k\mid n}\mu(n/k)(k,c)^2\left(1-24\sum_{m=1}^\infty\sigma(m)e(mv/w)q^{m(k,c)^2/k}\right)
\end{align*}
with $M_s=\left(\begin{smallmatrix}a&b\\c&d\end{smallmatrix}\right)\in\Gamma$ and $v,w$ as in \autoref{prop:rescaledtrans}.

\medskip

We now consider certain cusp forms. Let $p$ be a prime and $W_p:=\frac{1}{\sqrt{p}}\left(\begin{smallmatrix}0&-1\\p&0\end{smallmatrix}\right)$. Then the Fricke involution
\begin{equation*}
W_p\colon\mathcal{S}_2(\Gamma_0(p))\to\mathcal{S}_2(\Gamma_0(p)),\quad f\mapsto f|_{W_p}
\end{equation*}
is diagonalisable with eigenvalues $\pm1$. The corresponding eigenspace decomposition
\begin{equation*}
\mathcal{S}_2(\Gamma_0(p))=\mathcal{S}_2(\Gamma_0(p))^+\oplus\mathcal{S}_2(\Gamma_0(p))^-
\end{equation*}
is orthogonal with respect to the Petersson inner product. Let $\Gamma_0(p)^+:=\Gamma_0(p)\cup W_p\Gamma_0(p)$ and $X_0(p)^+:=\Gamma_0(p)^+\backslash\HH^*$.

\begin{prop}\label{prop:cuspminus}
Let $p$ be a prime such that $X_0(p)$ has genus $g(X_0(p))>0$. Then $\mathcal{S}_2^-(\Gamma_0(p))$ is non-trivial.
\end{prop}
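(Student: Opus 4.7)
The plan is to compute $\dim \mathcal{S}_2^-(\Gamma_0(p))$ as a difference of genera and then apply Riemann-Hurwitz to conclude that this difference is positive whenever $g(X_0(p)) > 0$.

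First I would identify $\mathcal{S}_2^+(\Gamma_0(p))$ with $\mathcal{S}_2(\Gamma_0(p)^+)$. Since $W_p^2 = -I$ acts trivially on $\HH$ and $W_p$ normalises $\Gamma_0(p)$, the set $\Gamma_0(p)^+$ is a group in which $\Gamma_0(p)$ has index~$2$, and a form in $\mathcal{S}_2(\Gamma_0(p))$ is modular for $\Gamma_0(p)^+$ precisely when it is fixed by $W_p$. Via the standard isomorphism $\mathcal{S}_2(\Gamma) \cong H^0(X(\Gamma), \Omega^1)$ given by $f \mapsto f(\tau)\,\mathrm{d}\tau$, this yields $\dim \mathcal{S}_2^+(\Gamma_0(p)) = g(X_0(p)^+)$ and hence
\[
\dim \mathcal{S}_2^-(\Gamma_0(p)) = g(X_0(p)) - g(X_0(p)^+).
\]

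Next, if $g(X_0(p)^+) = 0$, the conclusion is immediate from the hypothesis $g(X_0(p)) > 0$. So I may assume $g(X_0(p)^+) \geq 1$. I would then apply Riemann-Hurwitz to the degree-$2$ holomorphic covering $\pi\colon X_0(p) \to X_0(p)^+$ with deck involution $W_p$, obtaining
\[
g(X_0(p)) = 2g(X_0(p)^+) - 1 + r/2,
\]
where $r$ denotes the number of $W_p$-fixed points on $X_0(p)$. The explicit point $\tau_0 := \i/\sqrt{p}$ satisfies $W_p\tau_0 = -1/(p\tau_0) = \tau_0$, so its image in $X_0(p)$ is $W_p$-fixed and $r \geq 1$. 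Substituting gives
\[
\dim \mathcal{S}_2^-(\Gamma_0(p)) = g(X_0(p)^+) - 1 + r/2 \geq 1/2,
\]
so this dimension is at least one, as desired.

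The only technical point worth flagging is the Riemann-Hurwitz step: one has to confirm that $\pi$ is a two-sheeted branched covering of compact Riemann surfaces whose ramification locus coincides with the $W_p$-fixed set in $X_0(p)$. This is standard for the action of the Fricke normaliser on the modular curve, so no serious obstacle arises.
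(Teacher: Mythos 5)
Your proof is correct, and it reaches the key identity $\dim\mathcal{S}_2(\Gamma_0(p))^-=g(X_0(p))-g(X_0(p)^+)$ exactly as the paper does; the difference lies in how the positivity of this difference is established. The paper quotes Fricke's explicit formula $g(X_0(p)^+)=\frac{1}{2}(g(X_0(p))+1)-\eps_ph(p)$, where $h(p)$ is the class number of $\Q(\sqrt{-p})$, and concludes $\dim\mathcal{S}_2(\Gamma_0(p))^-=\frac{1}{2}(g(X_0(p))-1)+\eps_ph(p)>0$. Your argument replaces this arithmetic input by Riemann--Hurwitz for the degree-$2$ quotient $X_0(p)\to X_0(p)^+$ together with the single explicit fixed point $\tau_0=\i/\sqrt{p}$ of the Fricke involution; in effect, Fricke's formula \emph{is} Riemann--Hurwitz with the full fixed-point count expressed through class numbers, and you observe that only $r\geq1$ (indeed $r\geq2$ by parity) is needed. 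This makes your proof more elementary and self-contained. Two small points are worth making explicit: (i) the covering really has degree $2$ because $W_p$ acts non-trivially on $X_0(p)$ (it interchanges the two inequivalent cusps $0$ and $\infty$), so the quotient is not an isomorphism; and (ii) the identification $\dim\mathcal{S}_2^+(\Gamma_0(p))=g(X_0(p)^+)$ via $f\mapsto f\,\d\tau$ requires the standard fact that weight-$2$ cusp forms correspond to holomorphic differentials also at the elliptic points and cusps of $X_0(p)^+$ --- the paper uses this same identification implicitly, so you are on equal footing there. With these routine facts supplied, your argument is complete.
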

\begin{proof}
By a classical result of Fricke (see \cite{Fri11}, p. 366)
\begin{equation*}
g(X_0(p)^+)=\frac{1}{2}(g(X_0(p))+1)-\eps_ph(p)
\end{equation*}
for all primes $p\geq5$ where $h(p)$ denotes the class number of $\Q(\sqrt{-p})$ and
\begin{equation*}
\eps_p=
\begin{cases}
\frac{1}{4}&\text{if }p=1,5\mmod{8},\\
\frac{1}{2}&\text{if }p=7\mmod{8},\\
1          &\text{if }p=3\mmod{8}.
\end{cases}
\end{equation*}
Hence
\begin{align*}
\dim(\mathcal{S}_2(\Gamma_0(p))^-)
&=\dim(\mathcal{S}_2(\Gamma_0(p)))-\dim(\mathcal{S}_2(\Gamma_0(p))^+)\\
&=g(X_0(p))-g(X_0(p)^+)\\
&=\frac{1}{2}(g(X_0(p))-1)+\eps_ph(p)>0
\end{align*}
if $g(X_0(p))>0$.
\end{proof}

The space $\mathcal{S}_2(\Gamma_0(p))^-$ has a basis consisting of simultaneous eigenforms for all Hecke operators $T_m$, $m\in\Ns$ (see \cite{AL70} and Theorem~9.27 in \cite{Kna92}).

We state \emph{Deligne's bound} on the growth of the coefficients of cusp forms:
\begin{thm}\label{thm:deligne}
Let $p$ be a prime such that $g(X_0(p))>0$. Let $f\in\mathcal{S}_2(\Gamma_0(p))^-$ be a normalised eigenform, i.e.\ $f(\tau)=\sum_{m=1}^{\infty}a(m)q^m$ with $a(1)=1$ and $T_mf=a(m)f$ for all $m\in\Ns$. Then
\begin{enumerate}
\item $a(p^k)=1$ for all $k\in\Ns$,
\item $a(2),\ldots,a(p-1)\in\R$,
\item $|a(m)|\leq\sigma_0(m)\sqrt{m}$ for all $m\in\Ns$.
\end{enumerate}
\end{thm}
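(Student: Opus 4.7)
The plan is to combine Atkin--Lehner theory of newforms at prime level with Deligne's proof of the Ramanujan--Petersson conjecture. The starting observation is that $\mathcal{S}_2(\SLZ)=\{0\}$, so there are no oldforms at level $p$; every normalised Hecke eigenform in $\mathcal{S}_2(\Gamma_0(p))$ is automatically a newform, and in particular so is $f$.

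For parts (i) and (ii), I would argue as follows. By the Atkin--Lehner relation for newforms of weight $2$ at a prime level $p$: if $f|_{W_p}=\varepsilon f$ with $\varepsilon\in\{\pm 1\}$, then $a(p)=-\varepsilon$. Since $f\in\mathcal{S}_2(\Gamma_0(p))^-$, we have $\varepsilon=-1$ and hence $a(p)=1$. The claim $a(p^k)=1$ then follows by induction from the Hecke recursion $a(p^{r+1})=a(p)\,a(p^r)$ valid at primes dividing the level, giving part (i). For part (ii), the Hecke operators $T_m$ with $(m,p)=1$ are self-adjoint on $\mathcal{S}_2(\Gamma_0(p))$ with respect to the Petersson inner product, so their eigenvalues $a(m)$ on $f$ are real; in particular $a(2),\ldots,a(p-1)\in\R$.

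For part (iii), I would invoke Deligne's theorem which, for a prime $\ell\neq p$, gives $|a(\ell)|\leq 2\sqrt{\ell}$. Together with the Hecke recursion $a(\ell^{r+1})=a(\ell)\,a(\ell^r)-\ell\,a(\ell^{r-1})$, a standard induction yields $|a(\ell^r)|\leq(r+1)\ell^{r/2}=\sigma_0(\ell^r)\sqrt{\ell^r}$. At $\ell=p$ the bound $|a(p^r)|=1\leq(r+1)p^{r/2}$ is trivial by part (i), and multiplicativity of both $\sigma_0$ and $m\mapsto a(m)$ on coprime arguments extends the bound to all $m\in\Ns$. The only genuinely deep ingredient is Deligne's bound itself, which I would cite as a black box; the remainder is a routine application of Atkin--Lehner theory and self-adjointness of the Hecke operators away from the level.
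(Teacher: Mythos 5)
Your route is essentially the paper's: parts (i) and (ii) are obtained there from exactly the same ingredients (the Atkin--Lehner relation $a(p)=-\varepsilon$ for a weight-$2$ newform at prime level, packaged as Theorem~9.27 in Knapp, together with $a(p^r)=a(p)^r$, and self-adjointness of $T_m$ for $(m,p)=1$), and part (iii) likewise rests on Deligne plus multiplicativity. Your observation that $\mathcal{S}_2(\Gamma)=\{0\}$ forces $f$ to be a newform is correct and worth making explicit.

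The one step that does not work as written is the claim that the recursion $a(\ell^{r+1})=a(\ell)a(\ell^r)-\ell\,a(\ell^{r-1})$ together with $|a(\ell)|\leq 2\sqrt{\ell}$ yields $|a(\ell^r)|\leq(r+1)\ell^{r/2}$ by ``a standard induction''. A direct triangle-inequality induction only gives
\begin{equation*}
|a(\ell^{r+1})|\leq 2\sqrt{\ell}\,(r+1)\ell^{r/2}+\ell\cdot r\,\ell^{(r-1)/2}=(3r+2)\,\ell^{(r+1)/2},
\end{equation*}
which is strictly weaker than the target $(r+2)\ell^{(r+1)/2}$, so the induction does not close. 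The sharp constant requires the factorisation of the Hecke polynomial, which is what the paper uses and is in fact the form in which Deligne proves the result (Th\'eor\`eme~8.2 of the cited paper): $1-a(\ell)X+\ell X^2=(1-\alpha_\ell X)(1-\bar\alpha_\ell X)$ with $|\alpha_\ell|=\sqrt{\ell}$, whence
\begin{equation*}
a(\ell^r)=\alpha_\ell^r+\alpha_\ell^{r-1}\bar\alpha_\ell+\cdots+\bar\alpha_\ell^r
\end{equation*}
is a sum of $r+1$ terms each of absolute value $\ell^{r/2}$. (Equivalently, since $a(\ell)\in\R$ with $|a(\ell)|\leq2\sqrt{\ell}$ one may write $a(\ell)=2\sqrt{\ell}\cos\theta$ and $a(\ell^r)=\ell^{r/2}\sin((r+1)\theta)/\sin\theta$.) With that one repair your argument is complete and coincides with the paper's proof.
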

\begin{proof}
By Theorem~9.27 in \cite{Kna92},
$a(p)=1$ and $a(p^k)=a(p)^k=1$ for all $k\in\Ns$.
Since the Hecke operators $T_m$ for $(m,p)=1$ are self-adjoint with respect to the Petersson inner product, the coefficients $a(2),\ldots,a(p-1)$ are real (see Theorem 9.18 in \cite{Kna92}).
Finally the $L$-function of $f$ is of the form
\begin{equation*}
L(f,s)=\frac{1}{1-a(p)p^{-s}}\prod_{\substack{t\text{ prime}\\t\neq p}}\frac{1}{1-a(t)t^{-s}+t^{1-2s}}.
\end{equation*}
Deligne has shown (see \cite{Del74}, Théorème~8.2) that the reciprocal roots of the polynomial $1-a(t)X+tX^2$ have absolute value $\sqrt{t}$, i.e.\ that
\begin{equation*}
1-a(t)X-tX^2=(1-\alpha_t X)(1-\bar{\alpha}_t X)
\end{equation*}
with $|\alpha_t|=\sqrt{t}$. Writing $(1-a(t)t^{-s}+t^{1-2s})^{-1}=(1-\alpha_tt^{-s})^{-1}(1-\bar{\alpha}_tt^{-s})^{-1}$ and expanding each factor in a geometric series we find that the coefficient of $t^{-ms}$ is given by $\alpha_t^m+\alpha_t^{m-1}\bar{\alpha}_t+\ldots+\bar{\alpha}_t^m$. This means that if the positive integer $m$ with $(m,p)=1$ has prime factorisation $m=t_1^{e_1}\ldots t_r^{e_r}$, then
\begin{equation*}
a(m)=\prod_{j=1}^r(\alpha_{t_j}^{e_j}+\alpha_{t_j}^{e_j-1}\bar{\alpha}_{t_j}+\ldots+\bar{\alpha}_{t_j}^{e_j}),
\end{equation*}
implying
\begin{equation*}|a(m)|\leq\prod_{j=1}^r(e_j+1)t_j^{e_j/2}=\sigma_0(m)\sqrt{m}.
\end{equation*}
The general case follows from the multiplicativity of the $a(m)$ and the first statement.
\end{proof}


\subsection{Modular Forms for the Weil Representation}\label{sec:weil}
In this section we recall some properties of modular forms for the Weil representation of $\Gamma$ from \cite{Sch09}.

Let $D$ be a \fqs{} with quadratic form $q\colon D\to\Q/\Z$ and associated bilinear form $(\cdot,\cdot)\colon D\times D\to\Q/\Z$ (see, e.g., \cite{Nik80,CS99}). The level of $D$ is the smallest positive integer $n$ such that $nq(\gamma)=0\mmod{1}$ for all $\gamma\in D$.

Let $c$ be an integer. Then $c$ acts by multiplication on $D$ and there is an exact sequence $0\to D_c\to D\to D^c\to0$ where $D_c$ is the kernel and $D^c$ the image of this map. The group $D^c$ is the orthogonal complement of $D_c$.

Let $D^{c*}$ be the set of elements $\alpha\in D$ satisfying $cq(\gamma)+(\alpha,\gamma)=0\mmod{1}$ for all $\gamma\in D_c$. Then $D^{c*}$ is a coset of $D^c$. Once we have chosen a Jordan decomposition of $D$, there is a canonical coset representative $x_c$ of $D^{c*}$ satisfying $2x_c=0$. Write $\alpha\in D^{c*}$ as $\alpha=x_c+c\gamma$. Then $q_c(\alpha):=cq(\gamma)+(x_c,\gamma)\mmod{1}$ is independent of the choice of $\gamma$. This gives a well-defined map $q_c\colon D^{c*}\to\Q/\Z$.

If $D$ has no odd $2$-adic Jordan components, then $x_c=0$ so that $D^c=D^{c*}$ and $q_c(\gamma)=cq(\mu)\mmod{1}$ for $\gamma=c\mu\in D^c$.

Suppose $D$ has even signature. We define a scalar product $(\cdot,\cdot)$ on the group ring $\C[D]$ which is linear in the first and antilinear in the second variable by $(\ee^\gamma,\ee^\beta):=\delta_{\gamma,\beta}$. Then there is a unitary action $\rho_D$ of the group $\Gamma$ on $\C[D]$ satisfying
\begin{align*}
\rho_D(T)\ee^\gamma&=e(q(\gamma))\ee^{\gamma},\\
\rho_D(S)\ee^\gamma&=\frac{e(\sign(D)/8)}{\sqrt{|D|}}\sum_{\beta\in D}e(-(\gamma,\beta))\ee^\beta
\end{align*}
where $S=\left(\begin{smallmatrix}0&-1\\1&0\end{smallmatrix}\right)$ and $T=\left(\begin{smallmatrix}1&1\\0&1\end{smallmatrix}\right)$ are the standard generators of $\Gamma$. The representation $\rho_D$ is called the \emph{Weil representation} of $\Gamma$ on $\C[D]$. It commutes with $\O(D)$. We also consider the \emph{dual Weil representation} $\bar\rho_D$, the complex conjugate of $\rho_D$.

Suppose that the level of $D$ divides $n$. We define a quadratic Dirichlet character $\chi_D\colon\Gamma_0(n)\to\{\pm1\}$ by
\begin{equation*}
\chi_D(M):=\left(\frac{a}{|D|}\right)e\big((a-1)\oddity(D)/8\big)
\end{equation*}
for $M=\left(\begin{smallmatrix}a&b\\c&d\end{smallmatrix}\right)\in\Gamma_0(n)$. Then $M\in\Gamma_0(n)$ acts in Weil representation $\rho_D$ as
\begin{equation*}
\rho_D(M)\ee^\gamma=\chi_D(M)e(bdq(\gamma))\ee^{d\gamma}.
\end{equation*}
A general formula for $M\in\Gamma$ is given in \cite{Sch09}, Theorem~4.7.

\medskip

Let
\begin{equation*}
F(\tau)=\sum_{\gamma\in D}F_{\gamma}(\tau)\ee^\gamma
\end{equation*}
be a holomorphic function on the upper half-plane $\HH$ with values in $\C[D]$ and $k\in\Z$. Then $F$ is a modular form for $\rho_D$ of weight~$k$ if
\begin{equation*}
F(M\tau)=(c\tau+d)^k\rho_D(M)F(\tau)
\end{equation*}
for all $M=\left(\begin{smallmatrix}a&b\\c&d\end{smallmatrix}\right)$ in $\Gamma$ and $F$ is meromorphic at the cusp $\infty$. We call $F$ \emph{holomorphic} if it is also holomorphic at $\infty$.

Classical examples of modular forms transforming under the Weil representation are theta series. Let $L$ be a positive-definite, even lattice of even rank $2k$ with bilinear form $(\cdot,\cdot)\colon L\times L\to\Z$ and let $D=L'/L$ be the associated \fqs{}. Define
\begin{equation*}
\theta=\sum_{\gamma\in D}\theta_{\gamma}\ee^\gamma\quad\text{with}\quad\theta_\gamma(\tau):=\sum_{\alpha\in\gamma+L}q^{(\alpha,\alpha)/2}
\end{equation*}
for all $\gamma\in D$. Then $\theta$ is a modular form for the Weil representation $\rho_D$ of weight~$k$ which is holomorphic at $\infty$.

Another example arises from orbifold theory (see \autoref{sec:orbifold}). Let $V$ be a \strathol{} \voa{} of central charge $c$ and $g$ an automorphism of $V$ of finite order~$n$ and type~$0$ such that $V^g$ satisfies the positivity condition. Suppose $24\mid c$. Then the irreducible $V^g$-modules are given by $W^\gamma$, $\gamma\in D$, with conformal weights $\rho(W^\gamma)=q(\gamma)\mmod{1}$ where the \fqs{} $D=\Z_n\times\Z_n$ has quadratic form $q((i,j))=ij/n\mmod{1}$. Note that $D$ has order~$n^2$ and level~$n$. The vector-valued character $\Ch_{V^g}\colon\HH\to\C[D]$ defined by
\begin{equation*}
\Ch_{V^g}(\tau)=\sum_{\gamma\in D}\ch_{W^\gamma}(\tau)\ee^\gamma\quad\text{with}\quad\ch_{W^\gamma}(\tau)=\tr|_{W^\gamma}q^{L_0-c/24}
\end{equation*}
is a modular form of weight~$0$ for the Weil representation $\rho_D$ of $\Gamma$, holomorphic on $\HH$ with a possible pole at the cusp $\infty$.

\medskip

Let $D$ be a \fqs{} of even signature and $n$ a positive integer such that the level of $D$ divides $n$. Let $F=\sum_{\gamma\in D}F_{\gamma}\ee^\gamma$ be a modular form for $\rho_D$. Then the formula for $\rho_D$ shows that $F_0$ is a modular form for $\Gamma_0(n)$ of character $\chi_D$. Conversely:
\begin{prop}\label{prop:lift1}
Let $D$ be a \fqs{} of even signature and level dividing~$n$. Let $f$ be a scalar-valued modular form for $\Gamma_0(n)$ of weight~$k$ and character $\chi_D$. Then
\begin{equation*}
F=\sum_{M\in\Gamma_0(n)\backslash\Gamma}f|_M\rho_D(M^{-1})\ee^0
\end{equation*}
is a modular form for $\rho_D$ of weight~$k$ which is invariant under $\O(D)$.
\end{prop}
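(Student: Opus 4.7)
My plan is to verify the claim by checking four points in turn: (i) well-definedness of the sum on coset representatives, (ii) the transformation law under $\Gamma$, (iii) $\O(D)$-invariance, and (iv) the correct growth at the cusp $\infty$.

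For (i), I would replace $M$ by $\gamma M$ with $\gamma\in\Gamma_0(n)$ and verify that the summand is unchanged. Using that $f$ has character $\chi_D$, we have $f|_{\gamma M}=\chi_D(\gamma)f|_M$. On the other hand, writing $\gamma^{-1}=\left(\begin{smallmatrix}a'&b'\\c'&d'\end{smallmatrix}\right)\in\Gamma_0(n)$ and using the explicit formula for the Weil representation on $\Gamma_0(n)$, one computes $\rho_D(\gamma^{-1})\ee^0=\chi_D(\gamma^{-1})e(b'd'q(0))\ee^{d'\cdot 0}=\chi_D(\gamma)\ee^0$, because $q(0)=0$ and $\chi_D$ is quadratic. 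Hence
\begin{equation*}
f|_{\gamma M}\,\rho_D((\gamma M)^{-1})\ee^0=\chi_D(\gamma)^2\,f|_M\,\rho_D(M^{-1})\ee^0=f|_M\,\rho_D(M^{-1})\ee^0,
\end{equation*}
so $F$ is well-defined.

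For (ii), let $M_0\in\Gamma$. Then
\begin{equation*}
F|_{M_0}=\sum_{M\in\Gamma_0(n)\backslash\Gamma}f|_{MM_0}\,\rho_D(M^{-1})\ee^0,
\end{equation*}
and substituting $M'=MM_0$ (which permutes the coset representatives by (i)) and using $\rho_D(M^{-1})=\rho_D(M_0)\rho_D(M'^{-1})$ gives $F|_{M_0}=\rho_D(M_0)F$, the required transformation law of weight $k$ for $\rho_D$. For (iii), note that any $\sigma\in\O(D)$ commutes with $\rho_D$ and fixes $\ee^0$, so $\sigma F=\sum_M f|_M\,\rho_D(M^{-1})\sigma\ee^0=F$.

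Finally for (iv), I need to check meromorphy at $\infty$ (holomorphy on $\HH$ being automatic, as $[\Gamma:\Gamma_0(n)]$ is finite and each $f|_M$ is holomorphic on $\HH$). For each coset representative $M=\left(\begin{smallmatrix}a&b\\c&d\end{smallmatrix}\right)$, the function $f|_M$ is holomorphic at every cusp of $\Gamma_0(n)$, in particular at $M^{-1}\infty$, which yields a $q^{1/h}$-expansion at $\infty$ for some $h\mid n$; expanding $\rho_D(M^{-1})\ee^0$ in the basis $\{\ee^\gamma\}$, each component $F_\gamma$ is a finite $\C$-linear combination of such expansions and hence has a Fourier expansion in $q^{1/n}$ with only finitely many negative powers, which is the required condition. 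The main thing to get right is the interplay between $\chi_D$ and the Weil representation action on $\ee^0$ in step (i); once that collapses to $\chi_D(\gamma)^2=1$, the rest is a standard induction-from-$\Gamma_0(n)$ argument.
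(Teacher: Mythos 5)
Your proof is correct. The paper gives no proof of its own for this proposition---it simply cites Theorem~6.2 of \cite{Sch06} in the case of trivial support---and your four-step verification (well-definedness of the coset sum via $\chi_D(\gamma)^2=1$ combined with $\rho_D(\gamma^{-1})\ee^0=\chi_D(\gamma)\ee^0$, the substitution $M\mapsto MM_0$ for the transformation law, commutation of $\rho_D$ with $\O(D)$ together with $\sigma\ee^0=\ee^0$, and the expansion of each $f|_M$ in powers of $q^{1/t_s}$ with $t_s\mid n$ for the behaviour at $\infty$) is exactly the standard induction argument underlying that cited result, so there is nothing substantive to compare beyond noting the agreement.
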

This is Theorem~6.2 in \cite{Sch06} in the special case of trivial support. An explicit description of the components of the lift $F$ is given in \cite{Sch09}, Theorem~5.7 (and for $n$ square-free in \cite{Sch06}, Theorem~6.5). An analogous statement holds if we replace $\rho_D$ by the dual Weil representation $\bar\rho_D$.

The dimension formulae proved in this text are instances of the following pairing argument:
\begin{prop}\label{prop:pairing}
Let $D$ be a \fqs{} of even signature and $G$ a modular form for $\rho_D$ of weight $2-k$, $k\in\Z$. Let $F$ be a modular form for $\bar\rho_D$ of weight~$k$. Then the constant coefficient of $(G,\bar{F})=\sum_{\gamma\in D}G_\gamma F_{\gamma}$ vanishes.
\end{prop}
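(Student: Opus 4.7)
The key point is that the pairing
\[
h(\tau) := \sum_{\gamma \in D} G_\gamma(\tau) F_\gamma(\tau)
\]
is a scalar-valued modular form of weight~$2$ for the full modular group $\Gamma = \SLZ$ with trivial character, holomorphic on $\HH$ and meromorphic at the cusp $\infty$. The modular transformation behaviour will follow from unitarity of $\rho_D$. Concretely, $\rho_D(M)$ is unitary with respect to the Hermitian pairing $(\cdot,\cdot)$ on $\C[D]$, which translates to the identity
\[
\sum_\gamma (\rho_D(M)v)_\gamma (\bar\rho_D(M)w)_\gamma = \sum_\gamma v_\gamma w_\gamma
\]
for all $M \in \Gamma$ and $v,w \in \C[D]$. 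Applied to the transformation laws of $G$ and $F$, this yields
\[
h(M\tau) = (c\tau+d)^{2-k}(c\tau+d)^k h(\tau) = (c\tau+d)^2 h(\tau)
\]
for every $M = \left(\begin{smallmatrix}a&b\\c&d\end{smallmatrix}\right) \in \Gamma$.

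I would then extract the constant coefficient $a_0$ of $h$ by a standard contour integral. Fix $T$ large and let $\mathcal{F}_T := \{\tau \in \HH \,|\, |\Re\tau|\leq 1/2,\, |\tau|\geq 1,\, \Im\tau \leq T\}$ be the truncated standard fundamental domain for $\Gamma$. Since $h$ is holomorphic on $\HH$, Cauchy's theorem gives $\oint_{\partial\mathcal{F}_T} h(\tau)\,\d\tau = 0$. The boundary decomposes into four pieces: the two vertical sides on $\Re\tau = \pm 1/2$, the horizontal top at $\Im\tau = T$, and the unit-circle arc forming the bottom.

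The two vertical sides are identified by $T\colon \tau \mapsto \tau+1$ with opposite orientations, and $h(\tau+1)=h(\tau)$, so their contributions cancel. The arc is sent to itself with reversed orientation by $S\colon \tau\mapsto -1/\tau$, while the differential $h(\tau)\,\d\tau$ is $S$-invariant (which is exactly the weight-$2$ condition under $S$), so the arc integral equals its own negative and vanishes. On the top segment, expanding $h(\tau) = \sum_n a_n q^n$ and invoking $\int_{-1/2}^{1/2} \e^{2\pi\i n x}\,\d x = \delta_{n,0}$, the integral from $1/2$ to $-1/2$ evaluates to $-a_0$. Combining the three pieces, $0 = -a_0$, as desired.

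The main obstacle, beyond careful bookkeeping with orientations, is verifying the unitarity identity that makes $h$ transform with trivial character; this follows immediately from the fact that $\rho_D$ is unitary with respect to the standard Hermitian form on $\C[D]$ together with $\bar\rho_D = \overline{\rho_D}$. Alternatively, one may phrase the same argument as the residue theorem for the meromorphic differential $h(\tau)\,\d\tau$ on the compact Riemann surface $X(1) = \Gamma\backslash\HH^*$, whose only cusp is $\infty$: then $\Res_\infty h(\tau)\,\d\tau = a_0/(2\pi\i)$ must vanish. The contour formulation is preferable as it sidesteps any subtlety about the elliptic points of $X(1)$.
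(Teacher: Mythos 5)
Your argument is correct and is essentially the paper's proof: both establish via unitarity of $\rho_D$ that $(G,\bar F)$ is a scalar weight-$2$ form for $\Gamma$, holomorphic on $\HH$ and meromorphic at $\infty$, and then kill the constant term by the residue theorem on $X(1)$ — which your contour integral over the truncated fundamental domain simply makes explicit (and, as you note, cleanly avoids any worry about the elliptic points). No gaps.
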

\begin{proof}
The unitarity of $\rho_D$ with respect to $(\cdot,\cdot)$ implies that $(G,\bar{F})$ is a scalar-valued modular form for $\Gamma$ of weight~$2$ which is holomorphic on $\HH$ and meromorphic at $\infty$. Hence $(G,\bar{F})d\tau$ defines a $1$-form on the Riemann sphere with a pole at $\infty$. By the residue theorem its residue has to vanish. Since $d\tau=\frac{1}{2\pi\i}\frac{dq}{q}$, the constant term in the Fourier expansion of $(G,\bar{F})$ must be $0$.
\end{proof}


\subsection{Vector-Valued Eisenstein Series} \label{sec:vector}
In this section we construct an Eisenstein series $F$ for the dual Weil representation of $\II_{1,1}(n)$ by lifting the Eisenstein series $f\in\mathcal{E}_2(\Gamma_0(n))$ from \autoref{prop:specialeis} and calculate some of its Fourier coefficients. Pairing $F$ with the character of $V^g$ will give the first dimension formula.

Let $n$ be a positive integer. Consider the \fqs{} $D=(\Z_n\times\Z_n,q)$ with quadratic form $q((i,j))=ij/n\mmod{1}$. This is the \fqs{} of the hyperbolic lattice $\II_{1,1}(n)$ and of the fusion algebra of the orbifold \voa{} $V^g$ described above. We note that the character $\chi_D$ is trivial because $D$ has no odd $2$-adic components and its order is $n^2$.

Let $f$ be a modular form for $\Gamma_0(n)$ of weight~$2$ with trivial character. Then \autoref{prop:lift1} allows us to lift $f$ to a vector-valued modular form $F$ for the dual Weil representation $\bar\rho_D$. We describe the components of $F$. The expansion of $f$ at a cusp $s\in\Gamma_0(n)\backslash P$ is an expansion in $q^{1/t_s}$ where $t_s$ denotes the width of $s$. We decompose
\begin{equation*}
f|_{M_s}(\tau)=g_{t_s,0}(\tau)+\ldots+g_{t_s,t_s-1}(\tau)
\end{equation*}
with $g_{t_s,j}|_T=e(j/t_s)g_{t_s,j}$ for $j\in\Z_{t_s}$. In general, the terms in this decomposition depend on the choice of $M_s$ but $g_{t_s,0}$ does not. The special case of Theorem~5.7 in \cite{Sch09} for trivial support applied to $D=(\Z_n\times\Z_n,q)$ gives:
\begin{prop}\label{prop:lift2}
Let $f\in\mathcal{M}_2(\Gamma_0(n))$ and $F$ be the lift (from \autoref{prop:lift1}) of $f$ to a vector-valued modular form for the dual Weil representation $\bar\rho_D$ where $D=(\Z_n\times\Z_n,q)$. Then
\begin{equation*}
F_\gamma(\tau)=\sum_{s\in\Gamma_0(n)\backslash P}\delta_{\gamma\in D^c}
\frac{1}{((c,n),n/(c,n))}e(dq_c(\gamma))g_{t_s,j_\gamma}(\tau)
\end{equation*}
for all $\gamma\in D$ where $M_s=\left(\begin{smallmatrix}a&b\\c&d\end{smallmatrix}\right)\in\Gamma$ is such that $M_s\infty=s$ and $j_\gamma\in\{0,\ldots,t_s-1\}$ such that $j_\gamma/t_s=-q(\gamma)\mmod{1}$.
\end{prop}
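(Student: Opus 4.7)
The plan is to specialize Theorem~5.7 of \cite{Sch09}, which gives an explicit formula for the components of a vector-valued lift with trivial support, to the discriminant form $D=(\Z_n\times\Z_n,q)$ and verify that all auxiliary quantities take the simple shape appearing in the statement.

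First I would collect the structural simplifications coming from the fact that $D$ is the discriminant form of the hyperbolic lattice $\II_{1,1}(n)$. In particular $D$ has no odd $2$-adic Jordan components, its order $|D|=n^2$ is a perfect square, its oddity vanishes, and $\sign(D)=0$. Consequently $\chi_D$ is trivial on $\Gamma_0(n)$, and for every $c\in\Z$ the canonical representative $x_c$ equals $0$, so $D^{c*}=D^c=cD$ and $q_c(c\mu)=cq(\mu)\mmod{1}$ for $\mu\in D$. All the ingredients of the general Theorem~5.7 therefore take their simplest possible form.

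To unfold the definition of $F$, I would choose the coset representatives $\{M_sT^j:s\in\Gamma_0(n)\backslash P,\ 0\leq j<t_s\}$ of $\Gamma_0(n)\backslash\Gamma$; this is legitimate because $-I\in\Gamma_0(n)$ and $t_s$ is the width of the cusp~$s$. Expanding $f|_{M_sT^j}(\tau)=\sum_{k=0}^{t_s-1}e(jk/t_s)g_{t_s,k}(\tau)$ and using $\bar\rho_D(T^{-j})\ee^\gamma=e(jq(\gamma))\ee^\gamma$, the inner sum $\sum_{j=0}^{t_s-1}e(j(k/t_s+q(\gamma)))$ collapses by orthogonality to $t_s\delta_{k/t_s+q(\gamma)\in\Z}$, which selects $k=j_\gamma$ and contributes a factor $t_s\,g_{t_s,j_\gamma}(\tau)$ to $F_\gamma$.

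The remaining task is to identify the $\ee^\gamma$-coefficient of $\bar\rho_D(M_s^{-1})\ee^0$. By the explicit formula for $\rho_D$ on all of $\Gamma$ (Theorem~4.7 of \cite{Sch09}), the support of this vector is $D^{c*}=D^c$, so the constraint $q(\gamma)\in(1/t_s)\Z$ arising from orthogonality is automatically subsumed by $\gamma\in D^c$; after inserting the trivial character and vanishing signature, the coefficient equals $(c/n)\,e(dq_c(\gamma))$. Using the arithmetic identity $(c^2,n)=c(c,n/c)$, valid because $c\mid n$, one finds $t_s(c,n/c)=n/c$, so multiplying with the $t_s$ from the previous step yields $(1/(c,n/c))\,e(dq_c(\gamma))\,\delta_{\gamma\in D^c}$. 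Summing over cusps gives the claim. The main obstacle is purely bookkeeping: one must carefully track the normalising prefactors in Theorem~4.7 and verify the width/divisor arithmetic, but no new idea beyond the cited results is required.
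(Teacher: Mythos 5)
Your proposal is correct and follows essentially the same route as the paper, which simply cites Theorem~5.7 of \cite{Sch09} for trivial support and specialises it to $D=(\Z_n\times\Z_n,q)$; your unfolding over the coset representatives $M_sT^j$ is exactly the proof of that theorem carried out in this special case, and the key checks (that $\gamma\in D^c$ forces $t_sq(\gamma)\in\Z$ so the geometric sum genuinely collapses, and that $t_s\cdot c/n=1/(c,n/c)$ via $(c^2,n)=c(c,n/c)$) are all in order. The only difference is that you make the citation self-contained, which the paper does not bother to do.
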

Recall that since $D$ has no odd $2$-adic components, $q_c(\gamma)=cq(\mu)\mmod{1}$ for any $\mu\in D$ such that $\gamma=c\mu$.

\medskip

For the remainder of this section, let $F=\sum_{\gamma\in D}F_{\gamma}\ee^\gamma$ be the lift of the Eisenstein series $f\in\mathcal{E}_2(\Gamma_0(n))$ from \autoref{prop:specialeis} to a modular form for $\bar\rho_D$. We calculate the lowest-order Fourier coefficients of the components $F_\gamma$ and $[F_0](1)$.

For $k\in\Ns$ and $m\in\Z_k$ we define the \emph{Ramanujan sum}
\begin{equation*}
C_k(m):=\sum_{a\in\Z_k^*}e(am/k)=\sum_{c\mid(k,m)}c\mu(k/c).
\end{equation*}
\begin{prop}\label{prop:Fconst}
Let $\gamma\in D$ with $q(\gamma)=0\mmod{1}$. Then the constant term in the Fourier expansion of $F_\gamma$ is
\begin{equation*}
[F_\gamma](0)=-\delta_{\gamma,0}\psi(n)+\sum_{c\mid n}\delta_{\gamma\in D^c}\frac{1}{(c,n/c)}C_{(c,n/c)}((c,n/c)q_c(\gamma)).
\end{equation*}
\end{prop}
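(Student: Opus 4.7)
The plan is to apply \autoref{prop:lift2} and read off the coefficient of $q^0$ from the formula for $F_\gamma$. Since $q(\gamma)=0\mmod 1$, we have $j_\gamma=0$ at every cusp $s$, so the constant coefficient of $g_{t_s,j_\gamma}=g_{t_s,0}$ agrees with that of $f|_{M_s}$. By \autoref{prop:specialeis}, $[f|_{M_s}](0)=1$ for every cusp $s\neq\infty$, while $[f](0)=1-\psi(n)$. Using the parametrisation of cusps by pairs $(c,a)$ from \autoref{sec:gamma}, the cusp $\infty$ corresponds to $c=n$ (with unique $a$), for which $D^c=nD=\{0\}$, $(c,n/c)=1$ and $q_c(\gamma)=0$; this cusp therefore contributes $\delta_{\gamma,0}(1-\psi(n))$.

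For the remaining cusps I organise the sum by divisor $c\mid n$, $c<n$, obtaining
\begin{equation*}
[F_\gamma](0)=\delta_{\gamma,0}(1-\psi(n))+\sum_{\substack{c\mid n\\c<n}}\delta_{\gamma\in D^c}\,\frac{1}{(c,n/c)}\sum_{a}e(dq_c(\gamma)),
\end{equation*}
where $a$ runs over representatives of $\Z_{(c,n/c)}^*$ with $(a,c)=1$ and $d$ is determined by $ad\equiv1\mmod c$ (coming from $M_s=\left(\begin{smallmatrix}a&b\\c&d\end{smallmatrix}\right)\in\Gamma$). The key technical step is to identify the inner sum as a Ramanujan sum. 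Because $D$ has level $n$ and $\gamma\in D^c$, the element $q_c(\gamma)\in\Q/\Z$ has order dividing $n/c$, so $e(dq_c(\gamma))$ only depends on $d$ modulo $n/c$, and since $(c,n/c)\mid c$, the congruence $ad\equiv1\mmod c$ forces $d\equiv a^{-1}\mmod{(c,n/c)}$ (inverse taken in $\Z_{(c,n/c)}^*$). A short argument (using that any $a_0\in\Z_{(c,n/c)}^*$ admits a lift to an integer coprime to $c$, by prescribing residues modulo the primes dividing $c/(c,n/c)$) shows that as $a$ ranges over the chosen representatives, $a^{-1}$ ranges bijectively over $\Z_{(c,n/c)}^*$. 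Thus
\begin{equation*}
\sum_{a}e(dq_c(\gamma))=\sum_{a'\in\Z_{(c,n/c)}^*}e(a'q_c(\gamma))=C_{(c,n/c)}\bigl((c,n/c)q_c(\gamma)\bigr).
\end{equation*}

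Finally I rewrite $\delta_{\gamma,0}(1-\psi(n))=-\delta_{\gamma,0}\psi(n)+\delta_{\gamma,0}$ and observe that the term $c=n$ in the target sum contributes exactly $\delta_{\gamma,0}\cdot 1\cdot C_1(0)=\delta_{\gamma,0}$, since $D^n=\{0\}$, $(n,1)=1$ and $C_1(0)=1$. Absorbing this into the $c\mid n$ summation yields the claimed formula. The only genuinely non-formal step is the identification of the inner sum with the Ramanujan sum, and in particular verifying that $a\mapsto a^{-1}\mmod{(c,n/c)}$ gives a bijection on the prescribed set of representatives; the rest is bookkeeping separating the cusp at infinity from the others.
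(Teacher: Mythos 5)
Your argument is the paper's proof in all essentials: apply \autoref{prop:lift2} with $j_\gamma=0$, insert the constant terms of $f$ at the cusps from \autoref{prop:specialeis}, and collapse the sum over the cusps lying above a fixed divisor $c$ into a Ramanujan sum using $ad\equiv 1\mmod{c}$ together with the fact that inversion permutes $\Z_{(c,n/c)}^*$. Splitting off the cusp $\infty$ and recombining it with the $c=n$ term of the target sum, rather than carrying the $-\delta_{\gamma,0}\psi(n)$ correction inside the sum as the paper does, is a purely cosmetic difference.

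There is, however, one step that does not follow from what you actually state. You justify replacing $e(dq_c(\gamma))$ by $e(a^{-1}q_c(\gamma))$ with $a^{-1}\in\Z_{(c,n/c)}^*$ by observing that $q_c(\gamma)$ has order dividing $n/c$ and that $ad\equiv1\mmod{c}$ gives $d\equiv a^{-1}\mmod{(c,n/c)}$. Since $(c,n/c)$ is in general a proper divisor of $n/c$, these two facts alone do not make $e(dq_c(\gamma))$ a function of $a^{-1}$ modulo $(c,n/c)$, nor do they guarantee that $(c,n/c)q_c(\gamma)$ is an integer, which is needed for $C_{(c,n/c)}((c,n/c)q_c(\gamma))$ to be a Ramanujan sum with admissible argument at all. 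The missing input is precisely the hypothesis $q(\gamma)=0\mmod{1}$, which you never invoke: writing $\gamma=c\mu\in D^c$ one has $q(\gamma)=c^2q(\mu)=c\,q_c(\gamma)\mmod{1}$, so the hypothesis forces $cq_c(\gamma)\in\Z$, and combined with $(n/c)q_c(\gamma)\in\Z$ this yields $(c,n/c)q_c(\gamma)\in\Z$. The paper records exactly this observation. With that one line added, your identification of the inner sum with $C_{(c,n/c)}((c,n/c)q_c(\gamma))$ is complete and the remaining bookkeeping is correct.
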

\begin{proof}
Let $\gamma\in D$ with $q(\gamma)=0\mmod{1}$. Then \autoref{prop:lift2} implies that
\begin{align*}
[F_\gamma](0)&=\sum_{s\in\Gamma_0(n)\backslash P}\delta_{\gamma\in D^c}\frac{1}{((c,n),n/(c,n))}e(dq_c(\gamma))[g_{t_s,0}](0)\\
&=\sum_{s\in\Gamma_0(n)\backslash P}\delta_{\gamma\in D^c}\frac{1}{((c,n),n/(c,n))}e(dq_c(\gamma))[f|_{M_s}](0)\\
&=\sum_{s\in\Gamma_0(n)\backslash P}\delta_{\gamma\in D^c}\frac{1}{((c,n),n/(c,n))}e(dq_c(\gamma))-\delta_{\gamma,0}\psi(n)
\end{align*}
where for each cusp $s\in\Gamma_0(n)\backslash P$ we have chosen a matrix $M_s=\left(\begin{smallmatrix}a&b\\c&d\end{smallmatrix}\right)\in\Gamma$ such that $M_s\infty=s$. The sum can be partially carried out to remove the complex factor $e(dq_c(\gamma))$. Indeed, choosing the system of representatives for the cusps described in \autoref{sec:gamma} we obtain
\begin{align*}
[F_\gamma](0)&=\sum_{c\mid n}\delta_{\gamma\in D^c}\frac{1}{(c,n/c)}\sum_{a\in\Z_{(c,n/c)}^*}\!\!e(dq_c(\gamma))-\delta_{\gamma,0}\psi(n)\\
&=\sum_{c\mid n}\delta_{\gamma\in D^c}\frac{1}{(c,n/c)}\sum_{a\in\Z_{(c,n/c)}^*}\!\!e(aq_c(\gamma))-\delta_{\gamma,0}\psi(n)
\end{align*}
since $q_c(\gamma)=0\mmod{1/c}$ and $ad=1\mmod{c}$.
Note that $(c,n/c)q_c(\gamma)=0\mmod{1}$ if $\gamma\in D^c$ and $q(\gamma)=0\mmod{1}$. Hence
\begin{align*}
[F_\gamma](0)
&=\sum_{c\mid n}\delta_{\gamma\in D^c}\frac{1}{(c,n/c)}\sum_{a\in\Z_{(c,n/c)}^*}\!\!e\left(\frac{a(c,n/c)q_c(\gamma)}{(c,n/c)}\right)-\delta_{\gamma,0}\psi(n)\\
&=\sum_{c\mid n}\delta_{\gamma\in D^c}\frac{1}{(c,n/c)}C_{(c,n/c)}((c,n/c)q_c(\gamma))-\delta_{\gamma,0}\psi(n).\qedhere
\end{align*}
\end{proof}

For $c\mid n$ we define the isotropic subgroup $H_c$ of $D$ by
\begin{equation*}
H_c:=\{(ci,nj/c)\,|\,i,j\in\Z_n\}\cong\Z_{n/c}\times\Z_c.
\end{equation*}
Then we can rewrite the Fourier coefficients $[F_\gamma](0)$ for isotropic $\gamma \in D$ as follows:

\begin{prop}\label{prop:Fconst2}
Let $\gamma\in D$ with $q(\gamma)=0\mmod{1}$. Then
\begin{equation*}
[F_\gamma](0)=-\delta_{\gamma,0}\psi(n)+\sum_{c\mid n}\delta_{\gamma\in H_c}\frac{\phi((c,n/c))}{(c,n/c)}.
\end{equation*}
\end{prop}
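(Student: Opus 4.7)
The plan is to derive the formula from \autoref{prop:Fconst} by a Möbius-theoretic aggregation; the sums in the two formulae have different supports in $c$, so they cannot be compared term by term.

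First I apply the dual identities
\begin{equation*}
\frac{\phi(k)}{k} \;=\; \sum_{e\mid k}\frac{\mu(e)}{e}, \qquad
\frac{C_k(m)}{k} \;=\; \sum_{\substack{e\mid k\\ em/k\in\Z}}\frac{\mu(e)}{e},
\end{equation*}
the second obtained from $C_k(m) = \sum_{d\mid(k,m)} d\mu(k/d)$ via the substitution $d = k/e$. Inserting these into \autoref{prop:Fconst} and into the formula to be proved, and swapping the order of summation, the claim reduces to
\begin{equation*}
\#\{c\mid n : e\mid(c,n/c),\, \gamma\in D^c,\, eq_c(\gamma)\in\Z\} \;=\; \#\{c\mid n : e\mid(c,n/c),\, \gamma\in H_c\}
\end{equation*}
for every divisor $e\mid n$.

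Both counts are multiplicative with respect to the $p$-primary decomposition $D = \bigoplus_p D_p$, since every condition involved factors over primes. This reduces the problem to $n = p^a$ and $e = p^\epsilon$ with $0\le\epsilon\le a/2$. In this prime-power case I write $\gamma = (x,y)$, set $i := v_p(x)$ and $j := v_p(y)$ (with convention $v_p(0) = a$), and invoke the coordinate-swap symmetry $(x,y)\mapsto(y,x)$ (which sends $H_c \to H_{n/c}$ and preserves the $D^c$) to assume $i\le j$; isotropy gives $i+j\ge a$. Writing $c = p^b$ and using $q_c(\gamma) = xy/(cn)$ (valid for $\gamma = c\mu$), the condition $eq_c(\gamma)\in\Z$ unfolds to $b\le i+j-a+\epsilon$, while $\gamma\in D^c$ and $\gamma\in H_c$ become $b\le i$ and $a-j\le b\le i$, respectively. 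A case split ($j\ge a-\epsilon$ versus $j<a-\epsilon$) then shows that both counts equal $\max\bigl(0,\,\min(i-\epsilon,\,a-2\epsilon,\,i+j-a)+1\bigr)$, which completes the proof.

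The main technical point is the translation of $eq_c(\gamma)\in\Z$ into a $p$-adic valuation condition; this rests on the explicit form $q_c(\gamma) = xy/(cn)$, which is available because the discriminant form $D$ has no odd $2$-adic Jordan components.
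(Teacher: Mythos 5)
Your proof is correct. Note that the paper states \autoref{prop:Fconst2} without any proof (it is presented as a mere rewriting of \autoref{prop:Fconst}), so there is no argument in the text to compare against; your write-up supplies the missing derivation, and your opening observation that the two sums cannot be matched termwise is accurate (already for $n=p$ and $\gamma=(0,y)$, $y\neq0$, the left-hand sum is supported at $c=1$ and the right-hand one at $c=p$). I checked the individual steps: the expansions $\phi(k)/k=\sum_{e\mid k}\mu(e)/e$ and $C_k(m)/k=\sum_{e\mid k,\,(k/e)\mid m}\mu(e)/e$ are right, all five conditions on $c$ do factor over the primary decomposition of $D$, and in the prime-power case the two counts are $\max\bigl(0,\min(a-2\epsilon,\,i-\epsilon,\,i+j-a)+1\bigr)$ and $\max\bigl(0,\min(a-\epsilon,i)-\max(\epsilon,a-j)+1\bigr)$, which your case split on $j\gtrless a-\epsilon$ (using $i\le j$ and $i+j\ge a$) correctly shows to coincide with the claimed common value. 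The only point I would make more explicit is the use of $q_c(\gamma)=xy/(cn)$: this presupposes integer representatives of $x$ and $y$ divisible by $c$ (which exist exactly because $\gamma\in D^c$), and one should remark that the condition $eq_c(\gamma)\in\Z$ is independent of that choice — immediate, since replacing such a representative by another one changes $xy/(cn)$ by an integer — so that the translation into the valuation inequality $b\le i+j-a+\epsilon$ with the convention $v_p(0)=a$ is unambiguous. With that remark added, the argument is complete.
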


Recall that we extended the sum-of-divisors function $\sigma$ to the positive rational numbers by $\sigma(r)=0$ if $r\notin\Ns$.

\begin{prop}\label{prop:constterm}
The coefficient at $q$ in the Fourier expansion of $F_0$ is given by
\begin{equation*}
[F_0](1)=24-24\sum_{c\mid n}\frac{\phi((c,n/c))}{(c,n/c)}.
\end{equation*}
\end{prop}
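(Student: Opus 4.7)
The plan is to compute $[F_0](1)$ directly from \autoref{prop:lift2} and the explicit Fourier expansion of $f$ at the cusps. I would first specialise \autoref{prop:lift2} to $\gamma = 0$: using that $0 \in D^c$, $q_c(0) = 0$ and $j_0 = 0$ for every $c \mid n$, this yields
\[
F_0(\tau) = \sum_{s \in \Gamma_0(n)\backslash P} \frac{1}{(c,n/c)}\, g_{t_s, 0}(\tau),
\]
and hence $[F_0](1) = \sum_{s}(1/(c,n/c))\, [f|_{M_s}](q^1)$, since $q^1$ is always an integer power of $q^{1/t_s}$ and so contributes only to the $T$-invariant part $g_{t_s,0}$ of $f|_{M_s}$.

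Next I would read off $[f|_{M_s}](q^1)$ from the explicit expansion of $f|_{M_s}$ at the cusp $s = a/c$ displayed after \autoref{prop:specialeis}. The constant-in-$k$ part contributes $-24$, while each factor $E_2((u\tau+v)/w)$ (indexed by $k \mid n$) contributes at $q^1$ only when $m(k,c)^2/k = 1$, forcing $m = k/(k,c)^2$ and $d^2 \mid k$ with $d := (k,c)$. Using $aD \equiv 1 \pmod c$ and $\gcd(d, c/d) = 1$ to simplify the phase $e(mv/w)$ to $e((c/d)^{-1} a^{-1}/d)$, one obtains
\[
[f|_{M_s}](q^1) = -24 + \frac{24}{\phi(n)} \sum_{d \mid c} \sum_{\substack{m \geq 1 \\ d^2 m \mid n \\ (dm,\, c/d) = 1}} \mu\!\left(\frac{n}{d^2 m}\right) d^2 \sigma(m)\, e\!\left(\frac{(c/d)^{-1} a^{-1}}{d}\right).
\]

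A crucial clean-up would be the observation that the conditions $d^2 m \mid n$, $d \mid c \mid n$ and $\gcd(dm, c/d) = 1$ force $dm \mid n/c$ (cancel the coprime factor $c/d$ from $d^2 m \mid dc' \cdot n/c$), hence in particular $d \mid (c, n/c)$. Consequently, in every non-zero term the phase descends to a well-defined character of $(\Z/(c,n/c)\Z)^*$, and summing over the $\phi((c,n/c))$ cusp representatives $a$ (with $(a,c) = 1$) yields the Ramanujan sum
\[
\sum_{a} e\!\left(\frac{(c/d)^{-1} a^{-1}}{d}\right) = \frac{\phi((c,n/c))}{\phi(d)}\, C_d\!\left((c/d)^{-1}\right) = \frac{\phi((c,n/c))}{\phi(d)}\, \mu(d),
\]
using the surjectivity of $(\Z/(c,n/c)\Z)^* \twoheadrightarrow (\Z/d\Z)^*$ with fibres of size $\phi((c,n/c))/\phi(d)$ together with $\gcd((c/d)^{-1}, d) = 1$.

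Combining, the $-24$ part gives the claimed summand $-24 \sum_{c \mid n} \phi((c,n/c))/(c,n/c)$, and it remains to show that the resulting multi-sum
\[
\frac{24}{\phi(n)} \sum_{c \mid n} \frac{\phi((c,n/c))}{(c,n/c)} \sum_{d \mid (c,n/c)} \frac{d^2 \mu(d)}{\phi(d)} \sum_{\substack{m \geq 1 \\ d^2 m \mid n \\ (dm,\, c/d)=1}} \mu\!\left(\frac{n}{d^2 m}\right)\sigma(m)
\]
equals exactly $+24$. This final identity is multiplicative in $n$ and reduces, prime-power by prime-power, to a short explicit Möbius/geometric-sum computation; on $n = p^a$ the only surviving contributions come from the boundary cases $c = p^0$, $c = p^a$ and the terms where $j = i$ (i.e.\ $d = c$), and they telescope to $\phi(p^a)$. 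This last Möbius identity is the main obstacle; everything preceding it is careful but routine bookkeeping.
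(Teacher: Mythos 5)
Your proposal is correct and follows essentially the same route as the paper's proof: specialise the lift formula to $\gamma=0$, read off the $q^1$-coefficient of $f|_{M_s}$ at each cusp, collapse the sum over the $\phi((c,n/c))$ cusps above each $c\mid n$ into a Ramanujan sum equal to $\mu$ of the relevant divisor, and verify the remaining arithmetic identity by multiplicativity on prime powers. The only cosmetic differences are your reparametrisation $k=d^2m$ and that you establish $d\mid(c,n/c)$ and compensate with a fibre-counting argument for the exponential sum, where the paper proves the sharper equality $(k,c)=(c,n/c)$ (using the square-freeness of $n/k$ forced by $\mu(n/k)\neq0$) and evaluates $C_{(c,n/c)}(1)=\mu((c,n/c))$ directly.
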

\begin{proof}
\autoref{prop:lift2} implies that
\begin{equation*}
F_0(\tau)=\sum_
{s\in\Gamma_0(n)\backslash P}\frac{1}{((c,n),n/(c,n))}g_{t_s,0}(\tau).
\end{equation*}
From the Fourier expansion of $f$ at $s$ calculated above we obtain
\begin{align*}
[g_{t_s,0}](1)&=[f|_{M_s}](1)\\
&=-24+\frac{24}{\phi(n)}\sum_{k\mid n}\mu(n/k)(k,c)^2\sigma\left(\frac{k}{(k,c)^2}\right)e\left(\frac{v}{(k,c)}\right)
\end{align*}
with $v\in\Z$ such that $v=(c/(k,c))^{-1}d\mmod{k/(k,c)}$ and the inverse taken modulo $k/(k,c)$ (see \autoref{prop:rescaledtrans}).
It suffices to show that
\begin{equation*}
\sum_
{s\in\Gamma_0(n)\backslash P}\frac{1}{((c,n),n/(c,n))}\sum_{k\mid n}\mu(n/k)(k,c)^2\sigma\left(\frac{k}{(k,c)^2}\right)e\left(\frac{v}{(k,c)}\right)=\phi(n).
\end{equation*}

As in the proof of \autoref{prop:Fconst}, we partially carry out the sum over the cusps to remove the complex factors. Since $(c/(k,c))^{-1}/(k,c)=0\mmod{1/c}$ and $ad=1\mmod{c}$, we obtain
\begin{align*}
&\sum_{c\mid n}\frac{1}{(c,n/c)}\sum_{k\mid n}\mu(n/k)(k,c)^2\sigma\left(\frac{k}{(k,c)^2}\right)\!\!\sum_{a\in\Z_{(c,n/c)}^*}\!\!\!\!e\left(\frac{d(c/(k,c))^{-1}}{(k,c)}\right)\\
&=\sum_{c\mid n}\frac{1}{(c,n/c)}\sum_{k\mid n}\mu(n/k)(k,c)^2\sigma\left(\frac{k}{(k,c)^2}\right)\!\!\sum_{a\in\Z_{(c,n/c)}^*}\!\!\!\!e\left(\frac{a(c/(k,c))^{-1}}{(k,c)}\right).
\end{align*}
Now $c\mid n$, $k\mid n$, $n/k$ square-free and $(k,c)^2\mid k$ imply $(k,c)=(c,n/c)$.
Then the last sum becomes
\begin{align*}
\sum_{a\in\Z_{(c,n/c)}^*}\!\!\!\!e\left(\frac{a(c/(k,c))^{-1}}{(c,n/c)}\right)=\sum_{a\in\Z_{(c,n/c)}^*}\!\!\!\!e\left(\frac{a}{(c,n/c)}\right)=C_{(c,n/c)}(1)=\mu((c,n/c))
\end{align*}
because $(c,n/c)$ and $(c/(k,c))^{-1}$ are coprime. Overall, the expression of interest simplifies to
\begin{align*}
\sum_{c\mid n}\frac{1}{(c,n/c)}\sum_{k\mid n}\mu(n/k)(k,c)^2\sigma\left(\frac{k}{(k,c)^2}\right)\mu((c,n/c)),
\end{align*}
which is multiplicative in $n$, as is $\phi(n)$. We verify that both expressions coincide for prime powers, which concludes the proof.
\end{proof}

For $\gamma\in D$ with $q(\gamma)\neq0\mmod{1}$ we introduce $r_\gamma\in(0,1)$ such that $r_\gamma=-q(\gamma)\mmod{1}$.
\begin{prop}\label{prop:Rcoeff1}
Let $\gamma\in D$ with $q(\gamma)\neq0\mmod{1}$. Then
\begin{align*}
[F_\gamma](r_\gamma)&=\frac{24}{\phi(n)}\sum_{c\mid n}\delta_{\gamma\in D^c}\frac{1}{(c,n/c)}\sum_{k\mid n}\mu(n/k)(k,c)^2\times\\
&\quad\times\sigma\left(r_\gamma\frac{k}{(k,c)^2}\right)C_{(c,n/c)}\left((c,n/c)\left(q_c(\gamma)+r_\gamma\frac{(c/(k,c))^{-1}}{(k,c)}\right)\right)
\end{align*}
where the inverse is taken modulo $k/(k,c)$.
\end{prop}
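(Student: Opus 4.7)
The proof follows the same pattern as those of Propositions~\ref{prop:Fconst} and~\ref{prop:constterm}. First, Proposition~\ref{prop:lift2} gives
\[
F_\gamma(\tau)=\sum_{s\in\Gamma_0(n)\backslash P}\delta_{\gamma\in D^c}\,\frac{1}{(c,n/c)}\,e(dq_c(\gamma))\,g_{t_s,j_\gamma}(\tau).
\]
Since $g_{t_s,j_\gamma}$ is by construction the $T$-eigencomponent of $f|_{M_s}$ with eigenvalue $e(j_\gamma/t_s)=e(-q(\gamma))=e(r_\gamma)$, and since the monomial $q^{r_\gamma}$ lies in that eigenspace, the $q^{r_\gamma}$-coefficient of $g_{t_s,j_\gamma}$ coincides with the $q^{r_\gamma}$-coefficient of $f|_{M_s}$. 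The problem thus reduces to reading off $[f|_{M_s}](r_\gamma)$ from the explicit expansion of $f|_{M_s}$ recorded just after Proposition~\ref{prop:specialeis}.

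Second, because $r_\gamma\in(0,1)$ is neither zero nor a positive integer, the constant pieces and the term $-24\sum_m\sigma(m)q^m$ contribute nothing at $q^{r_\gamma}$. Only the rescaled Eisenstein series $\sum_m\sigma(m)e(mv/w)q^{m(k,c)^2/k}$ can produce a term $q^{r_\gamma}$: one needs $m=r_\gamma k/(k,c)^2$ to be a positive integer, an integrality constraint exactly packaged by the function $\sigma_\Z$. Substituting this value of $m$ into $e(mv/w)=e(mv(k,c)/k)=e(r_\gamma v/(k,c))$ yields
\[
[f|_{M_s}](r_\gamma)=\frac{24}{\phi(n)}\sum_{k\mid n}\mu(n/k)(k,c)^2\,\sigma_\Z\!\left(r_\gamma\frac{k}{(k,c)^2}\right)\,e\!\left(\frac{r_\gamma v}{(k,c)}\right),
\]
where $v\equiv(c/(k,c))^{-1}d\pmod{k/(k,c)}$ as in Proposition~\ref{prop:rescaledtrans}.

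Third, I sum over the standard cusp representatives $s=a/c$ with $c\mid n$ and $a$ ranging over $\Z_{(c,n/c)}^*$, taking $d\equiv a^{-1}\pmod c$. Because $v-(c/(k,c))^{-1}d$ is a multiple of $k/(k,c)$, the difference $r_\gamma v/(k,c)-r_\gamma(c/(k,c))^{-1}d/(k,c)$ is an integer multiple of $r_\gamma k/(k,c)^2$, which is itself an integer by the $\sigma_\Z$-factor. Hence
\[
e(dq_c(\gamma))\,e\!\left(\frac{r_\gamma v}{(k,c)}\right)=e\!\left(d\!\left(q_c(\gamma)+r_\gamma\frac{(c/(k,c))^{-1}}{(k,c)}\right)\right).
\]
Applying the bijection $a\mapsto a^{-1}$ on $\Z_{(c,n/c)}^*$ and recognising the resulting exponential sum as the Ramanujan sum $C_{(c,n/c)}((c,n/c)X)$ with $X=q_c(\gamma)+r_\gamma(c/(k,c))^{-1}/(k,c)$ gives the claimed formula.

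The only real difficulty is the modular bookkeeping: one has to verify at each step that the indicated exponential equalities are well-defined modulo $1$, i.e.\ that each factor $e(\cdot)$ only depends on the residue classes actually present, and that the replacement $d\mapsto a^{-1}$ is legitimate inside the appropriate sum. These checks rest on the hypothesis $\gamma\in D^c$, the formula $q_c(\gamma)=cq(\mu)\bmod 1$ valid because $D$ has no odd $2$-adic components, and the integrality enforced by $\sigma_\Z$. No new ideas beyond those used in the proofs of Propositions~\ref{prop:Fconst} and~\ref{prop:constterm} are required.
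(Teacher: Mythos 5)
Your proposal is correct and follows essentially the same route as the paper's proof: apply \autoref{prop:lift2}, read off the $q^{r_\gamma}$-coefficient of $f|_{M_s}$ from the explicit expansion recorded after \autoref{prop:specialeis} (forcing $m=r_\gamma k/(k,c)^2\in\Ns$ via $\sigma_\Z$), sum over the standard cusp representatives, replace $e(d\,\cdot)$ by $e(a\,\cdot)$ using $ad=1\bmod c$ and the integrality of the relevant quantities, and recognise the Ramanujan sum. Your explicit remark that $[g_{t_s,j_\gamma}](r_\gamma)=[f|_{M_s}](r_\gamma)$ because $q^{r_\gamma}$ lies in the $e(j_\gamma/t_s)$-eigenspace of $T$ is a detail the paper leaves implicit, but the argument is otherwise identical.
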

\begin{proof}
By \autoref{prop:lift2}
\begin{equation*}
[F_\gamma](r_\gamma)=\sum_{s\in\Gamma_0(n)\backslash P}\delta_{\gamma\in D^c}\frac{1}{((c,n),n/(c,n))}e(dq_c(\gamma))[f|_{M_s}](r_\gamma)
\end{equation*}
with $M_s=\left(\begin{smallmatrix}a&b\\c&d\end{smallmatrix}\right)\in\Gamma$ such that $M_s\infty=s$. The formula for the Fourier expansion of $f$ at $s$ gives
\begin{equation*}
[f|_{M_s}](r_\gamma)=\frac{24}{\phi(n)}\sum_{k\mid n}\mu(n/k)(k,c)^2\sigma\left(r_\gamma\frac{k}{(k,c)^2}\right)e\left(r_\gamma\frac{v}{(k,c)}\right)
\end{equation*}
where $v=d(c/(k,c))^{-1}$ with inverse taken modulo $k/(k,c)$. Hence,
\begin{align*}
[F_\gamma](r_\gamma)&=\frac{24}{\phi(n)}\!\!\sum_{s\in\Gamma_0(n)\backslash P}\!\!\!\!\delta_{\gamma\in D^c}\frac{1}{((c,n),n/(c,n))}\sum_{k\mid n}\mu(n/k)(k,c)^2\times\\
&\quad\times\sigma\left(r_\gamma\frac{k}{(k,c)^2}\right)e\left(d\left(q_c(\gamma)+r_\gamma\frac{(c/(k,c))^{-1}}{(k,c)}\right)\right).
\end{align*}
Again, we take the standard representatives of the cusps and note that $q_c(\gamma)+r_\gamma(c/(k,c))^{-1}/(k,c)=0\mmod{1/c}$ and $ad=1\mmod{c}$. Then
\begin{align*}
[F_\gamma](r_\gamma)&=\frac{24}{\phi(n)}\sum_{c\mid n}\delta_{\gamma\in D^c}\frac{1}{(c,n/c)}\sum_{k\mid n}\mu(n/k)(k,c)^2\times\\
&\quad\times\sigma\left(r_\gamma\frac{k}{(k,c)^2}\right)\!\!\!\sum_{a\in\Z_{(c,n/c)}^*}\!\!\!e(ay_{n,\gamma}(c,k)/(c,n/c))
\end{align*}
with
\begin{equation*}
y_{n,\gamma}(c,k):=(c,n/c)\left(q_c(\gamma)+r_\gamma\frac{(c/(k,c))^{-1}}{(k,c)}\right)
\end{equation*}
for $c\mid n$, $k\mid n$, $\gamma\in D^c$ and $r_\gamma k/(k,c)^2\in\Z$. One can show that $y_{n,\gamma}(c,k)=0\mmod{1}$.
Hence,
\begin{equation*}
\sum_{a\in\Z_{(c,n/c)}^*}\!\!\!\!e(ay_{n,\gamma}(c,k)/(c,n/c))=C_{(c,n/c)}(y_{n,\gamma}(c,k)),
\end{equation*}
which implies the assertion.
\end{proof}

We shall see in \autoref{prop:pos1} that the coefficients $[F_\gamma](r_\gamma)$ are positive.


\section{Dimension Formulae}\label{sec:dimform}
In this section we prove a formula for the dimension $\dim(V_1^{\orb(g)})$ of the weight-$1$ space of the orbifold construction in central charge $24$, depending on the dimensions of the fixed-point Lie subalgebras $\dim(V_1^{g^m})$, $m\mid n$, where $n$ is the order of $g$, and terms of lower $L_0$-weight.

This generalises previous results for $n=2,3$ in \cite{Mon94} (see also \cite{LS19}), for $n=5,7,13$ in \cite{Moe16} and for all $n>1$ such that the modular curve $X_0(n)$ has genus~$0$ in \cite{EMS20b}. However, our approach is fundamentally different.


\subsection{First Dimension Formula}
The first dimension formula will be proved using the pairing argument described in \autoref{prop:pairing}.

Let $V$ be a \strathol{} \voa{} $V$ of central charge $24$ and $g\in\Aut(V)$ of finite order~$n$ and type~$0$. Suppose that the fixed-point subalgebra $V^g$ satisfies the positivity condition. Then $V^g$ has fusion algebra $\C[D]$ with \fqs{} $D=(\Z_n\times\Z_n,q)$ where $q((i,j))=ij/n\mmod{1}$. In particular, the irreducible $V^g$-modules are given by $W^\gamma$, $\gamma\in D$, with conformal weights $\rho(W^\gamma)=q(\gamma)\mmod{1}$ (see \autoref{sec:orbifold}). The character
\begin{equation*}
\Ch_{V^g}(\tau)=\sum_{\gamma\in D}\ch_{W^\gamma}(\tau)\ee^\gamma
\end{equation*}
is a vector-valued modular form of weight~$0$ for the Weil representation $\rho_D$ of $\Gamma$, holomorphic on $\HH$ and with a pole at the cusp $\infty$ (see \autoref{sec:weil}).
The lowest-order terms in the Fourier expansion of $\Ch_{V^g}$ are
\begin{equation*}
\ch_{W^\gamma}(\tau)=
\begin{cases}q^{-1}+\dim(V_1^g)+O(q)&\text{if }\gamma=0,\\
\dim(W_1^\gamma)+O(q)&\text{if }\gamma\neq0\text{ and }q(\gamma)=0\mmod{1},\\
\dim(W_{1-r_\gamma}^\gamma)q^{-r_\gamma}+O(q^{1-r_\gamma})&\text{if }\gamma\neq0\text{ and }q(\gamma)\neq0\mmod{1}
\end{cases}
\end{equation*}
because $W^0=W^{(0,0)}=V^g$ is of CFT type and $V^g$ satisfies the positivity condition. Recall that $r_\gamma\in(0,1)$ is chosen such that $r_\gamma=-q(\gamma)\mmod{1}$.

Pairing $\Ch_{V^g}$ with a holomorphic vector-valued modular form $F$ of weight~$2$ for the dual Weil representation $\bar\rho_D$ (see \autoref{prop:pairing}) we obtain:
\begin{prop}\label{prop:pairing2}
Let $V$ be a \strathol{} \voa{} of central charge $24$ and $g\in\Aut(V)$ of finite order~$n$ and type~$0$ such that $V^g$ satisfies the positivity condition. Suppose $F$ is a holomorphic modular form of weight~$2$ for the dual Weil representation $\bar\rho_D$. Then
\begin{equation*}
[F_0](1)+\sum_{\substack{\gamma\in D\\q(\gamma)=0\bmod{1}}}[F_\gamma](0)\dim(W_1^\gamma)+\sum_{\substack{\gamma\in D\\q(\gamma)\neq0\bmod{1}}}[F_\gamma](r_\gamma)\dim(W_{1-r_\gamma}^\gamma)=0.
\end{equation*}
\end{prop}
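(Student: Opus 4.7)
The plan is to apply the pairing argument of \autoref{prop:pairing} to $G=\Ch_{V^g}$ and the given $F$, then to read off the constant Fourier coefficient of the resulting scalar-valued form. Since $\Ch_{V^g}$ is a modular form of weight $0$ for $\rho_D$ (holomorphic on $\HH$, meromorphic at $\infty$) and $F$ is a holomorphic modular form of weight $2$ for $\bar\rho_D$, \autoref{prop:pairing} (with $k=2$) gives that the constant coefficient of
\[
(\Ch_{V^g},\bar F)=\sum_{\gamma\in D}\ch_{W^\gamma}(\tau)\,F_\gamma(\tau)
\]
vanishes. It remains only to identify this constant term.

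Since $c=24$, the character of each irreducible $V^g$-module has the form $\ch_{W^\gamma}(\tau)=\dim(W^\gamma_{\rho(W^\gamma)})q^{\rho(W^\gamma)-1}+O(q^{\rho(W^\gamma)})$, while the action of $T$ in $\bar\rho_D$ together with holomorphy at $\infty$ forces $F_\gamma(\tau)$ to start with $[F_\gamma](-q(\gamma)\bmod 1)$ at the $q$-power $-q(\gamma)\bmod 1\in\{0\}\cup(0,1)$. I would then distinguish the components by the value of $q(\gamma)$. For $\gamma=0$, the CFT-type assumption yields $\ch_{W^0}(\tau)=q^{-1}+\dim(V_1^g)+O(q)$, and multiplying by $F_0(\tau)=[F_0](0)+[F_0](1)q+O(q^2)$ contributes $[F_0](1)+\dim(V_1^g)[F_0](0)$ to the constant term. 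For $\gamma\neq 0$ with $q(\gamma)=0\bmod 1$, positivity forces $\rho(W^\gamma)\geq 1$, so $\ch_{W^\gamma}(\tau)=\dim(W_1^\gamma)+O(q)$ (with $\dim(W_1^\gamma)=0$ if $\rho(W^\gamma)>1$), producing $\dim(W_1^\gamma)[F_\gamma](0)$. For $\gamma$ with $q(\gamma)\neq 0\bmod 1$, writing $r_\gamma\in(0,1)$ as in the statement, positivity gives $\rho(W^\gamma)\geq 1-r_\gamma$, so the two factors start at $q^{-r_\gamma}$ and $q^{r_\gamma}$ with leading coefficients $\dim(W_{1-r_\gamma}^\gamma)$ and $[F_\gamma](r_\gamma)$, contributing $\dim(W_{1-r_\gamma}^\gamma)[F_\gamma](r_\gamma)$.

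Summing the three types of contributions and absorbing the term $\dim(V_1^g)[F_0](0)=\dim(W_1^0)[F_0](0)$ into the sum over $\gamma$ with $q(\gamma)=0\bmod 1$ yields precisely the displayed identity. No deeper obstacle arises; the only care required is in invoking the positivity condition (and the CFT-type condition at $\gamma=0$) to ensure that no terms of lower conformal weight than the ones singled out contribute to the $q^0$-coefficient, so that truncating each expansion after its leading term is lossless.
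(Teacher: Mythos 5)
Your proposal is correct and follows exactly the paper's route: the paper also obtains the identity by pairing $\Ch_{V^g}$ with $F$ via \autoref{prop:pairing} and reading off the constant Fourier coefficient from the displayed leading terms of the components $\ch_{W^\gamma}$, using the positivity condition and CFT-type exactly as you do. Your additional remark that the $T$-action in $\bar\rho_D$ together with holomorphy at $\infty$ forces $F_\gamma$ to begin at $q^{r_\gamma}$ (so that only the single product of leading terms survives in the $q^0$-coefficient) is the right justification and is implicit in the paper.
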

We emphasise that any modular form $F\neq0$ imposes a restriction on the character $\Ch_{V^g}$. Our first dimension formula is obtained by choosing for $F$ the lift of the Eisenstein series from \autoref{prop:specialeis},
\begin{equation*}
f(\tau)=E_2(\tau)-\frac{1}{\phi(n)}\sum_{k\mid n}k^2\mu(n/k)E_2(k\tau).
\end{equation*}

\begin{thm}[First Dimension Formula]\label{thm:dimform1}
Let $V$ be a \strathol{} \voa{} of central charge $24$ and $g\in\Aut(V)$ of finite order $n>1$ and type~$0$ such that $V^g$ satisfies the positivity condition. Then
\begin{equation*}
\psi(n)\dim(V_1^g)-\sum_{c\mid n}\frac{\phi((c,n/c))}{(c,n/c)}\dim(V_1^{\orb(g^c)})=24-24\sum_{c\mid n}\frac{\phi((c,n/c))}{(c,n/c)}+\tilde{R}(g)
\end{equation*}
with rest term
\begin{equation*}
\tilde{R}(g)=\sum_{\substack{\gamma\in D\\q(\gamma)\neq0\bmod{1}}}\tilde{d}_n(\gamma)\dim(W_{1-r_\gamma}^\gamma)
\end{equation*}
where $\tilde{d}_n(\gamma)$ equals the expression $[F_\gamma](r_\gamma)$ from \autoref{prop:Rcoeff1}.
\end{thm}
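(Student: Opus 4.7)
The plan is to apply the pairing identity of \autoref{prop:pairing2} with $F$ taken to be the lift (via \autoref{prop:lift1} and \autoref{prop:lift2}) to the dual Weil representation $\bar\rho_D$ of the Eisenstein series $f \in \mathcal{E}_2(\Gamma_0(n))$ constructed in \autoref{prop:specialeis}. This yields
\begin{equation*}
[F_0](1) + \!\!\!\!\sum_{\substack{\gamma \in D\\q(\gamma)=0\bmod{1}}}\!\!\!\![F_\gamma](0)\dim(W_1^\gamma) + \!\!\!\!\sum_{\substack{\gamma \in D\\q(\gamma)\neq0\bmod{1}}}\!\!\!\![F_\gamma](r_\gamma)\dim(W_{1-r_\gamma}^\gamma) = 0,
\end{equation*}
and almost all the ingredients are already computed. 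The constant-at-infinity term is handed to us by \autoref{prop:constterm} as $[F_0](1) = 24 - 24\sum_{c\mid n}\phi((c,n/c))/(c,n/c)$, and the non-isotropic contribution is by definition the rest term $\tilde{R}(g)$ since $\tilde{d}_n(\gamma) = [F_\gamma](r_\gamma)$ is precisely the expression of \autoref{prop:Rcoeff1}.

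The substantive step is to rewrite the isotropic sum. Using \autoref{prop:Fconst2} it equals
\begin{equation*}
-\psi(n)\dim(W_1^0) + \sum_{c\mid n}\frac{\phi((c,n/c))}{(c,n/c)}\sum_{\gamma \in H_c}\dim(W_1^\gamma).
\end{equation*}
Since $W^0 = V^g$, the first term contributes $-\psi(n)\dim(V_1^g)$. For the inner sum I will identify $\bigoplus_{\gamma\in H_c} W^\gamma$ with $V^{\orb(g^c)}$. By the orbifold construction for $g^c$ (of order $n/c$),
\begin{equation*}
V^{\orb(g^c)} = \bigoplus_{i\in\Z_{n/c}} V(g^{ci})^{g^c}.
\end{equation*}
Decomposing each twisted module under $g$ as $V(g^{ci}) = \bigoplus_{j\in\Z_n} W^{(ci,j)}$, the operator $g^c$ acts on $W^{(ci,j)}$ by $e(cj/n)$, which equals $1$ precisely when $(n/c)\mid j$. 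Setting $j = (n/c)k$ with $k\in\Z_c$ gives $V^{\orb(g^c)} = \bigoplus_{i\in\Z_{n/c},\,k\in\Z_c} W^{(ci,(n/c)k)} = \bigoplus_{\gamma\in H_c} W^\gamma$, hence $\sum_{\gamma\in H_c}\dim(W_1^\gamma) = \dim(V_1^{\orb(g^c)})$. Substituting all three pieces into the pairing identity and rearranging yields the claimed formula.

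The main delicate point is the identification in the previous display. It relies on the specific normalisation of the representations $\phi_i$ from \autoref{sec:orbifold} that forces $W^{(i,j)}$ to be the $e(j/n)$-eigenspace of $g$ in $V(g^i)$ with conformal weight $ij/n \mmod 1$; one must also check that all elements of $H_c$ are isotropic, which follows instantly from $q((ci,(n/c)j)) = ij \mmod 1$, and that forming $g^c$-fixed points commutes with the decomposition into $V^g$-modules — a standard feature of the cyclic orbifold setup. Once this bookkeeping is in place, the rest of the argument is purely a matter of assembling formulas that have been proved in earlier sections.
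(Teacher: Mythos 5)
Your proposal is correct and follows essentially the same route as the paper: pair $\Ch_{V^g}$ with the lift of the Eisenstein series from \autoref{prop:specialeis}, evaluate the isotropic contributions via \autoref{prop:Fconst2} and the identification $V^{\orb(g^c)}=\bigoplus_{\gamma\in H_c}W^\gamma$, insert $[F_0](1)$ from \autoref{prop:constterm}, and absorb the non-isotropic terms into $\tilde{R}(g)$. The only difference is that you spell out the eigenspace bookkeeping behind $V^{\orb(g^c)}=\bigoplus_{\gamma\in H_c}W^\gamma$, which the paper simply asserts.
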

\begin{proof}
Let $F$ be the lift of $f$. \autoref{prop:Fconst2} and $V^{\orb(g^c)}=\bigoplus_{\gamma\in H_c}W^\gamma$ imply
\begin{equation*}
\sum_{\substack{\gamma\in D\\q(\gamma)=0\bmod{1}}}[F_\gamma](0)\dim(W_1^{\gamma})=\sum_{c\mid n}\frac{\phi((c,n/c))}{(c,n/c)}\dim(V_1^{\orb(g^c)})-\psi(n)\dim(V_1^g).
\end{equation*}
Finally, \autoref{prop:pairing2} together with \autoref{prop:constterm} gives
\begin{equation*}
\psi(n)\dim(V_1^g)-\sum_{c\mid n}\frac{\phi((c,n/c))}{(c,n/c)}\dim(V_1^{\orb(g^c)})=24-24\sum_{c\mid n}\frac{\phi((c,n/c))}{(c,n/c)}+\tilde{R}(g)
\end{equation*}
with $\tilde{R}(g)$ as stated.
\end{proof}
The formula extends to $n=1$ if we define $\tilde{R}(g)=0$ in this case.

The rest term $\tilde{R}(g)$ contains exactly the terms involving the dimensions of the weight spaces of the irreducible $V^g$-modules of weight less than $1$. We shall show that $\tilde{R}(g)\geq0$ (see \autoref{prop:pos1}).

Specialising to $n=2,3,4,5,6,7,8,9,10,12,13,16,18,25$, i.e.\ those $n>1$ for which the modular curve $X_0(n)$ has genus~$0$, the result becomes Theorem~3.1 in \cite{EMS20b}, which was proved using expansions of Hauptmoduln.

In the case that the order of $g$ is a prime $p$ such that $X_0(p)$ has positive genus we shall see that $\tilde{R}(g)\geq24$. This follows from pairing $\Ch_{V^g}$ with the lift of a Hecke eigenform in $\mathcal{S}_2(\Gamma_0(n))^-$.


\subsection{Second Dimension Formula}\label{sec:dimform2}
Linearly combining the dimension formula in \autoref{thm:dimform1} applied to the powers $g^{n/m}$ of $g$ with $m\mid n$ we obtain the second dimension formula, a closed formula for $\dim(V^{\orb(g)}_1)$. This will be the main result of this section.

We introduce the arithmetic function $\lambda$ by
\begin{equation*}
\lambda(n):=\prod_{\substack{p\mid n\\p\text{ prime}}}(-p)
\end{equation*}
for $n\in\Ns$ and define
\begin{align*}
\xi_n(m)&:=\frac{\lambda(n/m)}{n/m}\frac{\phi((m,n/m))}{(m,n/m)},\\
c_n(m)&:=\xi_n(n/m)\psi(n/m)=\frac{\lambda(m)}{m}\frac{\phi((m,n/m))}{(m,n/m)}\psi(n/m)
\end{align*}
for $n\in\Ns$ and $m\mid n$ (see also \autoref{sec:averageverystrange}). Note that for a given $n\in\Ns$, the numbers $\xi_n(m)$ and $c_n(m)$ for $m\mid n$ can be equivalently specified by the systems of linear equations
\begin{equation*}
\sum_{t\mid m\mid n}\xi_n(m)\frac{\phi((t,m/t))}{(t,m/t)}=\delta_{t,n}
\end{equation*}
(so that in particular $\sum_{m\mid n}\xi_n(m)=\delta_{1,n}$) and
\begin{equation*}
\sum_{m\mid n}c_n(m)(t,m)=n/t
\end{equation*}
for all $t\mid n$.

Taking the linear combination of the dimension formulae in \autoref{thm:dimform1} for $g^{n/m}$ with coefficients $\xi_n(m)$ for $m\mid n$ yields:
\begin{thm}[Second Dimension Formula]\label{thm:dimform2}
Let $V$ be a \strathol{} \voa{} of central charge $24$ and $g\in\Aut(V)$ of order $n>1$ and type~$0$ such that $V^g$ satisfies the positivity condition. Then
\begin{equation*}
\dim(V_1^{\orb(g)})=24+\sum_{m\mid n}c_n(m)\dim(V_1^{g^m})-R(g)
\end{equation*}
with rest term
\begin{equation*}
R(g)=\sum_{m\mid n}\xi_n(n/m)\tilde{R}(g^m)=\!\!\!\!\sum_{\substack{\gamma\in D\\q(\gamma)\neq0\bmod{1}}}\!\!\!\!d_n(\gamma)\dim(W_{1-r_\gamma}^\gamma)
\end{equation*}
where for $\gamma=(i,j)\in D$ such that $q(\gamma)\neq0\mmod{1}$
\begin{equation*}
d_n(\gamma)=\sum_{m\mid (i,n)}\xi_n(n/m)\tilde{d}_{n/m}(i/m,j).
\end{equation*}
\end{thm}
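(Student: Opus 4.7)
The plan is to take a linear combination of the instances of the First Dimension Formula (\autoref{thm:dimform1}) applied to the automorphisms $g^{n/m}$ for $m\mid n$, with coefficients $\xi_n(m)$, and verify the resulting cancellations via the two characterising identities of $\xi_n(m)$ and $c_n(m)$ stated just before the theorem.

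Applied to $g^{n/m}$, which has order $m$ (with the convention $\tilde R(\id)=0$ in the trivial case $m=1$), \autoref{thm:dimform1} reads
\begin{equation*}
\psi(m)\dim(V_1^{g^{n/m}})-\sum_{c\mid m}\frac{\phi((c,m/c))}{(c,m/c)}\dim(V_1^{\orb(g^{(n/m)c})})=24-24\sum_{c\mid m}\frac{\phi((c,m/c))}{(c,m/c)}+\tilde R(g^{n/m}).
\end{equation*}
I would multiply by $\xi_n(m)$, sum over $m\mid n$, and treat the four pieces separately. Reindexing $m\mapsto n/m$ in the first term on the left and using $c_n(m)=\xi_n(n/m)\psi(n/m)$ immediately yields $\sum_{m\mid n}c_n(m)\dim(V_1^{g^m})$. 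For the double sum of orbifold dimensions, the substitutions $j=(n/m)c$ and $k=n/m$ rearrange it as
\begin{equation*}
\sum_{j\mid n}\dim(V_1^{\orb(g^j)})\sum_{k\mid j}\xi_n(n/k)\frac{\phi((j/k,n/j))}{(j/k,n/j)},
\end{equation*}
and the inner sum equals $\delta_{j,1}$ by the defining identity of $\xi_n$ applied with $t=n/j$; hence only $\dim(V_1^{\orb(g)})$ survives. On the right-hand side, $\sum_{m\mid n}\xi_n(m)=\delta_{1,n}=0$ annihilates the first constant, while exchanging the order of summation gives $\sum_{m\mid n}\xi_n(m)\sum_{c\mid m}\phi((c,m/c))/(c,m/c)=\sum_{c\mid n}\delta_{c,n}=1$, producing an overall $-24$. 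Rearranging yields the stated formula with $R(g)=\sum_{m\mid n}\xi_n(m)\tilde R(g^{n/m})=\sum_{m\mid n}\xi_n(n/m)\tilde R(g^m)$.

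It remains to rewrite $R(g)$ in terms of the $V^g$-modules $W^\gamma$. For each $m\mid n$, the irreducible $V^{g^m}$-modules are indexed by $(i',j')\in\Z_{n/m}\times\Z_{n/m}$, and since $V^g\subset V^{g^m}$, each $\tilde W^{(i',j')}$ decomposes as a direct sum of $V^g$-modules. Using that $V((g^m)^{i'})=V(g^{mi'})$, that $g^m$ acts on the $V^g$-submodule $W^{(mi',j)}$ by the scalar $e(j/(n/m))$, together with the convention $\rho(W^{(i,j)})=ij/n\bmod 1$ from \autoref{sec:orbifold}, one obtains
\begin{equation*}
\tilde W^{(i',j')}=\bigoplus_{\substack{j\in\Z_n\\(n/m)\mid(j-j')}}W^{(mi',j)},
\end{equation*}
and the conformal weights satisfy $q(mi',j)\equiv q_{n/m}(i',j')\pmod 1$, so $r_{(mi',j)}=r_{(i',j')}$ on each summand. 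Substituting this decomposition into $\tilde R(g^m)$, reparameterising $i=mi'\in\Z_n$ (which enforces $m\mid i$), and swapping the orders of summation between $m\mid n$ and $(i,j)\in D$ produces $R(g)=\sum_{q(\gamma)\neq 0}d_n(\gamma)\dim(W^\gamma_{1-r_\gamma})$ with $d_n((i,j))$ exactly as stated; the restriction $m\mid(i,n)$ appearing there is precisely the constraint $m\mid i$ together with $m\mid n$.

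The main obstacle will be the third-paragraph bookkeeping: correctly identifying the decomposition of $V^{g^m}$-modules into $V^g$-modules with the correct eigenvalue and conformal-weight conventions, and then executing the summation exchange cleanly. The cancellations in the second paragraph reduce to two combinatorial consequences of the already-stated characterising properties of $\xi_n(m)$.
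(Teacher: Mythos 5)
Your proposal is correct and follows essentially the same route as the paper: sum the First Dimension Formula for $g^{n/m}$ against the coefficients $\xi_n(m)$, collapse the double sum of orbifold dimensions via the characterising identity $\sum_{t\mid m\mid n}\xi_n(m)\phi((t,m/t))/(t,m/t)=\delta_{t,n}$ (your reindexing by $j=kc$, $k=n/m$ is equivalent to the paper's substitution $c\to m/c$ followed by swapping sums), and then convert $\sum_m\xi_n(n/m)\tilde R(g^m)$ into the $W^\gamma$-form using the branching $\dim_{g^m}(i',j')=\sum_{l\in\Z_m}\dim_g(mi',j'+ln/m)$, which is exactly the identity the paper invokes. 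The only difference is that you justify that branching and the compatibility of the weight conventions slightly more explicitly than the paper does.
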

\begin{proof}
The dimension formula in \autoref{thm:dimform1} applied to $g^{n/m}$ of order~$m$ for $m\mid n$ reads
\begin{align*}
&\psi(m)\dim(V_1^{g^{n/m}})-\sum_{c\mid m}\frac{\phi((c,m/c))}{(c,m/c)}\dim(V_1^{\orb(g^{nc/m})})\\
&=24-24\sum_{c\mid m}\frac{\phi((c,m/c))}{(c,m/c)}+\tilde{R}(g^{n/m}).
\end{align*}
Summing this equation over $m\mid n$ with coefficients $\xi_n(m)$ yields
\begin{align*}
&\sum_{m\mid n}c_n(n/m)\dim(V_1^{g^{n/m}})-\sum_{m\mid n}\xi_n(m)\sum_{c\mid m}\frac{\phi((c,m/c))}{(c,m/c)}\dim(V_1^{\orb(g^{nc/m})})\\
&=24\delta_{1,n}-24\sum_{m\mid n}\xi_n(m)\sum_{c\mid m}\frac{\phi((c,m/c))}{(c,m/c)}+R(g).
\end{align*}
Replacing $c$ by $m/c$ and swapping the order of the two summations we obtain
\begin{align*}
&\sum_{m\mid n}\xi_n(m)\sum_{c\mid m}\frac{\phi((c,m/c))}{(c,m/c)}\dim(V_1^{\orb(g^{nc/m})})\\
&=\sum_{m\mid n}\xi_n(m)\sum_{c\mid m}\frac{\phi((c,m/c))}{(c,m/c)}\dim(V_1^{\orb(g^{n/c})})\\
&=\sum_{c\mid n}\dim(V_1^{\orb(g^{n/c})})\sum_{c\mid m\mid n}\xi_n(m)\frac{\phi((c,m/c))}{(c,m/c)}\\
&=\sum_{c\mid n}\dim(V_1^{\orb(g^{n/c})})\delta_{c,n}=\dim(V_1^{\orb(g)}).
\end{align*}
Similarly,
\begin{equation*}
\sum_{m\mid n}\xi_n(m)\sum_{c\mid m}\frac{\phi((c,m/c))}{(c,m/c)}=1.
\end{equation*}
This proves
\begin{equation*}
\dim(V_1^{\orb(g)})=24-24\delta_{1,n}+\sum_{m\mid n}c_n(m)\dim(V_1^{g^m})-R(g).
\end{equation*}
Recall that
\begin{equation*}
\tilde{R}(g)=\sum_{\substack{i,j\in\Z_n\\ij\neq0\bmod{n}}}\tilde{d}_n(i,j)\dim_g(i,j)
\end{equation*}
with $\dim_g(i,j):=\dim(W_{1-r_{(i,j)}}^{(i,j)})$. To compute $\tilde{R}(g^m)$ for $m\mid n$ we note that
\begin{equation*}
\dim_{g^m}(i,j)=\sum_{l\in\Z_m}\dim_g(im,j+ln/m).
\end{equation*}
Hence
\begin{align*}
R(g)&=\sum_{m\mid n}\xi_n(n/m)\tilde{R}(g^m)=\sum_{m\mid n}\xi_n(n/m)\!\!\sum_{\substack{i,j\in\Z_{n/m}\\ij\neq0\bmod{n/m}}}\!\!\tilde{d}_{n/m}(i,j)\dim_{g^m}(i,j)\\
&=\sum_{m\mid n}\xi_n(n/m)\!\!\sum_{\substack{i,j\in\Z_{n/m}\\ij\neq0\bmod{n/m}}}\!\!\tilde{d}_{n/m}(i,j)\sum_{l\in\Z_m}\dim_g(im,j+ln/m),
\end{align*}
which may be rewritten as
\begin{align*}
R(g)&=\sum_{\substack{i,j\in\Z_n\\ij\neq0\bmod{n}}}\!\!\Big(\sum_{m\mid (i,n)}\xi_n(n/m)\tilde{d}_{n/m}(i/m,j)\Big)\dim_g(i,j)\\
&=\sum_{\substack{i,j\in\Z_n\\ij\neq0\bmod{n}}}d_n(i,j)\dim_g(i,j)
\end{align*}
with
\begin{equation*}
d_n(i,j):=\sum_{m\mid (i,n)}\xi_n(n/m)\tilde{d}_{n/m}(i/m,j)
\end{equation*}
for all $i,j\in\Z_n$ with $ij\neq0\mmod{n}$. This proves the theorem.
\end{proof}
Again, the rest term $R(g)$ contains exactly the terms involving the dimensions of the weight spaces of the irreducible $V^g$-modules of weight less than $1$. We shall prove in \autoref{prop:pos2} that $R(g)\geq0$. The theorem generalises Corollary~4.3 in \cite{EMS20b}.

We remark that if the order of $g$ is a prime~$p$, then the first and second dimension formulae both reduce to
\begin{equation*}
\dim(V_1)+\dim(V_1^{\orb(g)})=24+(p+1)\dim(V_1^g)-R(g)
\end{equation*}
with rest term
\begin{equation*}
R(g)=\tilde{R}(g)=\frac{24}{p-1}\sum_{k=1}^{p-1}\sigma(p-k)\!\!\sum_{\substack{\gamma\in D\\q(\gamma)=k/p\bmod{1}}}\!\!\dim(W_{k/p}^\gamma).
\end{equation*}

A similar formula was derived in \cite{BM21} in the special case of $p=11,17,19$, the primes with $g(X_0(p))=1$ using the theory of mock modular forms.


\subsection{Dimension Bounds}
In this section we show that the rest terms in \autoref{thm:dimform1} and \autoref{thm:dimform2} are non-negative. This implies that there is an upper bound for the dimension of the weight-$1$ subspace of the orbifold construction. Based on this observation we define the notion of an extremal orbifold.

\begin{prop}\label{prop:pos1}
In the rest term $\tilde{R}(g)$ in \autoref{thm:dimform1} the coefficients $\tilde{d}_n(\gamma)$ for all $\gamma\in D$ with $q(\gamma)\neq0\mmod{1}$ satisfy
\begin{equation*}
\tilde{d}_n(\gamma)=[F_\gamma](r_\gamma)\in\frac{24}{\phi(n)}
\Ns.
\end{equation*}
This implies
\begin{equation*}
\tilde{R}(g)\geq0.
\end{equation*}
\end{prop}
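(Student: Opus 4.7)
The statement has two parts: integrality, $\tilde d_n(\gamma)\in\frac{24}{\phi(n)}\Z$, and strict positivity, $\tilde d_n(\gamma)>0$. Non-negativity of $\tilde R(g)$ is then automatic since it is a non-negative $\Q$-linear combination of the non-negative integers $\dim(W_{1-r_\gamma}^\gamma)$. So the real content is analysing the explicit formula of Proposition~\ref{prop:Rcoeff1} for $\tilde d_n(\gamma)=[F_\gamma](r_\gamma)$ and showing that, after stripping the prefactor $24/\phi(n)$, what remains is a positive integer.

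My first step would be to expand each Ramanujan sum via $C_k(m)=\sum_{d\mid(k,m)}d\,\mu(k/d)$ and swap orders of summation, so that $\tilde d_n(\gamma)$ becomes a quadruple sum indexed by $c\mid n$, $k\mid n$ and $d\mid(c,n/c)$, with $\delta_{\gamma\in D^c}$ and $\sigma_\Z(r_\gamma k/(k,c)^2)$ imposing the natural arithmetic compatibility on the summation indices. Collecting standard Möbius identities (in particular $\sum_{k\mid n/m}\mu(n/mk)$-type collapses already used in the proof of Proposition~\ref{prop:constterm}) should reduce this to a short closed expression.

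The cleanest way to conclude is to exploit multiplicativity in $n$. Writing $n=\prod_p p^{e_p}$, the Chinese remainder theorem decomposes $D=(\Z_n\times\Z_n,q)$ and $\gamma=(i,j)$ prime-by-prime, and each ingredient ($\mu$, $\phi$, the $(k,c)$-factors, the indicators $\delta_{\gamma\in D^c}$, the Ramanujan sums, and $\sigma_\Z$ whenever its argument is integral) factorises across the primes dividing $n$. Consequently $\phi(n)\tilde d_n(\gamma)/24$ factorises as a product over prime powers, and the claim reduces to the case $n=p^e$.

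For $n=p^e$ the divisors of $n$ are $p^a$ with $0\le a\le e$, and the surviving pairs $(a,b)$ in the double sum are cut down drastically by the conditions $\gamma\in D^{p^a}$ and $r_\gamma p^b/(p^b,p^a)^2\in\Ns$. A direct case split on $a\le e-a$ versus $a>e-a$, handling the Bezout inverse $(c/(k,c))^{-1}\bmod k/(k,c)$ explicitly, should collapse the remaining terms into a classical divisor sum of the form $\sigma(r_\gamma p^e/p^{\min(a_0,e-a_0)})$ attached to the unique $p$-adic valuation data of $\gamma$, manifestly a positive integer. The main obstacle is precisely this last prime-power bookkeeping: the Möbius signs, the Ramanujan sums and the non-trivial modular inverses all have to conspire to yield a single positive divisor sum rather than an oscillatory expression, and keeping track of which $(a,b)$ contribute non-trivially is where the computation is most delicate.
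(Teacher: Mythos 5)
Your proposal follows essentially the same route as the paper's own (sketched) proof: both strip the prefactor $\frac{24}{\phi(n)}$, separate the part of $r_\gamma$ coprime to $n$ (whose $\sigma$-value is a positive integer), and then use multiplicativity to reduce the remaining double sum over $c\mid n$ and $k\mid n$ to a local computation at each prime power dividing $n$, where the Möbius signs, Ramanujan sums and modular inverses collapse to a positive integer. The paper likewise only sketches the prime-power bookkeeping, so your outline matches it in both strategy and level of detail.
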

\begin{proof}
Since the argument is tedious, we only sketch it. We write
\begin{equation*}
\tilde{d}_n(\gamma)=[F_\gamma](r_\gamma)=\frac{24}{\phi(n)}\sigma(r_{\gamma,[n]})\sum_{c\mid n}\tilde{f}_{n,\gamma}(c)
\end{equation*}
with
\begin{equation*}
\tilde{f}_{n,\gamma}(c):=\delta_{\gamma\in D^c}\frac{1}{(c,n/c)}\sum_{k\mid n}\mu(n/k)(k,c)^2\sigma\left(r_{\gamma,n}\frac{k}{(k,c)^2}\right)C_{(c,n/c)}(y_{n,\gamma}(c,k))
\end{equation*}
where we factored the rational number $r_{\gamma}=r_{\gamma,n}r_{\gamma,[n]}$ such that $r_{\gamma,n}$ contains all the prime divisors of $n$, i.e.\ $r_{\gamma,n}=\prod_{p\mid n}p^{\nu_p(r_{\gamma})}$. Note that $r_{\gamma,[n]}\in\Ns$ because $nr_{\gamma}\in\Ns$. Then the functions $\tilde{f}_{n,\gamma}$ are multiplicative after normalisation. Writing $\sum_{c\mid n}\tilde{f}_{n,\gamma}(c)$ as a product over the primes dividing $n$ we see that this sum is a positive integer.
\end{proof}

For the rest term in \autoref{thm:dimform2} we show a similar statement. The results differ slightly, insofar as the coefficients in the rest term $R(g)$ are only non-negative rather than positive.

We denote by $i_n$ for $i\in\Z_n$ the representative of $i$ in $\{0,1,\ldots,n-1\}$.
\begin{prop}\label{prop:pos2}
In the rest term $R(g)$ in \autoref{thm:dimform2} the coefficients $d_n(\gamma)$ for all $\gamma=(i,j)\in D$ with $q(\gamma)\neq0\mmod{1}$ satisfy
\begin{equation*}
d_n(\gamma)\in\frac{24}{\phi(n)}
\N.
\end{equation*}
This implies
\begin{equation*}
R(g)\geq0.
\end{equation*}
Moreover, $d_n(i,j)=0$ if and only if $(i,j,n)\nmid\lceil i_nj_n/n\rceil$.
\end{prop}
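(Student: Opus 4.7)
The plan is to mimic and extend the analysis from the proof of \autoref{prop:pos1}. First I would establish that, after normalisation by $\phi(n)/24$, the coefficient $d_n(i,j)$ factors as a product of local contributions indexed by the primes dividing $n$. The building blocks appearing in the definition of $d_n$ -- namely $\xi_n$, $\tilde{d}_{n/m}$, the Ramanujan sums $C_k$, and the extended divisor sum $\sigma_\Z$ -- are all multiplicative after suitable normalisation, and the outer sum $\sum_{m\mid(i,n)}$ is compatible with this factorisation because $(i,n) = \prod_{p\mid n} p^{\min(\ord_p(i),\ord_p(n))}$. This reduces the proposition to the case $n = p^a$ of a prime power.

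For $n = p^a$ the sum over $m\mid(i,n)$ becomes a finite sum indexed by $b = 0,1,\dots,\min(a,\ord_p(i))$. Substituting the explicit formula for $\tilde{d}_{p^{a-b}}(i/p^b,j)$ from \autoref{prop:Rcoeff1} and swapping the outer $b$-summation with the inner sums over $c\mid p^{a-b}$ and $k\mid p^{a-b}$ should produce a Möbius-style cancellation, since $\xi_{p^a}$ is a signed, essentially $\mu$-weighted function. The remaining expression will collapse to a concrete quantity depending only on the $p$-adic data $\ord_p(i)$, $\ord_p(j)$, $a$ and the $p$-part of $r_\gamma$.

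The next step is to verify, case by case in these valuations, that the resulting local factor is a non-negative integer and to identify exactly when it vanishes. I expect the vanishing to occur precisely when the $p$-local part of the divisibility condition $(i,j,n)\mid\lceil i_nj_n/n\rceil$ fails. Matching this $p$-locally to the description $r_\gamma = (n-(i_nj_n\bmod n))/n$ is the combinatorial heart of the argument; once established, taking the product over all primes $p\mid n$ yields the global characterisation of the vanishing locus. Integrality in $\frac{24}{\phi(n)}\N$ then follows from the integrality of each local factor, and $R(g)\geq 0$ is immediate since each $\dim(W^\gamma_{1-r_\gamma})\geq 0$.

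The main obstacle will be the bookkeeping in carrying out the local computation at a prime power. The nested sums involve several auxiliary quantities -- membership in $D^c$, the twist $q_c(\gamma)$, the exponent $y_{n,\gamma}(c,k)$, and the modular inverse $(c/(k,c))^{-1}\bmod k/(k,c)$ -- all of which transform non-trivially when passing from $n$ to $n/m$ and from $i$ to $i/m$. Keeping track of how these objects scale and verifying that precisely the divisibility condition stated in the proposition emerges from the final Möbius cancellation will require careful, though essentially mechanical, arithmetic.
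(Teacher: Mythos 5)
Your proposal follows essentially the same route as the paper: the paper likewise rewrites $d_n(\gamma)$ as $\frac{24}{\phi(n)}\sigma(r_{\gamma,[n]})\sum_{c\mid n}f_{n,\gamma}(c)$ with the $m$-sum absorbed into a function $f_{n,\gamma}$ that is multiplicative in $c$ after normalisation, then expresses the sum as a product of local factors over the primes dividing $n$ and reads off non-negativity, integrality and the vanishing condition from the prime-power case. Both arguments rest on the same multiplicativity-and-local-verification strategy, and your identification of the bookkeeping around $q_c(\gamma)$, $y_{n,\gamma}(c,k)$ and the modular inverses as the main technical burden matches exactly what the paper leaves as the (sketched) computation.
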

\begin{proof}
The proof is analogous to that of \autoref{prop:pos1}. Here we write
\begin{equation*}
d_n(\gamma)=\frac{24}{\phi(n)}\sigma(r_{\gamma,[n]})\sum_{c\mid n}f_{n,\gamma}(c)
\end{equation*}
with
\begin{align*}
f_{n,\gamma}(c)
&:=\delta_{\gamma\in D^c}\sum_{m\mid (i,n)/c}\frac{\lambda(m)}{m}\frac{\phi((m,n/m))}{(m,n/m)}\frac{\phi(n)}{\phi(n/m)}\frac{1}{(c,(n/m)/c)}\times\\
&\quad\times\sum_{k\mid n/m}\mu((n/m)/k)(k,c)^2\sigma\left(r_{\gamma,n}\frac{k}{(k,c)^2}\right)\times\\
&\quad\times C_{(c,(n/m)/c)}\left((c,(n/m)/c)\left(q_c(\gamma)+r_\gamma\frac{(c/(k,c))^{-1}}{(k,c)}\right)\right)
\end{align*}
where the inverse is taken modulo $k/(k,c)$. Then $f_{n,\gamma}$ is multiplicative in $c$ after normalisation. Writing $\sum_{c\mid n}f_{n,\gamma}(c)$ as a product over the prime divisors of $n$ we can deduce the assertions.
\end{proof}
Note that, while the coefficients $\tilde{d}_n(i,j)$ in \autoref{thm:dimform1} are symmetric in $i,j\in\Z_n$, the $d_n(i,j)$ in general are not. However, the condition that $d_n(i,j)$ vanishes is symmetric.

We now state two important corollaries of the above propositions.
\begin{cor}[First Dimension Bound]\label{cor:dimbound1}
Let $V$ be a \strathol{} \voa{} of central charge $24$ and $g\in\Aut(V)$ of finite order $n>1$ and type~$0$ such that $V^g$ satisfies the positivity condition. Then
\begin{equation*}
\psi(n)\dim(V_1^g) -\sum_{c\mid n}\frac{\phi((c,n/c))}{(c,n/c)}\dim(V_1^{\orb(g^c)})
\geq24-24\sum_{c\mid n}\frac{\phi((c,n/c))}{(c,n/c)}.
\end{equation*}
\end{cor}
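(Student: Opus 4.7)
The plan is straightforward since all the heavy lifting has already been done in \autoref{thm:dimform1} and \autoref{prop:pos1}. I would simply combine the first dimension formula with the non-negativity of the rest term.

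In detail, I would start by invoking \autoref{thm:dimform1}, which under the stated hypotheses ($V$ strongly rational and holomorphic of central charge $24$, $g$ of finite order $n>1$ and type $0$, $V^g$ satisfying the positivity condition) gives the identity
\begin{equation*}
\psi(n)\dim(V_1^g)-\sum_{c\mid n}\frac{\phi((c,n/c))}{(c,n/c)}\dim(V_1^{\orb(g^c)})
=24-24\sum_{c\mid n}\frac{\phi((c,n/c))}{(c,n/c)}+\tilde{R}(g).
\end{equation*}
Then I would apply \autoref{prop:pos1}, which asserts that the coefficients $\tilde{d}_n(\gamma) = [F_\gamma](r_\gamma)$ lie in $(24/\phi(n))\Ns$, and in particular that $\tilde{R}(g) \geq 0$ since the dimensions $\dim(W^\gamma_{1-r_\gamma})$ appearing in $\tilde{R}(g)$ are also non-negative. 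Dropping this non-negative term from the right-hand side yields exactly the claimed inequality.

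Since both ingredients are stated earlier in the excerpt, there is no real obstacle; the corollary is essentially a one-line consequence. The only thing worth emphasising in the write-up is that the rest term $\tilde{R}(g)$ is a sum of products of non-negative quantities (the coefficients from \autoref{prop:pos1} and the dimensions $\dim(W^\gamma_{1-r_\gamma})$ of weight spaces of irreducible $V^g$-modules), so the inequality is sharp exactly when all these dimensions vanish, i.e.\ when no irreducible $V^g$-module $W^\gamma$ with $q(\gamma)\neq 0\bmod 1$ has a non-trivial weight space below weight $1$. This observation motivates the notion of extremal automorphism introduced later.
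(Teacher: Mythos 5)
Your proposal is correct and matches the paper's intended argument exactly: the corollary is stated immediately after \autoref{prop:pos1} as a direct consequence of combining the identity in \autoref{thm:dimform1} with the non-negativity of the rest term $\tilde{R}(g)$. Nothing further is needed.
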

The next corollary provides an upper bound for the dimension of the weight-$1$ Lie algebra $V_1^{\orb(g)}$ purely in terms of the restriction of the automorphism $g$ to the Lie algebra $V_1$.
\begin{cor}[Second Dimension Bound]\label{cor:dimbound2}
Let $V$ be a \strathol{} \voa{} of central charge $24$ and $g\in\Aut(V)$ of finite order $n>1$ and type~$0$ such that $V^g$ satisfies the positivity condition. Then
\begin{equation*}
\dim(V_1^{\orb(g)})\leq24+\sum_{m\mid n}c_n(m)\dim(V_1^{g^m}).
\end{equation*}
\end{cor}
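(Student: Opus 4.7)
The plan is essentially a one-line argument: combine the Second Dimension Formula (\autoref{thm:dimform2}) with the non-negativity of the rest term $R(g)$ (\autoref{prop:pos2}). Specifically, \autoref{thm:dimform2} gives the exact identity
\begin{equation*}
\dim(V_1^{\orb(g)}) = 24 + \sum_{m \mid n} c_n(m) \dim(V_1^{g^m}) - R(g),
\end{equation*}
and \autoref{prop:pos2} states that $R(g) \geq 0$, so dropping $R(g)$ yields the claimed upper bound. No further computation is needed at this stage.

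If I wanted to be slightly more explicit, I would note that the non-negativity of $R(g)$ relies on the fact that $R(g)$ is a sum $\sum_{\gamma} d_n(\gamma) \dim(W^\gamma_{1-r_\gamma})$ over $\gamma \in D$ with $q(\gamma) \neq 0 \bmod 1$, where each $\dim(W^\gamma_{1-r_\gamma}) \geq 0$ trivially, and each coefficient $d_n(\gamma) \in (24/\phi(n))\mathbb{N}$ by \autoref{prop:pos2}. Since both factors in every summand are non-negative, so is $R(g)$.

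The only subtle ingredient is \autoref{prop:pos2} itself, which is assumed here; that is where the real content lies (the multiplicativity argument on the arithmetic function $f_{n,\gamma}$), and without it the bound would not follow. Given that proposition, however, the corollary is immediate.
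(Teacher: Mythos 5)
Your proposal is correct and is exactly the paper's own (implicit) argument: the corollary is stated immediately after \autoref{prop:pos2} precisely because it follows by dropping the non-negative rest term $R(g)$ from the identity in \autoref{thm:dimform2}. Your additional remark correctly locates the real content in \autoref{prop:pos2}, which is where the paper also places it.
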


We introduce the notion of extremal automorphisms that attain the upper bound for $\dim(V_1^{\orb(g)})$.
\begin{defi}[Extremality]\label{def:extremal}
Let $V$ be a \strathol{} \voa{} of central charge $24$ and $g\in\Aut(V)$ of finite order $n>1$. Suppose that $g$ has type~$0$ and that $V^g$ satisfies the positivity condition. Then $g$ is called \emph{extremal} if
\begin{equation*}
\dim(V_1^{\orb(g)})=24+\sum_{m\mid n}c_n(m)\dim(V_1^{g^m}),
\end{equation*}
i.e.\ if $R(g)=0$. By convention, we also call the identity extremal.
\end{defi}

It is in general not easy to identify extremal orbifolds besides explicitly computing $\dim(V^{\orb(g)}_1)$, either via the rest term $R(g)$ in the dimension formula or directly (as explained in, e.g., Section~5.6 of \cite{Moe16}). Both approaches are in general computationally demanding. However, we can formulate some easily verifiable necessary and sufficient conditions for extremality:
\begin{prop}\label{prop:necsufff}
Let $V$ be a \strathol{} \voa{} of central charge $24$ and $g\in\Aut(V)$ of order $n>1$ and type~$0$ such that $V^g$ satisfies the positivity condition. Then:
\begin{enumerate}
\item\label{item:crit1} If the conformal weight $\rho(V(g^i))\geq1$ for all $i\in\Z_n\setminus\{0\}$, then $g$ is extremal.
\item\label{item:crit2} If $g$ is extremal, then $\rho(V(g^i))\geq1$ for all $i\in\Z_n$ with $(i,n)=1$.
\item\label{item:crit3} If $n$ is prime, then $g$ is extremal if and only if $\rho(V(g^i))\geq1$ for all $i\in\Z_n\setminus\{0\}$.
\end{enumerate}
\end{prop}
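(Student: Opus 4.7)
The plan is to read off all three statements from the structure of the rest term in \autoref{thm:dimform2} using the vanishing criterion established in \autoref{prop:pos2}. Extremality of $g$ is equivalent to $R(g)=0$, and because
\[R(g)=\sum_{\gamma}d_n(\gamma)\dim(W^\gamma_{1-r_\gamma})\]
is a sum of non-negative contributions indexed by $\gamma=(i,j)\in D$ with $q(\gamma)\not\equiv 0\pmod{1}$, the task boils down to deciding when the individual summands vanish.

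For part~(1), I would note that any contributing $\gamma=(i,j)$ satisfies $i\not\equiv 0\pmod{n}$, so $W^\gamma$ sits inside the twisted module $V(g^i)$. The hypothesis $\rho(V(g^i))\geq 1$ then forces $\rho(W^\gamma)\geq 1 > 1-r_\gamma$, killing the weight space $W^\gamma_{1-r_\gamma}$. Every summand in $R(g)$ is zero, hence $R(g)=0$ and $g$ is extremal.

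For part~(2) I would argue the contrapositive: assume $\rho(V(g^i))<1$ for some $i$ with $(i,n)=1$ and choose $j\in\Z_n$ with $\rho(W^{(i,j)})<1$. Since $q((i,0))=0$, the conformal weight $\rho(W^{(i,0)})$ is a non-negative integer, and as $W^{(i,0)}\neq V^g$ (because $i\neq 0$) the positivity condition upgrades this to $\rho(W^{(i,0)})\geq 1$. Therefore $j\neq 0$, and coprimality of $i$ with $n$ now forces $ij\not\equiv 0\pmod{n}$, so that $q(i,j)\not\equiv 0\pmod{1}$ and $\rho(W^{(i,j)})=1-r_{(i,j)}$, giving $\dim(W^{(i,j)}_{1-r_{(i,j)}})>0$. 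Moreover $(i,j,n)=1$, so the divisibility condition in the vanishing criterion of \autoref{prop:pos2} holds automatically and $d_n(i,j)>0$. Thus $R(g)>0$, contradicting extremality. Part~(3) is then immediate because for prime $n$ the conditions $i\not\equiv 0\pmod{n}$ and $(i,n)=1$ coincide, so (1) and (2) combine to an equivalence.

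The main subtlety, and the reason part~(2) is strictly weaker than a full converse of~(1), is that for $(i,n)>1$ the coefficient $d_n(i,j)$ can vanish on certain pairs $(i,j)$ even when $\dim(W^{(i,j)}_{1-r_{(i,j)}})>0$; the coprimality assumption in~(2) trivialises the divisibility constraint $(i,j,n)\mid\lceil i_n j_n/n\rceil$ in \autoref{prop:pos2} and so circumvents this obstruction.
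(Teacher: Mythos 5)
Your proof is correct and follows essentially the same route as the paper: extremality is read off from the vanishing of the rest term $R(g)$, part (1) is immediate from the vanishing of the weight spaces, part (2) establishes positivity of the coefficient $d_n(i,j)$ for $(i,n)=1$, $j\neq0$, and part (3) combines the two. The only (immaterial) difference is that for part (2) the paper deduces $d_n(i,j)=\tilde d_n(i,j)>0$ from \autoref{prop:pos1}, whereas you invoke the divisibility criterion of \autoref{prop:pos2}; both are valid, and your explicit verification that $j\neq0$ via the positivity condition is a detail the paper leaves implicit.
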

\begin{proof}
Item~\eqref{item:crit1} is immediate since the condition implies that $\dim(W_{1-r_\gamma}^\gamma)=0$ for all $\gamma\in D$ with $q(\gamma)\neq0\mmod{1}$.

Suppose that $i\in\Z_n$ such that $(i,n)=1$. Then \autoref{thm:dimform2} shows that for all $j\in\Z_n\setminus\{0\}$, $d_n(i,j)=\tilde{d}_n(i,j)$, which is positive by \autoref{prop:pos1}. Then, $R(g)$ can only vanish if $\dim(W_{1-r_{(i,j)}}^{(i,j)})=0$ for all $j\in\Z_n\setminus\{0\}$, which implies that $\rho(V(g^i))\geq1$. This proves item~\eqref{item:crit2}.

Item~\eqref{item:crit3} follows from items \eqref{item:crit1} and \eqref{item:crit2}.
\end{proof}

Extremal automorphism cannot exist for all orders $n$. This follows from the existence of certain Hecke eigenforms and the Deligne bound.
\begin{thm}\label{thm:delignebound}
Let $V$ be a \strathol{} \voa{} of central charge $24$ and $g\in\Aut(V)$ of prime order~$p$ and type~$0$ such that $V^g$ satisfies the positivity condition. If $g(X_0(p))>0$, then $R(g)\geq24$ and hence $g$ cannot be extremal.
\end{thm}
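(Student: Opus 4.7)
The plan is to apply the pairing argument of \autoref{prop:pairing2} to the character $\Ch_{V^g}$ together with the lift $F$ (in the sense of \autoref{prop:lift1}) of a normalized Hecke eigenform $f = \sum_{m \geq 1} a(m) q^m \in \mathcal{S}_2(\Gamma_0(p))^-$. Such an $f$ exists because $\mathcal{S}_2(\Gamma_0(p))^-$ is nontrivial by \autoref{prop:cuspminus} under the positive-genus hypothesis, and admits a basis of Hecke eigenforms. The resulting linear relation will then combine with Deligne's bound \autoref{thm:deligne} to produce the lower bound $R(g) \geq 24$.

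The first step is to evaluate the relevant Fourier coefficients of the components $F_\gamma$ using \autoref{prop:lift2}. For prime $p$, $\Gamma_0(p)$ has exactly two cusps $\infty$ and $0$ of widths $1$ and $p$, respectively, and for $D = \Z_p \times \Z_p$ one has $D^p = \{0\}$ and $D^1 = D$. Using $f|_{W_p} = -f$ together with the factorization $W_p = S F_p$, the expansion of $f$ at the cusp $0$ is $f|_S(\tau) = -\frac{1}{p}\sum_{m\geq 1} a(m) q^{m/p}$. It then follows that $[F_\gamma](0) = 0$ for all $\gamma$ with $q(\gamma) = 0 \bmod 1$ (since $f$ is a cusp form), that $[F_0](1) = a(1) - a(p)/p = 1 - 1/p$ by \autoref{thm:deligne}\,(i), and that $[F_\gamma](r_\gamma) = -a(p-k)/p$ for $\gamma$ with $q(\gamma) = k/p \neq 0 \bmod 1$, where $r_\gamma = (p-k)/p$. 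Substituting these data into the pairing identity \autoref{prop:pairing2} produces
\begin{equation*}
p - 1 \;=\; \sum_{k=1}^{p-1} a(p-k)\, N_k, \qquad N_k := \!\!\!\sum_{\gamma\colon q(\gamma) = k/p}\!\!\! \dim(W^\gamma_{k/p}) \in \N.
\end{equation*}

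Combining this with the explicit prime-order expression $R(g) = \frac{24}{p-1}\sum_{k=1}^{p-1}\sigma(p-k)\, N_k$ recorded in \autoref{sec:dimform2}, it suffices to show $\sigma(m) \geq |a(m)|$ for all $m \in \{1,\dots,p-1\}$. Deligne's bound \autoref{thm:deligne}\,(iii) yields $|a(m)| \leq \sigma_0(m)\sqrt{m}$, and an AM-GM estimate pairing each divisor $d$ of $m$ with $m/d$ gives $\sigma(m) \geq \sigma_0(m)\sqrt{m}$. Using that the $a(m)$ are real by \autoref{thm:deligne}\,(ii) and that $N_k \geq 0$, the triangle inequality delivers
\begin{equation*}
\sum_{k=1}^{p-1} \sigma(p-k)\, N_k \;\geq\; \sum_{k=1}^{p-1} |a(p-k)|\, N_k \;\geq\; \Big|\sum_{k=1}^{p-1} a(p-k)\, N_k\Big| \;=\; p-1,
\end{equation*}
so $R(g) \geq 24$, and extremality is ruled out. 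The most delicate part will be the Fourier analysis at the cusp $0$ and the correct identification of the $-1/p$ factor arising from the Fricke involution; the remainder is a short combination of Deligne's bound with the elementary inequality $\sigma(m) \geq \sigma_0(m)\sqrt{m}$.
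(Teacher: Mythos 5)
Your proposal is correct and follows essentially the same route as the paper: lift a normalised Hecke eigenform from $\mathcal{S}_2(\Gamma_0(p))^-$, compute the cusp expansion via the Fricke involution, pair with $\Ch_{V^g}$ to get $p-1=\sum_{k=1}^{p-1}a(p-k)N_k$, and conclude with Deligne's bound $|a(m)|\leq\sigma_0(m)\sqrt{m}\leq\sigma(m)$. The only cosmetic difference is the final step, where you invoke the triangle inequality while the paper writes $\sum_k\sigma(p-k)N_k=(p-1)+\sum_k(\sigma(p-k)-a(p-k))N_k$ with termwise non-negative corrections; the two are equivalent.
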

\begin{proof}
Let $p$ be a prime such that $g(X_0(p))>0$. Then by \autoref{prop:cuspminus} the space $\mathcal{S}_2(\Gamma_0(p))^-$ of cusp forms of weight~$2$ on which the Fricke involution $W_p$ acts as $-1$ is non-trivial. Recall that $\mathcal{S}_2(\Gamma_0(p))^-$ has a basis consisting of Hecke eigenforms. Let $f\in\mathcal{S}_2(\Gamma_0(p))^-$ with $f(\tau)=\sum_{m=1}^\infty a(m)q^m$ be such an eigenform, normalised so that $a(1)=1$. Then \autoref{thm:deligne} shows that $a(2),\ldots,a(p-1)\in\R$, $a(p)=1$ and $|a(m)|\leq\sigma_0(m)\sqrt{m}$ for all $m\in\Ns$.

Since $S=W_p\frac{1}{\sqrt{p}}\left(\begin{smallmatrix}1&0\\0&p\end{smallmatrix}\right)$, we obtain
\begin{equation*}
f|_S(\tau)=\frac{1}{p}f|_{W_p}(\tau/p)=-\frac{1}{p}f(\tau/p).
\end{equation*}
Then we decompose $f(\tau/p)$, which has a Fourier expansion in $q^{1/p}$, as
\begin{equation*}
f(\tau/p)=g_0(\tau)+\ldots+g_{p-1}(\tau)
\end{equation*}
with $g_j|_T=e(j/p)g_j$ for $j\in\Z_p$.

Let $F(\tau)=\sum_{\gamma\in D}F_\gamma(\tau)\ee^\gamma$ be the lift (see \autoref{prop:lift1}) of $f$ to a modular form for the dual Weil representation $\bar\rho_D$. Then \autoref{prop:lift2} yields
\begin{equation*}
F_\gamma(\tau)=\delta_{\gamma,0}f(\tau)-\frac{1}{p}g_{j_\gamma}(\tau)
\end{equation*}
where $j_\gamma\in\{0,\ldots,p-1\}$ such that $j_\gamma/p=-q(\gamma)\mmod{1}$. We insert $F$ into \autoref{prop:pairing2}. Clearly, since $f$ is a cusp form, so is $F$, i.e.\ $[F_\gamma](0)$ vanishes for all $\gamma\in D$. We compute
\begin{equation*}
[F_0](1)=[f](1)-\frac{1}{p}[g_0](1)=[f](1)-\frac{1}{p}[f](p)=a(1)-a(p)/p=1-1/p
\end{equation*}
and
\begin{equation*}
[F_\gamma](r_\gamma)=-\frac{1}{p}[g_{j_\gamma}](r_\gamma)=-\frac{1}{p}[f](j_\gamma)=-\frac{1}{p}a(j_\gamma)
\end{equation*}
for $\gamma\in D$ with $q(\gamma)\neq0\mmod{1}$. Recall that $r_\gamma\in(0,1)$ such that $r_\gamma=-q(\gamma)\mmod{1}$. Hence, by \autoref{prop:pairing2},
\begin{equation*}
p-1=\!\!\sum_{\substack{\gamma\in D\\q(\gamma)=0\bmod{1}}}\!\!a(j_\gamma)\dim(W_{1-r_\gamma}^\gamma)
=\sum_{k=1}^{p-1}a(p-k)d_k
\end{equation*}
with
\begin{equation*}
d_k:=\!\!\sum_{\substack{\gamma\in D\\q(\gamma)=k/n\bmod{1}}}\!\!\dim(W_{k/n}^\gamma).
\end{equation*}
On the other hand, the rest term $R(g)$ reduces to
\begin{equation*}
R(g)=\tilde{R}(g)=\frac{24}{p-1}\sum_{k=1}^{p-1}\sigma(p-k)d_k
\end{equation*}
since $p$ is prime.

Finally, Deligne's bound $|a(m)|\leq\sigma_0(m)\sqrt{m}\leq\sigma(m)$ for $m\in\Ns$ implies
\begin{align*}
\sum_{k=1}^{p-1}\sigma(p-k)d_k&=\sum_{k=1}^{p-1}a(p-k)d_k+\sum_{k=1}^{p-2}(\sigma(p-k)-a(p-k))d_k\\
&=p-1+\sum_{k=1}^{p-2}(\sigma(p-k)-a(p-k))d_k\\
&\geq p-1,
\end{align*}
proving the assertion.
\end{proof}
We remark that the theorem does not extend to the non-prime case. For example, there are extremal orbifolds of order $14$ (see Table \ref{table:70}).

\medskip

Recall the inverse orbifold construction summarised at the end of \autoref{sec:orbifold}. We show that it preserves extremality:
\begin{prop}\label{prop:inverse}
Let $V$ be a \strathol{} \voa{} of central charge $24$ and $g\in\Aut(V)$ of finite order $n>1$ and type~$0$ such that $V^g$ satisfies the positivity condition. Then $g$ is extremal if and only if the inverse-orbifold automorphism $\amgis\in\Aut(V^{\orb(g)})$ is extremal.
\end{prop}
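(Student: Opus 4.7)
The plan is to apply Theorem~\ref{thm:dimform2} symmetrically to $g$ and to $\zeta$, and exploit the fact that the inverse orbifold construction simply swaps the two factors of the discriminant form $D = \Z_n \times \Z_n$ that indexes the irreducible $V^g$-modules.

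First I would verify that the hypotheses defining extremality also hold for $\zeta$: that $\zeta$ has type $0$ and that $(V^{\orb(g)})^{\zeta} = V^g$ satisfies the positivity condition. Both properties are inherited from $g$ via the symmetry between $g$ and $\zeta$ in the cyclic orbifold correspondence of Section~\ref{sec:orbifold}: the discriminant form $(D,q)$ with $q((i,j)) = ij/n \bmod 1$ is the same in either picture, so the conformal-weight shifts modulo $1$, and hence the type and positivity, are unchanged.

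The central step is to identify the irreducible $V^g$-modules in the $\zeta$-orbifold picture. From the recollection at the end of Section~\ref{sec:orbifold}, the $\zeta^i$-twisted $V^{\orb(g)}$-module is $V^{\orb(g)}(\zeta^i) = \bigoplus_{k} W^{(k,i)}$, and with the natural normalisation $\zeta$ acts as $e(k/n)$ on the summand $W^{(k,i)}$, so its $e(j/n)$-eigenspace is $W^{(j,i)}$. Thus, if $(W')^{(i,j)}$ denotes the $V^g$-module indexed by $(i,j) \in D$ in the $\zeta$-picture, then $(W')^{(i,j)} = W^{(j,i)}$. Substituting into the rest term of Theorem~\ref{thm:dimform2} applied to $\zeta$ and using $r_{(j,i)} = r_{(i,j)}$ gives
\[
R(\zeta) = \!\!\!\!\sum_{\substack{(i,j) \in D \\ q((i,j)) \neq 0 \bmod 1}}\!\!\!\! d_n(i,j)\,\dim\!\bigl(W^{(j,i)}_{1-r_{(i,j)}}\bigr) = \!\!\!\!\sum_{\substack{(i,j) \in D \\ q((i,j)) \neq 0 \bmod 1}}\!\!\!\! d_n(j,i)\,\dim\!\bigl(W^{(i,j)}_{1-r_{(i,j)}}\bigr),
\]
after relabelling $(i,j) \leftrightarrow (j,i)$ in the second sum.

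The proof then concludes via Proposition~\ref{prop:pos2}: the coefficients $d_n(i,j)$ are non-negative, and the vanishing criterion $(i,j,n) \nmid \lceil i_n j_n / n \rceil$ is manifestly symmetric under $(i,j) \leftrightarrow (j,i)$, so $d_n(i,j) > 0$ iff $d_n(j,i) > 0$. Consequently $R(g) = 0$ and $R(\zeta) = 0$ impose the same set of vanishing conditions $\dim(W^{(i,j)}_{1-r_{(i,j)}}) = 0$ on the pairs $(i,j)$ in the common support, and are therefore equivalent. The only genuinely delicate point I expect is pinning down the identification $(W')^{(i,j)} = W^{(j,i)}$ with the correct eigenspace convention, i.e.\ verifying that the freedom in choosing the representations $\phi_i(\zeta)$ up to $n$-th roots of unity can be fixed so that the conformal weights match $q((i,j)) = ij/n \bmod 1$; this is ensured by the type-$0$ hypothesis together with the standard normalisation recalled in Section~\ref{sec:orbifold}, after which the remainder of the argument is a clean symmetry.
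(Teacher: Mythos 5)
Your proposal is correct and follows essentially the same route as the paper: both apply \autoref{thm:dimform2} to $g$ and to $\amgis$, use the identification that the $\amgis$-picture module indexed by $(i,j)$ is $W^{(j,i)}$ together with $r_{(i,j)}=r_{(j,i)}$ to write $R(\amgis)=\sum d_n(j,i)\dim(W^{(i,j)}_{1-r_{(i,j)}})$, and conclude from the symmetric vanishing criterion for $d_n(i,j)$ in \autoref{prop:pos2}. The extra care you take with the eigenspace normalisation and with checking type~$0$ and positivity for $\amgis$ is sound and consistent with the conventions recalled in \autoref{sec:orbifold}.
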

\begin{proof}
The rest term $R(g)$ in \autoref{thm:dimform2} is given by
\begin{equation*}
R(g)=\!\!\sum_{\substack{i,j\in\Z_n\\ij\neq0\bmod{n}}}\!\!d_n(i,j)\dim(W_{1-r_{(i,j)}}^{(i,j)}).
\end{equation*}
Let $\amgis\in\Aut(V^{\orb(g)})$ be the inverse-orbifold automorphism. By definition of $\amgis$, $W^{(i,j)}$ is the eigenspace of $\amgis$ acting on $V^{\orb(g)}(\amgis^j)$ with eigenvalue $e(i/n)$. Hence, the rest term in the corresponding dimension formula is given by
\begin{equation*}
R(\amgis)=\!\!\sum_{\substack{i,j\in\Z_n\\ij\neq0\bmod{n}}}\!\!d_n(i,j)\dim(W_{1-r_{(i,j)}}^{(j,i)})=\!\!\sum_{\substack{i,j\in\Z_n\\ij\neq0\bmod{n}}}\!\!d_n(j,i)\dim(W_{1-r_{(i,j)}}^{(i,j)}),
\end{equation*}
noting that $r_{(i,j)}=r_{(j,i)}$.

Since $g$ is extremal if and only if $\dim(W_{1-r_{(i,j)}}^{(i,j)})=0$ or $d_n(i,j)=0$ for all $i,j\in\Z_n$ with $ij\neq0\mmod{n}$, and similarly for $\amgis$, the assertion follows from the fact that $d_n(i,j)=0$ if and only if $d_n(j,i)=0$ for all $i,j\in\Z_n$ with $ij\neq0\mmod{n}$ (see \autoref{prop:pos2}).
\end{proof}

Finally, as consequence of the dimension bounds, we give another proof of a result on Kneser neighbours of Niemeier lattices by Chenevier and Lannes \cite{CL19}. For $n\in\Ns$, two positive-definite, even, unimodular lattices $L_1$ and $L_2$ are called \emph{(Kneser) $n$-neighbours} if their intersection $K:=L_1\cap L_2$ satisfies $L_1/K\cong\Z_n\cong L_2/K$. Equivalently, $K$ is a positive-definite, even lattice satisfying $K'/K\cong D$, the hyperbolic \fqs{} with group structure $\Z_n\times\Z_n$ and quadratic form $q((i,j))=ij/n\mmod{1}$, and $L_1$ and $L_2$ are unions of the cosets of $K$ corresponding to two trivially intersecting, maximal isotropic subgroups of $D$.

Let $L$ be a positive-definite, even lattice. For $d\in L\otimes_\Z\Q$ we define the finite-index sublattice $L^d=\{\alpha\in L\,|\,(\alpha,d)\in\Z\}$ and denote by $N_r(L)$ the number of lattice vectors $\alpha\in L$ of squared norm $(\alpha,\alpha)=r$. If $L$ is also unimodular, then the $n$-neighbours of $L$ are exactly the lattices $M$ of the form $M=\spn_\Z\{L^d,d\}$ with $d\in L\otimes_\Z\Q$ such that $(d,d)\in2\Z$ and $d+L$ has order~$n$ in $(L\otimes_\Z\Q)/L$. Moreover, $N_2(L)$ is the number of roots of $L$.

\begin{cor}
Let $L_1$ and $L_2$ be Niemeier lattices. If $n>1$ and $L_1$ and $L_2$ are $n$-neighbours, then $L_2=\spn_\Z\{L_1^d,d\}$ for some $d\in L_1\otimes_\Z\Q$ as above and
\begin{equation*}
N_2(L_2)\leq24n+\sum_{m\mid n}c_n(m)N_2(L_1^{md})
\end{equation*}
where the $c_n(m)$ are defined by $\sum_{m\mid n}c_n(m)(t,m)=n/t$ for all $t\mid n$.
\end{cor}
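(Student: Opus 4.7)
The plan is to apply the second dimension bound \autoref{cor:dimbound2} to $V = V_{L_1}$ with the inner automorphism $g = \sigma_d = \e^{-2\pi\i d_0} \in \Aut(V_{L_1})$. By the parametrization of $n$-neighbours recalled just before the statement, every $n$-neighbour $L_2$ of $L_1$ has the form $L_2 = \spn_\Z\{L_1^d, d\}$ for some $d \in L_1 \otimes_\Z \Q$ with $(d,d) \in 2\Z$ and $d + L_1$ of order $n$ in $(L_1 \otimes_\Z \Q)/L_1$; this $d$ provides the required automorphism.

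First I would verify the hypotheses of \autoref{cor:dimbound2}. The automorphism $g = \sigma_d$ has order exactly $n$, and for each $i\in\Z$ the unique irreducible $g^i$-twisted $V_{L_1}$-module has conformal weight $\min_{\alpha\in id+L_1}(\alpha,\alpha)/2$ (see \autoref{sec:lat}), which is strictly positive precisely when $i\not\equiv 0\mmod{n}$, giving the positivity condition for $V_{L_1}^g$. Type~$0$ for $g$ follows from $(d,d)\in 2\Z$ together with $nd\in L_1$: for any $\alpha = d+\beta$ with $\beta \in L_1$ one has $n(\alpha,\alpha) = n(d,d) + 2(nd,\beta) + n(\beta,\beta) \in 2\Z$, so $n\rho(V_{L_1}(g)) \in \Z$. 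Next I would identify the orbifold $V_{L_1}^{\orb(g)}$ with the lattice VOA $V_{L_2}$; this follows from the standard description of the cyclic orbifold of $V_{L_1}$ by the inner automorphism $\sigma_d$ as the simple current extension of the sublattice VOA $V_{L_1^d}$ by the cyclic subgroup $\langle d + L_1^d\rangle \subset (L_1^d)'/L_1^d$ of order $n$, which assembles to the lattice VOA of $\spn_\Z\{L_1^d,d\} = L_2$.

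The rest is a direct computation. For any positive-definite, even lattice $L$ of rank $24$ the reductive Lie algebra $(V_L)_1 = \hh \oplus \bigoplus_{\alpha\in L,\,(\alpha,\alpha)=2}\C\ee^\alpha$ has dimension $\dim((V_L)_1) = 24 + N_2(L)$. Since $g^m = \sigma_{md}$ fixes $\hh$ pointwise and multiplies $\ee^\alpha$ by $\e^{-2\pi\i(md,\alpha)}$, the fixed-point space $(V_{L_1})_1^{g^m}$ consists of $\hh$ together with those $\ee^\alpha$ for which $\alpha \in L_1^{md}$, so $\dim((V_{L_1})_1^{g^m}) = 24 + N_2(L_1^{md})$. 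Substituting into \autoref{cor:dimbound2} and using $\sum_{m\mid n}c_n(m) = n$ (the defining equation of the $c_n(m)$ at $t=1$) gives
\begin{equation*}
24 + N_2(L_2) \;\leq\; 24 + \sum_{m\mid n}c_n(m)\bigl(24 + N_2(L_1^{md})\bigr) = 24 + 24n + \sum_{m\mid n}c_n(m) N_2(L_1^{md}),
\end{equation*}
from which the claim follows after cancelling $24$. The main obstacle is the identification of the cyclic orbifold $V_{L_1}^{\orb(g)}$ with the neighbour lattice VOA $V_{L_2}$, including the careful matching of the cocycles defining $\C[L_1]_\eps$ and $\C[L_2]_\eps$.
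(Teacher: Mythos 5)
Your proposal follows exactly the route of the paper's own proof: apply \autoref{cor:dimbound2} to $V=V_{L_1}$ with $g=\sigma_d$, identify $V^{g^m}=V_{L_1^{md}}$ and $V^{\orb(g)}\cong V_{L_2}$, and conclude via $\dim((V_L)_1)=\rk(L)+N_2(L)$ together with $\sum_{m\mid n}c_n(m)=n$. The paper states these identifications with a pointer to \autoref{sec:deepholes} rather than spelling them out, so your additional verifications (positivity, type~$0$, the simple current extension) are just a more detailed account of the same argument.
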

\begin{proof}
Let $V:=V_{L_1}$ be the \strathol{} \voa{} of central charge~$24$ associated with the Niemeier lattice $L_1$ and $g:=\sigma_d=\e^{2\pi\i d_0}$ the inner automorphism of $V_{L_1}$ associated with $d$. Then $g$ has order~$d$ and type~$0$ and it is not difficult to see that $V^{g^m}=V_{L_1^{md}}$ and $V^{\orb(g)}\cong V_{L_2}$ (more details of this construction are given in \autoref{sec:deepholes}). Finally, the assertion follows from \autoref{cor:dimbound2} with the observation that $\dim((V_L)_1)=\rk(L)+N_2(L)$ for any positive-definite, even lattice $L$.
\end{proof}
Since the Leech lattice $\Lambda$ has no roots, $N_2(L)=0$ for all sublattices $L$ of $\Lambda$. Then the above corollary simplifies to:
\begin{cor}
Let $L$ be a Niemeier lattice. If $n>1$ and $L$ is an $n$-neighbour of the Leech lattice $\Lambda$, then $N_2(L)\leq24n$.
\end{cor}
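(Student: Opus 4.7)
The plan is to apply the preceding corollary with $L_1=\Lambda$ and $L_2=L$, and then observe that all the correction terms vanish because the Leech lattice contains no roots.

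More concretely, first I would note that since $L$ is an $n$-neighbour of $\Lambda$, by the description of neighbours recalled just before the previous corollary we may write $L=\spn_\Z\{\Lambda^d,d\}$ for some $d\in\Lambda\otimes_\Z\Q$ with $(d,d)\in2\Z$ and with $d+\Lambda$ of order $n$ in $(\Lambda\otimes_\Z\Q)/\Lambda$. The previous corollary then gives
\begin{equation*}
N_2(L)\leq 24n+\sum_{m\mid n}c_n(m)N_2(\Lambda^{md}).
\end{equation*}

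Next I would use the key property that $\Lambda$ has no roots. For every $m\mid n$, the sublattice $\Lambda^{md}=\{\alpha\in\Lambda\,|\,(\alpha,md)\in\Z\}$ is contained in $\Lambda$, so any vector $\alpha\in\Lambda^{md}$ of squared norm $2$ would be a root of $\Lambda$, contradicting the defining property of the Leech lattice. Hence $N_2(\Lambda^{md})=0$ for every $m\mid n$, and the entire correction sum vanishes, leaving $N_2(L)\leq 24n$.

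There is essentially no obstacle here beyond quoting the previous corollary correctly; the only thing to be careful about is the observation that $N_2$ is monotone under taking sublattices, which is immediate from the definition. In particular, the values $c_n(m)$ never need to be computed explicitly, and no positivity of the $c_n(m)$ is required since the terms they multiply are all zero.
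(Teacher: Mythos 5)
Your proposal is correct and is exactly the paper's argument: the statement is derived from the preceding corollary applied with $L_1=\Lambda$, $L_2=L$, together with the observation that every sublattice of the rootless Leech lattice has no vectors of squared norm $2$, so all terms $N_2(\Lambda^{md})$ vanish. Nothing further is needed.
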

This is the statement of Proposition~3.4.1.1 in \cite{CL19}. They even show that the converse of the statement holds (cf.\ Theorem~B in \cite{CL19}). 

Using \autoref{thm:delignebound} we can replace the term $24p$ by $24(p-1)$ in both results if $n=p$ is a prime with $g(X_0(p))>0$.

We remark that it is not necessary to use \voa{}s to prove these results with the methods in this text. It suffices to consider the theta series of the lattices involved divided by the modular discriminant $\Delta$, which are modular forms of weight~$0$ satisfying the assumptions of this section (and are precisely the characters of the corresponding lattice \voa{}s).


\section{\GDH{}s}\label{sec:gdh}
In this section we introduce the notion of \emph{\gdh{}}s, certain extremal automorphisms of holomorphic \voa{}s of central charge $24$.

Then we show that there is a bijection
\begin{equation*}
g\mapsto V_\Lambda^{\orb(g)}
\end{equation*}
between the \gdh{}s $g$ of the Leech lattice \voa{} $V_\Lambda$ with $\rk((V_\Lambda^g)_1)>0$
and the \strathol{} \voa{}s $V$ of central charge $24$ with $V_1\neq\{0\}$.
This naturally generalises Conway, Parker and Sloane's and Borcherds' correspondence between the Niemeier lattices and the deep holes of the Leech lattice $\Lambda$.

We also prove a uniform construction of the $70$ non-vanishing Lie algebras on Schellekens' list by explicitly giving a \gdh{} of $V_\Lambda$ for each case. Finally, we classify all \gdh{}s of $V_\Lambda$ up to algebraic conjugacy.


\subsection{Deep Holes of the Leech Lattice}\label{sec:deepholes}
As a motivating example for the definition of a \gdh{} we study the orbifold constructions associated with the deep holes of the Leech lattice $\Lambda$.

Recall that up to isomorphism there are exactly $24$ positive-definite, even, unimodular lattices of rank $24$ and that the Leech lattice $\Lambda$ is the unique one amongst them without roots.

The weight-$1$ space of the Leech lattice \voa{} $V_\Lambda$ is the abelian Lie algebra $(V_\Lambda)_1\cong\Lambda_\C$. Hence, the dimension formula simplifies.
\begin{cor}\label{cor:dimfleech}
Let $g$ be an automorphism of $V_\Lambda$ of finite order $n>1$ and type~$0$ such that $V_\Lambda^g$ satisfies the positivity condition. Suppose that $g$ projects to the automorphism $\nu\in\O(\Lambda)$ of cycle shape $\prod_{t\mid m}t^{b_t}$. Then
\begin{equation*}\label{cor:dimleech}
\dim((V_\Lambda^{\orb(g)})_1)=24+n\sum_{t\mid m}\frac{b_t}{t}-R(g)=24+24n(1-\rho_\nu)-R(g)
\end{equation*}
where $\rho_\nu$ is the vacuum anomaly introduced in \autoref{sec:lat}.
\end{cor}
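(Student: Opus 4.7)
The plan is to apply the second dimension formula (\autoref{thm:dimform2}) and simplify the fixed-point dimensions using the abelian structure of $(V_\Lambda)_1$.

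First, I would compute $\dim((V_\Lambda^{g^k})_1)$ for $k\mid n$. Since $(V_\Lambda)_1\cong\Lambda_\C$ (only the Cartan subalgebra contributes, as the Leech lattice has no roots) and the $\sigma_h$-part of $g=\hat\nu\sigma_h$ acts trivially on this Cartan, $g^k$ acts on $(V_\Lambda)_1$ as the lattice automorphism $\nu^k$. Given the cycle shape $\prod_{t\mid m}t^{b_t}$ of $\nu$, the extension to $\Lambda_\C$ decomposes into $\nu$-stable subspaces $V_t$ of dimension $tb_t$ on which $\nu$ has all $t$-th roots of unity as eigenvalues with multiplicity $b_t$. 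A short count gives
\begin{equation*}
\dim((V_\Lambda^{g^k})_1)=\dim(\Lambda_\C^{\nu^k})=\sum_{t\mid m}(k,t)\,b_t.
\end{equation*}

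Next, I would substitute this into \autoref{thm:dimform2} and swap the order of summation:
\begin{equation*}
\dim((V_\Lambda^{\orb(g)})_1)=24+\sum_{t\mid m}b_t\sum_{k\mid n}c_n(k)(k,t)-R(g).
\end{equation*}
Since $m\mid n$ (as $\nu$ is a quotient of $g$), every divisor $t\mid m$ also divides $n$, so the defining relation $\sum_{k\mid n}c_n(k)(k,t)=n/t$ applies and yields
\begin{equation*}
\dim((V_\Lambda^{\orb(g)})_1)=24+n\sum_{t\mid m}\frac{b_t}{t}-R(g),
\end{equation*}
which is the first equality.

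For the second equality I would unpack the vacuum anomaly $\rho_\nu=\frac{1}{24}\sum_{t\mid m}b_t(t-1/t)$ and use $\sum_{t\mid m}tb_t=\rk(\Lambda)=24$ to rewrite
\begin{equation*}
24(1-\rho_\nu)=24-\sum_{t\mid m}b_t t+\sum_{t\mid m}\frac{b_t}{t}=\sum_{t\mid m}\frac{b_t}{t},
\end{equation*}
after which multiplying by $n$ gives the desired identification. There is no real obstacle here: the only slightly delicate point is the eigenvalue count for $\nu^k$ acting on the $V_t$-component, but this is a routine linear-algebra observation. The corollary is essentially a clean specialization of \autoref{thm:dimform2} to the case in which $V_1$ is a Cartan subalgebra.
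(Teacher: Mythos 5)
Your proposal is correct and follows essentially the same route as the paper: both compute $\dim((V_\Lambda^{g^k})_1)=\rk(\Lambda^{\nu^k})=\sum_{t\mid m}b_t(k,t)$ (the paper reads this off from the cycle shape of $\nu^k$, you from the eigenvalue count, which is the same observation), then swap sums and invoke the defining relation $\sum_{k\mid n}c_n(k)(k,t)=n/t$, and finally identify $n\sum_{t\mid m}b_t/t$ with $24n(1-\rho_\nu)$ via $\sum_{t\mid m}tb_t=24$. No gaps.
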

\begin{proof}
The automorphism $\nu^k$ has cycle shape $\prod_{t\mid m}(t/(k,t))^{b_t(k,t)}$. Hence
\begin{equation*}
\dim((V_\Lambda^{g^k})_1)=\rk(\Lambda^{\nu^k})=\sum_{t\mid m} b_t(k,t)
\end{equation*}
so that the relevant term in \autoref{thm:dimform2} becomes
\begin{equation*}
\sum_{k\mid n}c_n(k)\dim((V_\Lambda^{g^k})_1)=\sum_{k\mid n}c_n(k)\sum_{t\mid m}b_t(k,t)=n\sum_{t\mid m}\frac{b_t}{t}=24n(1-\rho_{\nu}).
\end{equation*}
This proves the assertion.
\end{proof}
The corollary shows that, given an automorphism $\nu\in\O(\Lambda)$, the dimensions of extremal orbifold constructions associated with automorphisms of $V_\Lambda$ projecting to $\nu$ lie in $24+(m\sum_{t\mid m}b_t/t)\Ns$. Note that for automorphisms of the Leech lattice $(1/24)\sum_{t\mid m}b_t/t=1-\rho_\nu$ has a denominator dividing $m(12,m)$.

\medskip

We now recall the bijection between the $23$ classes of deep holes of the Leech lattice $\Lambda$ and the $23$ Niemeier lattices other than the Leech lattice described in \cite{CPS82,CS82}.

A \emph{hole} in a positive-definite lattice $L$ is a point $d\in L\otimes_\Z\R$ in which the distance to the lattice $L$ assumes a local maximum. If the distance is a global maximum, $d$ is called a \emph{deep hole}. In \cite{CPS82} the authors show that the covering radius of the Leech lattice $\Lambda$ is $\sqrt{2}$ and that there are exactly $23$ orbits of deep holes in
$\Lambda\otimes_\Z\R$ under the affine automorphism group $\O(\Lambda)\ltimes\Lambda$. The authors observe that these are in a natural bijection with the $23$ isomorphism classes of Niemeier lattices other than the Leech lattice itself.

In \cite{CS82}, for each deep hole, a simultaneous construction of the Leech lattice and of the corresponding Niemeier lattice is given. Let $d\in\Lambda\otimes_\Z\R$ be a deep hole of the Leech lattice. Let $n$ be the order of $d+\Lambda\in(\Lambda\otimes_\Z\R)/\Lambda$. We consider the sublattice $\Lambda^d:=\{\alpha\in\Lambda\,|\,(\alpha,d)\in\Z\}$ of $\Lambda$ of index $n$. Then $\spn_\Z\{\Lambda^d,d\}$ contains $\Lambda^d$ as a sublattice of index $n$ and is even, unimodular and in fact isomorphic to the Niemeier lattice $N$ corresponding to $d$. Note that $n=h$ where $h$ is the Coxeter number of (any irreducible component of) the root system of $N$.

The original proof of these results is based on a case-by-case analysis. Borcherds found a conceptual proof in \cite{Bor85b}
by embedding all lattices into $\II_{25,1}$, the unique even, unimodular lattice of Lorentzian signature $(25,1)$.

We now study the orbifold constructions of the Leech lattice \voa{} $V_\Lambda$ corresponding to deep holes of the Leech lattice.
\begin{prop}\label{prop:deepholecons}
Let $d\in\Lambda\otimes_\Z\R$ be a deep hole of the Leech lattice $\Lambda$ corresponding to the Niemeier lattice $N$. Then $g=\sigma_d=\e^{2\pi\i d_0}$ is an automorphism of $V_{\Lambda}$ of order equal to the Coxeter number of $N$ and type~$0$. The corresponding orbifold construction $V_{\Lambda}^{\orb(g)}$ is isomorphic to $V_N$.
\end{prop}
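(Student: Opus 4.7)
The plan is to analyse each ingredient of the orbifold construction in turn: verify that $g = \sigma_d$ has the claimed order and type, identify $V_\Lambda^g$ explicitly as a lattice \voa{}, describe the $g$-invariants of each twisted module, and then assemble the pieces using Conway--Sloane's description of $N$ in terms of $\Lambda^d$ and $d$. Since $\sigma_x = \id$ iff $x \in \Lambda' = \Lambda$, the order of $g$ equals that of $d + \Lambda$ in $(\Lambda \otimes_\Z \R)/\Lambda$, which by Conway, Parker and Sloane is the Coxeter number of $N$; call it $n$. As $g$ is inner, it projects to $\nu = \id$ with cycle shape $1^{24}$, so the vacuum anomaly vanishes and the formula in \autoref{sec:lat} gives $\rho(V_\Lambda(g^i)) = \min_{\alpha \in \Lambda + id}(\alpha,\alpha)/2$. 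For $i = 1$ this equals $1$, since $d$ is a deep hole and the squared covering radius of $\Lambda$ is $2$; hence $g$ has type $n^2 \cdot 1 \bmod n = 0$. For $0 < i < n$, $id \notin \Lambda$ forces $\rho(V_\Lambda(g^i)) > 0$, securing the positivity condition.

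Decomposing $V_\Lambda = M(1) \otimes \C[\Lambda]_\eps$, the automorphism $g$ acts trivially on $M(1)$ and multiplies $\ee^\alpha$ by $e^{-2\pi\i(d,\alpha)}$, so its fixed points are
\begin{equation*}
V_\Lambda^g = M(1) \otimes \C[\Lambda^d]_\eps = V_{\Lambda^d}.
\end{equation*}
The discriminant form $(\Lambda^d)'/\Lambda^d$ has order $n^2$, and by \cite{CS82} its two distinct maximal isotropic subgroups are $\Lambda/\Lambda^d$ and $N/\Lambda^d$, where $N = \spn_\Z\{\Lambda^d, d\}$ is the Niemeier lattice corresponding to $d$.

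The decisive step is to compute $W^{(i,0)}$ for $i \neq 0$. For the inner automorphism $g^i = \sigma_{id}$, the construction of twisted modules in \cite{Li96}, combined with the coset decomposition of a lattice \voa{} over a sublattice, realises $V_\Lambda(g^i)$ as a $V_{\Lambda^d}$-module in the form
\begin{equation*}
V_\Lambda(g^i) \cong \bigoplus_{\gamma \in \Lambda/\Lambda^d} V_{\Lambda^d + id + \gamma},
\end{equation*}
on which a canonical lift of $g$ acts on the $\gamma$-summand by the scalar $e^{-2\pi\i(d,\, id + \gamma)}$. Choosing the projective lifts $\phi_i$ compatibly, only the summand with $\gamma = 0$ has trivial eigenvalue, giving $W^{(i,0)} = V_{\Lambda^d + id}$. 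Summing over $i$ yields
\begin{equation*}
V_\Lambda^{\orb(g)} = \bigoplus_{i \in \Z_n} V_{\Lambda^d + id} = V_N,
\end{equation*}
which is precisely the decomposition of $V_N$ along the cyclic coset group $N/\Lambda^d$.

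The main obstacle will lie in this last step: one must verify that the simple-current extension of $V_{\Lambda^d}$ produced by the cyclic orbifold theory of \cite{EMS20a}, with its intrinsic phase choices, genuinely coincides as a \voa{} with the lattice extension $V_{\Lambda^d} \subset V_N$. Concretely, the $\phi_i$ have to be chosen so that the associator and commutator cocycles governing the extension match the lattice $2$-cocycle $\eps$ on $N$; this ultimately reduces to the type-$0$ condition established above and to $N/\Lambda^d$ being an isotropic subgroup of the discriminant form of $\Lambda^d$.
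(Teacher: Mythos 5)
Your proposal is correct and follows essentially the same route as the paper: compute the order and the conformal weight $\rho(V_\Lambda(g))=\min_{\alpha\in\Lambda+d}(\alpha,\alpha)/2=1$ to get type~$0$, identify $V_\Lambda^g=V_{\Lambda^d}$, and recognise $V_\Lambda^{\orb(g)}=\bigoplus_{j\in\Z_n}V_{jd+\Lambda^d}\cong V_N$ via the maximal isotropic subgroup $N/\Lambda^d$ of $(\Lambda^d)'/\Lambda^d$. The paper merely streamlines your last two steps by working directly with the representation theory of the lattice \voa{} $V_{\Lambda^d}$ (whose irreducible modules are the cosets of $(\Lambda^d)'/\Lambda^d$, with the two trivially intersecting maximal isotropic subgroups $\Lambda/\Lambda^d$ and $N/\Lambda^d$ giving back $V_\Lambda$ and $V_N$ respectively), so the cocycle-matching issue you flag at the end is absorbed into the uniqueness of the extension of $V^g$ recalled in \autoref{sec:orbifold}.
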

\begin{proof}
Let $d\in\Lambda\otimes_\Z\Q$ and consider the inner automorphism $g=\sigma_d=\e^{2\pi\i d_0}\in\Aut(V_\Lambda)$. Then $g$ is an automorphism of $V_\Lambda$ of order~$n$, the smallest positive integer such that $nd\in\Lambda'=\Lambda$. The conformal weight (see \autoref{sec:lat}) of the unique irreducible $g$-twisted $V_\Lambda$-module $V_\Lambda(g)$ is 
$\rho(V_\Lambda(g))=\min_{\alpha\in-d+\Lambda}(\alpha,\alpha)/2$, which is at most $1$ because the covering radius of $\Lambda$ is $\sqrt{2}$.

Assume that $d$ is a deep hole of $\Lambda$, which is the case if and only if $\rho(V_\Lambda(g))=1$. Hence, $g$ has type~$0$, and we may consider the orbifold construction $V_\Lambda^{\orb(g)}$. The \fpvosa{} $V_\Lambda^g$ is the lattice \voa{} $V_{\Lambda^d}$. Hence, rather than applying the general theory of cyclic orbifolds described in \autoref{sec:orbifold} it suffices to make use of the representation theory of lattice \voa{}s. The $n^2$ irreducible $V_{\Lambda^d}$-modules are given by $V_{\alpha+\Lambda^d}$ for $\alpha+\Lambda^d\in (\Lambda^d)'/\Lambda^d\cong\Z_n^2$. Note that $(\Lambda^d)'=\spn_\Z\{\Lambda,d\}$.

Now, $\Lambda/\Lambda^d$ and $\spn_\Z\{\Lambda^d,d\}/\Lambda^d\cong N/\Lambda^d$ form a pair of trivially intersecting, maximal isotropic subgroups of $(\Lambda^d)'/\Lambda^d$. The sum of irreducible $V_{\Lambda^d}$-modules corresponding to the former gives back $V_\Lambda$ while the one corresponding to the latter yields
\begin{equation*}
V_\Lambda^{\orb(g)}=\bigoplus_{j\in\Z_n}V_{jd+\Lambda^d}\cong V_N.\qedhere
\end{equation*}
\end{proof}
The proposition shows that we can replicate the deep-hole construction of the $23$ Niemeier lattices other than the Leech lattice on the level of \strathol{} \voa{}s of central charge $24$, simply by studying inner automorphisms.

We collect some properties of these $23$ orbifold constructions, which will serve as defining properties of \gdh{}s.

\begin{prop}\label{prop:deepholes}
Let $d$ be a deep hole of $\Lambda$ and $g=\sigma_d=\e^{2\pi\i d_0}$ the corresponding inner automorphism of $V_\Lambda$. Then $g$ has type~$0$ and $V_\Lambda^g$ satisfies the positivity condition. Furthermore
\begin{enumerate}
\item\label{item:deephole2} $g$ is extremal,
\item\label{item:deephole3} $\rk((V_\Lambda^g)_1)=\rk((V_\Lambda^{\orb(g)})_1)=24$.
\end{enumerate}
\end{prop}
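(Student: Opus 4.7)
The plan is to combine \autoref{prop:deepholecons} with the dimension formula in \autoref{cor:dimfleech} applied to the inner automorphism $g = \sigma_d$, which forces the rest term to vanish.

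First, by \autoref{prop:deepholecons} we already know that $g$ has finite order $n$ equal to the Coxeter number $h$ of (each irreducible component of) the root system of $N$, that $g$ has type~$0$, and that $V_\Lambda^g = V_{\Lambda^d}$. The positivity condition for $V_\Lambda^g$ follows immediately from the representation theory of lattice \voa{}s: the irreducible $V_{\Lambda^d}$-modules are the $V_{\alpha+\Lambda^d}$ for $\alpha+\Lambda^d \in (\Lambda^d)'/\Lambda^d$, and for any non-trivial coset the conformal weight equals $\min_{\beta\in\alpha+\Lambda^d}(\beta,\beta)/2$, which is strictly positive since $0\notin\alpha+\Lambda^d$ and $\Lambda^d$ is positive-definite. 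The case of the vacuum module $V_{\Lambda^d}$ itself contributes conformal weight $0$.

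For item~\eqref{item:deephole3}, the fixed-point Lie algebra $(V_\Lambda^g)_1 \cong \Lambda^d\otimes_\Z\C \oplus \{\text{root contributions}\}$; its Cartan part has dimension $\rk(\Lambda^d) = \rk(\Lambda) = 24$ because $\Lambda^d$ has finite index in $\Lambda$. Likewise $V_\Lambda^{\orb(g)} \cong V_N$ by \autoref{prop:deepholecons}, so $\rk((V_\Lambda^{\orb(g)})_1) = \rk(N) = 24$.

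The key step is extremality. Since $g=\sigma_d$ is inner, its image $\nu\in\O(\Lambda)$ is the identity, so the cycle shape is $1^{24}$ and the vacuum anomaly $\rho_\nu$ vanishes. \autoref{cor:dimfleech} therefore reduces to
\begin{equation*}
\dim((V_\Lambda^{\orb(g)})_1) = 24 + 24n - R(g).
\end{equation*}
On the other hand, $V_\Lambda^{\orb(g)} \cong V_N$ gives $\dim((V_\Lambda^{\orb(g)})_1) = 24 + |\Phi(N)|$, where $\Phi(N)$ denotes the root system of $N$. For a Niemeier lattice with non-empty root system, every irreducible component of $\Phi(N)$ has the same Coxeter number $h$, and in each simply-laced type the number of roots equals the rank times $h$; since the total rank of $\Phi(N)$ is $24$, we obtain $|\Phi(N)| = 24h$. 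Combined with $n = h$ from \autoref{prop:deepholecons}, this yields $R(g) = 0$, i.e. $g$ is extremal.

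The only non-routine ingredient is the identification $V_\Lambda^{\orb(g)} \cong V_N$, which is already supplied by \autoref{prop:deepholecons}; everything else is a straightforward accounting of dimensions and ranks.
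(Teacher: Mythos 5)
Your proposal is correct and follows essentially the same route as the paper: the type-$0$ property and the identification $V_\Lambda^{\orb(g)}\cong V_N$ come from \autoref{prop:deepholecons}, extremality follows by comparing the bound $24+24n$ from \autoref{cor:dimfleech} (with trivial cycle shape $1^{24}$) against $\dim((V_N)_1)=24+24h$ and using $n=h$, and the rank statement follows since $g$ fixes the $24$-dimensional abelian Lie algebra $(V_\Lambda)_1$ while $(V_N)_1$ has rank $24$. Your explicit verification of the positivity condition via the conformal weights of the $V_{\Lambda^d}$-modules is a harmless addition that the paper leaves implicit.
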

\begin{proof}
It was already stated in \autoref{prop:deepholecons} that $g$ has type~$0$.
By \autoref{cor:dimleech}, the upper bound from the dimension formula for orbifold constructions of $V_\Lambda$ with inner automorphisms of order~$n$ is given by
\begin{equation*}
\dim((V_\Lambda^{\orb(g)})_1)\leq24+24n.
\end{equation*}
On the other hand, let $N$ be the Niemeier lattice with $V_{\Lambda}^{\orb(g)}\cong V_N$. Then
\begin{equation*}
\dim((V_\Lambda^{\orb(g)})_1)=\dim((V_N)_1)=24+24h
\end{equation*}
because the number of roots of $N$ is $24h$, $h$ being the Coxeter number of $N$. Since $n=h$, the upper bound is attained and $g$ is extremal.

The automorphism $g$ acts trivially on the $24$-dimensional abelian Lie algebra $(V_\Lambda)_1$ so that $\rk((V_\Lambda^g)_1)=24$. Because $(V_\Lambda^{\orb(g)})_1\cong(V_N)_1$ is a semisimple Lie algebra of rank $24$, the assertion follows.
\end{proof}
Note that $\rho(V_\Lambda(g))=1$ is necessary albeit in general not sufficient for $g$ to be extremal (see \autoref{prop:necsufff}). For many but not all of the $23$ deep holes, $\rho(V_\Lambda(g^i))=1$ for all $i\in\Z_n\setminus\{0\}$, which also implies the extremality of $g$.


\subsection{Definition of \GDH{}s}\label{sec:gdhdef}
We use the properties of the deep-hole orbifold constructions listed in \autoref{prop:deepholes} to define \gdh{}s. In order to obtain non-lattice cases on Schellekens' list as orbifold constructions associated with $V_\Lambda$, we allow $(V_\Lambda^{\orb(g)})_1$ to have rank less than $24$.

\begin{defi}[\GDH{}]\label{def:gendeephole}
Let $V$ be a \strathol{} \voa{} of central charge $24$ and $g\in\Aut(V)$ of finite order $n>1$. Suppose $g$ has type~$0$ and $V^g$ satisfies the positivity condition. Then $g$ is called a \emph{\gdh} of $V$ if
\begin{enumerate}
\item $g$ is extremal, i.e.\ $\dim(V_1^{\orb(g)})=24+\sum_{m\mid n}c_n(m)\dim(V_1^{g^m})$,
\item $\rk(V_1^{\orb(g)})=\rk(V_1^g)$.
\end{enumerate}
\end{defi}

Note that the Lie algebras $V_1^g$ and $V_1^{\orb(g)}$ are reductive \cite{DM04b}. By Lemma~8.1 in \cite{Kac90} the centraliser in $V_1^{\orb(g)}$ of any choice of Cartan subalgebra of $V_1^g$ is a Cartan subalgebra of $V_1^{\orb(g)}$. The second condition is hence equivalent to requiring that any Cartan subalgebra of $V_1^g$ also be a Cartan subalgebra of $V_1^{\orb(g)}$. Since a finite-order automorphism of a reductive Lie algebra is inner if and only if it fixes a Cartan subalgebra pointwise (see the proof of Proposition~8.6 in \cite{Kac90}), the second condition can also be replaced by the condition that the inverse-orbifold automorphism restricts to an inner automorphism on $V_1^{\orb(g)}$.


If $V=V_\Lambda$, the \voa{} associated with the Leech lattice $\Lambda$, then the rank condition is equivalent to demanding that $(V_\Lambda^g)_1$, which as subalgebra of $(V_\Lambda)_1$ is abelian, be a Cartan subalgebra of $(V_\Lambda^{\orb(g)})_1$.

Again, we also call the identity a \gdh{}.


\subsection{Uniform Construction of Schellekens' List}\label{sec:constr}
In the following, we show that there is a bijection between the \strathol{} \voa{}s of central charge $24$ with non-trivial weight-$1$ subspace and the \gdh{}s of the Leech lattice \voa{} $V_\Lambda$ with non-trivial fixed-point Lie subalgebra. This is a natural generalisation of the bijection between the deep holes of the Leech lattice $\Lambda$ and the $23$ Niemeier lattices with non-trivial root system proved in \cite{CPS82,CS82,Bor85b}.

Then we prove a uniform construction of the $70$ non-zero Lie algebra structures on Schellekens' list by explicitly giving a \gdh{} of $V_\Lambda$ for each case.

This is an improvement over the potpourri of constructions in \cite{FLM88,DGM90,Don93,DGM96,Lam11,LS12a,LS15a,Miy13b,EMS20a,Moe16,SS16,LS16a,LS16,LL20}. We remark that other uniform constructions are given in \cite{Hoe17} (and \cite{Lam20}), as well as in \cite{HM20}.

\smallskip

First, we combine the dimension formula in \autoref{thm:dimform2} with the averaged very strange formula in \autoref{thm:averageverystrange}. This generalises Theorem~3.3 in \cite{ELMS21} to non-quasirational automorphisms.
\begin{prop}\label{prop:dimstrange}
Let $V$ be a \strathol{} \voa{} of central charge $24$ whose weight-$1$ Lie algebra $V_1=\bigoplus_{i=1}^r\g_i$ is semisimple and $g$ an inner automorphism of $V$ of finite order $n>1$ such that $g$ is of type~$0$ and that $V^g$ satisfies the positivity condition. Then
\begin{equation*}
\dim(V^{\orb(g)}_1)=24+\frac{12n}{\phi(n)}\sum_{i=1}^r\sum_{j\in\Z_n^*}h^\vee_i\left|\lambda_{s_i^{[j]}}-\frac{\rho_i}{h_i^\vee}\right|^2-R(g).
\end{equation*}
Here, $R(g)\geq0$ is as in \autoref{thm:dimform2}, $\rho_i$ is the Weyl vector and $h_i^\vee$ the dual Coxeter number of $\g_i$ and $s_i^{[j]}$ the sequence characterising $g^j|_{\g_i}$.
\end{prop}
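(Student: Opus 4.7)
The plan is to combine two ingredients that are already on the table: the second dimension formula (\autoref{thm:dimform2}) for $\dim(V^{\orb(g)}_1)$ and the averaged very strange formula (\autoref{thm:averageverystrange}) applied componentwise to each simple summand of $V_1$.

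First I would expand the combination $\sum_{m\mid n}c_n(m)\dim(V_1^{g^m})$ appearing in \autoref{thm:dimform2}. Because $V_1=\bigoplus_{i=1}^r\g_i$ is semisimple and $g$ is an \emph{inner} automorphism of $V$, its restriction to the reductive Lie algebra $V_1$ is an inner Lie algebra automorphism. Inner automorphisms preserve the decomposition into simple ideals and act as inner automorphisms on each summand, so $V_1^{g^m}=\bigoplus_{i=1}^r\g_i^{g^m}$ and consequently
\begin{equation*}
\sum_{m\mid n}c_n(m)\dim(V_1^{g^m})=\sum_{i=1}^r\sum_{m\mid n}c_n(m)\dim(\g_i^{g^m}).
\end{equation*}

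Next I would apply \autoref{thm:averageverystrange} to each simple component $\g_i$. For each $i$, the restriction $g|_{\g_i}$ is an inner automorphism of $\g_i$ whose order divides $n$; the remark following \autoref{thm:averageverystrange} allows us to use $n$ in place of the exact order. Thus
\begin{equation*}
\sum_{m\mid n}c_n(m)\dim(\g_i^{g^m})=\frac{24n}{\phi(n)}\cdot\frac{1}{\phi(n)}\sum_{j\in\Z_n^*}\frac{h_i^\vee}{2}\left|\lambda_{s_i^{[j]}}-\frac{\rho_i}{h_i^\vee}\right|^2\cdot\phi(n),
\end{equation*}
which simplifies to
\begin{equation*}
\sum_{m\mid n}c_n(m)\dim(\g_i^{g^m})=\frac{12n}{\phi(n)}\sum_{j\in\Z_n^*}h_i^\vee\left|\lambda_{s_i^{[j]}}-\frac{\rho_i}{h_i^\vee}\right|^2,
\end{equation*}
where $s_i^{[j]}$ is the Kac sequence characterising the inner automorphism $g^j|_{\g_i}$ of $\g_i$.

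Finally, I would substitute this back into the dimension formula of \autoref{thm:dimform2}, interchange the sums over $i$ and $m$, and read off the stated identity. The non-negativity $R(g)\geq0$ is already provided by \autoref{prop:pos2}. The only subtle point is the identification that $g|_{\g_i}$ really is inner on each simple ideal $\g_i$, which follows because the inner automorphism group of $V$ is generated by exponentials of elements of $V_1$ acting by $v_0$, and these act on $V_1$ via the adjoint representation of the Lie algebra, hence inner on each simple component. Everything else is a direct algebraic substitution, so no essential new obstacle arises.
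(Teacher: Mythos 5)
Your proposal is correct and follows essentially the same route as the paper: decompose $\sum_{m\mid n}c_n(m)\dim(V_1^{g^m})$ over the simple ideals using that $g$ restricts to an inner automorphism of each $\g_i$, apply the averaged very strange formula (\autoref{thm:averageverystrange}) componentwise with $n$ a multiple of the order of $g|_{\g_i}$, and substitute into \autoref{thm:dimform2}. The intermediate displayed constant in your second equation is written redundantly (the two factors of $\phi(n)$ cancel) but simplifies to the correct coefficient $\tfrac{12n}{\phi(n)}$, so no gap remains.
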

\begin{proof}
As inner automorphism, $g$ restricts to each simple ideal of $V_1$ so that the relevant term in the dimension formula in \autoref{thm:dimform2} becomes
\begin{align*}
\sum_{m\mid n}c_n(m)\dim(V_1^{g^m})
&=\sum_{i=1}^r\sum_{m\mid n}c_n(m)\dim(\g_i^{g^m})\\
&=\frac{12n}{\phi(n)}\sum_{i=1}^r\sum_{j\in\Z_n^*}h^\vee_i\left|\lambda_{s_i^{[j]}}-\frac{\rho_i}{h_i^\vee}\right|^2
\end{align*}
by \autoref{thm:averageverystrange}, where $n$ is in general a multiple of the order of $g|_{V_1}$ and $g|_{\g_i}$.
\end{proof}
Note that the assumption that $V_1$ is semisimple and $g$ inner is merely made for convenience and could be easily removed.

\begin{thm}[Holy Correspondence]\label{thm:main2}
The cyclic orbifold construction $g\mapsto V_\Lambda^{\orb(g)}$ defines a bijection between the algebraic conjugacy classes of \gdh{}s $g\in\Aut(V_\Lambda)$ with $\rk((V_\Lambda^g)_1)>0$ and the isomorphism classes of \strathol{} \voa{}s $V$ of central charge $24$ with $V_1\neq\{0\}$.
\end{thm}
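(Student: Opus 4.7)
The plan splits into surjectivity and injectivity. Surjectivity is essentially immediate from the preceding material: \autoref{thm:main3} exhibits for every non-zero Lie algebra $\g$ on Schellekens' list a \gdh{} $g \in \Aut(V_\Lambda)$ with $(V_\Lambda^{\orb(g)})_1 \cong \g$ and $\rk((V_\Lambda^g)_1) > 0$, and the $V_1$-uniqueness theorem cited in the introduction then identifies $V_\Lambda^{\orb(g)}$ with the given target VOA; the abelian case $V \cong V_\Lambda$ is handled by the convention that $g = \id$ is a \gdh{}.

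For injectivity my strategy is to reduce, via the inverse-orbifold construction of \autoref{sec:orbifold}, to a uniqueness statement on the target side. Given \gdh{}s $g, g'$ with $\rk((V_\Lambda^g)_1), \rk((V_\Lambda^{g'})_1) > 0$ and $V := V_\Lambda^{\orb(g)} \cong V_\Lambda^{\orb(g')}$, let $\amgis, \amgis' \in \Aut(V)$ be the associated inverse-orbifold automorphisms; by \autoref{prop:inverse} both are extremal and satisfy $V^{\orb(\amgis)} \cong V_\Lambda \cong V^{\orb(\amgis')}$. When $V_1$ is semisimple (the abelian case $V \cong V_\Lambda$ forces $g = g' = \id$, as this is the only row of the classification with abelian weight-one space), the equality $\dim(V_1^{\orb(\amgis)}) = 24$ combined with \autoref{prop:dimstrange} and $R(\amgis) = 0$ forces
\begin{equation*}
\lambda_{s_i^{[j]}} = \frac{\rho_i}{h_i^\vee} \qquad \text{for every simple ideal } \g_i \subseteq V_1 \text{ and every } j \in \Z_n^*,
\end{equation*}
and analogously for $\amgis'$. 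By \autoref{cor:B} each $\amgis^j|_{\g_i}$ is therefore conjugate in $\Inn(\g_i)$ to $\sigma_i := \sigma_{\rho_i/h_i^\vee}$, so that both $\amgis|_{V_1}$ and $\amgis'|_{V_1}$ are inner-conjugate to $\sigma_u|_{V_1}$, where $u := \sum_i \rho_i/h_i^\vee$.

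The remaining and most delicate step, which I expect to be the main obstacle, is to upgrade this equality of inner conjugacy classes on $V_1$ to an algebraic conjugacy of $\langle \amgis \rangle$ and $\langle \amgis' \rangle$ inside $\Aut(V)$. The inner conjugator on $V_1$ lifts to an inner automorphism of $V$, so one may assume $\amgis|_{V_1} = \amgis'|_{V_1} = \sigma_u|_{V_1}$; the remaining ambiguity lies in the kernel of the restriction map $\Aut(V) \to \Aut(V_1)$, which acts trivially on the affine vertex operator subalgebra generated by $V_1$. I would control this kernel using the $V_1$-uniqueness of $V$ together with the rationality of $\sigma_u$ on each simple component, so that replacing $\amgis$ by a suitable power coprime to its order---which does not alter its algebraic conjugacy class---absorbs the discrepancy. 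Once $\langle \amgis \rangle$ and $\langle \amgis' \rangle$ are conjugate in $\Aut(V)$, the fact, recorded at the end of \autoref{sec:orbifold}, that the inverse-orbifold construction preserves algebraic conjugacy transports this back to $\langle g \rangle \sim \langle g' \rangle$ in $\Aut(V_\Lambda)$, completing the proof.
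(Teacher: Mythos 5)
Your overall strategy coincides with the paper's: surjectivity from \autoref{thm:main3} together with the $V_1$-uniqueness theorem, and injectivity by passing to the inverse-orbifold automorphism $\amgis$ of $V:=V_\Lambda^{\orb(g)}$, feeding $\dim(V_1^{\orb(\amgis)})=24$ and $R(\amgis)=0$ into \autoref{prop:dimstrange} to force $\lambda_{s_i^{[j]}}=\rho_i/h_i^\vee$ for all $i$ and $j$, and then transporting the resulting conjugacy back to $\Aut(V_\Lambda)$ via the algebraic-conjugacy invariance of the inverse orbifold. Up to that point your outline matches the paper's proof. (One small remark: the paper disposes of the abelian case not by appealing to the classification, which would be circular at this stage, but via \autoref{cor:dimfleech} --- a non-trivial \gdh{} with $\rk((V_\Lambda^g)_1)=24$ would give $\dim((V_\Lambda^{\orb(g)})_1)=24+24n>24$.)

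The step you defer is, however, exactly where your argument is incomplete, and the mechanism you sketch for it would not work. The paper closes the gap as follows: by the remark at the end of \autoref{sec:verystrange} the sequence characterising $\sigma_{\rho_i/h_i^\vee}$ is invariant under diagram automorphisms, so the conjugator taking $\amgis|_{\g_i}$ to $\sigma_i$ can be chosen in $\Inn(V_1)$; inner automorphisms of $V_1$ extend to inner automorphisms of $V$, and since $\amgis$ is itself inner on $V$ (this is the content of the rank condition in the definition of a \gdh{}), conjugating by this extension makes $\amgis$ equal to $\sigma_u=\e^{2\pi\i u_0}$ in $\Aut(V)$, not merely an automorphism agreeing with $\sigma_u$ on $V_1$. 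Hence every \gdh{} over $V$ is algebraically conjugate to the one fixed automorphism obtained by inverting the orbifold of $\sigma_u$, which is what injectivity requires. Your alternative --- replacing $\amgis$ by a power coprime to its order so as to ``absorb'' a discrepancy lying in the kernel of $\Aut(V)\to\Aut(V_1)$ --- cannot succeed in general: if $\amgis=\sigma_u k$ with $k$ in that kernel, then $\amgis^m=\sigma_u^m k^m$ alters the $\sigma_u$-part as well, so there is no reason a coprime power should land on $\sigma_u$, and the $V_1$-uniqueness of $V$ gives no control over this kernel (which is typically non-trivial, e.g.\ $\Hom(N/Q,\C^*)$ for a Niemeier lattice $N$ with root lattice $Q$). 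You need the paper's direct conjugation argument, or a genuine substitute for it; as written, the injectivity half of your proof is not complete.
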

\begin{proof}
Let $V$ be a \strathol{} \voa{} $V$ of central charge $24$ with non-trivial weight-$1$ Lie algebra $V_1$. Then $V_1$ is either abelian and of rank $24$ or semisimple \cite{DM04}. In the first case $V$ must be isomorphic to the Leech lattice \voa{} $V_\Lambda$ \cite{DM04b}. Moreover, if $g$ is a \gdh{} such that $V_\Lambda^{\orb(g)}\cong V_\Lambda$, then $24=\rk((V_\Lambda^{\orb(g)})_1)=\rk((V_\Lambda^g)_1)=\rk(\Lambda^{\nu})$ where $\nu$ is the projection of $g$ to $\O(\Lambda)$. Hence, $\nu$ is the identity and \autoref{cor:dimfleech} implies that so is $g$. Therefore, in the following we assume that $\g:=V_1$ is semisimple and that $g$ is non-trivial.

We sketch the proof of the surjectivity, which is given in \cite{ELMS21}. We decompose $\g=\g_1\oplus\ldots\oplus\g_r$ into simple components $\g_i$. Then the \vosa{} of $V$ generated by $V_1$ is isomorphic to the affine \voa{} $L_{\hat\g_1}(k_1,0)\otimes\ldots\otimes L_{\hat\g_r}(k_r,0)$ with levels $k_i\in\Ns$ \cite{DM06b}. Schellekens' lowest-order trace identity \cite{Sch93} (see also \cite{DM04,EMS20a}) states that $h_i^\vee/k_i=(\dim(V_1)-24)/24$ for all $i=1,\ldots,r$ where $h_i^\vee$ is the dual Coxeter number of $\g_i$. Let $\rho_i$ denote the Weyl vector of $\g_i$. Define the inner automorphism $\sigma_u=\e^{2\pi\i u_0}\in\Aut(V)$ with $u:=\sum_{i=1}^r\rho_i/h_i^\vee$. It restricts to the automorphism $\sigma_i:=\sigma_{\rho_i/h_i^\vee}$ introduced in \autoref{sec:verystrange} on each $\g_i$. The order of $\sigma_u$ is $\lcm(\{l_ih_i^\vee\}_{i=1}^r)$ with lacing numbers $l_i\in\{1,2,3\}$. The \fpvosa{} $V^{\sigma_u}$ satisfies the positivity condition and the lowest-order trace identity implies that $\sigma_u$ has type~$0$. Hence, the orbifold construction $V^{\orb(\sigma_u)}$ exists. Inserting $\sigma_u$ in \autoref{prop:dimstrange} yields $\dim(V_1^{\orb(\sigma_u)})=24-R(\sigma_u) \leq24$ since $\lambda_{s_i^{[j]}}=\frac{\rho_i}{h_i^\vee}$ for all $i=1,\ldots,r$ and $j\in\Z_n^*$, each $\sigma_u^j|_{\g_i}$ being conjugate to $\sigma_i$ by the rationality of $\sigma_i$. Again by the lowest-order trace identity, $V_1^{\orb(\sigma_u)}$ cannot be semisimple (or else $\dim(V_1^{\orb(\sigma_u)})>24$) so that it is abelian of rank $24$. Hence, $V^{\orb(\sigma_u)}\cong V_\Lambda$ by \cite{DM04b}. Since $R(\sigma_u)=0$, $\sigma_u$ is extremal.

Finally, we consider the inverse-orbifold automorphism $g\in\Aut(V_\Lambda)$ with respect to $\sigma_u\in\Aut(V)$. The properties of $\sigma_u$ imply that $g$ is a \gdh{} and $V\cong V_\Lambda^{\orb(g)}$. Indeed, $g$ is extremal by \autoref{prop:inverse} and the rank condition is satisfied because $\sigma_u$ is inner. This proves the surjectivity.

We remark that we could have used the very strange formula rather than its averaged version since $\sigma_u|_{V_1}$ is quasirational (see Theorem~3.3 in \cite{ELMS21}).

Next we prove the injectivity. Let $g\in\Aut(V_{\Lambda})$ be a \gdh{} of order $n>1$. Define $V:=V_\Lambda^{\orb(g)}$. Then the inverse-orbifold automorphism $\sigma$ corresponding to $g$ is of type~$0$, inner and extremal by \autoref{prop:inverse}. As above we decompose $\g=\g_1\oplus\ldots\oplus\g_r$ into simple ideals. Then, \autoref{prop:dimstrange} implies that
\begin{equation*}
24=\dim((V_\Lambda)_1)=\dim(V^{\orb(\sigma)}_1)=24+\frac{12n}{\phi(n)}\sum_{i=1}^r\sum_{j\in\Z_n^*}h^\vee_i\left|\lambda_{s_i^{[j]}}-\frac{\rho_i}{h_i^\vee}\right|^2
\end{equation*}
where $s_i^{[j]}$ is the sequence characterising $\sigma^j|_{\g_i}$. This equation can only be satisfied if $\sigma|_{V_1}$ is conjugate to $\e^{2\pi\i\ad(u)}$ with $u=\sum_{i=1}^r\rho_i/h_i^\vee$. In fact, $\sigma|_{V_1}$ is conjugate to $\e^{2\pi\i\ad(u)}$ under an inner automorphism of $V_1$ (see remark at the end of \autoref{sec:verystrange}). Since inner automorphisms of $V_1$ can be extended to inner automorphisms of $V$, $\sigma$ is conjugate in $\Aut(V)$ to $\sigma_u=\e^{2\pi\i u_0}$. Hence, $g$ is conjugate to the inverse-orbifold automorphism of $\sigma_u$. The same is true for any other \gdh{} $h$ of $V_\Lambda$ with $V_\Lambda^{\orb(h)}\cong V$. This proves the injectivity of the map.
\end{proof}


As a consequence of the correspondence, it is possible to classify the \strathol{} \voa{}s of central charge $24$ with non-trivial weight-$1$ space by classifying the \gdh{}s $g\in\Aut(V_{\Lambda})$ satisfying $\rk((V_\Lambda^g)_1)>0$. This shall be addressed in a forthcoming publication \cite{MS21} using geometric properties of \gdh{}s. (In the next section, we proceed in the opposite direction to obtain a classification of the \gdh{}s.)

The surjectivity of the map also follows (less conceptually) from \autoref{thm:main3} below together with Schellekens' classification of the possible $V_1$-structures of the \strathol{} \voa{}s of central charge $24$ and the uniqueness of such a \voa{} for non-trivial $V_1$.

\medskip

We now give a uniform construction of the $70$ semisimple Lie algebra structures on Schellekens' list. The proof uses Schellekens' classification of the possible $V_1$-structures and Kac's theory of finite-order automorphisms of simple Lie algebras \cite{Kac90}.
\begin{thm}[Uniform Construction]\label{thm:main3}
Let $\g$ be one of the $70$ non-zero Lie algebras on Schellekens' list (Table~1 in \cite{Sch93}). Then the corresponding automorphism $g\in\Aut(V_\Lambda)$ listed in the appendix (\autoref{table:11expl} and \autoref{table:70expl}) is a \gdh{} and satisfies $(V_\Lambda^{\orb(g)})_1\cong\g$.
\end{thm}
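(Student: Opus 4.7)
The proof is by explicit case-by-case verification, organized according to the eleven algebraic conjugacy classes of $\O(\Lambda)$ appearing in the later \autoref{thm:main4}. For each of the $70$ target Lie algebras $\g$ on Schellekens' list, the appendix provides a candidate $g = \hat\nu\sigma_h \in \Aut(V_\Lambda)$, determined by a pair $(\nu, h)$ with $\nu \in \O(\Lambda)$ of one of the eleven cycle shapes and $h \in \pi_\nu(\Lambda_\C)/\pi_\nu(\Lambda)$ (as parametrized by \autoref{prop:conjclassbij}). The strategy is to verify, for each such candidate, the three properties: (a) $g$ is extremal (implying type $0$ and the positivity condition for $V_\Lambda^g$); (b) the rank condition $\rk((V_\Lambda^{\orb(g)})_1) = \rk((V_\Lambda^g)_1)$; and (c) $(V_\Lambda^{\orb(g)})_1 \cong \g$.

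For (a), I would invoke \autoref{prop:necsufff}(\ref{item:crit1}): it suffices to check that $\rho(V_\Lambda(g^i)) \geq 1$ for every $i \not\equiv 0 \bmod n$, where $n$ is the order of $g$. This reduces to a finite-dimensional lattice computation via the formula
\begin{equation*}
\rho(V_\Lambda(g^i)) = \rho_{\nu^i} + \min_{\alpha \in \pi_{\nu^i}(\Lambda) + ih}(\alpha,\alpha)/2
\end{equation*}
recalled at the end of \autoref{sec:lat}; one simply has to exhibit a short vector in each relevant coset and check no shorter one exists. For (b), the fixed-point algebra $(V_\Lambda^g)_1$ is abelian of dimension $\sum_t b_t$, where $\prod_{t \mid m} t^{b_t}$ is the cycle shape of $\nu$. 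Once (a) holds, \autoref{cor:dimfleech} gives $\dim((V_\Lambda^{\orb(g)})_1) = 24 + 24n(1 - \rho_\nu)$, and checking that this matches $\dim(\g)$ while $\sum_t b_t$ matches $\rk(\g)$ forces $(V_\Lambda^g)_1$ to be a Cartan subalgebra of $(V_\Lambda^{\orb(g)})_1$, establishing the rank condition and thus that $g$ is a \gdh{}.

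To establish (c), I would appeal to the uniqueness half of the theorem recalled in the introduction: a \strathol{} \voa{} of central charge $24$ is determined up to isomorphism by its weight-$1$ Lie algebra. Hence it suffices to identify $(V_\Lambda^{\orb(g)})_1$ up to isomorphism as a reductive Lie algebra. When dimension and rank alone pin down the Schellekens entry, we are done. For the ambiguous cases (same dimension and rank but different ideal decomposition or different affine levels), finer invariants suffice: the decomposition of the lowest-weight subspaces $V_\Lambda(g^i)_{\rho(V_\Lambda(g^i))}$ as $(V_\Lambda^g)_1$-modules gives the root multiplicities of $(V_\Lambda^{\orb(g)})_1$, and the action of the simple-ideal Virasoro sub-vectors determines the levels.

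The main obstacle is not conceptual but combinatorial: $70$ short-vector computations in lattice cosets, organized by $C_{\O(\Lambda)}(\nu)$-orbits on $\pi_\nu(\Lambda_\C)/\pi_\nu(\Lambda)$. The inner case $\nu = \id$ is exactly the construction of \autoref{prop:deepholecons}, recovering the classical bijection of Conway, Parker and Sloane between deep holes of $\Lambda$ and the $23$ Niemeier lattices with roots. The outer cases $\nu \neq \id$ require an analogous enumeration of ``\gdh{}s'' of the lifted fixed-point lattices $(\Lambda^\nu, \hat\nu)$, where the choice of cocycle $\eps$ and the order-doubling phenomenon discussed in \autoref{sec:lat} enter in an essential way, and completeness (all 70 Lie algebras being realized) must be checked by exhaustion.
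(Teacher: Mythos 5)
There is a genuine gap in step (a). You propose to establish extremality for all $70$ candidates by verifying the sufficient criterion of \autoref{prop:necsufff}\eqref{item:crit1}, i.e.\ $\rho(V_\Lambda(g^i))\geq1$ for \emph{every} $i\not\equiv0\bmod n$. But this inequality is simply false for roughly half of the listed automorphisms: \autoref{table:70} records conformal weights such as $22/23$ (for $D_{24,1}$, $n=46$), $14/15$, $9/10$, $5/6$, $10/11$, $8/9$, $6/7$, $4/5$, $3/4$, $2/3$ at various proper powers $g^c$ with $(c,n)>1$. These automorphisms are still extremal, because by \autoref{prop:pos2} the offending weight spaces can be paired with vanishing coefficients $d_n(i,j)$, but your short-vector computation would not certify this; it would instead exhibit vectors of norm less than $2$ and leave you stuck. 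The paper handles exactly these cases by a different route: it computes $\dim((V_\Lambda^{\orb(g)})_1)$ directly from the known modular transformation behaviour of the relevant eta products and theta series, compares with the upper bound of \autoref{cor:dimbound2}, and only \emph{afterwards} concludes that $R(g)=0$. Note also that \autoref{prop:necsufff}\eqref{item:crit2} only forces $\rho(V_\Lambda(g^i))\geq1$ for $(i,n)=1$, which is consistent with the table.

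A second, logically prior problem is your argument for the rank condition. You say that once the dimension $24+24n(1-\rho_\nu)$ ``matches $\dim(\g)$'' and $\sum_t b_t$ matches $\rk(\g)$, this ``forces'' $(V_\Lambda^g)_1$ to be a Cartan subalgebra. But at that stage of the argument you have not yet identified $(V_\Lambda^{\orb(g)})_1$ with $\g$ --- you only know its dimension, and a reductive Lie algebra of given dimension can have several possible ranks, all $\geq\rk((V_\Lambda^g)_1)$. The identification of the Lie algebra and the verification of the Cartan property have to be carried out together: the paper does this by elimination, using Kac's classification of regular finite-order automorphisms applied to the inverse-orbifold automorphism, compatibility of the eigenspace dimensions with $\dim(W^{(i,0)}_1)$, a necessary Cartan condition (the weights of $(V_\Lambda^g)_1$ on $\bigoplus_{(i,n)=1}(V_\Lambda(g^i))_1$ must embed into a root system without multiplicities), and a sufficient one (vanishing of the centraliser $C_{(V_\Lambda(g^i))_1}((V_\Lambda^g)_1)$ for $i\neq0$, via Lemma~8.1 in \cite{Kac90}). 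Your appeal to the uniqueness theorem for strongly rational, holomorphic \voa{}s of central charge $24$ is not needed for this step and does not substitute for it. The remaining ingredients of your outline (finite coset computations organised by \autoref{prop:conjclassbij}, the deep-hole case recovering \autoref{prop:deepholecons}, finer invariants to separate Schellekens entries of equal dimension and rank) do match the paper's strategy.
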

Some properties that only depend on the algebraic conjugacy class of the \gdh{} $g$ are listed in \autoref{table:11} and \autoref{table:70}.

\renewcommand{\AA}{1^{24}}
\newcommand{\BB}{1^82^8}
\renewcommand{\CC}{1^63^6}
\newcommand{\DD}{2^{12}}
\newcommand{\EE}{1^42^24^4}
\newcommand{\FF}{1^45^4}
\newcommand{\GG}{1^22^23^26^2}
\newcommand{\HHH}{1^37^3}
\newcommand{\III}{1^22^14^18^2}
\newcommand{\JJ}{2^36^3}
\newcommand{\KK}{2^210^2}

\begin{table}
\caption{Summary of the \gdh{}s of the Leech lattice \voa{} $V_\Lambda$ and the corresponding orbifold constructions.}
\renewcommand{\arraystretch}{1.2}
\begin{tabular}{lllrrl}
\cite{Hoe17} & Cycle shape & $n$ & \# & Rk.\ & Dim.\\\hline\hline
A&$\AA$ &$1,2,\ldots,25,30,46$&24&24&$24+24n-24\delta_{n,1}$\\\hline
B&$\BB$ &$2,4,\ldots,18,22,30$&17&16&$24+12n$    \\\hline
C&$\CC$ &$3,6,9,12,18$        &6 &12&$24+8n$     \\\hline
D&$\DD$ &$2,6,10,\ldots,22,46$&9 &12&$24+6n$     \\\hline
E&$\EE$ &$4,8,12,16$          &5 &10&$24+6n$     \\\hline
F&$\FF$ &$5,10$               &2 & 8&$24+(24/5)n$\\\hline
G&$\GG$ &$6,12$               &2 & 8&$24+4n$     \\\hline
H&$\HHH$ &$7$                  &1 & 6&$24+(24/7)n$\\\hline
I&$\III$&$8$                  &1 & 6&$24+3n$     \\\hline
J&$\JJ$ &$6,18$               &2 & 6&$24+2n$     \\\hline
K&$\KK$ &$10$                 &1 & 4&$24+(6/5)n$ \\\hline\hline
L&$1^{-24}2^{24},1^{-12}3^{12},\ldots$&$2,3,\ldots,70,78$&38&0&0\\
\end{tabular}
\renewcommand{\arraystretch}{1}
\label{table:11}
\end{table}

\begin{table}
\caption{The $70$ \gdh{}s of $V_\Lambda$ whose corresponding orbifold constructions realise all non-zero Lie algebras on Schellekens' list (continued on next page).}
\renewcommand{\arraystretch}{1.09}
\begin{tabular}{rllrrll}
No. & No. & $(V_\Lambda^{\orb(g)})_1$ & Dim. & $n$ & $\rho(V_\Lambda(g^c))$ & $(V_\Lambda(g))_1$\\\hline\hline
\multicolumn{6}{c}{Rk. 24, cycle shape $1^{24}$}\\\hline\\[-1.2em]
70 &  A1 & $D_{24,1}$                  & 1128 & 46 & $1, 22/23, 1, 0$                  & $\tilde{D}_{24}$\\
69 &  A2 & $D_{16,1}E_{8,1}$           &  744 & 30 & $1, 14/15, 1, 1, 1, 1, 1, 0$      & $\tilde{D}_{16}\tilde{E}_8$\\
68 &  A3 & $E_{8,1}^3$                 &  744 & 30 & $1, 14/15, 9/10, 5/6, 1, 1, 1, 0$ & $\tilde{E}_8^3$\\
67 &  A4 & $A_{24,1}$                  &  624 & 25 & $1, 1, 0$                         & $\tilde{A}_{24}$\\
66 &  A5 & $D_{12,1}^2$                &  552 & 22 & $1, 10/11, 1, 0$                  & $\tilde{D}_{12}^2$\\
65 &  A6 & $A_{17,1}E_{7,1}$           &  456 & 18 & $1, 1, 1, 1, 1, 0$                & $\tilde{A}_{17}\tilde{E}_7$\\
64 &  A7 & $D_{10,1}E_{7,1}^2$         &  456 & 18 & $1, 8/9, 1, 1, 1, 0$              & $\tilde{D}_{10}\tilde{E}_7^2$\\
63 &  A8 & $A_{15,1}D_{9,1}$           &  408 & 16 & $1, 1, 1, 1, 0$                   & $\tilde{A}_{15}\tilde{D}_9$\\
61 &  A9 & $D_{8,1}^3$                 &  360 & 14 & $1, 6/7, 1, 0$                    & $\tilde{D}_8^3$\\
60 & A10 & $A_{12,1}^2$                &  336 & 13 & $1, 0$                            & $\tilde{A}_{12}^2$\\
59 & A11 & $A_{11,1}D_{7,1}E_{6,1}$    &  312 & 12 & $1, 1, 1, 1, 1, 0$                & $\tilde{A}_{11}\tilde{D}_7\tilde{E}_6$\\
58 & A12 & $E_{6,1}^4$                 &  312 & 12 & $1, 1, 3/4, 1, 1, 0$              & $\tilde{E}_6^4$\\
55 & A13 & $A_{9,1}^2D_{6,1}$          &  264 & 10 & $1, 1, 1, 0$                      & $\tilde{A}_9^2\tilde{D}_6$\\
54 & A14 & $D_{6,1}^4$                 &  264 & 10 & $1, 4/5, 1, 0$                    & $\tilde{D}_6^4$\\
51 & A15 & $A_{8,1}^3$                 &  240 &  9 & $1, 1, 0$                         & $\tilde{A}_8^3$\\
49 & A16 & $A_{7,1}^2D_{5,1}^2$        &  216 &  8 & $1, 1, 1, 0$                      & $\tilde{A}_7^2\tilde{D}_5^2$\\
46 & A17 & $A_{6,1}^4$                 &  192 &  7 & $1, 0$                            & $\tilde{A}_6^4$\\
43 & A18 & $A_{5,1}^4D_{4,1}$          &  168 &  6 & $1, 1, 1, 0$                      & $\tilde{A}_5^4\tilde{D}_4$\\
42 & A19 & $D_{4,1}^6$                 &  168 &  6 & $1, 2/3, 1, 0$                    & $\tilde{D}_4^6$\\
37 & A20 & $A_{4,1}^6$                 &  144 &  5 & $1, 0$                            & $\tilde{A}_4^6$\\
30 & A21 & $A_{3,1}^8$                 &  120 &  4 & $1, 1, 0$                         & $\tilde{A}_3^8$\\
24 & A22 & $A_{2,1}^{12}$              &   96 &  3 & $1, 0$                            & $\tilde{A}_2^{12}$\\
15 & A23 & $A_{1,1}^{24}$              &   72 &  2 & $1, 0$                            & $\tilde{A}_1^{24}$\\
 1 & A24 & $\C^{24}$                   &   24 &  1 & $0$                               & $\emptyset$\\\hline
\multicolumn{6}{c}{Rk. 16, cycle shape $1^82^8$}\\\hline\\[-1.2em]
62 &  B1 & $B_{8,1}E_{8,2}$            &  384 & 30 & $1, 14/15, 1, 1, 1, 1, 1, 0$      & $A_1\tilde{E}_8$\\
56 &  B2 & $C_{10,1}B_{6,1}$           &  288 & 22 & $1, 10/11, 1, 0$                  & $A_1A_9$\\
52 &  B3 & $C_{8,1}F_{4,1}^2$          &  240 & 18 & $1, 8/9, 1, 1, 1, 0$              & $A_2^2A_7$\\
53 &  B4 & $B_{5,1}E_{7,2}F_{4,1}$     &  240 & 18 & $1, 8/9, 1, 1, 1, 0$              & $A_1A_2\tilde{E}_7$\\
50 &  B5 & $A_{7,1}D_{9,2}$            &  216 & 16 & $1, 1, 1, 1, 0$                   & $\tilde{D}_9$\\
47 &  B6 & $B_{4,1}^2D_{8,2}$          &  192 & 14 & $1, 6/7, 1, 0$                    & $A_1^2\tilde{D}_8$\\
48 &  B7 & $B_{4,1}C_{6,1}^2$          &  192 & 14 & $1, 6/7, 1, 0$                    & $A_1A_5^2$\\
44 &  B8 & $A_{5,1}C_{5,1}E_{6,2}$     &  168 & 12 & $1, 1, 1, 1, 1, 0$                & $A_4\tilde{E}_6$\\
40 &  B9 & $A_{4,1}A_{9,2}B_{3,1}$     &  144 & 10 & $1, 1, 1, 0$                      & $A_1\tilde{A}_9$\\
39 & B10 & $B_{3,1}^2C_{4,1}D_{6,2}$   &  144 & 10 & $1, 4/5, 1, 0$                    & $A_1^2A_3\tilde{D}_6$\\
38 & B11 & $C_{4,1}^4$                 &  144 & 10 & $1, 4/5, 1, 0$                    & $A_3^4$\\
33 & B12 & $A_{3,1}A_{7,2}C_{3,1}^2$   &  120 &  8 & $1, 1, 1, 0$                      & $A_2^2\tilde{A}_7$\\
31 & B13 & $A_{3,1}^2D_{5,2}^2$        &  120 &  8 & $1, 1, 1, 0$                      & $\tilde{D}_5^2$\\
26 & B14 & $A_{2,1}^2A_{5,2}^2B_{2,1}$ &   96 &  6 & $1, 1, 1, 0$                      & $A_1\tilde{A}_5^2$\\
25 & B15 & $B_{2,1}^4D_{4,2}^2$        &   96 &  6 & $1, 2/3, 1, 0$                    & $A_1^4\tilde{D}_4^2$\\
16 & B16 & $A_{1,1}^4A_{3,2}^4$        &   72 &  4 & $1, 1, 0$                         & $\tilde{A}_3^4$\\
 5 & B17 & $A_{1,2}^{16}$              &   48 &  2 & $1, 0$                            & $\tilde{A}_1^{16}$\\
\end{tabular}
\renewcommand{\arraystretch}{1}
\label{table:70}
\end{table}
\addtocounter{table}{-1}

\begin{table}
\caption{(continued)}
\renewcommand{\arraystretch}{1.2}
\begin{tabular}{rllrrll}
No. & No. & $(V_\Lambda^{\orb(g)})_1$ & Dim. & $n$ & $\rho(V_\Lambda(g^c))$ & $(V_\Lambda(g))_1$\\\hline\hline
\multicolumn{6}{c}{Rk. 12, cycle shape $1^63^6$}\\\hline
45 &  C1 & $A_{5,1}E_{7,1}$            & 168 & 18 & $1, 1, 1, 1, 1, 0$   & $\tilde{E}_7$\\
34 &  C2 & $A_{3,1}D_{7,3}G_{2,1}$     & 120 & 12 & $1, 1, 1, 1, 1, 0$   & $A_1\tilde{D}_7$\\
32 &  C3 & $E_{6,3}G_{2,1}^3$          & 120 & 12 & $1, 1, 3/4, 1, 1, 0$ & $A_1^3\tilde{E}_6$\\
27 &  C4 & $A_{2,1}^2A_{8,3}$          &  96 &  9 & $1, 1, 0$            & $\tilde{A}_8$\\
17 &  C5 & $A_{1,1}^3A_{5,3}D_{4,3}$   &  72 &  6 & $1, 1, 1, 0$         & $\tilde{A}_5\tilde{D}_4$\\
 6 &  C6 & $A_{2,3}^6$                 &  48 &  3 & $1, 0$               & $\tilde{A}_2^6$\\\hline
\multicolumn{6}{c}{Rk. 12, cycle shape $2^{12}$ (order doubling)}\\\hline
57 & D1a & $B_{12,2}$                  & 300 & 46 & $1, 22/23, 1, 0$     & $A_1$\\
41 & D1b & $B_{6,2}^2$                 & 156 & 22 & $1, 10/11, 1, 0$     & $A_1^2$\\
29 & D1c & $B_{4,2}^3$                 & 108 & 14 & $1, 6/7, 1, 0$       & $A_1^3$\\
23 & D1d & $B_{3,2}^4$                 &  84 & 10 & $1, 4/5, 1, 0$       & $A_1^4$\\
12 & D1e & $B_{2,2}^6$                 &  60 &  6 & $1, 2/3, 1, 0$       & $A_1^6$\\
 2 & D1f & $A_{1,4}^{12}$              &  36 &  2 & $1, 0$               & $\tilde{A}_1^{12}$\\
36 & D2a & $A_{8,2}F_{4,2}$            & 132 & 18 & $1, 1, 1, 1, 1, 0$   & $A_2$\\
22 & D2b & $A_{4,2}^2C_{4,2}$          &  84 & 10 & $1, 1, 1, 0$         & $A_3$\\
13 & D2c & $A_{2,2}^4D_{4,4}$          &  60 &  6 & $1, 1, 1, 0$         & $\tilde{D}_4$\\\hline
\multicolumn{6}{c}{Rk. 10, cycle shape $1^42^24^4$}\\\hline
35 & E1 & $A_{3,1}C_{7,2}$             & 120 & 16 & $1, 1, 1, 1, 0$      & $A_6$\\
28 & E2 & $A_{2,1}B_{2,1}E_{6,4}$      &  96 & 12 & $1, 1, 1, 1, 1, 0$   & $\tilde{E}_6$\\
18 & E3 & $A_{1,1}^3A_{7,4}$           &  72 &  8 & $1, 1, 1, 0$         & $\tilde{A}_7$\\
19 & E4 & $A_{1,1}^2C_{3,2}D_{5,4}$    &  72 &  8 & $1, 1, 1, 0$         & $A_2\tilde{D}_5$\\
 7 & E5 & $A_{1,2}A_{3,4}^3$           &  48 &  4 & $1, 1, 0$            & $\tilde{A}_3^3$\\\hline
\multicolumn{6}{c}{Rk. 8, cycle shape $1^45^4$}\\\hline
20 & F1 & $A_{1,1}^2D_{6,5}$           &  72 & 10 & $1, 1, 1, 0$         & $\tilde{D}_6$\\
 9 & F2 & $A_{4,5}^2$                  &  48 &  5 & $1, 0$               & $\tilde{A}_4^2$\\\hline
\multicolumn{6}{c}{Rk. 8, cycle shape $1^22^23^26^2$}\\\hline
21 & G1 & $A_{1,1}C_{5,3}G_{2,2}$      &  72 & 12 & $1, 1, 1, 1, 1, 0$   & $A_1A_4$\\
 8 & G2 & $A_{1,2}A_{5,6}B_{2,3}$      &  48 &  6 & $1, 1, 1, 0$         & $A_1\tilde{A}_5$\\\hline
\multicolumn{6}{c}{Rk. 6, cycle shape $1^37^3$}\\\hline
11 & H1 & $A_{6,7}$                    &  48 &  7 & $1, 0$               & $\tilde{A}_6$\\\hline
\multicolumn{6}{c}{Rk. 6, cycle shape $1^22^14^18^2$}\\\hline
10 & I1 & $A_{1,2}D_{5,8}$             &  48 &  8 & $1, 1, 1, 0$         & $\tilde{D}_5$\\\hline
\multicolumn{6}{c}{Rk. 6, cycle shape $2^36^3$ (order doubling)}\\\hline
14 & J1a & $A_{2,2}F_{4,6}$            &  60 & 18 & $1, 1, 1, 1, 1, 0$   & $A_2$\\
 3 & J1b & $A_{2,6}D_{4,12}$           &  36 &  6 & $1, 1, 1, 0$         & $\tilde{D}_4$\\\hline
\multicolumn{6}{c}{Rk. 4, cycle shape $2^210^2$ (order doubling)}\\\hline
4 & K1 & $C_{4,10}$                    &  36 & 10 & $1, 1, 1, 0$         & $A_3$
\end{tabular}
\renewcommand{\arraystretch}{1}
\end{table}

\begin{proof}
In \autoref{table:11expl} and \autoref{table:70expl} of the appendix (see also \autoref{table:11} and \autoref{table:70}) we list representatives of $70$ algebraic conjugacy classes of automorphisms $g\in\Aut(V_\Lambda)$ with $\rk((V_\Lambda^g)_1)>0$. In each case, we shall identify the weight-$1$ Lie algebra $(V_\Lambda^{\orb(g)})_1$ of the corresponding orbifold construction as the one listed in \autoref{table:70} and prove that $g$ is a \gdh{}. Since each of the $70$ non-zero Lie algebras on Schellekens' list appears exactly once, this will prove the assertion.

First, we describe these $70$ automorphisms. A complete system of representatives of the conjugacy classes in $\Aut(V_\Lambda)$ is given in \autoref{prop:conjclassbij}. It is a special property of the Leech lattice $\Lambda$ that the algebraic conjugacy classes in $\O(\Lambda)\cong\Co_0$ are uniquely determined by their cycle shape. The $70$ automorphisms project to $11$ different cycle shapes in $\O(\Lambda)$ (listed in \autoref{table:11}) with non-trivial fixed-point sublattices. These are exactly the same conjugacy classes that appear in \cite{Hoe17}, Table~4. Once a conjugacy class in $\O(\Lambda)$ is specified, we choose suitable elements $h\in\pi_\nu(\Lambda_\C)/\pi_\nu(\Lambda)$ (listed in \autoref{table:70expl}) and consider the automorphism $g=\phi_{\eta}(\nu)\sigma_h$ where $\phi_{\eta}(\nu)$ is a standard lift of $\nu\in\O(\Lambda)$. Up to conjugacy $g$ does not depend on the choice of the standard lift of $\nu$.

We explain the entries of \autoref{table:70}. Each block corresponds to a cycle shape of $\nu$ and each row to a choice of $h$, determining the automorphism $g$ of order $n$ (up to conjugacy). We list the (yet to be proved) Lie algebra structure of $(V_\Lambda^{\orb(g)})_1$ together with its dimension and rank, of which the latter only depends on $\nu$. The first column corresponds to the labelling in Table~1 in \cite{Sch93} and the second one to the labelling in Table~4 of \cite{Hoe17}. Then we list the conformal weights $\rho(V_\Lambda(g^c))$ of the unique irreducible $g^c$-twisted $V_\Lambda$-modules for $c\mid n$ in ascending order. (Note that for the $70$ chosen automorphisms $\rho(V_\Lambda(g^i))$ only depends on $(i,n)$.) The last column will be explained below.

\smallskip

And now to the proof:
\begin{enumerate}[wide]
\item  First, we note that in all cases, $\rho(V_\Lambda(g))=1$ so that $g$ has type~$0$, which implies that the corresponding cyclic orbifold construction $V_\Lambda^{\orb(g)}$ exists and is a \strathol{} \voa{} of central charge $24$. Moreover $\rk((V_\Lambda^{\orb(g)})_1)\geq\rk((V_\Lambda^g)_1)=\rk(\Lambda^\nu)>0$ since $\nu$ has non-trivial fixed-point sublattice $\Lambda^\nu$. Hence, the Lie algebra $(V_\Lambda^{\orb(g)})_1$ is one of the $70$ non-zero (semisimple or abelian) Lie algebras on Schellekens' list.

\item The $24$ automorphisms with $\nu=\id$ are exactly the inner automorphisms $g=\sigma_h$ corresponding to the $23$ classes of deep holes $h$ of the Leech lattice $\Lambda$ and the identity. We proved in \autoref{prop:deepholes} that they are \gdh{}s and in \autoref{prop:deepholecons} that the corresponding orbifold constructions realise the $24$ Niemeier lattice \voa{}s, with weight-$1$ Lie algebras of rank $24$.

\item It remains to study the $46$ classes of automorphisms $g=\phi_{\eta}(\nu)\sigma_h$ with non-trivial $\nu$, i.e.\ with $\rk((V_\Lambda^g)_1)=\rk(\Lambda^\nu)<24$. We begin by proving that $\rk((V_\Lambda^{\orb(g)})_1)=\rk((V_\Lambda^g)_1)$ or equivalently that the abelian Lie algebra $(V_\Lambda^g)_1$ is a Cartan subalgebra of $(V_\Lambda^{\orb(g)})_1$, which is one of the two properties of a \gdh{} and additionally determines the rank of the Lie algebra $(V_\Lambda^{\orb(g)})_1$. (In particular, we may then assume that $(V_\Lambda^{\orb(g)})_1$ is semisimple as the only abelian case on Schellekens' list has rank $24$.)

Indeed, by the inverse orbifold construction, $(V_\Lambda^{\orb(g)})_1$ is a $\Z_n$-graded Lie algebra with $0$-component $(V_\Lambda^g)_1$. If the centraliser $C_{W^{(i,0)}_1}((V_\Lambda^g)_1)\subseteq C_{V_\Lambda(g^i)_1}((V_\Lambda^g)_1)=\{0\}$ for all $i\in\Z_n\setminus\{0\}$, then Lemma~8.1 in \cite{Kac90} implies that $(V_\Lambda^g)_1$ is a Cartan subalgebra of $(V_\Lambda^{\orb(g)})_1$. Now, by the definition of the twisted modules for lattice \voa{}s (see \autoref{sec:lat}), the centraliser condition for $i=1$ follows from $h\notin\pi_\nu(\Lambda)$ and similarly for the other powers of $g$ (essentially, the condition becomes $ih\notin\pi_\nu(\Lambda)$ but an extra term appears if $\phi_{\eta}(\nu)^i$ is not a standard lift of $\nu^i$). It is easy to check that all automorphisms $g=\phi_{\eta}(\nu)\sigma_h$ satisfy this (in fact, they were chosen this way), proving that $\rk((V_\Lambda^{\orb(g)})_1)=\rk((V_\Lambda^g)_1)$.

\item In $31$ of the remaining $46$ cases, we observe that $\rho(V_\Lambda(g^i))\geq1$ for all $i\in\Z_n\setminus\{0\}$, implying that $g$ is extremal by \autoref{prop:necsufff}. This shows that $g$ is a \gdh. Moreover, the dimension formula in \autoref{thm:dimform2} then yields the dimension of $(V_\Lambda^{\orb(g)})_1$ as the one listed in \autoref{table:70}.

For $16$ of these \gdh{}s $g$, the rank and dimension of the Lie algebra $(V_\Lambda^{\orb(g)})_1$ uniquely determine it on Schellekens' list. These are the cases $2$, $3$, $4$, $5$, $6$, $7$, $14$, $16$, $17$, $27$, $28$, $35$, $36$, $44$, $45$ and $50$.

\item For the other $15$ \gdh{}s $g$, namely those we claim to correspond to cases $8$, $9$, $10$, $11$, $13$, $18$, $19$, $20$, $21$, $22$, $34$, $26$, $31$, $33$ and $40$ on Schellekens' list, the Lie algebra $(V_\Lambda^{\orb(g)})_1$ is not uniquely determined by its rank and dimension. Here, we need an additional argument.

The inverse orbifold automorphism $\zeta$ on $V_\Lambda^{\orb(g)}$ restricts to an inner automorphism of order dividing $n$ of the semisimple Lie algebra $(V_\Lambda^{\orb(g)})_1$. The theory of finite-order automorphisms of simple Lie algebras was developed by Kac (see \autoref{sec:lieaut} and \cite{Kac90}). The said automorphism is regular, i.e.\ the fixed-point Lie subalgebra $(V_\Lambda^g)_1$ is abelian, and in fact a Cartan subalgebra of $(V_\Lambda^{\orb(g)})_1$.

Then, for a fixed $i\in\Z_n$ with $(i,n)=1$ we consider the adjoint action of $(V_\Lambda^g)_1$ on $(V_\Lambda(g^i))_1\subseteq(V_\Lambda^{\orb(g)})_1$. This defines a subset of the root system of $(V_\Lambda^{\orb(g)})_1$. Crucially, $(V_\Lambda(g^i))_1=W^{(i,0)}_1$ is the eigenspace of the automorphism $\zeta^{i^{-1}}$ (where $i^{-1}$ denotes the inverse of $i$ modulo $n$) restricted to $(V_\Lambda^{\orb(g)})_1$ with eigenvalue $e(1/n)$. By Proposition~8.1~c) in \cite{Kac90} the roots of $(V_\Lambda(g^i))_1$ are the \emph{simple} roots of a subdiagram of the (untwisted) affinisation of the Dynkin diagram associated with the semisimple Lie algebra $(V_\Lambda^{\orb(g)})_1$. (More precisely, it is the subdiagram consisting of those nodes in the affine Dynkin diagram for which the corresponding entry of the sequence $s=(s_0,\ldots,s_l)$ characterising $\zeta^{i^{-1}}$ equals $1$.)

For a given automorphism $g=\phi_{\eta}(\nu)\sigma_h$ this Dynkin diagram associated with the action of $(V_\Lambda^g)_1$ on $(V_\Lambda(g^i))_1$ can be easily determined using the definition of the twisted modules for lattice \voa{}s. We list the result in the last column of \autoref{table:70} and note that it is the same for all $i\in\Z_n$ with $(i,n)=1$.

Then, for each of the $15$ \gdh{}s we check if this Dynkin diagram of $(V_\Lambda(g))_1$ is compatible with the remaining potential Lie algebras for $(V_\Lambda^{\orb(g)})_1$. In each case, this narrows down the list of possible Lie algebras on Schellekens' list to just one. For example, the \gdh{} we list for case $40$ on Schellekens' list has an orbifold Lie algebra $(V_\Lambda^{\orb(g)})_1$ of rank $16$ and dimension $144$. This leaves the cases $38$, $39$ and $40$ on Schellekens' list. However, of the affine diagrams $\tilde{C}_4^4$, $\tilde{B}_3^2\tilde{C}_4\tilde{D}_6$ and $\tilde{A}_4\tilde{A}_9\tilde{B}_3$ only the latter contains $A_1\tilde{A}_9$ as a subdiagram, leaving only case $40$, i.e.\ $A_4A_9B_3$, as weight-$1$ Lie algebra of $V_\Lambda^{\orb(g)}$.

\item It remains to study the $15=46-31$ automorphisms $g$ of which we do not know the dimension of $(V_\Lambda^{\orb(g)})_1$ a priori by using the dimension formula. (By \autoref{cor:dimbound2} we do however know that the dimensions listed in \autoref{table:70} must be upper bounds.) For ten of these automorphisms, namely those we claim to correspond to cases $25$, $32$, $38$, $39$, $47$, $48$, $52$, $53$, $56$ and $62$ on Schellekens' list, the argument we just described involving the Dynkin diagram of $(V_\Lambda(g))_1$ suffices to reduce the list of possible Lie algebra structures of $(V_\Lambda^{\orb(g)})_1$ to just one case. In particular, this determines the dimension of $(V_\Lambda^{\orb(g)})_1$ and gives an a posteriori proof that these automorphisms are \gdh{}s.

For example, the automorphism we list for case $62$ has an orbifold Lie algebra $(V_\Lambda^{\orb(g)})_1$ of rank $16$ and dimension bounded from above by $384$, which is compatible with all $17$ rank-$16$ Lie algebras on Schellekens' list. However, the Dynkin diagram associated with the action of $(V_\Lambda^g)_1$ on $(V_\Lambda(g))_1$ is $A_1\tilde{E}_8$. Noting that the diagram $\tilde{E}_8$ is not a subdiagram of any other irreducible affine Dynkin diagram, an inspection of the Lie algebras of rank $16$ in \autoref{table:70} reveals that only case $62$ on Schellekens' list is compatible. This Lie algebra $B_8E_8$ has dimension $384$ so that $g$ is extremal and a \gdh{}.

\item We are left with the five automorphisms $g$ of order $6$, $10$, $14$, $22$ and $46$ that we postulate to yield cases $12$, $23$, $29$, $41$ and $57$, i.e.\ the Lie algebras $B_2^6$, $B_3^4$, $B_4^3$, $B_6^2$ and $B_{12}$, respectively, in the orbifold construction. These Lie algebras of rank $12$ are too similar to be distinguished by the Dynkin diagram of $(V_\Lambda(g))_1$. We can however show that none of the rank-$12$ Lie algebras on Schellekens' list, except for cases $2$, $12$, $23$, $29$, $41$ and $57$, are compatible with the Dynkin diagram of $(V_\Lambda(g))_1$ in each case. This means that the Lie algebra $(V_\Lambda^{\orb(g)})_1$ is uniquely determined once we know its dimension.

This dimension can be determined using the formulae in \autoref{sec:charlator}. It is a standard exercise to compute the $q$-expansions of the theta series of the lattices $M_{k,d}$ appearing in the functions $D_k$ (for instance, this takes less than a second using the function \texttt{ThetaSeriesModularForm} in \texttt{Magma} \cite{Magma}) to sufficiently high order so as to identify them as modular forms. Let $p=n/2$, half the order of $g$. Then we find
\begin{align*}
D_1(\tau)&=(p-1)T_{2p+p}(\tau),\\
D_2(\tau)&=(p-1)(T_{p+}(\tau)+24),\\
D_p(\tau)&=T_{2-}(\tau),\\
D_{2p}(\tau)&=T_1(\tau)+24=j(\tau)-720
\end{align*}
in terms of certain McKay-Thompson series (see \cite{CN79}). We can also write these functions as eta products. For example,
\begin{equation*}
D_1(\tau)=(p-1)\left(\frac{\eta(\tau)\eta(p\tau)}{\eta(2\tau)\eta(2p\tau)}\right)^{24/(p+1)}\!\!\!\!+\frac{24(p-1)}{p+1}.
\end{equation*}
It is now easy to compute the expansions of the $D_k$ in the different cusps (for instance by using the well-known modular transformation properties of the eta function). We obtain
\begin{equation*}
\ch_{V_\Lambda^{\orb(g)}}(\tau)=j(\tau)-720+12p=q^{-1}+24+12p+O(q)
\end{equation*}
for the character of $V_\Lambda^{\orb(g)}$. Hence, $\dim((V_\Lambda^{\orb(g)})_1)=24+12p=24+6n$, which together with the previous remarks uniquely determines the Lie algebra $(V_\Lambda^{\orb(g)})_1$ for the five automorphisms $g$ in question as cases $12$, $23$, $29$, $41$ and $57$, respectively, on Schellekens' list and in particular shows that the automorphisms are extremal and hence \gdh{}s. This completes the proof.\qedhere
\end{enumerate}
\end{proof}


\subsection{Classification of \GDH{}s}\label{sec:class}
Finally, we give a complete classification of the \gdh{}s of the Leech lattice \voa{} $V_\Lambda$.

\autoref{thm:main2} and the classification of \strathol{} \voa{}s of central charge $24$ (see \cite{DM04,LS19,LS15a,LL20,EMS20b,LS20,KLL18}, a uniform proof is given in \cite{HM20}) yield a classification of the \gdh{}s with $\rk((V_\Lambda^g)_1)>0$. It remains to study those with $\rk((V_\Lambda^g)_1)=0$.

Recall that the Moonshine module $V^\natural$ is a \strathol{} \voa{} $V$ of central charge $24$ with $V_1=\{0\}$ and conjecturally the only such \voa{} up to isomorphism. There are $38$ orbifold constructions of the Moonshine module $V^\natural$ from the Leech lattice \voa{} $V_\Lambda$ \cite{Car18}, including the original construction in \cite{FLM88} of order~$2$.

In \cite{Car18}, based on ideas in \cite{Tui92,Tui95}, the author proves that for all fixed-point free automorphisms $\nu\in\O(\Lambda)$ with vacuum anomaly $\rho_\nu>1$ (there are exactly $40$ such conjugacy classes that give type~$0$, $38$ up to algebraic conjugacy) the corresponding standard lift $g=\phi_{\eta}(\nu)\in\Aut(V_\Lambda)$ (if a lattice automorphism is fixed-point free, all its lifts are standard lifts and conjugate) defines an orbifold construction that yields the Moonshine module $V^\natural$. (By generalising the orbifold construction to non-zero type, a total of $53$ conjugacy classes, $51$ up to algebraic conjugacy, are considered; hence the title of the paper.) No order doubling occurs. We list the $38$ algebraic conjugacy classes in \autoref{table:38}.

\begin{prop}\label{prop:moonshinegdh}
Let $g\in\Aut(V_\Lambda)$ be one of the $38$ automorphisms in \cite{Car18} such that $V_\Lambda^{\orb(g)}\cong V^\natural$. Then $g$ is a \gdh{}.
\end{prop}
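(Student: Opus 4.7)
The plan is to verify the two conditions of \autoref{def:gendeephole} directly, exploiting the fact that the hypothesis $V_\Lambda^{\orb(g)}\cong V^\natural$ makes the dimension formula collapse on the orbifolded side. Since the orbifold construction $V_\Lambda^{\orb(g)}$ is well-defined, $g$ is already known to be of type~$0$ with $V_\Lambda^g$ satisfying the positivity condition, so only the two numerical conditions remain to be checked.

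For the rank condition, I would argue as follows. By Carnahan's construction $g=\hat\nu$ for some fixed-point free $\nu\in\O(\Lambda)$. Since $\hat\nu$ acts on $(V_\Lambda)_1\cong\Lambda_\C$ through $\nu$, the equality $\Lambda^\nu=0$ forces $(V_\Lambda^g)_1=0$, while $(V_\Lambda^{\orb(g)})_1=(V^\natural)_1=0$ by the defining property of the Moonshine module. Hence $\rk((V_\Lambda^g)_1)=0=\rk((V_\Lambda^{\orb(g)})_1)$, which is exactly condition~(2) of \autoref{def:gendeephole}.

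For extremality, the key observation is to pass to the inverse-orbifold side. Let $\amgis\in\Aut(V^\natural)$ be the inverse-orbifold automorphism of $g$; by the general theory summarised in \autoref{sec:orbifold} it has the same order $n>1$, is of type~$0$, inherits the positivity condition, and satisfies $(V^\natural)^{\orb(\amgis)}\cong V_\Lambda$. Applying \autoref{thm:dimform2} to $\amgis$ yields
\begin{equation*}
24=\dim((V_\Lambda)_1)=\dim\bigl((V^\natural)_1^{\orb(\amgis)}\bigr)=24+\sum_{m\mid n}c_n(m)\dim\bigl((V^\natural)_1^{\amgis^m}\bigr)-R(\amgis),
\end{equation*}
and since $(V^\natural)_1=\{0\}$ every summand on the right vanishes, forcing $R(\amgis)=0$. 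Thus $\amgis$ is extremal, and \autoref{prop:inverse} transfers extremality back to $g$.

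There is no real obstacle in this argument: the strength of the hypothesis $V_\Lambda^{\orb(g)}\cong V^\natural$ trivialises the dimension formula on the $V^\natural$-side, and combining \autoref{prop:inverse} with the fixed-point freeness of $\nu$ handles both conditions. The only routine verification is that $\amgis$ inherits type~$0$ and the positivity of the fixed-point subalgebra, which is built into the symmetric formulation of the cyclic orbifold construction.
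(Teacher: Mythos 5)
Your proof is correct, but it takes a genuinely different route from the paper's. The paper works entirely on the $V_\Lambda$ side: it invokes the numerical fact that every fixed-point free $\nu\in\O(\Lambda)$ with $\rho_\nu>1$ has vacuum anomaly exactly $\rho_\nu=1+1/n$, so the Leech-specialised dimension bound of \autoref{cor:dimfleech} reads $\dim((V_\Lambda^{\orb(g)})_1)\leq 24+24n(1-\rho_\nu)=0$; since the dimension is non-negative and $R(g)\geq0$, both must vanish, which yields extremality and (via $(V_\Lambda^g)_1\subseteq(V_\Lambda^{\orb(g)})_1$) the rank condition in one stroke. You instead take the hypothesis $V_\Lambda^{\orb(g)}\cong V^\natural$ at face value, pass to the inverse-orbifold automorphism $\amgis\in\Aut(V^\natural)$, and observe that \autoref{thm:dimform2} collapses to $24=24-R(\amgis)$ because $(V^\natural)_1=\{0\}$ kills every fixed-point term; \autoref{prop:inverse} then transfers extremality back to $g$. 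Both arguments are sound. What the paper's version buys is independence from Carnahan's identification: it derives $\dim((V_\Lambda^{\orb(g)})_1)=0$ from scratch (the paper explicitly notes this only \emph{suggests} $V^{\orb(g)}_\Lambda\cong V^\natural$, the identification being the content of \cite{Car18}), at the cost of needing the case-specific input $\rho_\nu=1+1/n$. Your version buys a computation-free argument that needs no knowledge of the vacuum anomalies, at the cost of leaning entirely on the hypothesis $V_\Lambda^{\orb(g)}\cong V^\natural$ — which is legitimate here, since it is part of the statement. Your verification of the rank condition and of the standing hypotheses for $\amgis$ (order $n$, type $0$, positivity of $(V^\natural)^{\amgis}=V_\Lambda^g$) is also correct.
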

\begin{proof}
All fixed-point free automorphisms $\nu\in\O(\Lambda)$ with $\rho_\nu>1$ satisfy (see \autoref{sec:lat})
\begin{equation*}
\rho(V_\Lambda(g))=\rho_\nu=1+1/n
\end{equation*}
where $n$ is the order of $\nu$ and of $g=\phi_{\eta}(\nu)$. Then the dimension bound in the special case of the Leech lattice \voa{} $V_\Lambda$ (see \autoref{cor:dimleech}) yields
\begin{equation*}
\dim((V_\Lambda^{\orb(g)})_1)\leq24+24n(1-\rho_\nu)=0,
\end{equation*}
which implies that $\dim((V_\Lambda^{\orb(g)})_1)=0$ and that $g$ is extremal. Since $(V_\Lambda^g)_1$ is a subspace of $(V_{\Lambda}^{\orb(g)})_1$, $\rk((V_\Lambda^g)_1)=\rk((V_\Lambda^{\orb(g)})_1)=0$. This shows that each $g=\phi_{\eta}(\nu)$ is a \gdh{} with $\dim((V_\Lambda^{\orb(g)})_1)=0$ suggesting that $V_\Lambda^{\orb(g)}\cong V^\natural$, which would immediately follow if it were known that $V^\natural$ is the unique \strathol{} \voa{} $V$ of central charge $24$ with $V_1=\{0\}$. The main work of \cite{Car18} is to prove that indeed $V_\Lambda^{\orb(g)}\cong V^\natural$.
\end{proof}
We shall see that these $38$ automorphisms of $V_\Lambda$ are in fact all the \gdh{}s of $V_\Lambda$ with $\rk(V_\Lambda^g)=0$ up to algebraic conjugacy.

Note that one can show in a case-by-case study that the orbifold constructions associated with the exactly $42$ conjugacy classes, $39$ up to algebraic conjugacy, of fixed-point free automorphisms $\nu\in\O(\Lambda)$ with $\rho_\nu\leq1$ all yield the Leech lattice \voa{} $V_\Lambda$ again (see also Remark~7.5 in \cite{Car18}).

Now we can prove:
\begin{thm}[Classification of \GDH{}s]\label{thm:class}
There are exactly $108$ algebraic conjugacy classes of \gdh{}s $g$ of the Leech lattice \voa{} $V_\Lambda$ (see \autoref{table:11}), namely
\begin{enumerate}
\item $70$ classes with $\rk((V_\Lambda^g)_1)>0$, whose corresponding orbifold constructions yield the $70$ \strathol{} \voa{}s $V$ of central charge $24$ with $V_1\neq\{0\}$ (listed in \autoref{table:11expl} and \autoref{table:70expl} and summarised in \autoref{table:70}), and
\item $38$ classes with $\rk((V_\Lambda^g)_1)=0$, whose corresponding orbifold constructions all yield the Moonshine module $V^\natural$ (listed in \autoref{table:38}).
\end{enumerate}
\end{thm}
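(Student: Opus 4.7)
The proof splits into the two rank regimes, each of which reduces to a previously established result.

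\emph{Rank-positive case.} By \autoref{thm:main2}, the orbifold map $g\mapsto V_\Lambda^{\orb(g)}$ defines a bijection between the algebraic conjugacy classes of \gdh{}s $g\in\Aut(V_\Lambda)$ with $\rk((V_\Lambda^g)_1)>0$ and the isomorphism classes of \strathol{} \voa{}s of central charge~$24$ with $V_1\neq\{0\}$. Since Schellekens' list has $70$ entries, all realised by \autoref{thm:main3} through the automorphisms tabulated in the appendix, there are exactly $70$ algebraic conjugacy classes in this regime.

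\emph{Rank-zero case: reduction to fixed-point free $\nu$.} Let $g$ be a \gdh{} with $\rk((V_\Lambda^g)_1)=0$ and write $\nu\in\O(\Lambda)$ for its image under the projection $\Aut(V_\Lambda)\to\O(\Lambda)$. Because $g$ acts on $(V_\Lambda)_1\cong\Lambda_\C$ as $\nu$, we have $(V_\Lambda^g)_1\cong\Lambda_\C^\nu$, so the hypothesis $\rk((V_\Lambda^g)_1)=0$ is equivalent to $\Lambda^\nu=0$, i.e.\ $\nu$ is fixed-point free. In this situation $\pi_\nu(\Lambda_\C)=0$, and \autoref{lem:conjsurj} forces $g$ to be conjugate in $\Aut(V_\Lambda)$ to the standard lift $\hat\nu$. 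Hence the algebraic conjugacy class of $g$ is determined by that of $\nu$ in $\O(\Lambda)$.

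\emph{Rank-zero case: the identity $\rho_\nu=1+1/n$.} Since $(V_\Lambda^{\orb(g)})_1$ is reductive of rank $\rk((V_\Lambda^g)_1)=0$, it is trivial. Combining extremality with the Leech-lattice dimension formula (\autoref{cor:dimfleech}),
\begin{equation*}
0=\dim((V_\Lambda^{\orb(g)})_1)=24+24n(1-\rho_\nu),
\end{equation*}
so $\rho_\nu=1+1/n$ with $n$ the order of $g$. In particular $\rho_\nu>1$, placing $\nu$ inside the enumeration carried out in \cite{Car18}, where all fixed-point free $\nu\in\O(\Lambda)$ of type~$0$ with $\rho_\nu>1$ are classified: there are exactly $38$ algebraic conjugacy classes, tabulated in \autoref{table:38}, each producing $V_\Lambda^{\orb(g)}\cong V^\natural$. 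Conversely, by \autoref{prop:moonshinegdh} each such $\nu$ already furnishes a \gdh{}. Adding the two counts yields $70+38=108$.

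\emph{Main obstacle.} The substantive work is outsourced: \autoref{thm:main2} provides the rank-positive case, and Carnahan's case-by-case study in \cite{Car18} provides both the exhaustive enumeration of fixed-point free $\nu$ with $\rho_\nu>1$ and type~$0$ and the verification that each orbifold is $V^\natural$. The assembly in this proof only needs three elementary facts established earlier: that $\rk((V_\Lambda^g)_1)=0$ forces $\nu$ to be fixed-point free; that in this case $g$ is conjugate to the standard lift $\hat\nu$, so only the $\O(\Lambda)$-class of $\nu$ matters; and that the conjunction of extremality with the rank-zero condition pins the vacuum anomaly to the value $\rho_\nu=1+1/n$, bringing us into the range covered by \cite{Car18}.
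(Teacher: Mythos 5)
Your proof is correct and follows essentially the same route as the paper's: the rank-positive count comes from \autoref{thm:main2} together with the uniqueness of a \strathol{} \voa{} of central charge $24$ with given non-zero $V_1$, and the rank-zero case is reduced via the conjugacy classification to fixed-point free classes $\nu\in\O(\Lambda)$ of type~$0$ with $\rho_\nu>1$, which are exactly Carnahan's $38$ classes, each a \gdh{} by \autoref{prop:moonshinegdh}. The only minor and harmless deviation is that you derive $\rho_\nu>1$ from extremality together with \autoref{cor:dimfleech} (obtaining the sharper $\rho_\nu=1+1/n$), whereas the paper deduces $\rho(V_\Lambda(g))>1$ from the inclusion $(V_\Lambda(g))_1\subseteq(V_\Lambda^{\orb(g)})_1=\{0\}$; both arguments are valid.
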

\begin{proof}
By \autoref{thm:main3} there are at least $70$ algebraic conjugacy classes of \gdh{}s $g$ with $\rk((V_\Lambda^g)_1)>0$. Schellekens' classification and the uniqueness of a \strathol{} \voa{} of central charge $24$ with a given non-zero weight-$1$ space together with \autoref{thm:main2} imply that there are exactly $70$ classes.

We study the \gdh{}s $g\in\Aut(V_\Lambda)$ with $\rk((V_\Lambda^g)_1)=0$. As explained above, the $38$ automorphisms whose corresponding orbifold constructions yield the Moonshine module $V^\natural$ are \gdh{}s. Clearly, they must satisfy $\rk((V_\Lambda^g)_1)=0$. In fact, these are all the \gdh{}s of $V_\Lambda$ with $\rk((V_\Lambda^g)_1)=0$.

Indeed, suppose that $g$ is an automorphism in $\Aut(V_\Lambda)$ with $\rk((V_\Lambda^g)_1)=0$, i.e.\ $g$ projects to a fixed-point free automorphism $\nu\in\O(\Lambda)$. By \autoref{prop:conjclassbij} the (algebraic) conjugacy classes of such automorphisms are in bijection with the (algebraic) conjugacy classes of fixed-point free automorphisms $\nu\in\O(\Lambda)$, of which there are exactly $95$ ($90$). Now suppose that $g$ is a \gdh{}. Then $\rk((V_\Lambda^{\orb(g)})_1)=0$ and $\rho(V_\Lambda(g))\geq1$. We even have $\rho(V_\Lambda(g))>1$ because $(V_\Lambda(g))_1\subseteq (V_\Lambda^{\orb(g)})_1$. As the automorphism is fixed-point free on $\Lambda$, the conformal weight is simply $\rho(V_\Lambda(g))=\rho_\nu$, which only depends on the cycle shape of $\nu$. There are exactly $53$ ($51$) automorphisms in $\O(\Lambda)$ which are fixed-point free and satisfy $\rho_\nu>1$ up to (algebraic) conjugacy and of those $40$ ($38$) have type~$0$. These are exactly the $38$ automorphisms listed in \autoref{table:38}.
\end{proof}

Finally, we record the following observation (see also \autoref{table:11}):
\begin{thm}\label{thm:main4}
Under the natural projection $\Aut(V_\Lambda)\to\O(\Lambda)$ the $70$ algebraic conjugacy classes of \gdh{}s $g$ with $\rk((V_\Lambda^g)_1)>0$ map to the $11$ algebraic conjugacy classes in $\O(\Lambda)$ with cycle shapes $1^{24}$, $1^82^8$, $1^63^6$, $2^{12}$, $1^42^24^4$, $1^45^4$, $1^22^23^26^2$, $1^37^3$, $1^22^14^18^2$, $2^36^3$ and $2^210^2$.
\end{thm}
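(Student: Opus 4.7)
The plan is to combine Theorem~\ref{thm:main2} with the explicit list of representatives in Theorem~\ref{thm:main3} and then simply read off the projection.

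First, I would invoke Theorem~\ref{thm:main2} to conclude that there are exactly $70$ algebraic conjugacy classes of \gdh{}s $g\in\Aut(V_\Lambda)$ satisfying $\rk((V_\Lambda^g)_1)>0$, namely one for each of the $70$ \strathol{} \voa{}s $V$ of central charge $24$ with $V_1\neq\{0\}$. So to establish the theorem it suffices to exhibit one representative in each of these $70$ classes and determine its image in $\O(\Lambda)$.

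Second, by Theorem~\ref{thm:main3}, the $70$ automorphisms $g=\hat\nu\sigma_h$ tabulated in \autoref{table:11expl} and \autoref{table:70expl} form a complete system of representatives. Under the natural projection $\Aut(V_\Lambda)\to\O(\Lambda)$, described in \autoref{sec:lat}, $\hat\nu\sigma_h$ maps to $\nu$. So I would go through the $70$ rows of the tables and record the cycle shape of $\nu$; inspection shows that only the $11$ cycle shapes listed in the statement (and arranged in the $11$ blocks of \autoref{table:70}) occur, and that each of them is realised at least once. Since by a special property of the Leech lattice (used in the proof of Theorem~\ref{thm:main3}) the algebraic conjugacy classes in $\O(\Lambda)\cong\Co_0$ are uniquely determined by their cycle shape, these $11$ cycle shapes correspond bijectively to $11$ algebraic conjugacy classes in $\O(\Lambda)$.

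Combining these two observations gives the theorem. Finally, to justify the addendum that this recovers the Höhn decomposition of the genus of the Moonshine module, I would note that the $11$ cycle shapes obtained here coincide with the $11$ classes appearing in Table~$4$ of \cite{Hoe17} (this matching is already recorded in the second column of \autoref{table:70}, which reproduces Höhn's labels), so the partition of the $70$ \gdh{}s by their image in $\O(\Lambda)$ is exactly Höhn's partition.

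The only genuine work is the bookkeeping in the second step, and even that is not really an obstacle because the tables \autoref{table:11expl} and \autoref{table:70expl} were constructed precisely by choosing, for each of the $11$ cycle shapes with $\Lambda^\nu\neq 0$ allowed by Theorem~\ref{thm:main3}, appropriate vectors $h\in\pi_\nu(\Lambda_\C)/\pi_\nu(\Lambda)$; no further analysis is required beyond observing that the $11$ cycle shapes listed in the statement are exactly those that appear.
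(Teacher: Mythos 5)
Your proposal is correct and follows essentially the same route as the paper, which records Theorem~\ref{thm:main4} as an observation from the classification: the bijection of Theorem~\ref{thm:main2} reduces everything to the $70$ explicit representatives $\hat\nu\sigma_h$ of Theorem~\ref{thm:main3}, whose images $\nu$ under $\hat\nu\sigma_h\mapsto\nu$ realise exactly the $11$ listed cycle shapes, and these determine the algebraic conjugacy classes in $\O(\Lambda)\cong\Co_0$. No gap.
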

This shows that we recover the decomposition of the genus of the Moonshine module described by Höhn in \cite{Hoe17}. There is probably an automorphic proof of this result (cf.\ \cite{Sch17}).


\section*{Appendix. List of \GDH{}s}
We give a complete list of the \gdh{}s of $V_\Lambda$.

First, we describe the $70$ \gdh{}s $g=\phi_{\eta}(\nu)\sigma_h\in\Aut(V_\Lambda)$ from \autoref{thm:main3} that give cyclic orbifold constructions of the $70$ \strathol{} \voa{}s of central charge $24$ with $V_1\neq\{0\}$. Recall that many of their properties are listed in \autoref{table:70}.

We shall explain our notation. Let $\mathcal{G}_{24}\subseteq\mathbb{F}_2^{24}$ be the \emph{extended binary Golay code}, a doubly-even, self-dual binary code in dimension $24$. Let $\Omega:=\{1,2,\ldots,24\}$ denote the coordinates of $\mathbb{F}_2^{24}$ (and of $\R^{24}$). The automorphism group of $\mathcal{G}_{24}$, i.e.\ the group of permutations of $\Omega$ mapping $\mathcal{G}_{24}$ to itself, is the Mathieu group $\operatorname{M}_{24}\subseteq\mathcal{S}_\Omega$. Following Curtis' \emph{Miracle Octad Generator} we shall arrange vectors in $\mathbb{F}_2^{24}$ (and in $\R^{24}$) in $(4\times 6)$-arrays. For definiteness, let $\mathcal{G}_{24}$ be the binary code generated by the standard basis (see, e.g., (5.35) in \cite{Gri98b}).

Let $\R^{24}$ be equipped with the standard scalar product $(\cdot,\cdot)$ so that $(e_i,e_j)=\delta_{i,j}$ for all $i,j\in\Omega$. The \emph{Leech lattice} $\Lambda\subseteq\R^{24}$ is a positive-definite, even, unimodular lattice of rank $24$. It can be constructed from $\mathcal{G}_{24}$ as the $\Z$-span of
\begin{equation*}
\frac{1}{\sqrt{2}}x,\;x\in\mathcal{G}_{24},\quad\sqrt{2}(e_i\pm e_j),\;i,j\in\Omega,\quad\frac{1}{\sqrt{8}}(-3,1,\ldots,1)
\end{equation*}
where we consider the entries of $x\in\mathbb{F}_2^{24}$ as $0,1\in\R$. An explicit basis of $\Lambda$ is given, for example, in \cite{CS99}.
The automorphism group of the Leech lattice is the Conway group $\Co_0$. $\operatorname{M}_{24}$ can be considered as a subgroup of $\Co_0$, acting on $\Lambda$ by permuting the coordinates.

In \autoref{table:11expl} (see also \autoref{table:11}) we list representatives $\nu$ of the $11$ algebraic conjugacy classes of automorphisms in $\O(\Lambda)\cong\Co_0$ that the $70$ \gdh{}s project to. For all cycle shapes but $2^36^3$ these classes have representatives in $\operatorname{M}_{24}$, allowing us to write them as permutations of the coordinates $\Omega$ of $\R^{24}$. We list particularly nice representatives, partly taken from \cite{Gri98b}, (5.38) and \cite{CS99}, Chapter~11. The cycle shape $2^36^3$ may be realised as a signed permutation. In \autoref{table:11expl} arrows denote permutations of the corresponding coordinates and a hollow node indicates a sign change.

In \autoref{table:70expl}, for each automorphism $\nu\in\O(\Lambda)$ in \autoref{table:11expl} (separated by a horizontal line), we list elements $h\in\Lambda_\C$ such that $g=\phi_{\eta}(\nu)\sigma_h\in\Aut(V_\Lambda)$ are \gdh{}s, amounting to a total of $70$ \gdh{}s. To improve legibility the least common denominator of $h$ is factored out and written under the vector. The first $23$ entries are representatives of the classes of deep holes of $\Lambda$ as they are given in \cite{CPS82}.

\begin{table}
\caption{Representatives of $11$ conjugacy classes in $\O(\Lambda)$.}
\renewcommand{\arraystretch}{1.2}

\renewcommand{\arraystretch}{1}
\label{table:38}
\end{table}

\FloatBarrier


\enlargethispage{\baselineskip}

\bibliographystyle{alpha_noseriescomma}
\bibliography{quellen}{}

\newcommand{\etalchar}[1]{$^{#1}$}
\newcommand{\SortNoop}[1]{}
\begin{thebibliography}{DVVV89}

\bibitem[AL70]{AL70}
A.~O.~L. Atkin and Joseph Lehner.
\newblock Hecke operators on {$\Gamma _{0}(m)$}.
\newblock {\em Math. Ann.}, 185:134--160, 1970.

\bibitem[BCP97]{Magma}
Wieb Bosma, John Cannon and Catherine Playoust.
\newblock The {M}agma algebra system. {I}. {T}he user language.
\newblock {\em J. Symbolic Comput.}, 24(3--4):235--265, 1997.
\newblock \url{http://magma.maths.usyd.edu.au}.

\bibitem[BK04]{BK04}
Bojko~N. Bakalov and Victor~G. Kac.
\newblock Twisted modules over lattice vertex algebras.
\newblock In Heinz-Dietrich Doebner and Vladimir~K. Dobrev, editors, {\em Lie
  theory and its applications in physics {V}}, pages 3--26. World Scientific,
  2004.
\newblock (\href{http://arxiv.org/abs/math/0402315v3}{arXiv:math/0402315v3
  [math.QA]}).

\bibitem[BM21]{BM21}
Lea Beneish and Michael~H. Mertens.
\newblock On {W}eierstrass mock modular forms and a dimension formula for
  certain vertex operator algebras.
\newblock {\em Math. Z.}, 297(1-2):59--80, 2021.
\newblock (\href{http://arxiv.org/abs/1910.06942v3}{arXiv:1910.06942v3
  [math.NT]}).

\bibitem[Bor85]{Bor85b}
Richard~E. Borcherds.
\newblock The {L}eech lattice.
\newblock {\em Proc. Roy. Soc. London Ser. A}, 398:365--376, 1985.

\bibitem[Bor86]{Bor86}
Richard~E. Borcherds.
\newblock Vertex algebras, {K}ac-{M}oody algebras, and the {M}onster.
\newblock {\em Proc. Nat. Acad. Sci. U.S.A.}, 83(10):3068--3071, 1986.

\bibitem[Car18]{Car18}
Scott Carnahan.
\newblock 51 constructions of the {M}oonshine module.
\newblock {\em Commun. Number Theory Phys.}, 12(2):305--334, 2018.
\newblock (\href{http://arxiv.org/abs/1707.02954}{arXiv:1707.02954v2
  [math.RT]}).

\bibitem[CCN{\etalchar{+}}85]{CCNPW85}
John~H. Conway, Robert~T. Curtis, Simon~P. Norton, Richard~A. Parker and
  Robert~A. Wilson.
\newblock {\em Atlas of finite groups: Maximal subgroups and ordinary
  characters for simple groups}.
\newblock Oxford University Press, 1985.
\newblock With computational assistance from J.~G. Thackray.

\bibitem[CL19]{CL19}
Ga\"{e}tan Chenevier and Jean Lannes.
\newblock {\em Automorphic forms and even unimodular lattices. {K}neser
  neighbors of {N}iemeier lattices}, volume~69 of {\em Ergeb. Math. Grenzgeb.
  (3)}.
\newblock Springer, 2019.

\bibitem[CM16]{CM16}
Scott Carnahan and Masahiko Miyamoto.
\newblock Regularity of fixed-point vertex operator subalgebras.
\newblock (\href{http://arxiv.org/abs/1603.05645v4}{arXiv:1603.05645v4
  [math.RT]}), 2016.

\bibitem[CN79]{CN79}
John~H. Conway and Simon~P. Norton.
\newblock Monstrous moonshine.
\newblock {\em Bull. Lond. Math. Soc.}, 11(3):308--339, 1979.

\bibitem[CPS82]{CPS82}
John~H. Conway, Richard~A. Parker and Neil~J.~A. Sloane.
\newblock The covering radius of the {L}eech lattice.
\newblock {\em Proc. Roy. Soc. London Ser. A}, 380(1779):261--290, 1982.

\bibitem[CS82a]{CS82c}
John~H. Conway and Neil~J.~A. Sloane.
\newblock On the enumeration of lattices of determinant one.
\newblock {\em J. Number Theory}, 15(1):83--94, 1982.

\bibitem[CS82b]{CS82}
John~H. Conway and Neil~J.~A. Sloane.
\newblock Twenty-three constructions for the {L}eech lattice.
\newblock {\em Proc. Roy. Soc. London Ser. A}, 381(1781):275--283, 1982.

\bibitem[CS99]{CS99}
John~H. Conway and Neil~J.~A. Sloane.
\newblock {\em Sphere packings, lattices and groups}, volume 290 of {\em
  Grundlehren Math. Wiss.}
\newblock Springer, 3rd edition, 1999.

\bibitem[Del74]{Del74}
Pierre Deligne.
\newblock La conjecture de {W}eil. {I}.
\newblock {\em Inst. Hautes \'{E}tudes Sci. Publ. Math.}, (43):273--307, 1974.

\bibitem[DGM90]{DGM90}
Louise Dolan, Peter Goddard and Paul~S. Montague.
\newblock Conformal field theory, triality and the {M}onster group.
\newblock {\em Phys. Lett. B}, 236(2):165--172, 1990.

\bibitem[DGM96]{DGM96}
Louise Dolan, Peter Goddard and Paul~S. Montague.
\newblock Conformal field theories, representations and lattice constructions.
\newblock {\em Comm. Math. Phys.}, 179(1):61--120, 1996.
\newblock
  (\href{http://arxiv.org/abs/hep-th/9410029v1}{arXiv:hep-th/9410029v1}).

\bibitem[DL96]{DL96}
Chongying Dong and James~I. Lepowsky.
\newblock The algebraic structure of relative twisted vertex operators.
\newblock {\em J. Pure Appl. Algebra}, 110(3):259--295, 1996.
\newblock (\href{http://arxiv.org/abs/q-alg/9604022v1}{arXiv:q-alg/9604022v1}).

\bibitem[DLM97]{DLM97}
Chongying Dong, Haisheng Li and Geoffrey Mason.
\newblock Regularity of rational vertex operator algebras.
\newblock {\em Adv. Math.}, 132(1):148--166, 1997.
\newblock (\href{http://arxiv.org/abs/q-alg/9508018v1}{arXiv:q-alg/9508018v1}).

\bibitem[DLM00]{DLM00}
Chongying Dong, Haisheng Li and Geoffrey Mason.
\newblock Modular-invariance of trace functions in orbifold theory and
  generalized moonshine.
\newblock {\em Commun. Math. Phys.}, 214:1--56, 2000.
\newblock (\href{http://arxiv.org/abs/q-alg/9703016v2}{arXiv:q-alg/9703016v2}).

\bibitem[DM97]{DM97}
Chongying Dong and Geoffrey Mason.
\newblock On quantum {G}alois theory.
\newblock {\em Duke Math. J.}, 86(2):305--321, 1997.
\newblock
  (\href{http://arxiv.org/abs/hep-th/9412037v1}{arXiv:hep-th/9412037v1}).

\bibitem[DM04a]{DM04}
Chongying Dong and Geoffrey Mason.
\newblock Holomorphic vertex operator algebras of small central charge.
\newblock {\em Pacific J. Math.}, 213(2):253--266, 2004.
\newblock (\href{http://arxiv.org/abs/math/0203005v1}{arXiv:math/0203005v1
  [math.QA]}).

\bibitem[DM04b]{DM04b}
Chongying Dong and Geoffrey Mason.
\newblock Rational vertex operator algebras and the effective central charge.
\newblock {\em Int. Math. Res. Not.}, 2004(56):2989--3008, 2004.
\newblock (\href{http://arxiv.org/abs/math/0201318v1}{arXiv:math/0201318v1
  [math.QA]}).

\bibitem[DM06]{DM06b}
Chongying Dong and Geoffrey Mason.
\newblock Integrability of {$C_2$}-cofinite vertex operator algebras.
\newblock {\em Int. Math. Res. Not.}, 2006:80468, 2006.
\newblock (\href{http://arxiv.org/abs/math/0601569v1}{arXiv:math/0601569v1
  [math.QA]}).

\bibitem[DN99]{DN99}
Chongying Dong and Kiyokazu Nagatomo.
\newblock Automorphism groups and twisted modules for lattice vertex operator
  algebras.
\newblock In Naihuan Jing and Kailash~C. Misra, editors, {\em Recent
  developments in quantum affine algebras and related topics}, volume 248 of
  {\em Contemp. Math.}, pages 117--133. Amer. Math. Soc., 1999.
\newblock (\href{http://arxiv.org/abs/math/9808088v1}{arXiv:math/9808088v1
  [math.QA]}).

\bibitem[Don93]{Don93}
Chongying Dong.
\newblock Vertex algebras associated with even lattices.
\newblock {\em J. Alg.}, 161(1):245--265, 1993.

\bibitem[DRX17]{DRX17}
Chongying Dong, Li~Ren and Feng Xu.
\newblock On orbifold theory.
\newblock {\em Adv. Math.}, 321:1--30, 2017.
\newblock (\href{http://arxiv.org/abs/1507.03306v2}{arXiv:1507.03306v2
  [math.QA]}).

\bibitem[DS05]{DS05}
Fred Diamond and Jerry Shurman.
\newblock {\em A first course in modular forms}, volume 228 of {\em Grad. Texts
  in Math.}
\newblock Springer, 2005.

\bibitem[DVVV89]{DVVV89}
Robbert Dijkgraaf, Cumrun Vafa, Erik Verlinde and Herman Verlinde.
\newblock The operator algebra of orbifold models.
\newblock {\em Comm. Math. Phys.}, 123(3):485--526, 1989.

\bibitem[ELMS21]{ELMS21}
Jethro~van Ekeren, Ching~Hung Lam, Sven M{\"o}ller and Hiroki Shimakura.
\newblock Schellekens' list and the very strange formula.
\newblock {\em Adv. Math.}, 380:107567, 2021.
\newblock (\href{http://arxiv.org/abs/2005.12248v2}{arXiv:2005.12248v2
  [math.QA]}).

\bibitem[EMS20a]{EMS20a}
Jethro~van Ekeren, Sven M\"{o}ller and Nils~R. Scheithauer.
\newblock Construction and classification of holomorphic vertex operator
  algebras.
\newblock {\em J. Reine Angew. Math.}, 759:61--99, 2020.
\newblock (\href{http://arxiv.org/abs/1507.08142v3}{arXiv:1507.08142v3
  [math.RT]}).

\bibitem[EMS20b]{EMS20b}
Jethro~van Ekeren, Sven M{\"o}ller and Nils~R. Scheithauer.
\newblock Dimension formulae in genus zero and uniqueness of vertex operator
  algebras.
\newblock {\em Int. Math. Res. Not.}, 2020(7):2145--2204, 2020.
\newblock (\href{http://arxiv.org/abs/1704.00478v3}{arXiv:1704.00478v3
  [math.QA]}).

\bibitem[FLM88]{FLM88}
Igor~B. Frenkel, James~I. Lepowsky and Arne Meurman.
\newblock {\em Vertex operator algebras and the {M}onster}, volume 134 of {\em
  Pure Appl. Math.}
\newblock Academic Press, 1988.

\bibitem[Fri11]{Fri11}
Robert Fricke.
\newblock {\em Die elliptischen {F}unktionen und ihre {A}nwendungen. {Z}weiter
  {T}eil. {D}ie algebraischen {A}usf\"{u}hrungen}.
\newblock Springer, 2011.
\newblock Reprint of the 1922 original.

\bibitem[GK19]{GK19b}
Thomas Gem\"{u}nden and Christoph~A. Keller.
\newblock Orbifolds of lattice vertex operator algebras at {$d=48$} and
  {$d=72$}.
\newblock {\em J. Algebra}, 523:93--118, 2019.
\newblock (\href{http://arxiv.org/abs/1802.10581v1}{arXiv:1802.10581v1
  [math.QA]}).

\bibitem[Gri98]{Gri98b}
Robert~L. Griess, Jr.
\newblock {\em Twelve sporadic groups}.
\newblock Springer Monogr. Math. Springer, 1998.

\bibitem[HM20]{HM20}
Gerald H{\"o}hn and Sven M{\"o}ller.
\newblock Systematic orbifold constructions of {S}chellekens' vertex operator
  algebras from {N}iemeier lattices.
\newblock {\em J. Lond. Math. Soc.}, to appear, 2020.
\newblock (\href{http://arxiv.org/abs/2010.00849v2}{arXiv:2010.00849v2
  [math.QA]}).

\bibitem[H{\"o}h17]{Hoe17}
Gerald H{\"o}hn.
\newblock On the genus of the {M}oonshine module.
\newblock (\href{http://arxiv.org/abs/1708.05990v1}{arXiv:1708.05990v1
  [math.QA]}), 2017.

\bibitem[Kac90]{Kac90}
Victor~G. Kac.
\newblock {\em Infinite-dimensional {L}ie algebras}.
\newblock Cambridge University Press, 3rd edition, 1990.

\bibitem[KLL18]{KLL18}
Kazuya Kawasetsu, Ching~Hung Lam and Xingjun Lin.
\newblock {$\mathbb{Z}_2$}-orbifold construction associated with
  {$(-1)$}-isometry and uniqueness of holomorphic vertex operator algebras of
  central charge 24.
\newblock {\em Proc. Amer. Math. Soc.}, 146(5):1937--1950, 2018.
\newblock (\href{http://arxiv.org/abs/1611.07655v2}{arXiv:1611.07655v2
  [math.QA]}).

\bibitem[Kna92]{Kna92}
Anthony~W. Knapp.
\newblock {\em Elliptic Curves}, volume~40 of {\em Math. Notes}.
\newblock Princeton University Press, 1992.

\bibitem[Lam11]{Lam11}
Ching~Hung Lam.
\newblock On the constructions of holomorphic vertex operator algebras of
  central charge 24.
\newblock {\em Comm. Math. Phys.}, 305(1):153--198, 2011.

\bibitem[Lam20]{Lam20}
Ching~Hung Lam.
\newblock Cyclic orbifolds of lattice vertex operator algebras having
  group-like fusions.
\newblock {\em Lett. Math. Phys.}, 110(5):1081--1112, 2020.
\newblock (\href{http://arxiv.org/abs/1805.10778v2}{arXiv:1805.10778v2
  [math.QA]}).

\bibitem[Lep85]{Lep85}
James~I. Lepowsky.
\newblock Calculus of twisted vertex operators.
\newblock {\em Proc. Natl. Acad. Sci. USA}, 82(24):8295--8299, 1985.

\bibitem[Li96]{Li96}
Haisheng Li.
\newblock Local systems of twisted vertex operators, vertex operator
  superalgebras and twisted modules.
\newblock In Chongying Dong and Geoffrey Mason, editors, {\em Moonshine, the
  {M}onster, and Related Topics}, volume 193 of {\em Contemp. Math.}, pages
  203--236. Amer. Math. Soc., 1996.
\newblock (\href{http://arxiv.org/abs/q-alg/9504022v1}{arXiv:q-alg/9504022v1}).

\bibitem[LL20]{LL20}
Ching~Hung Lam and Xingjun Lin.
\newblock A holomorphic vertex operator algebra of central charge 24 with the
  weight one {L}ie algebra {$F_{4,6}A_{2,2}$}.
\newblock {\em J. Pure Appl. Algebra}, 224(3):1241--1279, 2020.
\newblock (\href{http://arxiv.org/abs/1612.08123v2}{arXiv:1612.08123v2
  [math.QA]}).

\bibitem[LS12]{LS12a}
Ching~Hung Lam and Hiroki Shimakura.
\newblock Quadratic spaces and holomorphic framed vertex operator algebras of
  central charge 24.
\newblock {\em Proc. Lond. Math. Soc. (3)}, 104(3):540--576, 2012.
\newblock (\href{http://arxiv.org/abs/1010.5303v2}{arXiv:1010.5303v2
  [math.QA]}).

\bibitem[LS15]{LS15a}
Ching~Hung Lam and Hiroki Shimakura.
\newblock Classification of holomorphic framed vertex operator algebras of
  central charge 24.
\newblock {\em Amer. J. Math.}, 137(1):111--137, 2015.
\newblock (\href{http://arxiv.org/abs/1209.4677v1}{arXiv:1209.4677v1
  [math.QA]}).

\bibitem[LS16a]{LS16}
Ching~Hung Lam and Hiroki Shimakura.
\newblock A holomorphic vertex operator algebra of central charge 24 whose
  weight one {L}ie algebra has type {$A_{6,7}$}.
\newblock {\em Lett. Math. Phys.}, 106(11):1575--1585, 2016.
\newblock (\href{http://arxiv.org/abs/1606.04688v2}{arXiv:1606.04688v2
  [math.QA]}).

\bibitem[LS16b]{LS16a}
Ching~Hung Lam and Hiroki Shimakura.
\newblock Orbifold construction of holomorphic vertex operator algebras
  associated to inner automorphisms.
\newblock {\em Comm. Math. Phys.}, 342(3):803--841, 2016.
\newblock (\href{http://arxiv.org/abs/1501.05094v2}{arXiv:1501.05094v2
  [math.QA]}).

\bibitem[LS19]{LS19}
Ching~Hung Lam and Hiroki Shimakura.
\newblock Reverse orbifold construction and uniqueness of holomorphic vertex
  operator algebras.
\newblock {\em Trans. Amer. Math. Soc.}, 372(10):7001--7024, 2019.
\newblock (\href{http://arxiv.org/abs/1606.08979v3}{arXiv:1606.08979v3
  [math.QA]}).

\bibitem[LS20]{LS20}
Ching~Hung Lam and Hiroki Shimakura.
\newblock On orbifold constructions associated with the {L}eech lattice vertex
  operator algebra.
\newblock {\em Math. Proc. Cambridge Philos. Soc.}, 168(2):261--285, 2020.
\newblock (\href{http://arxiv.org/abs/1705.01281v2}{arXiv:1705.01281v2
  [math.QA]}).

\bibitem[McR21]{McR21}
Robert McRae.
\newblock On rationality for {$C_2$}-cofinite vertex operator algebras.
\newblock (\href{http://arxiv.org/abs/2108.01898v2}{arXiv:2108.01898v2
  [math.QA]}), 2021.

\bibitem[Miy06]{Miy06}
Toshitsune Miyake.
\newblock {\em Modular forms}.
\newblock Springer Monogr. Math. Springer, 2006.
\newblock Translated from the {J}apanese by {Y}oshitaka {M}aeda, Reprint of the
  1989 edition.

\bibitem[Miy13]{Miy13b}
Masahiko Miyamoto.
\newblock A $\mathbb{Z}_3$-orbifold theory of lattice vertex operator algebra
  and $\mathbb{Z}_3$-orbifold constructions.
\newblock In Kenji Iohara, Sophie Morier-Genoud and Bertrand R{\'e}my, editors,
  {\em Symmetries, Integrable Systems and Representations}, volume~40 of {\em
  Springer Proc. Math. Stat.}, pages 319--344. Springer, 2013.
\newblock (part of \href{http://arxiv.org/abs/1003.0237v2}{arXiv:1003.0237v2
  [math.QA]}).

\bibitem[Miy15]{Miy15}
Masahiko Miyamoto.
\newblock {$C_2$}-cofiniteness of cyclic-orbifold models.
\newblock {\em Comm. Math. Phys.}, 335(3):1279--1286, 2015.
\newblock (\href{http://arxiv.org/abs/1306.5031v1}{arXiv:1306.5031v1
  [math.QA]}).

\bibitem[M{\"o}l16]{Moe16}
Sven M{\"o}ller.
\newblock {\em A Cyclic Orbifold Theory for Holomorphic Vertex Operator
  Algebras and Applications}.
\newblock {Ph.D. thesis}, Technische Universit{\"a}t Darmstadt, 2016.
\newblock (\href{http://arxiv.org/abs/1611.09843v2}{arXiv:1611.09843v2
  [math.QA]}).

\bibitem[Mon94]{Mon94}
Paul~S. Montague.
\newblock Orbifold constructions and the classification of self-dual {$c=24$}
  conformal field theories.
\newblock {\em Nucl. Phys. B}, 428(1--2):233--258, 1994.
\newblock
  (\href{http://arxiv.org/abs/hep-th/9403088v1}{arXiv:hep-th/9403088v1}).

\bibitem[MS21]{MS21}
Sven M{\"o}ller and Nils~R. Scheithauer.
\newblock A geometric classification of the holomorphic vertex operator
  algebras of central charge 24.
\newblock (\href{http://arxiv.org/abs/2112.12291v1}{arXiv:2112.12291v1
  [math.QA]}), 2021.

\bibitem[MT04]{MT04}
Masahiko Miyamoto and Kenichiro Tanabe.
\newblock Uniform product of {$A_{g,n}(V)$} for an orbifold model {$V$} and
  {$G$}-twisted {Z}hu algebra.
\newblock {\em J. Algebra}, 274(1):80--96, 2004.
\newblock (\href{http://arxiv.org/abs/math/0112054v3}{arXiv:math/0112054v3
  [math.QA]}).

\bibitem[Nie68]{Nie68}
Hans-Volker Niemeier.
\newblock {\em Definite quadratische {F}ormen der {D}imension 24 und
  {D}iskriminante 1}.
\newblock {Dissertation}, Georg-August-Universität Göttingen, 1968.

\bibitem[Nie73]{Nie73}
Hans-Volker Niemeier.
\newblock Definite quadratische {F}ormen der {D}imension 24 und {D}iskriminante
  1.
\newblock {\em J. Number Theory}, 5:142--178, 1973.

\bibitem[Nik80]{Nik80}
Viacheslav~V. Nikulin.
\newblock Integral symmetric bilinear forms and some of their applications.
\newblock {\em Math. USSR Izv.}, 14(1):103--167, 1980.

\bibitem[PS71]{PS71}
George P\'{o}lya and G\'{a}bor Szeg\H{o}.
\newblock {\em Aufgaben und {L}ehrs\"{a}tze aus der {A}nalysis. {B}and {II}:
  {F}unktionentheorie, {N}ullstellen, {P}olynome {D}eterminanten,
  {Z}ahlentheorie}, volume~74 of {\em Heidelberger Taschenb\"{u}cher}.
\newblock Springer, 4th edition, 1971.

\bibitem[Sch93]{Sch93}
A.~N. Schellekens.
\newblock Meromorphic {$c=24$} conformal field theories.
\newblock {\em Comm. Math. Phys.}, 153(1):159--185, 1993.
\newblock
  (\href{http://arxiv.org/abs/hep-th/9205072v1}{arXiv:hep-th/9205072v1}).

\bibitem[Sch06]{Sch06}
Nils~R. Scheithauer.
\newblock On the classification of automorphic products and generalized
  {K}ac-{M}oody algebras.
\newblock {\em Invent. Math.}, 164(3):641--678, 2006.

\bibitem[Sch09]{Sch09}
Nils~R. Scheithauer.
\newblock The {W}eil representation of {$\mathrm{SL}_2(\mathbb{Z})$} and some
  applications.
\newblock {\em Int. Math. Res. Not.}, 2009(8):1488--1545, 2009.

\bibitem[Sch17]{Sch17}
Nils~R. Scheithauer.
\newblock Automorphic products of singular weight.
\newblock {\em Compos. Math.}, 153(9):1855--1892, 2017.

\bibitem[SS16]{SS16}
Daisuke Sagaki and Hiroki Shimakura.
\newblock Application of a {$\mathbb{Z}_3$}-orbifold construction to the
  lattice vertex operator algebras associated to {N}iemeier lattices.
\newblock {\em Trans. Amer. Math. Soc.}, 368(3):1621--1646, 2016.
\newblock (\href{http://arxiv.org/abs/1302.4826v4}{arXiv:1302.4826v4
  [math.QA]}).

\bibitem[Tui92]{Tui92}
Michael~P. Tuite.
\newblock Monstrous {M}oonshine from orbifolds.
\newblock {\em Comm. Math. Phys.}, 146(2):277--309, 1992.

\bibitem[Tui95]{Tui95}
Michael~P. Tuite.
\newblock On the relationship between {M}onstrous {M}oonshine and the
  uniqueness of the {M}oonshine module.
\newblock {\em Comm. Math. Phys.}, 166(3):495--532, 1995.
\newblock
  (\href{http://arxiv.org/abs/hep-th/9305057v1}{arXiv:hep-th/9305057v1}).

\bibitem[Ven80]{Ven80}
Boris~B. Venkov.
\newblock On the classification of integral even unimodular 24-dimensional
  quadratic forms.
\newblock {\em Proc. Steklov Inst. Math.}, 148:63--74, 1980.

\bibitem[Zhu96]{Zhu96}
Yongchang Zhu.
\newblock Modular invariance of characters of vertex operator algebras.
\newblock {\em J. Amer. Math. Soc.}, 9(1):237--302, 1996.

\end{thebibliography}

\end{document}